\newtheorem{theorem}{Theorem}[section]
\newtheorem{lemma}[theorem]{Lemma}
\newtheorem{definition}[theorem]{Definition}
\newtheorem{notion}[theorem]{}
\newtheorem{conjecture}[theorem]{Conjecture}
\newtheorem{proposition}[theorem]{Proposition}
\newtheorem{corollary}[theorem]{Corollary}
\theoremstyle{definition}
\newtheorem{remark}[theorem]{Remark}
\newtheorem{example}[theorem]{Example}
\author{Qingnan An}
\address{School of Mathematics and Statistics, Northeast Normal University, Changchun, {\rm130024}, China}
\email{qingnanan1024@outlook.com}
\author{Zhichao Liu}
\address{School of Mathematical Sciences,
Dalian University of Technology,
Dalian, {\rm116024}, China }
\email{lzc.12@outlook.com}
\keywords{Total Cuntz semigroup; Total K-theory; Classification; Real rank zero}
\subjclass[2000]{Primary 46L35, Secondary 46K80 19K35}
\begin{document}

\title[Classification problem for ${\rm C}^*$-algerbas] {Total Cuntz semigroup, Extension and Elliott Conjecture with Real rank zero}

\begin{abstract}
In this paper, we exhibit two unital, separable, nuclear ${\rm C}^*$-algebras of stable rank one and real rank zero with the same ordered scaled total K-theory, but they are not isomorphic with each other, which forms a counterexample to Elliott Classification Conjecture for real rank zero setting.
Thus, we introduce an additional normal condition and give a classification result in terms of total K-theory.
For the general setting, with a new invariant---total Cuntz semigroup \cite{AL}, we classify a large class of ${\rm C}^*$-algebras obtained from extensions. The total Cuntz semigroup,
which distinguish the algebras of our counterexample, could possibly classify all the ${\rm C}^*$-algebras of stable rank one and real rank zero.



\end{abstract}

\maketitle
\section*{Introduction}

The Elliott Conjecture asserts that all nuclear, separable ${\rm C}^*$-algebras are classified by suitable Elliott invariants. The original version of the conjecture was formulated in 1989 by Elliott in \cite{Ell}, in which he classified A$\mathbb{T}$-algebras of real rank zero. Since then, the classification program enjoyed tremendous success, the invariants had been extended and large class of ${\rm C}^*$-algebras had been classified.

For simple ${\rm C}^*$-algebras, Kirchberg and Phillips classified all the purely infinite, simple, separable, amenable ${\rm C}^*$-algebras satisfying the Universal Coefficient Theorem (see \cite{R} for an overview). For the stably finite simple case, the modified invariant consists of K-theory, traces, and the pairing between traces and K-theory. By now, it has been completed that all simple, separable, unital, nuclear, $\mathcal{Z}$-stable ${\rm C}^*$-algebras can be classified by the Elliott invariant provided the UCT holds (\cite{EGLN,GLN1,GLN2,TWW}), where $\mathcal{Z}$ is a simple, separable, nuclear and infinite dimensional ${\rm C}^*$-algebra with the same Elliott invariant as $\mathbb{C}$ and $\mathcal{Z}$-stability is a regularity property guarantees good structural behaviour \cite{GJSu}.

For real rank zero ${\rm C}^*$-algebras, the original Elliott invariant consisting of ordered ${\rm K}_*$-group and the scale (see \cite{Ell,EG}) has been modified to an extended form, in light of Gong's counterexample \cite{G, EGS}. Dadarlat and Loring also gave such an example in the class of AD algebras \cite{DL3}. The refined invariant is the ordered total $\mathrm{K}$-theory endowed with a certain order structure and acted upon by the natural coefficient transformations and by the Bockstein maps (see also \cite{DL2,Ei,RS,S}). From then on, the Elliott Conjecture for real rank zero setting had been modified to the following:

\begin{conjecture}\label{ell conj}
${({\rm\underline{K}}(A),{\rm\underline{K}}(A)_+,\Sigma A)}_{\rm \Lambda}$ is the complete invariant for separable nuclear ${\rm C}^*$-algebras of real rank zero and stable rank one.
\end{conjecture}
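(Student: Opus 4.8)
The statement is a conjecture of Elliott type, and the abstract already announces that it is \emph{false}; so the task is not to prove it but to locate the obstruction and turn it into a counterexample. Since the classification \emph{does} hold for good building blocks (Elliott--Gong, Dadarlat--Loring, Eilers), I would ask what structure on a real rank zero, stable rank one algebra $A$ remains invisible to the Bockstein $\Lambda$-module $\underline{K}(A)$. The natural place to look is the presence of a nontrivial ideal: if $I$ is an ideal of $A$ with $I$ and $B:=A/I$ classifiable, then $A$ is governed by the class of the extension $0\to I\to A\to B\to 0$ in $KK^{1}(B,I)$; and for $B$ in the bootstrap class the universal multicoefficient theorem of Dadarlat--Loring identifies $KL^{1}(B,I)\cong\operatorname{Hom}_{\Lambda}(\underline{K}(B),\underline{K}(I))$, so that $KK^{1}(B,I)$ surjects onto $\operatorname{Hom}_{\Lambda}(\underline{K}(B),\underline{K}(I))$ with kernel the pure-extension group $\operatorname{Pext}(K_{*}(B),K_{*+1}(I))$. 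An extension class lying in this kernel alters $A$ as a C*-algebra --- in particular its total Cuntz semigroup $\operatorname{Cu}^{\sim}(A)$ of \cite{AL} --- while leaving $\underline{K}(A)$, its order and its scale untouched, since the induced six-term (and coefficient) sequences, connecting maps included, depend on the extension class only through its image in $\operatorname{Hom}_{\Lambda}$.

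Concretely, the plan is the following. (i) Choose an ideal $I$ --- say a stabilised simple AF algebra whose $K_{0}$ is sufficiently non-finitely-generated (a $\mathbb{Z}[1/p]$- or $\mathbb{Q}$-type dimension group) --- and a quotient $B$ of real rank zero and stable rank one (a Bunce--Deddens or a simple AT algebra, say) so that $\operatorname{Pext}(K_{*}(B),K_{*+1}(I))\neq 0$ and so that projections in $B$ lift to $A$ with the index data vanishing enough to force $A$ into real rank zero. (ii) Choose $\tau_{0},\tau_{1}\in KK^{1}(B,I)$ with $\tau_{1}-\tau_{0}$ a nonzero element of that $\operatorname{Pext}$-group, and form the corresponding extensions $A_{0},A_{1}$. (iii) Verify $(\underline{K}(A_{0}),\underline{K}(A_{0})_{+},\Sigma A_{0})_{\Lambda}\cong(\underline{K}(A_{1}),\underline{K}(A_{1})_{+},\Sigma A_{1})_{\Lambda}$ --- arranging the building blocks so that the relevant $\Lambda$-module extension problem for $\underline{K}$ has an unambiguous (for instance split) solution, and checking that order and scale transport along it. (iv) Prove $A_{0}\not\cong A_{1}$ by computing a finer invariant, namely $\operatorname{Cu}^{\sim}$, through the behaviour of the total Cuntz semigroup under extensions established in \cite{AL}, and exhibiting a discrepancy. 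The same analysis then indicates which extra normality-type hypothesis removes the $\operatorname{Pext}$ degree of freedom and restores a classification by $\underline{K}$ alone.

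The main obstacle is step (i): real rank zero is not inherited automatically by an extension of real rank zero algebras (Brown--Pedersen, Lin), and the condition that does force $A$ to real rank zero --- roughly, liftability of projections together with a vanishing/divisibility constraint on the boundary maps --- interacts with precisely the $KK^{1}$-data one wants to vary, so the delicate engineering is to keep the $\operatorname{Pext}$ freedom alive \emph{inside} the set of extensions whose total algebra has real rank zero. The secondary difficulty is step (iv): because $\underline{K}$ is by design blind to the difference between $A_{0}$ and $A_{1}$, one must compute $\operatorname{Cu}^{\sim}$ of a non-simple extension and read off from it a concrete separating feature --- this is where the machinery of \cite{AL} has to do the real work, and the reason $\operatorname{Cu}^{\sim}$, rather than a lighter gadget, is the right vehicle.
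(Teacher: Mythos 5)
You read the statement correctly as a conjecture to be refuted, and your overall philosophy --- that the failure is caused by extension data invisible to $\underline{\mathrm K}$, tied to purity/$\operatorname{Pext}$ phenomena, and that the total Cuntz semigroup is the finer invariant --- is exactly the paper's. But the concrete mechanism you propose (fix the ideal $I$ and quotient $B$, and vary the Busby invariant by a nonzero class in $\operatorname{Pext}(\mathrm K_*(B),\mathrm K_*(I))$) does not work as described, for three reasons. First, for extensions with trivial index maps the algebra $E$ is governed, by Theorem~\ref{strong wei}, by the class of the group extension in $\operatorname{Ext}_{[1]}(\mathrm K_*(B),\mathrm K_*(I))$, and $\operatorname{Pext}$ is a \emph{subgroup of that same} $\operatorname{Ext}$ group; perturbing by a nonzero $\operatorname{Pext}$ element therefore changes the equivalence class of $0\to \mathrm K_*(I)\to \mathrm K_*(E)\to \mathrm K_*(B)\to 0$ and in general changes $\mathrm K_*(E)$ as an abstract group (already $\operatorname{Pext}(\mathbb Q,\mathbb Z)=\operatorname{Ext}(\mathbb Q,\mathbb Z)\neq 0$, and the middle group of a nontrivial such extension is not $\mathbb Z\oplus\mathbb Q$). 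So your premise that the perturbation ``leaves $\underline{\mathrm K}(A)$, its order and its scale untouched'' is unjustified and generally false. Second, if you arrange matters so that both group extensions are pure (e.g.\ base point split plus a $\operatorname{Pext}$ class), then both C*-extensions are K-pure, and Theorem~\ref{4 dengjia} of this very paper shows that for K-pure extensions of $\mathcal{DG}$-class building blocks the ordered scaled total K-theory \emph{is} complete --- so no counterexample can arise that way; the counterexample must be built from non-pure group extensions, which your plan never arranges. Third, you never establish $A_0\not\cong A_1$: inequivalent Busby invariants, even inequivalent $\operatorname{Ext}$ classes, can yield isomorphic extension algebras, since isomorphism only requires weak isomorphism, i.e.\ agreement of the $\operatorname{Ext}$ classes up to the actions of $\operatorname{Aut}(\mathrm K_*(B))$ and $\operatorname{Aut}(\mathrm K_*(I))$.

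The paper's construction is structurally different and avoids all three problems. It takes two \emph{non-isomorphic} stable ideals $B_1\ncong B_2$ (the Dadarlat--Loring algebras, which have isomorphic ordered $\mathrm K_*$ but non-isomorphic ordered $\underline{\mathrm K}$), one fixed simple quotient $A_\chi$, and deliberately \emph{non-K-pure} extensions with trivial index maps chosen so that $\underline{\mathrm K}(\iota_i)$ is not injective: the mod-$2$ data that distinguishes $\underline{\mathrm K}(B_1)$ from $\underline{\mathrm K}(B_2)$ is killed inside $\underline{\mathrm K}(E_i)$, which is why $(\underline{\mathrm K}(E_1),\underline{\mathrm K}(E_1)_+,[1_{E_1}])\cong(\underline{\mathrm K}(E_2),\underline{\mathrm K}(E_2)_+,[1_{E_2}])$ can be verified by hand (Theorem~\ref{zhuyao fanli}). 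The non-isomorphism $E_1\ncong E_2$ is then immediate and needs no Cuntz semigroup computation: $B_i$ is the unique maximal ideal of $E_i$ and $B_1\ncong B_2$. The total Cuntz semigroup enters only afterwards, as the invariant that does see the difference (Section~5), not as the tool for proving $E_1\ncong E_2$.
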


 We say a ${\rm C}^*$-algebra is an A$\mathcal{HD}$ algebra (see \cite[2.2-2.3]{GJL} and  \cite{GJL2}), if it is an inductive limit of finite direct sums of the form $M_n(\widetilde{\mathbb{I}}_p)$ and $PM_n(C(X))P$, where $$
\mathbb{I}_p=\{f\in M_p(C_0(0,1]):\,f(1)=\lambda\cdot1_p,\,1_p {\rm \,is\, the\, identity\, of}\, M_p\}
$$ is the Elliott-Thomsen dimension drop interval algebra
and $X$ is one of the following finite connected CW complexes: $\{pt\},~\mathbb{T},~[0, 1],~T_{II,k}.$ $P\in M_n(C(X))$ is a projection and $T_{II,k}$ is the 2-dimensional connected simplicial complex with $H^1(T_{II,k})=0$ and $H^2(T_{II,k})=\mathbb{Z}/k\mathbb{Z}$.

In 1997, Dadarlat and Gong \cite{DG} confirmed Conjecture \ref{ell conj} for  A${\mathcal{HD}}$-algebras of real rank zero. (In \cite[Section 1]{DG}, the A${\mathcal{HD}}$ class is called ASH(2) class, and it is proved in
\cite[Theorem 8.14]{DG} that this class includes all real rank zero AD algebras and AH algebras with slow dimension growth, though it is not known if it includes all real rank zero ASH algebras even with no dimension growth because the only subhomogeneous blocks included in \cite{DG} are dimension drop interval algebras.) Recently, \cite{AELL, ALZ} push forward a classification for certain ASH algebras with one-dimensional local spectra (it includes much more general subhomogeneous building blocks).
 It is believed that this conjecture can be completely verified for all ASH algebras of real rank zero, however, out of this setting, we will present a counterexample.

In this paper, we will use extension theory to construct two ${\rm C}^*$-algebras with the same ordered scaled total K-theory, but they are not isomorphic. Moreover, we point out that they have different total Cuntz semigroups. It has already been shown in \cite{AL} that total Cuntz semigroup may contain more information than total K-theory. 
So our work, on one hand, is to show the necessity  for an additional regularity assumption in Conjecture \ref{ell conj}, on the other hand, is devoted to find a suitable invariant for the general case. The appropriate condition might be ``K-pure'', which has close relation with weakly unperforated ${\rm K}_*$-group. We believe  that our consequences  will push forward the development of classification of nuclear ${\rm C}^*$-algebras of real rank zero.

We say a ${\rm C}^*$-algebra $A$ is K-$pure$, if for any ideal $I$ of $A$,  both the sequences
$$0\to  \mathrm{K}_j(I) \to\mathrm{K}_j(A)\to \mathrm{K}_j(A/I) \to 0,\quad j=0,1$$
are pure extensions of abelian groups. This concept is rooted in the extension theory of ${\rm C}^*$-algebras, which is an important tool for classification of ${\rm C}^*$-algebras together with K-theory and KK-theory. Many interesting ${\rm C}^*$-algebras has been constructed \cite{BD,DL0} through extensions. 

Our main results are as follows:
\begin{theorem}
  There exist two unital, separable, nuclear ${\rm C}^*$-algebras $E_1$ and $E_2$
 with stable rank one and real rank zero satisfying UCT and that
$$
(\underline{\mathrm{K}}(E_1),\underline{\mathrm{K}}(E_1)_+,[1_{E_1}])\cong
(\underline{\mathrm{K}}(E_2),\underline{\mathrm{K}}(E_2)_+,[1_{E_2}]),
$$
while $E_1\ncong E_2$. Moreover, both $E_1$ and $E_2$ are not {\rm K}-pure.
\end{theorem}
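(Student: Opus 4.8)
The plan is to realise $E_1$ and $E_2$ as the $\mathrm{C}^*$-algebras of two unital essential extensions
$$
0 \longrightarrow I\otimes\mathcal K \longrightarrow E_j \longrightarrow A \longrightarrow 0,\qquad j=1,2,
$$
built from one pair of building blocks $A,I$ together with two distinct classes in $\mathrm{KK}^1(A,I)$ that become equal after passing to $\underline{\mathrm{K}}$. First I would fix a unital, separable, nuclear A$\mathcal{HD}$ algebra $A$ and a separable, nuclear A$\mathcal{HD}$ algebra $I$, both of real rank zero and stable rank one and satisfying the UCT, with ordered scaled K-theory designed so that: (i) $\mathrm{Pext}^1_{\mathbb Z}(\mathrm K_*(A),\mathrm K_*(I))\neq 0$ --- for instance by matching a rationally divisible summand of $\mathrm K_*(A)$ (a copy of $\mathbb Q$, or of a dense subgroup of $\mathbb Q$) against a free summand of $\mathrm K_*(I)$, so that a copy of the uncountable group $\mathrm{Ext}^1_{\mathbb Z}(\mathbb Q,\mathbb Z)$ already embeds in it; and (ii) $\mathrm K_*(A)$ carries an extra torsion summand, or a nonzero index datum against $\mathrm K_*(I)$, which forces the six-term exact sequence of any extension of $A$ by $I$ to fail to be a pure short exact sequence. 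By the UCT one has $\mathrm{Ext}(A,I\otimes\mathcal K)\cong\mathrm{KK}^1(A,I)$, and by the universal multicoefficient theorem of Dadarlat--Loring the natural map $\Gamma$ from $\mathrm{KK}^1(A,I)$ to the relevant group of graded $\Lambda$-module homomorphisms out of $\underline{\mathrm{K}}(A)$ is surjective with kernel exactly $\mathrm{Pext}^1_{\mathbb Z}(\mathrm K_*(A),\mathrm K_*(I))$.

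I would then pick a class $\gamma$ in the image of $\Gamma$ whose underlying integral data realises the non-pure six-term sequence of (ii), choose two lifts $\tau_1,\tau_2\in\mathrm{KK}^1(A,I)$ of $\gamma$ with $\tau_1-\tau_2$ a \emph{nonzero} element of $\mathrm{Pext}^1_{\mathbb Z}(\mathrm K_*(A),\mathrm K_*(I))=\ker\Gamma$, and let $E_j$ be the pullback $\mathrm{C}^*$-algebra of an essential unital extension whose class in $\mathrm{Ext}(A,I\otimes\mathcal K)$ is $\tau_j$. Separability and nuclearity of $E_j$ are clear, and $E_j$ lies in the UCT class because the bootstrap class is closed under extensions. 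Real rank zero and stable rank one I would obtain from the standard permanence theory for extensions of real-rank-zero (resp.\ stable-rank-one) algebras, using that the projection-lifting obstruction (the exponential map $\mathrm K_0(A)\to\mathrm K_1(I)$) and the unitary-lifting obstruction are arranged to vanish in the choice of $\gamma$; alternatively, where available, from a direct presentation of $E_j$ as an A$\mathcal{HD}$ inductive limit. The non-$\mathrm K$-purity of $E_1$ and $E_2$ is then immediate: for the ideal $I\otimes\mathcal K$ the six-term sequence is the same for $j=1,2$ (since $\Gamma(\tau_1)=\Gamma(\tau_2)$, so in particular the index and exponential maps agree) and by (ii) it is not a pure extension; equivalently $\Gamma(\tau_j)\neq 0$.

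Two points then remain. To see that $(\underline{\mathrm{K}}(E_1),\underline{\mathrm{K}}(E_1)_+,[1_{E_1}])\cong(\underline{\mathrm{K}}(E_2),\underline{\mathrm{K}}(E_2)_+,[1_{E_2}])$, I would use that $\Gamma(\tau_1)=\Gamma(\tau_2)=\gamma$: the long exact sequences of $\Lambda$-modules attached to the two extensions coincide, so $\underline{\mathrm{K}}(E_1)$ and $\underline{\mathrm{K}}(E_2)$ are extensions, in the category of $\Lambda$-modules, of the same kernel of $\gamma$ by the same cokernel; I would choose the building blocks so that this $\Lambda$-module extension problem is rigid --- it suffices, for instance, that the pertinent subquotient of $\underline{\mathrm{K}}(A)$ be projective over $\Lambda$, or that the relevant $\mathrm{Ext}^1_{\Lambda}$ vanish --- which gives $\underline{\mathrm{K}}(E_1)\cong\underline{\mathrm{K}}(E_2)$, and then follow the positive cone and the class of the unit through the six-term sequence to make the isomorphism respect order and scale. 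To see that $E_1\ncong E_2$, I would compute the total Cuntz semigroups of $E_1$ and $E_2$ in the sense of \cite{AL} and show that they are non-isomorphic: unlike $\underline{\mathrm{K}}$, the total Cuntz semigroup of an extension retains the Busby datum up to the strictly finer equivalence under which it classifies such extensions, so $\tau_1\neq\tau_2$ in $\mathrm{KK}^1(A,I)$ yields non-isomorphic total Cuntz semigroups, hence $E_1\ncong E_2$.

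The main obstacle is exactly this last step: proving that the total Cuntz semigroup genuinely separates $\tau_1$ and $\tau_2$, i.e.\ that isomorphism of total Cuntz semigroups does not collapse extension classes as coarsely as $\underline{\mathrm{K}}$ does. This is where the classification of extensions by the total Cuntz semigroup --- itself one of the main results of this paper --- is brought to bear: it identifies the total Cuntz semigroup of such an extension with an invariant that refines $\underline{\mathrm{K}}$ precisely by the $\mathrm{Pext}$ term, so that the nonzero class $\tau_1-\tau_2\in\mathrm{Pext}^1_{\mathbb Z}(\mathrm K_*(A),\mathrm K_*(I))$ is detected. A second delicate point, to be settled in parallel, is that the requirement of real rank zero and stable rank one severely constrains the admissible ordered K-groups, so one must verify that groups large enough to support a nonzero $\mathrm{Pext}$ and a non-pure six-term sequence are still realised by real-rank-zero, stable-rank-one A$\mathcal{HD}$ algebras --- which is what makes the pair $(E_1,E_2)$ lie both outside Conjecture~\ref{ell conj} and inside the reach of the total-Cuntz-semigroup classification.
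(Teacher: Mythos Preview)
Your approach is fundamentally different from the paper's, and it carries a genuine gap.

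The paper does \emph{not} take a single ideal $I$ and two extension classes differing by a $\mathrm{Pext}$ element. Instead it starts from the Dadarlat--Loring pair $B_1\ncong B_2$ of real-rank-zero AD algebras with isomorphic ordered $\mathrm{K}_*$ but \emph{non}-isomorphic ordered $\underline{\mathrm{K}}$, and builds (via Theorem~\ref{strong wei}) two extensions $0\to B_i\to E_i\to A_\chi\to 0$ with trivial index maps, using the \emph{same} element of $\mathrm{Ext}_{[1]}(\mathrm{K}_*(A_\chi),\mathrm{K}_*(B_i))$ under the identification $\mathrm{K}_*(B_1)\cong\mathrm{K}_*(B_2)$. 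The quotient $A_\chi$ is a simple AH algebra chosen so that the extension of $\mathrm{K}_0$-groups is \emph{not} pure; by Lemma~\ref{k-pure lemma}, this forces $\underline{\mathrm{K}}(\iota_i)$ to fail to be injective, and the computation in Theorem~\ref{zhuyao fanli} shows that exactly the information distinguishing $\underline{\mathrm{K}}(B_1)$ from $\underline{\mathrm{K}}(B_2)$ lies in the kernel. The non-isomorphism $E_1\ncong E_2$ is then elementary: $B_i$ is the unique maximal ideal of $E_i$, and $B_1\ncong B_2$. No appeal to the total Cuntz semigroup is needed to prove $E_1\ncong E_2$; the corollary $\underline{\mathrm{Cu}}_u(E_1)\ncong_L\underline{\mathrm{Cu}}_u(E_2)$ comes afterwards, from Theorem~\ref{lattice Cu total classify}.

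The gap in your plan is the claim that a $\mathrm{Pext}$ difference is invisible to ordered scaled $\underline{\mathrm{K}}$ yet visible to $\underline{\mathrm{Cu}}_u$. With a \emph{single} ideal $I$ and trivial index maps, the Dadarlat--Gong order on $\underline{\mathrm{K}}(E_j)$ already encodes the sub\-group $\mathrm{K}_*(I)\hookrightarrow\mathrm{K}_*(E_j)$ (this is precisely the content of Lemma~\ref{alpha0}). Hence any ordered scaled isomorphism $(\underline{\mathrm{K}}(E_1),+,[1])\cong(\underline{\mathrm{K}}(E_2),+,[1])$ restricts to a $\mathrm{K}_*$-isomorphism carrying one short exact sequence to the other, i.e.\ it identifies the two classes in $\mathrm{Ext}_{[1]}(\mathrm{K}_*(A),\mathrm{K}_*(I))$ up to automorphisms of the ends. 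But in your setup the total Cuntz semigroup adds only the full $\underline{\mathrm{K}}(J)$ of each ideal $J\subset I$, which is the \emph{same} for $E_1$ and $E_2$; the only new datum at the top level is again $\mathrm{K}_*(E_j)$ together with the inclusion of $\mathrm{K}_*(I)$. So $\underline{\mathrm{Cu}}_u$ cannot separate $E_1,E_2$ unless ordered $\underline{\mathrm{K}}$ already does, and your scheme cannot produce a counterexample. The essential phenomenon the paper exploits --- information in $\underline{\mathrm{K}}(B_i)$ lost under a non-injective $\underline{\mathrm{K}}(\iota_i)$ but retained by $\underline{\mathrm{Cu}}_u$ --- requires the ideals themselves to be non-isomorphic, which your ``same $I$'' setup rules out by design.
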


These two algebras $E_1, E_2$ are obtained from non K-pure extensions of ${\rm C}^*$-algebras classified in \cite{DG} and we will prove that
$$
\underline{\rm Cu}_u(E_1)\ncong_L \underline{\rm Cu}_u(E_2),
$$
where $\underline{\rm Cu}_u$ is a functor recovering $(\underline{\mathrm{K}},\underline{\mathrm{K}}_+,[1])$ (see \cite{AL}). This  result disproves Conjecture \ref{ell conj} and also answers \cite[Question 5.14]{AL} raised by the authors.

We will also present a characterization of  K-pure ${\rm C}^*$-algebras and establish equivalent formulations for K-pure extension. 
Classification theorems are given for both K-pure extension case in terms of total K-theory and general extension case in terms of total Cuntz semigroup. 


\begin{theorem}
Let $A_1, A_2, B_1,B_2$ be A${\mathcal{HD}}$-algebras of real rank zero, and $A_1, A_2$ are unital simple, $B_1, B_2$ are stable. Given two unital essential {\rm K}-pure extensions
$$
0\to B_i \to E_i\to A_i\to 0,\quad i=1,2.
$$
We have $E_1\cong E_2$ iff
$$(\underline{\mathrm{K}}(E_1),\underline{\mathrm{K}}(E_1)_+,[1_{E_1}])\cong
(\underline{\mathrm{K}}(E_2),\underline{\mathrm{K}}(E_2)_+,[1_{E_2}]).$$
\end{theorem}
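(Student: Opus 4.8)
The plan is to reduce the classification of the extensions to a two-step argument: first recover each $E_i$ from the pair $(B_i, A_i)$ together with the invariant of the extension, and second show that the invariant of the extension is precisely what is encoded in $(\underline{\mathrm K}(E_i),\underline{\mathrm K}(E_i)_+,[1_{E_i}])$. The ``only if'' direction is functoriality of total K-theory and the scale, so the content is the ``if'' direction. Since the extensions are essential, each $E_i$ sits inside $M(B_i)$ and is determined by the Busby invariant $\tau_i\colon A_i\to M(B_i)/B_i=Q(B_i)$; because $B_i$ is a stable A$\mathcal{HD}$-algebra of real rank zero and $A_i$ is a unital simple A$\mathcal{HD}$-algebra of real rank zero, I would like to invoke the classification of such extensions up to (strong) unitary equivalence by an appropriate $\mathrm{KK}$- or $\underline{\mathrm K}$-theoretic invariant — essentially a uniqueness/existence theorem for morphisms into $Q(B_i)$ in the spirit of Dadarlat--Gong \cite{DG}, combined with the fact that $\mathrm{RR}(E_i)=0$ forces the extension to be quasidiagonal-like so that the six-term sequence in K-theory splits into the K-pure short exact sequences appearing in the hypothesis.

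The key steps, in order, are: (1) Use the K-pure hypothesis to show that the six-term exact sequence in ordered total K-theory associated to $0\to B_i\to E_i\to A_i\to 0$ degenerates into the two short exact sequences $0\to\underline{\mathrm K}(B_i)\to\underline{\mathrm K}(E_i)\to\underline{\mathrm K}(A_i)\to 0$, with the index/exponential maps vanishing, and that these are pure as $\Lambda$-modules; this identifies the class of the extension with an element of a suitable $\mathrm{Ext}_\Lambda(\underline{\mathrm K}(A_i),\underline{\mathrm K}(B_i))$ (or $\mathrm{Pext}$) group. (2) Show an isomorphism $(\underline{\mathrm K}(E_1),\underline{\mathrm K}(E_1)_+,[1_{E_1}])\cong(\underline{\mathrm K}(E_2),\underline{\mathrm K}(E_2)_+,[1_{E_2}])$ induces isomorphisms $\underline{\mathrm K}(A_1)\cong\underline{\mathrm K}(A_2)$ and $\underline{\mathrm K}(B_1)\cong\underline{\mathrm K}(B_2)$ compatible with order and scale — here one needs that $\underline{\mathrm K}(B_i)$ is recognizable inside $\underline{\mathrm K}(E_i)$ as (a multiple of) the ideal generated by infinitesimal/torsion-type elements, or more robustly that the ideal $B_i\triangleleft E_i$ is the unique ideal with the relevant K-theoretic signature, using simplicity of $A_i$. (3) Apply the Dadarlat--Gong classification \cite{DG} to get $*$-isomorphisms $\alpha\colon A_1\to A_2$ and $\beta\colon B_1\to B_2$ realizing these invariant isomorphisms. (4) Transport the Busby map: show $\alpha$ and $\beta$ intertwine $\tau_1$ and $\tau_2$ up to unitary equivalence in $Q(B_2)$, using a $\mathrm{KK}$-uniqueness theorem for maps into the corona algebra together with the identification of the extension classes from step (1); then lift to a $*$-isomorphism $E_1\to E_2$ by the universal property of the pullback.

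I expect the main obstacle to be step (4) — the uniqueness statement in the corona algebra $Q(B_i)$. Even when $B_i$ is classifiable, $Q(B_i)$ and $M(B_i)$ are large, non-separable algebras, and one needs both that any two Busby maps with the same $\mathrm{KK}$/total-K invariant are approximately unitarily equivalent and that approximate unitary equivalence can be upgraded to a genuine intertwining compatible with the chosen $\alpha,\beta$; this is where real rank zero of $E_i$ and of $B_i$ is essential, as it supplies enough projections to run an Elliott-type approximate-intertwining argument inside $M(B_i)$. A secondary subtlety is step (2): making sure the abstract isomorphism of scaled ordered total K-theory respects the sub-object coming from the ideal, which may require an additional observation that, for these extensions, $\mathrm{K}_*(B_i)$ is characterized order-theoretically inside $\mathrm{K}_*(E_i)$ (e.g. as the largest subgroup on which the trace-like states vanish, or via the simplicity of the quotient). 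Once these are in place, the pullback description $E_i=\{(a,m)\in A_i\oplus M(B_i): \tau_i(a)=\pi(m)\}$ gives the isomorphism $E_1\cong E_2$ directly.
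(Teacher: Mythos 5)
Your skeleton is the paper's: restrict the invariant isomorphism to the ideal and the quotient, lift the restricted isomorphisms to $*$-isomorphisms by Dadarlat--Gong, and then classify the resulting pair of extensions of the same $A$ by the same $B$. Two remarks on how the paper discharges the points you flag. For your step (2), the recognition of $\underline{\mathrm{K}}(B_i)$ inside $\underline{\mathrm{K}}(E_i)$ is exactly your ``infinitesimal'' guess made precise (Lemma \ref{alpha0} and Corollary \ref{* AND Total}): unitality of the extension plus stability of $B_i$ give $n\cdot \mathrm{K}_0(\iota_i)([p])\le[1_{E_i}]$ for every $n$, and simplicity of $A_i$ forces any such class to vanish under $\mathrm{K}_0(\pi_i)$; K-pureness is then used, via Lemma \ref{k-pure lemma}, to make $\underline{\mathrm{K}}(\iota_i)$ injective so that the induced map $\alpha_0$ is a genuine ordered scaled $\Lambda$-isomorphism, liftable by Dadarlat--Gong. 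For your step (4) --- which you rightly identify as the main obstacle --- the paper does \emph{not} run a corona-algebra uniqueness or approximate-intertwining argument; it instead invokes the already-established bijection $\mathrm{Ext}_{[1]}(\mathrm{K}_*(A),\mathrm{K}_*(B))\cong\mathrm{Text}^u_s(A,B)$ of Theorem \ref{strong wei} (due to Wei), applicable because $\mathrm{K}_0(B)$ is weakly unperforated for a stable A$\mathcal{HD}$-algebra of real rank zero. Consequently the invariant that classifies the extension at that stage is only the \emph{based} extension class of $\mathrm{K}_*$-groups, not an $\mathrm{Ext}_\Lambda$ or $\mathrm{Pext}$ of total K-theories as in your step (1): after composing with the lifted isomorphisms $\psi_0,\psi_1$, the two rows become extensions of $(\mathrm{K}_*(A_2),[1_{A_2}])$ by $\mathrm{K}_*(B_1)$ intertwined by the identity on both ends (Theorem \ref{4 dengjia}), hence strongly unitarily equivalent, hence $E_1\cong E_2$ by Proposition \ref{strong tui cong}. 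As written, your proposal leaves step (4) unproved; granting the cited extension-classification theorem, your route closes and coincides with the paper's.
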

\begin{theorem}
Let $A_1, A_2, B_1,B_2$ be A${\mathcal{HD}}$-algebras of real rank zero, and $A_1, A_2$ are unital simple, $B_1, B_2$ are stable.
Given two unital essential extensions (not necessarily $\mathrm{K}$-pure) with trivial index maps
$$
0\to B_i \to E_i\to A_i\to 0,\quad i=1,2.
$$
We have $E_1\cong E_2$ iff
$$\underline{\mathrm{Cu}}_u(E_1)\cong_L \underline{\mathrm{Cu}}_u(E_2),$$
where $\cong_L$ is the latticed ${\underline{\rm Cu}}$-isomorphism.
\end{theorem}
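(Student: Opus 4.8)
The plan is to reduce the "iff" to an existence-and-uniqueness statement for morphisms between the extensions, using the total Cuntz semigroup as a bookkeeping device that simultaneously carries the $\mathrm{K}$-theoretic and Cuntz-semigroup data of the three terms of each six-term-type exact sequence. First I would settle the easy direction: if $E_1 \cong E_2$ then $\underline{\mathrm{Cu}}_u$ is a functor (established in \cite{AL}), so it sends the isomorphism to a latticed $\underline{\mathrm{Cu}}$-isomorphism, and we are done. The substance is the converse.

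For the converse, the first step is a structural analysis of $\underline{\mathrm{Cu}}_u(E_i)$ for an essential extension $0\to B_i\to E_i\to A_i\to 0$ with trivial index maps. Because $B_i$ is stable and $A_i$ is unital simple, $B_i$ is the unique largest (essential) ideal and $E_i/B_i \cong A_i$; I would show that the functor $\underline{\mathrm{Cu}}_u$ applied to this extension produces, inside $\underline{\mathrm{Cu}}_u(E_i)$, a distinguished sub-object canonically isomorphic to $\underline{\mathrm{Cu}}(B_i)$ (the Cuntz semigroup of the ideal, with its total/$\mathrm{K}$-theoretic decorations) and a quotient canonically isomorphic to $\underline{\mathrm{Cu}}_u(A_i)$, together with the connecting data. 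The triviality of the index maps is exactly what guarantees that the relevant six-term sequence in total K-theory splits into short exact pieces $0\to\underline{\mathrm{K}}(B_i)\to\underline{\mathrm{K}}(E_i)\to\underline{\mathrm{K}}(A_i)\to 0$ (compatible with the Bockstein operations), so the extension is classified, as an extension, by a class in a suitable $\mathrm{Ext}$-type group; the latticed structure of $\underline{\mathrm{Cu}}_u(E_i)$ records precisely this class together with the $\mathrm{Cu}$-data of $B_i$ and $A_i$. Thus a latticed $\underline{\mathrm{Cu}}$-isomorphism $\underline{\mathrm{Cu}}_u(E_1)\cong_L\underline{\mathrm{Cu}}_u(E_2)$ must carry the ideal sub-object to the ideal sub-object, hence restricts to $\underline{\mathrm{Cu}}(B_1)\cong\underline{\mathrm{Cu}}(B_2)$ and descends to $\underline{\mathrm{Cu}}_u(A_1)\cong\underline{\mathrm{Cu}}_u(A_2)$, and these two isomorphisms are compatible with the extension classes.

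The second step is to promote these invariant-level isomorphisms to $*$-isomorphisms of the corresponding $\mathrm{C}^*$-algebras. Since $A_1, A_2$ are unital simple A$\mathcal{HD}$-algebras of real rank zero, the classification of \cite{DG} (via Conjecture \ref{ell conj}, which holds in this class) upgrades $\underline{\mathrm{Cu}}_u(A_1)\cong\underline{\mathrm{Cu}}_u(A_2)$—equivalently $(\underline{\mathrm{K}}(A_1),\underline{\mathrm{K}}(A_1)_+,[1_{A_1}])\cong(\underline{\mathrm{K}}(A_2),\ldots)$ since $\underline{\mathrm{Cu}}_u$ recovers that invariant—to an isomorphism $\varphi\colon A_1\xrightarrow{\ \sim\ } A_2$. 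Likewise, $B_1, B_2$ are stable A$\mathcal{HD}$-algebras of real rank zero; stabilizing the \cite{DG} classification (real-rank-zero A$\mathcal{HD}$-algebras are classified by ordered total K-theory with scale, and stable algebras by the scale-free version), $\underline{\mathrm{Cu}}(B_1)\cong\underline{\mathrm{Cu}}(B_2)$ yields $\psi\colon B_1\xrightarrow{\ \sim\ } B_2$. The remaining task is to realize $E_1\cong E_2$ by an isomorphism lifting $\varphi$ and extending $\psi$. Here I would invoke the extension-theoretic machinery (Kasparov/Busby picture): an essential extension of $A_i$ by $B_i$ with trivial index map is classified up to the natural equivalence by a KK-theoretic/$\mathrm{Ext}$-class, computed from the total-K-theory boundary data, and the compatibility of $\varphi_*,\psi_*$ with the extension classes—exactly what the latticed $\underline{\mathrm{Cu}}$-isomorphism encodes—forces the Busby invariants of $E_1$ and $E_2$ to agree after the identifications $\varphi,\psi$. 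A uniqueness theorem for $*$-homomorphisms into multiplier/corona algebras (again available in the A$\mathcal{HD}$ real-rank-zero setting, via the approximate-unitary-equivalence results underlying \cite{DG}, together with an appropriate existence theorem) then allows one to conjugate the two Busby maps into each other, producing the desired $E_1\cong E_2$.

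The main obstacle I anticipate is the second half of Step 2: lifting the invariant isomorphism of extensions to an actual isomorphism $E_1\cong E_2$. This requires (i) an \emph{existence} theorem—realizing the prescribed KK/$\mathrm{Ext}$-compatible data by a $*$-homomorphism—and (ii) a \emph{uniqueness} theorem up to approximate unitary equivalence for maps into the corona algebra $\mathcal{M}(B_i)/B_i$, so that two lifts with the same induced data can be intertwined; controlling approximate unitaries in the multiplier algebra (rather than in a unital, or even $\sigma$-unital-with-nice-comparison, setting) is delicate, and one must check that real rank zero and stable rank one of $E_i$ are preserved and that the index-zero hypothesis genuinely removes the $\mathrm{K}_1$-obstruction to lifting projections and unitaries. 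A secondary technical point is verifying that a \emph{latticed} $\underline{\mathrm{Cu}}$-isomorphism really does respect the ideal structure finely enough to split as claimed in Step 1—i.e., that the lattice of (closed, two-sided) ideals, together with the total decorations, is rigid enough to pin down the sub-object $\underline{\mathrm{Cu}}(B_i)$—but this should follow from the definition of $\cong_L$ and the fact that $B_i$ corresponds to the unique maximal proper ideal since $A_i$ is simple.
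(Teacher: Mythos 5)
Your proposal follows essentially the same route as the paper: the latticed $\underline{\mathrm{Cu}}$-isomorphism is shown to restrict to $\underline{\mathrm{Cu}}(B_1)\cong\underline{\mathrm{Cu}}(B_2)$ and to induce compatible data on the quotients; the Dadarlat--Gong classification lifts these to $*$-isomorphisms $\phi_0:B_1\to B_2$ and $\phi_1:A_1\to A_2$; and the extension-theoretic classification (the paper's Theorem \ref{strong wei}, which identifies unital trivial-index extensions up to strong unitary equivalence with $\mathrm{Ext}_{[1]}(\mathrm{K}_*(A),\mathrm{K}_*(B))$) then forces $E_1\cong E_2$. The key insight you correctly identify is that the ideal sub-object of $\underline{\mathrm{Cu}}_u(E_i)$ carries the \emph{full} total K-theory of $B_i$ (with its own decorations), which is exactly what total K-theory of $E_i$ loses in the non-K-pure case.

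One intermediate claim in your argument is false, however: you assert that triviality of the index maps guarantees exactness of $0\to\underline{\mathrm{K}}(B_i)\to\underline{\mathrm{K}}(E_i)\to\underline{\mathrm{K}}(A_i)\to 0$. By the paper's Lemma \ref{k-pure lemma} this exactness is \emph{equivalent} to K-pureness, and the whole point of the counterexample in Section 2 is that it fails here ($\underline{\mathrm{K}}(\iota_i)$ is not injective and $\underline{\mathrm{K}}(\pi_i)$ is not surjective). Trivial index maps only give exactness at the $\mathrm{K}_*$ level. This error is not fatal to your architecture, because the extension classification you ultimately need (Theorem \ref{strong wei}) is formulated purely in terms of $\mathrm{K}_*$ with base point, so the $\mathrm{K}_*$-level compatibility of $\phi_0,\phi_1$ with the extension classes suffices; but the justification must be routed through $\mathrm{K}_*$ rather than through a (nonexistent) short exact sequence in total K-theory. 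The paper does exactly this: it first extracts the scaled ordered $\mathrm{K}_0$-isomorphism from the compact elements, applies Lemma \ref{alpha0} to see the ideal is preserved, recovers $\underline{\mathrm{K}}(B_i)$ from the restricted $\underline{\mathrm{Cu}}$-isomorphism via Proposition \ref{recover prop}, and only then descends to $\mathrm{K}_*$ of the quotient.
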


The above results present the further evidence that the total K-theory might be complete for the  ``well-behaved" ${\rm C}^*$-algebras and  it is necessary to consider the  new ingredients in the general classification theory. It can be expected that the Elliott conjecture for real rank zero ${\rm C}^*$-algebras might be accomplished through the following versions:

\begin{conjecture}
{\bf (Strong)} ${({\rm\underline{\mathrm{K}}}(A),{\rm\underline{K}}(A)_+,\Sigma A)}_{\rm \Lambda}$ is the complete invariant for separable, nuclear,  {\rm K}-pure ${\rm C}^*$-algebras of real rank zero and stable rank one.
\end{conjecture}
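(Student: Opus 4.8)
The plan is to prove the nontrivial (``if'') direction by peeling the extension into three pieces---the ideal $B_i$, the quotient $A_i$, and the gluing datum recorded by the class of $E_i$ in $KK^1(A_i,B_i)$---and matching each piece in turn; the ``only if'' direction is just functoriality of the ordered scaled total K-theory, since a $*$-isomorphism $E_1\cong E_2$ induces an order isomorphism $\underline{\mathrm{K}}(E_1)\cong\underline{\mathrm{K}}(E_2)$ sending $[1_{E_1}]$ to $[1_{E_2}]$. Throughout, all algebras satisfy the UCT (the class is closed under extensions), so the Kasparov groups are accessible through coefficient theory.

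First I would extract the algebraic skeleton forced by K-purity. By definition, K-purity makes each sequence $0\to\mathrm{K}_j(B_i)\to\mathrm{K}_j(E_i)\to\mathrm{K}_j(A_i)\to0$ short exact---so the index maps vanish---and pure; hence tensoring with $\mathbb{Z}/n$ and applying $\mathrm{Tor}(-,\mathbb{Z}/n)$ both preserve exactness, and one obtains a short exact sequence of ordered $\Lambda$-modules
$$0\to\underline{\mathrm{K}}(B_i)\to\underline{\mathrm{K}}(E_i)\to\underline{\mathrm{K}}(A_i)\to0,\qquad i=1,2,$$
which is moreover \emph{pure} as a $\Lambda$-module extension. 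The next step is to see that the given isomorphism $\phi\colon\underline{\mathrm{K}}(E_1)\to\underline{\mathrm{K}}(E_2)$ respects this skeleton. Because each extension is essential with simple quotient $A_i$, the ideal $B_i$ is the unique nontrivial closed ideal of $E_i$; in the real-rank-zero, stable-rank-one regime the ideal lattice is reflected in the order structure of $(\mathrm{K}_0,\mathrm{K}_0^+)$, and an order isomorphism sends order ideals to order ideals, so $\phi$ must carry the unique nontrivial proper order ideal $\underline{\mathrm{K}}(B_1)$ onto $\underline{\mathrm{K}}(B_2)$. This produces induced ordered isomorphisms $\phi_B\colon\underline{\mathrm{K}}(B_1)\to\underline{\mathrm{K}}(B_2)$ and, on quotients, a scaled order isomorphism $\phi_A\colon\underline{\mathrm{K}}(A_1)\to\underline{\mathrm{K}}(A_2)$.

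With the skeleton matched I would invoke the Dadarlat--Gong classification \cite{DG}: the stable A$\mathcal{HD}$ algebras $B_i$ of real rank zero are classified by ordered total K-theory and the unital simple ones $A_i$ by ordered scaled total K-theory, so $\phi_B$ and $\phi_A$ lift to $*$-isomorphisms $\beta\colon B_1\to B_2$ and $\alpha\colon A_1\to A_2$ inducing them. It remains to promote $(\alpha,\beta)$ to an isomorphism of the two extensions. Here the universal multi-coefficient theorem of Dadarlat--Loring \cite{DL2} is the key homological input: the extension class $\eta_i\in KK^1(A_i,B_i)$ has vanishing image under $\Gamma$ in $\mathrm{Hom}_\Lambda(\underline{\mathrm{K}}(A_i),\underline{\mathrm{K}}_{*+1}(B_i))$ exactly because the index maps are trivial, so $\eta_i$ lies in the $\mathrm{Pext}$-kernel $\mathrm{Pext}_{\mathbb{Z}}(\mathrm{K}_*(A_i),\mathrm{K}_{*+1}(B_i))$, and one then checks that the natural assignment $\eta_i\mapsto[(*)_i]$ into $\mathrm{Ext}^1_\Lambda(\underline{\mathrm{K}}(A_i),\underline{\mathrm{K}}(B_i))$ is injective on pure classes, so that $\eta_i$ is recorded faithfully by the $\Lambda$-module extension above. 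Compatibility of $\phi$ with $\phi_A=\alpha_*$ and $\phi_B=\beta_*$ then forces $\beta_*\eta_1=\alpha^*\eta_2$ in $KK^1(A_1,B_2)$. Finally, since each extension is essential with stable nuclear ideal, the Elliott--Kucerovsky absorption theorem applies, and an equality of $KK^1$-classes upgrades to a strong unitary equivalence, producing the commuting diagram of extensions whose middle vertical map is the desired unital isomorphism $E_1\cong E_2$.

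I expect two points to carry the real weight. The first is the homological claim that a \emph{pure}, trivial-index extension class is faithfully seen inside the $\Lambda$-module $\underline{\mathrm{K}}(E_i)$---equivalently that $\eta_i\mapsto[(*)_i]$ is injective on $\mathrm{Pext}$; this is precisely where K-purity is indispensable and is exactly what fails, forcing the passage to the total Cuntz semigroup \cite{AL}, once purity is dropped. The second is the realization step: establishing absorption for these extensions and converting the $KK^1$-equality into an honest unital $*$-isomorphism, where the scale-preservation of $\phi$ must be threaded through so that units are matched.
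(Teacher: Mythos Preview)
The statement you are addressing is a \emph{conjecture}, not a theorem: the paper does not prove it, and indeed offers it only as a modified form of the Elliott conjecture suggested by the results of the paper. There is no proof in the paper to compare against.

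What you have actually written is an argument for a much more restricted result, namely the paper's Theorem~\ref{ab1b2}: classification, by ordered scaled total K-theory, of algebras $E_i$ arising as unital essential K-pure extensions $0\to B_i\to E_i\to A_i\to 0$ with $A_i$ unital simple and $B_i$ stable, all lying in a $\mathcal{DG}$-class (in particular, A$\mathcal{HD}$ algebras of real rank zero). Your entire setup---``the ideal $B_i$, the quotient $A_i$, the gluing datum''---presupposes this extension structure, which a general K-pure algebra of real rank zero and stable rank one need not have. So as a proof of the Strong Conjecture the argument is simply off target.

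Viewed instead as an attempt at Theorem~\ref{ab1b2}, your outline has the right architecture but two soft spots. First, the claim that $B_i$ is ``the unique nontrivial closed ideal of $E_i$'' is false: $B_i$ is the unique \emph{maximal} proper ideal, but it typically has many sub-ideals (as in the Dadarlat--Loring examples used in Section~2), so you cannot identify $\underline{\mathrm{K}}(B_i)$ inside $\underline{\mathrm{K}}(E_i)$ by counting order ideals. The paper handles this with a direct order argument (Lemma~\ref{alpha0} and Corollary~\ref{* AND Total}) showing that any scaled order isomorphism on $\mathrm{K}_0$ must send $\mathrm{K}_0(\iota_1)(\mathrm{K}_0(B_1))$ into $\mathrm{K}_0(\iota_2)(\mathrm{K}_0(B_2))$, using simplicity of $A_i$, stability of $B_i$, and weak unperforation. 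Second, for the gluing step the paper does not pass through $KK^1$ and the UMCT/$\mathrm{Pext}$ analysis you sketch; it instead uses the concrete bijection of Theorem~\ref{strong wei}, which identifies strong unitary equivalence classes of unital trivial-index extensions with the group $\mathrm{Ext}_{[1]}(\mathrm{K}_*(A),\mathrm{K}_*(B))$ of pointed group extensions. Your $KK^1$ route could in principle be made to work, but the injectivity claim you flag (that the pure $KK^1$-class is faithfully recorded in the $\Lambda$-module extension) and the scale-matching in the absorption step are exactly the places where the paper's more elementary $\mathrm{Ext}_{[1]}$ argument avoids trouble.
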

\begin{conjecture}
{\bf (Weak) }$\underline{\mathrm{Cu}}_u(A)$ is the complete invariant for unital, separable, nuclear ${\rm C}^*$-algebras of real rank zero and stable rank one.
\end{conjecture}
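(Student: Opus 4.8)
The plan is to establish the conjecture within the bootstrap (UCT) class — which is the natural setting, since without the UCT the ${\rm K}$-theoretic part of $\underline{\mathrm{Cu}}_u$ cannot determine ${\rm KK}$-theory — by combining the extension-classification machinery developed above with a structural reduction of an arbitrary unital, separable, nuclear ${\rm C}^*$-algebra $A$ of real rank zero and stable rank one to building blocks that are already understood. Because $\underline{\mathrm{Cu}}_u$ recovers the scaled ordered total ${\rm K}$-theory and, through its latticed structure $\cong_L$, the ideal lattice of $A$, the idea is to classify $A$ ideal-by-ideal and to reassemble the pieces via extension theory. First I would fix a composition series $0=I_0\subset I_1\subset\cdots\subset A$ whose subquotients $I_{k+1}/I_k$ are simple; real rank zero and stable rank one pass to ideals and quotients, so each subquotient is again a simple, separable, nuclear ${\rm C}^*$-algebra of real rank zero and stable rank one.

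Second, I would dispose of the simple base case. For a simple, separable, nuclear ${\rm C}^*$-algebra of real rank zero and stable rank one in the UCT class the ${\rm K}_*$-group is weakly unperforated and there is no extension obstruction, so the scaled ordered total ${\rm K}$-theory — equivalently the restriction of $\underline{\mathrm{Cu}}_u$ to a simple algebra — is already complete; this follows from the known simple classification together with the fact, recorded in \cite{AL}, that $\underline{\mathrm{Cu}}_u$ refines $(\underline{\mathrm{K}},\underline{\mathrm{K}}_+,[1])$. The A$\mathcal{HD}$ blocks of \cite{DG} furnish the models realizing these invariants.

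Third, the inductive step glues the subquotients through extensions. Given two such algebras $A,B$ with $\underline{\mathrm{Cu}}_u(A)\cong_L\underline{\mathrm{Cu}}_u(B)$, the latticed isomorphism carries a sub-object $\underline{\mathrm{Cu}}_u(I)$ and its quotient onto $\underline{\mathrm{Cu}}_u(J)$ and the corresponding quotient for a matching ideal $J\subset B$, so by the inductive hypothesis the paired subquotients are isomorphic; one then lifts these isomorphisms to an isomorphism of the two extensions. This demands an \emph{Existence} theorem — every $\cong_L$-morphism $\underline{\mathrm{Cu}}_u(A)\to\underline{\mathrm{Cu}}_u(B)$ is, approximately, induced by a $*$-homomorphism $A\to B$ — and a \emph{Uniqueness} theorem — two $*$-homomorphisms inducing the same $\underline{\mathrm{Cu}}_u$-morphism are approximately unitarily equivalent — from which Elliott's two-sided approximate intertwining yields $A\cong B$.

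I expect the main obstacle to split into two. The more serious is the \emph{structural reduction}: running the induction requires knowing that every unital, separable, nuclear, real rank zero, stable rank one algebra in the UCT class is assembled, via extensions and inductive limits, from A$\mathcal{HD}$-type blocks whose simple subquotients we can classify. This is presently available only under additional hypotheses (A$\mathcal{HD}$, or ASH with one-dimensional local spectra, as in \cite{AELL,ALZ}), and securing it in full generality is the genuinely open heart of the program. The second obstacle is that the extension theorem above is proved only under the \emph{trivial index map} assumption; for general $A$ the boundary maps of an ideal need not vanish, so one must strengthen the existence and uniqueness arguments to show that the total-${\rm K}$-theory component of $\underline{\mathrm{Cu}}_u$ — which carries the Bockstein operations and the entire six-term exact sequence, and hence the index and exponential maps — still pins down the extension when these maps are nonzero.
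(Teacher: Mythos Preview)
The statement you are asked to prove is labelled \textbf{Conjecture} in the paper, not Theorem; the paper does \emph{not} contain a proof of it. The authors offer it as an expectation supported by the special cases they do prove (Theorems \ref{ab1b2} and \ref{lattice Cu total classify}) and by the counterexample showing that ordered total $\mathrm{K}$-theory alone is insufficient. So there is no ``paper's own proof'' to compare against, and your proposal should be read as a research programme rather than a proof.

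As a programme your outline has the right shape, but several of its steps are themselves open problems, and you under-sell their difficulty. First, your base case is not established: classification of \emph{simple}, separable, nuclear $\mathrm{C}^*$-algebras of real rank zero and stable rank one in the UCT class is known only under an additional regularity hypothesis such as $\mathcal{Z}$-stability or finite nuclear dimension; real rank zero plus stable rank one are not currently known to force this. Second, a composition series with simple subquotients need not be finite --- the ideal lattice of a separable real-rank-zero algebra can be an arbitrary countable distributive lattice --- so your induction must be transfinite and must handle limit stages via inductive limits, a point you do not address. Third, the extension step in the paper (Theorem \ref{lattice Cu total classify}) is proved only when the quotient is simple, the ideal is stable, and the index maps vanish; your inductive step would require a far more general extension classification (arbitrary ideal, arbitrary quotient, nontrivial boundary maps), and the mere fact that $\underline{\mathrm{Cu}}_u$ encodes the six-term sequence does not by itself yield the Existence and Uniqueness theorems you invoke. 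These are precisely the reasons the authors state the assertion as a conjecture.
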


This paper is organized as follows. In Section 1, we list some preliminaries for total $\mathrm{K}$-theory with the Dadarlat-Gong order,
extension theory and also some basic results.
In Section 2, we begin with an example of two non isomorphic algebras $B_1,B_2$ raised in \cite{DL3}
and take two (non K-pure) unital essential extensions with trivial index maps 
$$
e_i: \quad 0\to B_i\to E_i\to A_\chi, \quad i=1,2,
$$
where $A_\chi$ is an algebra classified in \cite{EG}.
We will check that $E_1,E_2$ have the same  ordered scaled total K-theory, but they are not isomorphic.
In Section 3, we will prove some properties for K-pure  extensions and K-pure ${\rm C}^*$-algebras.
In Section 4 and Section 5, we will classify the algebras obtained from unital K-pure  extensions in terms of total K-theory and the algebras from general extensions in terms of total Cuntz semigroup introduced in \cite{AL}.

\section{Preliminaries}

\begin{notion}\rm
  Let $A$ be a unital $\mathrm{C}^*$-algebra. $A$ is said to have stable rank one, written $sr(A)=1$, if the set of invertible elements of $A$ is dense. $A$ is said to have real rank zero, written $rr(A)=0$, if the set of invertible self-adjoint elements is dense in the set $A_{sa}$ of self-adjoint elements of $A$. If $A$ is not unital, let us denote the minimal unitization of $A$ by $\widetilde{A}$. A non-unital $\mathrm{C}^*$-algebra is said to have stable rank one (or real rank zero) if its unitization has stable rank one (or real rank zero).

  Let $p$ and $q$ be two projections in $A$. One says that $p$ is $Murray$--$von$ $Neumann$ $equivalent$ to $q$ in $A$ and writes $p\sim q$ if there exists $x\in A$ such that $x^*x=p$ and $xx^*=q$. We will write $p\preceq q$ if $p$ is equivalent to some subprojection of $q$. The class of a projection $p$ in $\mathrm{K}_0(A)$ will be denoted by $[p]$ and  we say $[p]\leq [q]$, if $p\preceq q$.

$A$ is said to have cancellation of projections, if for any projections $p,q,e,f\in A$ with $pe=0$, $qf=0$, $e\sim f$, and $p+e\sim q+f$, then $p\sim q$. $A$ has cancellation of projections if and only if $p\sim q$ implies that there exists a unitary $u\in \widetilde{A}$ such that $u^*pu=q$ (see \cite[3.1]{L}). Every unital $\mathrm{C}^*$-algebra of stable rank one has cancellation of projections, hence, any two projections with the same ${\rm K}$-theory generate the same ideal; frequently, we will also use $I_{[p]}$ to represent the ideal $I_p$ generated by $p$ in $A$.
\end{notion}

\begin{notion}\label{def k-total}\rm
 {\bf (The total K-theory)} (\cite[Section 4]{DG}) For $n\geq 2$, the mod-$n$ K-theory groups are defined by
$$\mathrm{K}_* (A;\mathbb{Z}_n)=\mathrm{K}_*(A\otimes C_0(W_n)),$$
where $\mathbb{Z}_n:=\mathbb{Z}/n\mathbb{Z}$, $W_n$ denotes the Moore space obtained by attaching the unit disk to the circle by a degree $n$-map, such as $f_n: \mathbb{T}\rightarrow \mathbb{T}$, $e^{{\rm i}\theta}\mapsto e^{{\rm i}n\theta}$. The ${\rm C}^*$-algebra $C_0(W_n)$ of continuous functions vanishing at the base point is isomorphic to the mapping cone of the canonical map of degree $n$ from $C(\mathbb{T})$ to itself.

 In the setting of \cite{S} (see also \cite{Kar,Cu mod p}), the mod-$n$ ${\rm K}$-theory groups are defined by
$$
{\rm K}_{j}(A ; \mathbb{Z}_n)={\rm K}_{j}\left(A \otimes C_{0}\left(W_{n}\right)\right),\,\,\,\,j=0,1.
$$

Let ${\rm K}_{*}(A ; \mathbb{Z}_n)={\rm K}_{0}(A ; \mathbb{Z}_n) \oplus {\rm K}_{1}(A ; \mathbb{Z}_n)$. For $n=0$, we set ${\rm K}_{*}(A ; \mathbb{Z}_n)=$ ${\rm K}_{*}(A)$ and for $n=1, {\rm K}_{*}(A ; \mathbb{Z}_n)=0$.

For a $\mathrm{C}^{*}$-algebra $A$, one defines the total K-theory of $A$ by
$$
\underline{{\rm K}}(A)=\bigoplus_{n=0}^{\infty} {\rm K}_{*}(A ; \mathbb{Z}_n) .
$$
It is a $\mathbb{Z}_2 \times \mathbb{Z}^{+}$graded group. It was shown in \cite{S} that the coefficient maps
$$
\begin{gathered}
\rho: \mathbb{Z} \rightarrow \mathbb{Z}_n, \quad \rho(1)=[1], \\
\kappa_{mn, m}: \mathbb{Z}_m \rightarrow \mathbb{Z}_{mn}, \quad \kappa_{m n, m}[1]=n[1], \\
\kappa_{n, m n}: \mathbb{Z}_{mn} \rightarrow \mathbb{Z}_n, \quad \kappa_{n, m n}[1]=[1],
\end{gathered}
$$
induce natural transformations
$$
\rho_{n}^{j}: {\rm K}_{j}(A) \rightarrow {\rm K}_{j}(A ; \mathbb{Z}_n),
$$
$$
\kappa_{m n, m}^{j}: {\rm K}_{j}(A ; \mathbb{Z}_m) \rightarrow {\rm K}_{j}(A ; \mathbb{Z}_{mn}),
$$
$$
\kappa_{n, m n}^{j}: {\rm K}_{j}(A ; \mathbb{Z}_{mn}) \rightarrow {\rm K}_{j}(A ; \mathbb{Z}_n) .
$$
The Bockstein operation
$$
\beta_{n}^{j}: {\rm K}_{j}(A ; \mathbb{Z}_n) \rightarrow {\rm K}_{j+1}(A)
$$
appears in the six-term exact sequence
$$
{\rm K}_{j}(A) \stackrel{\times n}{\longrightarrow} {\rm K}_{j}(A) \stackrel{\rho_{n}^{j}}{\longrightarrow} {\rm K}_{j}(A ; \mathbb{Z}_n) \stackrel{\beta_{n}^{j}}{\longrightarrow} {\rm K}_{j+1}(A) \stackrel{\times n}{\longrightarrow} {\rm K}_{j+1}(A)
$$
induced by the cofibre sequence
$$
A \otimes S C_{0}\left(\mathbb{T}\right) \longrightarrow A \otimes C_{0}\left(W_{n}\right) \stackrel{\beta}{\longrightarrow} A \otimes C_{0}\left(\mathbb{T}\right) \stackrel{n}{\longrightarrow} A \otimes C_{0}\left(\mathbb{T}\right),
$$
where $S C_{0}\left(\mathbb{T}\right)$ is the suspension algebra of $C_{0}\left(\mathbb{T}\right)$.

There is a second six-term exact sequence involving the Bockstein operations. This is induced by a cofibre sequence
$$
A \otimes S C_{0}\left(W_{n}\right) \longrightarrow A \otimes C_{0}\left(W_{m}\right) \longrightarrow A \otimes C_{0}\left(W_{m n}\right) \longrightarrow A \otimes C_{0}\left(W_{n}\right)
$$
and takes the form:
$$
{\rm K}_{j+1}(A ; \mathbb{Z}_n) \stackrel{\beta_{m, n}^{j+1}}{\longrightarrow} {\rm K}_{j}(A ; \mathbb{Z}_m) \stackrel{\kappa_{m n, m}^{j}}{\longrightarrow} {\rm K}_{j}(A ; \mathbb{Z}_{mn}) \stackrel{\kappa_{n, m n}^{j}}{\longrightarrow} {\rm K}_{j}(A ; \mathbb{Z}_n),
$$
where $\beta_{m, n}^{j}=\rho_{m}^{j+1} \circ \beta_{n}^{j}$.

The collection of all the transformations $\rho, \beta, \kappa$ and their compositions is denoted by $\Lambda$. $\Lambda$ can be regarded as the set of morphisms in a category whose objects are the elements of $\mathbb{Z}_2 \times \mathbb{Z}^{+}$. Abusing the terminology, $\Lambda$ will be called the category of Bockstein operations. Via the Bockstein operations, $\underline{{\rm K}}(A)$ becomes a $\Lambda$-module. It is natural to consider the group $\operatorname{Hom}_{\Lambda}(\underline{{\rm K}}(A), \underline{{\rm K}}(B))$ consisting of all $\mathbb{Z}_2 \times \mathbb{Z}^{+}$ graded group morphisms which are $\Lambda$-linear, i.e. preserve the action of the category $\Lambda$.

The Kasparov product induces a map
$$
\gamma_{n}^{j}: {\rm K K}(A, B) \rightarrow \operatorname{Hom}\left({\rm K}_{j}(A ; \mathbb{Z}_n), {\rm K}_{j}(B ; \mathbb{Z}_n)\right).
$$
Then $\gamma_{n}=\left(\gamma_{n}^{0}, \gamma_{n}^{1}\right)$ will be a map
$$
\gamma_{n}: {\rm K K}(A, B) \rightarrow \operatorname{Hom}\left({\rm K}_{*}(A ; \mathbb{Z}_n), {\rm K}_{*}(B ; \mathbb{Z}_n)\right).
$$
Note that if $n=0$, then ${\rm K}_{*}(A, \mathbb{Z}_n)={\rm K}_{*}(A)$ and the map $\gamma_{0}$ is the same as the map $\gamma$ from the Universal Coefficient Theorem (UCT) of Rosenberg and Schochet \cite{RS}. We assemble the sequence $\left(\gamma_{n}\right)$ into a map $\Gamma$. Since the Bockstein operations are induced by multiplication with suitable KK elements and since the Kasparov product is associative, we obtain a map
$$
\Gamma: {\rm K K}(A, B) \rightarrow \operatorname{Hom}_{\Lambda}(\underline{{\rm K}}(A), \underline{{\rm K}}(B)) \text {. }
$$
For the sake of simplicity, if $\alpha \in {\rm K K}(A, B)$, then $\Gamma(\alpha)$ will be often denoted by $\underline{\alpha}$.

For any $n\in \mathbb{N},\,j=0,1$, if $\psi:\, A\to B$ is a homomorphism, we will denote
 $\underline{\rm K}(\psi):\, \underline{{\rm K}}(A)\to\underline{{\rm K}}(B)$ and ${\rm K}_j (\psi;\mathbb{Z}_n):\, {{\rm K}}_j(A;\mathbb{Z}_n)\to{{\rm K}}_j(B;\mathbb{Z}_n)$ to be the induced maps.

 If $\beta:\,\underline{{\rm K}}(A)\to\underline{{\rm K}}(B)$ is a graded map, we will denote
$$
\beta_n^j:\,{\rm K}_j(A;\mathbb{Z}_n)\to{\rm K}_j(B;\mathbb{Z}_n),\,\,n\in \mathbb{N},\,j=0,1
$$
to be the restriction maps.

In fact, as
pointed out by Cuntz and Schochet \cite[Theorem 6.4]{S}, one can also define ${\rm K}_{*}(A ; \mathbb{Z}_n)$
using the (non-commutative!) Cuntz algebra $\mathcal{O}_{n+1}$ (see \cite{Cu alg, Cu ann}) in place of $W_n$.
\end{notion}

\begin{notion}\label{dg order}{\bf (Dadarlat-Gong order) ~}\rm
 Assume that $A$ is a separable C*-algebra of stable rank one and
$\mathcal{K}$  is the compact operators on a separable infinite-dimensional Hilbert space.
For any $[e]\in \mathrm{K}_0^+(A)$, denote
$I_e$  the ideal of $A\otimes\mathcal{K}$ generated by $e$ and
denote $\underline{\mathrm{K}}({I_e}\,|\,A)$ to be the image of $\underline{\mathrm{K}}({I_e})$ in $\underline{\mathrm{K}}(A)$, i.e.,
$$\underline{\mathrm{K}}({I_e}\,|\,A)=:
\underline{\mathrm{K}}(\iota_{I_e})(\underline{\mathrm{K}}({I_e}))\subset \underline{\mathrm{K}}(A),$$
where $\iota_{I_e}:\,I_e\to A\otimes\mathcal{K} $ is the natural embedding map. The following is a positive cone for total K-theory of $A$ (\cite[Definition 4.6]{DG}):
$$\textstyle
\underline{\mathrm{K}}(A)_+=\{([e],\mathfrak{u},
\bigoplus\limits_{n=1}^{\infty}(\mathfrak{s}_{n,0},\mathfrak{s}_{n,1})):[e]\in \mathrm{K}_0^+(A),([e],\mathfrak{u},\bigoplus\limits_{n=1}^{\infty}
(\mathfrak{s}_{n,0},\mathfrak{s}_{n,1}))\in \underline{\mathrm{K}}({I_e}\,|\, A)\},
$$
where $\mathfrak{u}\in {\rm K}_1(A)$, $\mathfrak{s}_{n,0}\in {\rm K}_0(A;\mathbb{Z}_n)$ and  $\mathfrak{s}_{n,1}\in {\rm K}_1(A;\mathbb{Z}_n)$ are equivalent classes. We will call this order structure by Dadarlat-Gong order.

In particular, we may also denote
$${\mathrm{K}_j}({I_e}\,|\,A;\mathbb{Z}_n)=:
{\mathrm{K}_j}(\iota_{I_e};\mathbb{Z}_n)({\mathrm{K}_j}({I_e};\mathbb{Z}_n))\subset {\mathrm{K}_j}(A;\mathbb{Z}_n),\quad j=0,1,$$
$$\mathrm{K}_*^+(A)=:\mathrm{K}_*(A)\cap \underline{\mathrm{K}}(A)_+,$$
where $\mathrm{K}_*(A)$ is identified with its natural image in $\underline{\mathrm{K}}(A)$.
\end{notion}

Now we have the following proposition immediately.
\begin{proposition}\label{gong ideal preserving}
Let $A, B$ be ${\rm C}^*$-algebras of stable rank one and
let $\phi:\, {\rm \underline{K}}(A)\to {\rm \underline{K}}(B)$ be a morphism of $\mathbb{Z}_2\times \mathbb{Z}^+$-graded
groups. The following statements are equivalent:

(i) $\phi({\rm \underline{K}}(A)_+)\subset{\rm \underline{K}}(B)_+$;

(ii) For any $(x,\overline{x})$ with $x\in {\rm {K}}_0^+(A)$
and
$
\overline{x}\in {\rm {K}}_1(I_x)\times\bigoplus_{n= 1}^{\infty}{\rm{K}}_*(I_x;\mathbb{Z}_n),
$
there exists $(y,\overline{y})$ with $y\in {\rm {K}}_0^+(B)$
and $
\overline{y}\in {\rm {K}}_1(I_y)\times\bigoplus_{n= 1}^{\infty}{\rm{K}}_*(I_y;\mathbb{Z}_n)
$
such that
$$
\phi\circ\underline{\rm K}(\iota_{I_x}) (x,\overline{x})=\underline{\rm K}(\iota_{I_y})(y,\overline{y}),
$$
where $I_x$, $I_y$ are the ideals of $A\otimes \mathcal{K}$, $B\otimes \mathcal{K}$ generated by $x, y$, respectively,
and $\iota_{I_x}:\,I_x\to A\otimes \mathcal{K}$,
$\iota_{I_y}:\,I_y\to B\otimes \mathcal{K}$ are the natural injective maps.
\end{proposition}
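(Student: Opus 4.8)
The plan is to read the proposition directly off the description of the Dadarlat--Gong cone in \ref{dg order}. The first step I would take is to record the following reformulation of that definition for a separable stable rank one algebra $C$: an element $\xi\in\underline{\mathrm K}(C)$ lies in $\underline{\mathrm K}(C)_+$ precisely when its $\mathrm K_0$-component $x$ lies in $\mathrm K_0^+(C)$ and $\xi$ belongs to the image $\underline{\mathrm K}(\iota_{I_x})(\underline{\mathrm K}(I_x))$, where $I_x\subset C\otimes\mathcal K$ is the ideal generated by $x$. Here one uses that, as $C$ has stable rank one, $I_x$ depends only on $x$ and $x$ has a canonical lift to the class of a generating projection in $\mathrm K_0(I_x)$; in particular one may always take the element of $\underline{\mathrm K}(I_x)$ witnessing membership to be of the form $(x,\overline x)$ with $\overline x\in\mathrm K_1(I_x)\times\bigoplus_{n=1}^{\infty}\mathrm K_*(I_x;\mathbb Z_n)$. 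Granting this, both implications are transcriptions of the definition, modulo the small point about lifts addressed at the end.

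For $(ii)\Rightarrow(i)$ I would take an arbitrary $\xi\in\underline{\mathrm K}(A)_+$, write it as $\underline{\mathrm K}(\iota_{I_x})(x,\overline x)$ with $x\in\mathrm K_0^+(A)$ and $\overline x\in\mathrm K_1(I_x)\times\bigoplus_{n=1}^{\infty}\mathrm K_*(I_x;\mathbb Z_n)$ by the reformulation, and apply hypothesis $(ii)$ to obtain $y\in\mathrm K_0^+(B)$ and $\overline y\in\mathrm K_1(I_y)\times\bigoplus_{n=1}^{\infty}\mathrm K_*(I_y;\mathbb Z_n)$ with $\phi(\xi)=\underline{\mathrm K}(\iota_{I_y})(y,\overline y)$. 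Since $\underline{\mathrm K}(\iota_{I_y})$ respects the $\mathbb Z_2\times\mathbb Z^+$-grading, the $\mathrm K_0$-component of $\phi(\xi)$ equals $y\in\mathrm K_0^+(B)$ and the ideal it generates is exactly $I_y$; hence $\phi(\xi)\in\underline{\mathrm K}(I_y\,|\,B)$ with positive $\mathrm K_0$-component, i.e. $\phi(\xi)\in\underline{\mathrm K}(B)_+$. As $\xi$ was arbitrary, this gives $(i)$.

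For $(i)\Rightarrow(ii)$ I would start from the given $x\in\mathrm K_0^+(A)$ and $\overline x$, set $\xi=\underline{\mathrm K}(\iota_{I_x})(x,\overline x)$, and observe that $\xi$ has positive $\mathrm K_0$-component $x$ and lies in $\underline{\mathrm K}(I_x\,|\,A)$, hence $\xi\in\underline{\mathrm K}(A)_+$; by $(i)$ we get $\phi(\xi)\in\underline{\mathrm K}(B)_+$. Writing the $\mathrm K_0$-component of $\phi(\xi)$ as $[f]\in\mathrm K_0^+(B)$, the definition in \ref{dg order} gives $\phi(\xi)=\underline{\mathrm K}(\iota_{I_f})(\eta)$ for some $\eta\in\underline{\mathrm K}(I_f)$. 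Putting $y=[f]$, $I_y=I_f$ and $\overline y$ equal to the components of $\eta$ outside $\mathrm K_0(I_f)$ then supplies the data required in $(ii)$.

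The only step beyond pure bookkeeping is the last one: one must be entitled to assume the $\mathrm K_0(I_f)$-component of the lift $\eta$ equals the canonical lift of $[f]$. If it is not, I would replace that component by the canonical class; the difference lies in $\ker(\mathrm K_0(I_f)\to\mathrm K_0(B))$, and since $\underline{\mathrm K}(\iota_{I_f})$ decomposes as a direct sum over the Bockstein-graded components, it annihilates this difference and leaves $\underline{\mathrm K}(\iota_{I_f})(\eta)=\phi(\xi)$ unchanged. Thus I expect the main --- and quite minor --- obstacle to be this careful handling of lifts of $\mathrm K_0$-classes; everything else is a direct unwinding of \ref{dg order}.
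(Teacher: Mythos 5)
Your proof is correct and is exactly the argument the paper intends: the proposition is stated as an immediate consequence of Definition \ref{dg order}, and your write-up is precisely that unwinding of the definition in both directions. The one point that genuinely needs checking --- replacing the $\mathrm{K}_0(I_f)$-component of the lift by the canonical class of a generating projection, using that the difference lies in $\ker(\mathrm{K}_0(\iota_{I_f}))$ and is therefore annihilated by the graded map $\underline{\mathrm{K}}(\iota_{I_f})$ --- is handled correctly.
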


\begin{example}
Denote $\mathcal{Q}$  the  UHF algebra  whose $\mathrm{K}_0$-group is
$(\mathbb{Q},\mathbb{Q}_+,1)$
and recall that  $\mathcal{K}$ is the compact operators on a separable infinite-dimensional Hilbert space.

Then
$$(\underline{\mathrm{K}}(\mathcal{Q}),\underline{\mathrm{K}}(\mathcal{Q})_+,[1_\mathcal{Q}])\cong
(\mathbb{Q},\mathbb{Q}_+,1);
$$
$$
\underline{\mathrm{K}}(\mathcal{K})\cong
\mathbb{Z}\oplus \mathbb{Z}_2\oplus\mathbb{Z}_3\oplus\mathbb{Z}_4\oplus\cdots
$$
and $$\underline{\mathrm{\mathrm{K}}}(\mathcal{K})_+=\{(x,x_2,x_3,x_4,\cdots)\in\underline{\mathrm{K}}(\mathcal{K})\mid x>0\}\cup \{(0,0,0,0,\cdots)\},$$
where $\mathrm{K}_0(\mathcal{K}; \mathbb{Z}_n)\cong\mathbb{Z}_n$ and $\mathrm{K}_1(\mathcal{K}; \mathbb{Z}_n)\cong0.$
\end{example}

\begin{theorem}{\rm (\cite[Porposition 4.8--4.9]{DG})}\label{ordertotal}
Suppose that $A$ is of stable rank one and has an approximate unit $(e_n)$ consisting
of projections. Then

(i) $\underline{\mathrm{K}}(A)=\underline{\mathrm{K}}(A)_+-\underline{\mathrm{K}}(A)_+$;

(ii) $\underline{\mathrm{K}}(A)_+\cap\{-\underline{\mathrm{K}}(A)_+\} = \{0\}$, and hence,
$(\underline{\mathrm{K}}(A),\underline{\mathrm{K}}(A)_+)$ is an ordered group;

(iii) For any $x\in \underline{\mathrm{K}}(A)$, there are positive integers $k$, $n$ such that $k[e_n]+x \in \underline{\mathrm{K}}(A)_+$.
\end{theorem}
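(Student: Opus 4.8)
The plan is to reduce parts (i) and (iii) to a single absorption property of stable rank one algebras and to reduce (ii) to stable finiteness, after first presenting $\underline{\mathrm K}(A)$ as an increasing union of total K-groups of ideals. First I would arrange that $(e_n)$ is increasing (as is standard for an approximate unit of projections) and, writing $e_n$ also for $e_n\otimes e_{11}\in A\otimes\mathcal K$, set $I_n$ to be the ideal of $A\otimes\mathcal K$ generated by $e_n$. Each $I_n\cong I_n\otimes\mathcal K$ is then a stable $\mathrm C^*$-algebra of stable rank one in which $e_n$ is full, the $I_n$ are increasing with $\overline{\bigcup_n I_n}=A\otimes\mathcal K$, and since $\mathrm K_0$, $\mathrm K_1$ and each $\mathrm K_*(-;\mathbb Z_m)$ commute with inductive limits and an element of $\underline{\mathrm K}(A)$ has only finitely many nonzero coordinates, this gives $\underline{\mathrm K}(A)=\bigcup_n\underline{\mathrm K}(I_n\mid A)$. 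This is the only step that uses separability and the hypothesis on the approximate unit. I would also record the elementary fact that, because an ideal of an ideal is an ideal, the ideal of $I_n$ generated by a projection $e\in I_n$ coincides with the ideal of $A\otimes\mathcal K$ generated by $e$; together with the fact that $*$-homomorphisms carry projections to projections, this shows $\underline{\mathrm K}(\iota_n)$ maps the Dadarlat--Gong cone $\underline{\mathrm K}(I_n)_+$ into $\underline{\mathrm K}(A)_+$.

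The engine is the following absorption statement: if $D$ has stable rank one and $p\in D$ is a full projection, then for each $g\in\mathrm K_0(D)$ there is $k\in\mathbb N$ with $g+k[p]\geq[p]$ in $\mathrm K_0(D)$ (in particular $g+k[p]\in\mathrm K_0^+(D)$). I would prove this by writing $g=[p_1]-[p_2]$ with $p_i$ projections over $D$; Brown's theorem gives $D\otimes\mathcal K\cong pDp\otimes\mathcal K$, so every projection over $D$ is Murray--von Neumann equivalent to a subprojection of $p^{\oplus k_0}$ for a suitable $k_0$. Applying this to $p_2$ yields $k_0[p]-[p_2]\in\mathrm K_0^+(D)$ (using cancellation, which holds in stable rank one), hence $g+(k_0+1)[p]=[p_1]+\big(k_0[p]-[p_2]\big)+[p]\geq[p]$.

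Granting this, for (iii) I would, given $x\in\underline{\mathrm K}(A)$, pick $n$ with $x\in\underline{\mathrm K}(I_n\mid A)$, lift $x$ to $z\in\underline{\mathrm K}(I_n)$ along $\iota_n\colon I_n\hookrightarrow A\otimes\mathcal K$, and apply the absorption statement to $D=I_n$, $p=e_n$ and the $\mathrm K_0$-component of $z$ to get $k$ with that component plus $k[e_n]$ equal to $[e']$ for a projection $e'\in I_n$ with $e_n\preceq e'$. Then $e'$ is full in $I_n$, so the ideal it generates is $I_n$; hence $z+k[e_n]\in\underline{\mathrm K}(I_n)=\underline{\mathrm K}\big((I_n)_{e'}\mid I_n\big)$ with positive $\mathrm K_0$-component, i.e.\ $z+k[e_n]\in\underline{\mathrm K}(I_n)_+$, and pushing forward by $\iota_n$ gives $x+k[e_n]\in\underline{\mathrm K}(A)_+$, which is (iii). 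Part (i) is then immediate: $x=(x+k[e_n])-k[e_n]$, where $x+k[e_n]\in\underline{\mathrm K}(A)_+$ by (iii) and $k[e_n]=[e_n^{\oplus k}]\in\underline{\mathrm K}(A)_+$ (positive $\mathrm K_0$-component, all other components zero, so it lies in $\underline{\mathrm K}(I_n\mid A)$). For (ii), if $x$ and $-x$ both lie in $\underline{\mathrm K}(A)_+$ their $\mathrm K_0$-components are classes $[e]$ and $-[e]$ in $\mathrm K_0^+(A)$; since stable rank one makes $A$ stably finite, cancellation in $M_m(\widetilde A)$ forces $[e]=0$ and hence the representing projection $e=0$, giving $I_e=0$, $\underline{\mathrm K}(I_e\mid A)=0$, and therefore $x=0$.

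I expect the only genuinely substantive step to be the absorption statement, which rests on the fine structure of stable rank one algebras (fullness, Brown's stable isomorphism theorem, and cancellation of projections); once that is in hand the rest is bookkeeping with the definition of the Dadarlat--Gong order and the continuity of total K-theory under inductive limits.
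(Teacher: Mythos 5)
The paper gives no proof of this statement: it is quoted directly from Dadarlat--Gong (\cite[Propositions 4.8--4.9]{DG}), so there is no internal argument to compare against. Judged on its own, your proof is correct and is essentially the standard route: exhaust $\underline{\mathrm K}(A)$ by the images of the ideals $I_{e_n}$ via continuity of total K-theory, and then use fullness of $e_n$ in $I_{e_n}$ to absorb the $\mathrm K_0$-component into a full positive class, at which point the Dadarlat--Gong positivity condition is automatic because the relevant ideal is all of $I_{e_n}$. The absorption lemma is right, though you do not actually need Brown's stable isomorphism theorem for it: fullness of $p$ alone gives $q\preceq p^{\oplus k}$ for every projection $q$ over $D$ by the usual approximation $\bigl\|q-\sum_{i=1}^{k}x_ipy_i\bigr\|<1/2$, which slightly shortens the argument and avoids invoking $\sigma$-unitality there. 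Two points deserve a sentence in a written version. First, part (iii) refers to the \emph{given} approximate unit, so after you ``arrange that $(e_n)$ is increasing'' you should close the loop: for any projections $f\preceq e_m$ one has $[f]\le[e_m]$ and $\underline{\mathrm K}(A)_+$ is closed under addition (because $I_{e}\subseteq I_{e\oplus f}$), so positivity established relative to an increasing refinement transfers back to the original sequence; alternatively, note that any approximate unit of projections already satisfies $e_n\preceq e_m$ for suitable $m>n$, hence $I_{e_n}\subseteq I_{e_m}$ after passing to a subsequence, which is all your inductive-limit step uses. Second, separability (implicit in \ref{dg order}) is used not only for the inductive-limit decomposition but also to know that ideals of $A\otimes\mathcal K$ are themselves stable, i.e.\ that $I_n\cong I_n\otimes\mathcal K$. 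Neither point affects the validity of the argument.
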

Note that separability and  real rank zero imply the existence of an approximate unit consisting
of projections. In the above theorem, if we replace $\underline{\mathrm{K}}$ by
${\mathrm{K}}_0$ or ${\mathrm{K}}_*$, the statement is still true.

\begin{definition}\label{group pure}\rm
   An extension of abelian groups
   $$0\rightarrow  K \xrightarrow{\iota} G \xrightarrow{\pi} H\rightarrow 0$$
is called $pure$, if $\iota(K)$ is a pure subgroup of $G$, i.e.,
$
n\times \iota(K)=\iota(K)\cap (n\times G)$   for every $n\in\mathbb{N}$.
\end{definition}
\begin{definition}\rm
Let $A$ and $B$ be ${\rm C}^*$-algebras. An extension $e$ of $A$ by $B$ is a short exact sequence of ${\rm C}^*$-algebras:
 $$
e:\quad 0 \rightarrow B \rightarrow E \rightarrow A \rightarrow 0.
$$
We say $e$ is $\mathrm{K}$-$pure$,
if both the sequences
$$0\to  \mathrm{K}_j(B ) \to\mathrm{K}_j(E)\to \mathrm{K}_j(A ) \to 0,\quad j=0,1$$
are pure extensions of abelian groups.
\end{definition}

\begin{definition}\label{def kpure}\rm
Given an ideal $I$ of $A$, we say that $I$ is  $\mathrm{K}$-$pure$ in $A$, if the extension
$
0\to I\to A\to A/I\to 0
$
is $\mathrm{K}$-pure. We say a ${\rm C}^*$-algebra $A$ is $\mathrm{K}$-$pure$, if all the ideals of $A$ are $\mathrm{K}$-pure in $A$.
\end{definition}




It is obviously that if $A, B$ are K-pure, then $A\oplus B$ and $A\otimes\mathcal{K}$ are K-pure. By \cite[Proposition 4.4]{DE},  all  A$\mathcal{HD}$ algebras of real rank zero are K-pure.

\begin{notion}\rm
Given abelian groups $H,K$ and two extensions
$$
e_i\,\,:\,\,0\to K\to G_i\to H\to 0,\quad i=1,2,
$$
we say $e_1$, $e_2$ are equivalent, if there is a homomorphism  $\alpha$ making the following diagram commute:
$$
\xymatrixcolsep{2pc}
\xymatrix{
{\,\,0\,\,} \ar[r]^-{}
& {\,\,K\,\,} \ar[d]_-{{\rm id}} \ar[r]^-{}
& {\,\,G_1\,\,} \ar[d]_-{\alpha} \ar[r]^-{}
& {\,\,H\,\,} \ar[d]_-{{\rm id}} \ar[r]^-{}
& {\,\,0\,\,} \\
{\,\,0\,\,} \ar[r]^-{}
& {\,\,K\,\,} \ar[r]_-{}
& {\,\,G_2 \,\,} \ar[r]_-{}
& {\,\,H \,\,} \ar[r]_-{}
& {\,\,0\,\,}.}
$$
Denote $\mathrm{Ext}(H,K)$ the set of all the equivalent classes of extensions of $H$ by $K$. It is well-known that $\mathrm{Ext}(H,K)$ forms an abelian group. 

Let $h_0 \in H$. We consider the following
extension of $H$ by $K$ with base point $h_0$,
$$
e:\quad 0\to K\to (G, g_0)\xrightarrow{\psi} (H, h_0)\to 0,
$$
where $g_0 \in G$ and $\psi(g_0) = h_0$.
We say $e$ is trivial if there exists a homomorphism
$\lambda : H \to G$ such that $\psi\circ \lambda={\rm id}_{H}$ and $\lambda(h_0) = g_0$.

Suppose we have extensions
$$e_i:\quad
0\to K\to (G_i, g_i)\xrightarrow{\psi} (H, h_0)\to 0,\quad i = 1, 2$$
 with $\psi(g_i) = h_0$. We say  $e_1$ and $e_2$ are equivalent if there is a
homomorphism $\phi : G_1 \to G_2$ with $\phi(g_1) = g_2$ such that the following diagram commutes
$$
\xymatrixcolsep{2pc}
\xymatrix{
{\,\,0\,\,} \ar[r]^-{}
& {\,\,K\,\,} \ar[d]_-{{\rm id}} \ar[r]^-{}
& {\,\,(G_1, g_1)\,\,} \ar[d]_-{\phi} \ar[r]^-{}
& {\,\,(H, h_0)\,\,} \ar[d]_-{{\rm id}} \ar[r]^-{}
& {\,\,0\,\,} \\
{\,\,0\,\,} \ar[r]^-{}
& {\,\,K\,\,} \ar[r]_-{}
& {\,\,(G_2, g_2) \,\,} \ar[r]_-{}
& {\,\,(H, h_0) \,\,} \ar[r]_-{}
& {\,\,0\,\,}.}
$$

Let $\mathrm{Ext}((H, h_0), K)$ be the set of equivalence classes of all extensions of $H$ by $K$ with base point $h_0$. As the usual group-theoretic construction of $\mathrm{Ext}(H, K)$, there exists an analogue construction on $\mathrm{Ext}((H, h_0), K)$ such that it is an abelian group.
One can check that there is a short exact sequence of groups
$$0 \to K/\{f(h_0)|f \in {\rm Hom}(H, K)\} \to \mathrm{Ext}((H, h_0), K)\to \mathrm{Ext}(H, K) \to 0.$$

\end{notion}

The following proposition is well-known, see \cite[Proposition 4]{LR}.
\begin{proposition}\label{lin inj}
Let
$$
0 \longrightarrow B \longrightarrow E \longrightarrow A \longrightarrow 0
$$
be an extension  of $\mathrm{C}^{*}$-algebras. Let $\delta_{j}: \mathrm{K}_{j}(A) \rightarrow \mathrm{K}_{1-j}(B)$ for $j=0,1$, be the index maps of the sequence.

(i) Assume that $A$ and $B$ have real rank zero. Then the following three conditions are equivalent :

(a) $\delta_{0} \equiv 0$,

(b) $rr(E)=0$,

(c) all projections in $A$ are images of projections in $E$.

(ii) Assume that $A$ and $B$ have stable rank one. Then the following are equivalent:

(a) $\delta_{1} \equiv 0$,

(b) $sr(E)=1$.

If, in addition, $E$ (and $A$ ) are unital, then (a) and (b) in (ii) are equivalent to

(c) all unitaries in $A$ are images of unitaries in $E$.

\end{proposition}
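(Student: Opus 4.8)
The plan is to reduce every equivalence to two kinds of input: the six-term exact sequence in $\mathrm{K}$-theory attached to the extension, and lifting results for projections (under real rank zero) and unitaries (under stable rank one). Write $\pi\colon E\to A$ for the quotient map.

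\textbf{Part (i).} First I would record the $\mathrm{K}$-theoretic translation of (a): by exactness of $\mathrm{K}_0(E)\to\mathrm{K}_0(A)\xrightarrow{\delta_0}\mathrm{K}_1(B)$, condition $\delta_0\equiv 0$ holds iff $\mathrm{K}_0(E)\to\mathrm{K}_0(A)$ is surjective. The equivalence (b)$\Leftrightarrow$(c) is Brown--Pedersen's characterization of real rank zero extensions: under $rr(A)=rr(B)=0$ one has $rr(E)=0$ iff every projection of $A$ lifts to a projection of $E$. (If $rr(E)=0$, a self-adjoint contraction lifting a projection $p$ can be approximated within $1/4$ by a finite-spectrum self-adjoint $b\in E$; then $\pi(\chi_{(1/2,\infty)}(b))$ lies at distance $<1$ from $p$, hence equals $vpv^{*}$ for a unitary $v\in\widetilde{A}$ close to $1$, which lifts to a unitary $w\in\widetilde{E}$, so $w^{*}\chi_{(1/2,\infty)}(b)w$ is a projection of $E$ over $p$. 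Conversely, if projections of $A$ lift and $rr(A)=rr(B)=0$, a self-adjoint of $E$ is approximated mod $B$ by a finite-spectrum lift and the remaining error, which lies in $B$, is absorbed via $rr(B)=0$.) Granting (b)$\Leftrightarrow$(c), the implication (c)$\Rightarrow$(b) is trivial, and (b)$\Rightarrow$(a) follows by applying (b)$\Leftrightarrow$(c) to the matrix amplifications $0\to M_n(B)\to M_n(E)\to M_n(A)\to 0$ (all of real rank zero): every projection of $M_n(A)$ then lifts, so $\mathrm{K}_0(E)\to\mathrm{K}_0(A)$ is onto. It remains to prove (a)$\Rightarrow$(c): for a projection $p\in A$ with self-adjoint contraction lift $a\in E$ one has $a-a^2\in B$, and the obstruction to perturbing $a$ by a self-adjoint element of $B$ into a genuine projection over $p$ is a class of $\mathrm{K}_1(B)$ equal to $\delta_0([p])$, which vanishes by hypothesis; this is the Brown--Pedersen projection-lifting lemma, and the step where $rr(B)=0$ is genuinely used.

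\textbf{Part (ii).} The structure is parallel, with $\delta_1\colon\mathrm{K}_1(A)\to\mathrm{K}_0(B)$ in place of $\delta_0$ and invertibles/unitaries in place of self-adjoint contractions/projections; by exactness of $\mathrm{K}_1(E)\to\mathrm{K}_1(A)\xrightarrow{\delta_1}\mathrm{K}_0(B)$, condition (a) holds iff $\mathrm{K}_1(E)\to\mathrm{K}_1(A)$ is surjective. For (b)$\Rightarrow$(a): represent a class of $\mathrm{K}_1(A)$ by a unitary $u$ of some $M_m(\widetilde{A})$, lift $u$ to $M_m(\widetilde{E})$, approximate the lift by an invertible, and note its image is an invertible close to $u$, hence in the same path component; so $\mathrm{K}_1(E)\to\mathrm{K}_1(A)$ is onto. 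For (b)$\Rightarrow$(c) in the unital case: lift $u\in U(A)$ to $x\in E$, approximate $x$ by an invertible $x'\in E$, write $\pi(x')u^{*}=\exp(c)$ with $c\in A$ small, lift $c$ to a small $d\in E$, set $z=\exp(-d)x'$ (an invertible of $E$ with $\pi(z)=u$), and take the unitary part $z(z^{*}z)^{-1/2}\in E$, a unitary over $u$. The remaining implications (a)$\Rightarrow$(b), (a)$\Rightarrow$(c) and (c)$\Rightarrow$(a) I would extract from R{\o}rdam's analysis of stable rank one for extensions: when $sr(A)=sr(B)=1$, vanishing of $\delta_1$ lets one lift invertibles of $A$ to $E$ up to an error absorbable by $sr(B)=1$, giving $sr(E)=1$; and liftability of unitaries forces, after passing to matrix amplifications and using matrix stability of $sr=1$, that $\mathrm{K}_1(E)\to\mathrm{K}_1(A)$ be onto, i.e. $\delta_1\equiv 0$.

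\textbf{Main obstacle.} The crux in both parts is the implication (a)$\Rightarrow$(c): passing from the \emph{a priori} merely stable, $\mathrm{K}$-theoretic statement that the index map vanishes to the \emph{genuine} lifting of a \emph{single} projection, respectively unitary. This is exactly where regularity of the ideal $B$ is indispensable: $rr(B)=0$ supplies the abundance of projections and the cancellation needed to surger a self-adjoint lift into a projection, while $sr(B)=1$ supplies the cancellation needed to correct an invertible lift into a unitary. Every other implication is either formal (exactness of the six-term sequence; matrix stability of $rr=0$ and of $sr=1$; polar decomposition of invertible elements in unital $\mathrm{C}^*$-algebras) or a routine real-rank-zero / stable-rank-one approximation argument.
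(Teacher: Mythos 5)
The paper itself gives no argument for this statement: it is quoted verbatim as ``well-known'' with a pointer to \cite[Proposition 4]{LR}, so the only official proof is that citation. Your sketch reconstructs essentially the argument behind it --- Brown--Pedersen's characterization of real rank zero for extensions, the six-term exact sequence, and the index-map obstruction to lifting projections and unitaries --- and the overall architecture is sound; you also correctly locate the crux at (a)$\Rightarrow$(c) and the role of the regularity of the ideal $B$.

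Two soft spots would need attention in a written-out version. First, in (i)(a)$\Rightarrow$(c) you describe the obstruction to surgering a self-adjoint lift $a$ of $p$ into a projection as ``a class of $\mathrm{K}_1(B)$ equal to $\delta_0([p])$''. What the construction actually hands you is the class of $\exp(2\pi i a)$ in $U(\widetilde{B})/U_0(\widetilde{B})$: to correct $a$ one needs this unitary to be connected to $1$ inside $\widetilde{B}$ itself, with no matrix amplification. Collapsing that quotient onto $\mathrm{K}_1(B)$ is precisely where $rr(B)=0$ enters (via Lin's theorem that $U/U_0\to \mathrm{K}_1$ is injective for real rank zero algebras, or an equivalent approximation argument); you do flag that $rr(B)=0$ is ``genuinely used'', but the sketch reads as if the $\mathrm{K}_1$-class were the a priori obstruction, which it is not. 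Second, in (ii) your proposed route (c)$\Rightarrow$(a) ``after passing to matrix amplifications'' does not quite parse: hypothesis (c) only lifts unitaries of $A$, not of $M_m(A)$, so it cannot be amplified directly. The standard path is (c)$\Rightarrow$(b) --- given $x\in E$, approximate $\pi(x)$ by an invertible, lift its unitary part using (c), and absorb the remaining error using $sr(B)=1$ --- followed by your own (b)$\Rightarrow$(a). Neither point is a fatal gap; both are handled in \cite{LR} and the sources it relies on, but they are exactly the places where a complete proof needs content rather than a pointer.
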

\begin{definition}\label{trivial index def}\rm
We say an extension $e:0\to B\to E\to A\to 0$ has $trivial$ $index$ $maps$, if both $\delta_0$ and $\delta_1$ are the zero maps.

Thus, suppose both $A,B$ are of stable rank one and real
rank zero, by Proposition \ref{lin inj}, we point out that $E$ has stable rank one and real
rank zero if and only if $e$ has trivial index maps.
\end{definition}
\begin{definition}\rm
Let $A$, $B$ be ${\rm C}^*$-algebras and
$$e\,:\, 0 \to B \to E \to A \to 0 $$
be an extension of $A$ by $B$ with Busby invariant $\tau:\,A\to M(B)/B$, where $M(B)$ is the multiplier algebra of $B$.
We say extension $e$ is $essential$, if $\tau$ is injective.
When $A$ is unital, we say an extension $e$ is $unital$, if $\tau$ is unital.
\end{definition}
\begin{proposition}
Assume that $A$ is unital simple and $B$ is stable with $B\neq 0$. Then any unital extension $e$ of $A$ by $B$ is always essential.

\end{proposition}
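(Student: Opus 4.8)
The plan is to show that if $B$ is stable and nonzero and $A$ is unital simple, then any $\ast$-homomorphism $\tau\colon A\to M(B)/B$ that is unital must be injective, which is exactly the assertion that the extension is essential. First I would observe that since $A$ is simple, the kernel of $\tau$ is either $0$ or all of $A$; thus it suffices to rule out $\tau=0$. But $\tau$ is unital, so $\tau(1_A)=1_{M(B)/B}$, and therefore $\tau=0$ would force $1_{M(B)/B}=0$, i.e.\ the corona algebra $M(B)/B$ would be the zero algebra, i.e.\ $M(B)=B$. So the whole statement reduces to the fact that a nonzero stable ${\rm C}^*$-algebra is never unital — equivalently, $M(B)\neq B$ whenever $B=B\otimes\mathcal K\neq 0$.

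The key step, then, is this last fact. I would argue it as follows. Suppose $B\cong B_0\otimes\mathcal K$ is unital with unit $p$. Writing $p\in B_0\otimes\mathcal K$ and using that $p$ is a finite-rank-type element, one sees $p$ is equivalent to a projection supported on $B_0\otimes M_k$ for some finite $k$ after a small perturbation; more cleanly, consider the system of matrix units $(e_{ij})_{i,j\in\mathbb N}$ for $\mathcal K$ and the projections $q_n=1_{B_0}\otimes(e_{11}+\cdots+e_{nn})$. If $B$ had a unit $p$, then $pq_n=q_n$ for all $n$ since $q_n\in B$, forcing $q_n\le p$; but the $q_n$ form a strictly increasing sequence of projections in the unital (hence closed, bounded) algebra, contradicting that they are all dominated by the single projection $p$ — concretely $\|p-q_n\|=1$ for all $n$, so $q_n\not\to p$, yet $q_n\to 1_{M(B)}=p$ strictly, and one extracts a contradiction from $p\in B$ together with $\|p - q_n\| \ge \|(1-q_n)p(1-q_n)\| = \|(1-q_n)p\| $ not tending to $0$. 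The cleanest phrasing: $1_{M(B)}$ is the strict limit of $q_n$, this limit does not lie in $B$ because $B\ni b\mapsto \|b-q_n\|$ does not converge (the $q_n$ are not Cauchy in norm, as $\|q_{n+1}-q_n\|=1$), hence $1_{M(B)}\notin B$, so $M(B)\neq B$ and $M(B)/B\neq 0$.

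Assembling: since $M(B)/B\neq 0$ we have $1_{M(B)/B}\neq 0$, so $\tau(1_A)\neq 0$, so $\ker\tau\neq A$; by simplicity of $A$, $\ker\tau=0$, i.e.\ $\tau$ is injective, i.e.\ $e$ is essential. I do not expect any genuine obstacle here — the only point requiring care is the standard but slightly fiddly fact that a nonzero stable ${\rm C}^*$-algebra is nonunital, and the argument above via the increasing sequence $q_n$ of projections (which are pairwise at norm distance $1$, hence cannot all sit below a common projection in a unital algebra) makes that rigorous. One could alternatively cite this as folklore, but including the three-line argument keeps the proposition self-contained.
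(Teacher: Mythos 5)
Your proof is correct and follows essentially the same route as the paper: simplicity of $A$ forces $\ker\tau\in\{0,A\}$, and unitality of $\tau$ together with $M(B)/B\neq 0$ rules out $\ker\tau=A$. The paper leaves the fact that a nonzero stable ${\rm C}^*$-algebra is nonunital (so that $1_{M(B)/B}\neq 0$) implicit, whereas you verify it via the increasing projections $q_n$, which is a correct and standard argument; this is added detail rather than a different approach.
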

\begin{proof}
For any unital extension $e$ of $A$ by $B$ with Busby invariant $\tau$,
since $\tau$ is a unital homomorphism from $A$ to $M(B)/B$ and $A$ has no proper ideals,
the Busby invariant $\tau$ must be injective, which concludes the proof.

\end{proof}

\begin{definition}\rm
Let
$e_i:0 \to B \to E_i \to A \to 0 $
be two
extensions of $A$ by $B$ with Busby invariants $\tau_i$ for $i = 1, 2$.  $e_1$ and $e_2$ are called $strongly$ $unitarily$ $equivalent$, denoted by $e_1
\sim_s e_2$,
if there exists a unitary $u \in M(B)$ such that $\tau_2(a) = \pi(u)\tau_1(a)\pi(u)^*$ for all $a \in A$,
where $\pi:\,M(B)\to M(B)/B.$


\end{definition}
\begin{proposition}\label{strong tui cong}
Given two 
extensions $e_1$ and $e_2$ with $e_1\sim_s e_2$, we have $E_1\cong E_2$.
\end{proposition}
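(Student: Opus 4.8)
The plan is to produce the isomorphism $E_1 \cong E_2$ directly from the unitary $u \in M(B)$ implementing $e_1 \sim_s e_2$. Recall that for an extension $e_i : 0 \to B \to E_i \to A \to 0$ with Busby invariant $\tau_i : A \to M(B)/B$, one has the standard pullback description
$$
E_i \cong \{(a, m) \in A \oplus M(B) : \pi(m) = \tau_i(a)\},
$$
where $\pi : M(B) \to M(B)/B$ is the quotient map, and under this identification $B$ sits inside $E_i$ as $\{(0,b) : b \in B\}$ and the quotient map $E_i \to A$ is $(a,m) \mapsto a$. So the first step is to set up this pullback picture for both $E_1$ and $E_2$.

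Next I would write down the candidate map. Given the unitary $u \in M(B)$ with $\tau_2(a) = \pi(u)\,\tau_1(a)\,\pi(u)^*$ for all $a \in A$, define $\Phi : E_1 \to E_2$ by $\Phi(a, m) = (a,\, u m u^*)$. The key point is that this is well-defined: if $(a,m) \in E_1$, i.e.\ $\pi(m) = \tau_1(a)$, then $\pi(u m u^*) = \pi(u)\pi(m)\pi(u)^* = \pi(u)\tau_1(a)\pi(u)^* = \tau_2(a)$, so $(a, umu^*) \in E_2$. Then $\Phi$ is a $*$-homomorphism because $m \mapsto umu^*$ is a $*$-homomorphism on $M(B)$ and the first coordinate is untouched; it is bijective with inverse $(a,m') \mapsto (a, u^* m' u)$ (one checks well-definedness of the inverse the same way, using $\tau_1(a) = \pi(u)^*\tau_2(a)\pi(u)$). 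Finally $\Phi$ restricts to the identity on $B$ (since $u b u^* \in B$ for $b \in B$ as $B$ is an ideal of $M(B)$) and intertwines the quotient maps to $A$, so in particular it is the required $*$-isomorphism.

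There is essentially no hard step here; the only thing to be slightly careful about is the well-definedness of $\Phi$ on the pullback and the fact that conjugation by $u$ preserves the ideal $B \subset M(B)$, both of which are immediate. If one wants, one can phrase the argument without the pullback model at all: since the Busby invariant is a complete isomorphism invariant of the extension, exhibiting the commuting diagram with vertical maps $\mathrm{id}_B$, $\Phi$, $\mathrm{id}_A$ shows $e_1$ and $e_2$ are isomorphic extensions, hence $E_1 \cong E_2$ as $*$-algebras. I would present the pullback version since it makes $\Phi$ fully explicit and keeps the verification purely computational.
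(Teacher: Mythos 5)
Your proposal is correct and follows exactly the paper's own argument: the paper also uses the pullback description $E_i = \{(a,b)\in A\times M(B) : \tau_i(a)=\pi(b)\}$ and the map $(a,b)\mapsto(a,ubu^*)$. Your version simply spells out the well-definedness and inverse checks that the paper leaves implicit.
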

\begin{proof}
Via the pull back construction, we have
$$
E_i= \{(a,b)\in A\times M(B)\mid\, \tau_i(a)=\pi(b)\},\quad i=1,2.
$$
$e_1\sim_s e_2$ means that there exists a unitary $u \in M(B)$ such that $\tau_2(a) = \pi(u)\tau_1(a)\pi(u)^*$ for all $a \in A$.
Then
the map $\psi:\,E_1\to E_2$ with
$$
\psi(a,b)=(a,ubu^*)
$$
is an isomorphism.

\end{proof}


\begin{notion}\rm
Let $A$ and $B$ be ${\rm C}^*$-algebras. When $A$ is unital, we set
$$\mathrm{Ext}_{[1]}(\mathrm{K}_*(A), \mathrm{K}_*(B)) = \mathrm{Ext}((\mathrm{K}_0(A), [1_A]), \mathrm{K}_0(B)) \oplus \mathrm{Ext}(\mathrm{K}_1(A), \mathrm{K}_1(B)).$$
\end{notion}
Denote $\mathcal{N}$ the ``bootstrap" class of ${\rm C}^*$-algebras in \cite{RS}.
We list various versions of ``UCT" results as follows:

\begin{theorem}{\rm (}\cite{RS},  {\bf UCT} {\rm )}\label{UCT}
Let $A, B$ be separable ${\rm C}^*$-algebras. Suppose that $A\in \mathcal{N}$. Then there is a short exact sequence
$$
0 \to \mathrm{Ext}(\mathrm{K}_*(A), \mathrm{K}_*(B)) \xrightarrow{\delta} \mathrm{K}\mathrm{K}^*(A, B) \xrightarrow{\gamma} {\rm Hom}(\mathrm{K}_*(A), \mathrm{K}_*(B))\to 0,
$$
where the map $\gamma$ has degree $0$ and $\delta$ has degree $1$.
\end{theorem}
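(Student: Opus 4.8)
The plan is the classical argument of Rosenberg and Schochet: construct $\gamma$ from the Kasparov product, verify the statement directly for ``elementary'' algebras with free $\mathrm{K}$-theory, and reduce the general case to those via a geometric free resolution inside the category $\mathrm{KK}$. For $\alpha\in\mathrm{KK}^i(A,B)$, right Kasparov multiplication by $\alpha$ carries $\mathrm{K}_j(A)=\mathrm{KK}^j(\mathbb{C},A)$ into $\mathrm{KK}^{i+j}(\mathbb{C},B)=\mathrm{K}_{i+j}(B)$, and associativity and bi-additivity of the product make $\gamma$ a well-defined, natural, $\mathbb{Z}/2$-grading-preserving homomorphism. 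If $P$ is a countable direct sum of copies of $\mathbb{C}$ and of $S\mathbb{C}=C_0(\mathbb{R})$, then $\mathrm{K}_*(P)$ is free, so $\mathrm{Ext}(\mathrm{K}_*(P),\mathrm{K}_*(B))=0$; using $\mathrm{KK}^*(\mathbb{C},B)=\mathrm{K}_*(B)$, Bott periodicity, and the fact that $\mathrm{KK}^*(-,B)$ and ${\rm Hom}(\mathrm{K}_*(-),\mathrm{K}_*(B))$ both convert countable direct sums into products, one checks at once that $\gamma\colon\mathrm{KK}^*(P,B)\to{\rm Hom}(\mathrm{K}_*(P),\mathrm{K}_*(B))$ is an isomorphism. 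Thus the UCT holds for such $P$ with $\delta=0$, and in particular every homomorphism out of $\mathrm{K}_*(P)$ has the form $\gamma(\alpha)$.

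Now let $A\in\mathcal{N}$. Since $A$ is separable, $\mathrm{K}_*(A)$ is a countable graded abelian group, so there is a surjection $q\colon F\twoheadrightarrow\mathrm{K}_*(A)$ from a free graded group $F$, and $\ker q$ is again free (a subgroup of a free abelian group). Realize $F=\mathrm{K}_*(P)$ for an elementary $P$ as above; then $q=\gamma(\alpha)$ for some $\alpha\in\mathrm{KK}^0(P,A)$. Representing $\alpha$ by a $*$-homomorphism after stabilization and forming its mapping cone $C$, we get $C\in\mathcal{N}$ and an exact triangle
$$SA\longrightarrow C\longrightarrow P\xrightarrow{\ \alpha\ }A;$$
the six-term $\mathrm{K}$-theory sequence together with the surjectivity of $\alpha_*=q$ shows $\mathrm{K}_*(C)\cong\ker q$, which is free, so by the principle stated below $C$ is $\mathrm{KK}$-equivalent to an elementary algebra and hence also satisfies the UCT with $\delta=0$. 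Applying $\mathrm{KK}^*(-,B)$ and ${\rm Hom}(\mathrm{K}_*(-),\mathrm{K}_*(B))$ to the triangle and comparing via $\gamma$, the fact that $\gamma$ is an isomorphism for $P$ and for $C$ collapses the two six-term sequences to the single short exact sequence
$$0\to\mathrm{coker}\!\big(j^{*}\colon{\rm Hom}(F,\mathrm{K}_*(B))\to{\rm Hom}(\ker q,\mathrm{K}_*(B))\big)\to\mathrm{KK}^*(A,B)\xrightarrow{\ \gamma\ }\ker(j^{*})\to0,$$
where $j\colon\ker q\hookrightarrow F$. By the construction of $\mathrm{Ext}$ from the free resolution $0\to\ker q\xrightarrow{j}F\xrightarrow{q}\mathrm{K}_*(A)\to0$, the left-hand term is $\mathrm{Ext}(\mathrm{K}_*(A),\mathrm{K}_*(B))$ (lying in degree $1$) and the right-hand term is ${\rm Hom}(\mathrm{K}_*(A),\mathrm{K}_*(B))$ (degree $0$); taking $\delta$ to be the inclusion of the left-hand term gives the asserted sequence. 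Independence of the chosen resolution and naturality of $\delta$ are routine homological bookkeeping.

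The genuine content --- and the step I expect to be the main obstacle --- is the \emph{principle} used above: a morphism in $\mathrm{KK}$ that induces an isomorphism on $\mathrm{K}_*$ between two objects of $\mathcal{N}$ is a $\mathrm{KK}$-equivalence (equivalently, an object of $\mathcal{N}$ with vanishing $\mathrm{K}$-theory is $\mathrm{KK}$-contractible), together with the closure of $\mathcal{N}$ under suspension, countable inductive limits, $\mathrm{KK}$-equivalence and the two-out-of-three property in exact triangles, which is what keeps the mapping cones inside $\mathcal{N}$. This is proved by induction along the bootstrap description of $\mathcal{N}$ and a five-lemma argument in the triangulated category $\mathrm{KK}$; granting it, the rest of the proof is formal. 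Since Theorem~\ref{UCT} and its total-$\mathrm{K}$-theoretic refinements are applied in this paper only to algebras manifestly in $\mathcal{N}$, we take them as known from \cite{RS,S}.
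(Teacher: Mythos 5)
The paper does not prove this statement: Theorem~\ref{UCT} is quoted verbatim from Rosenberg--Schochet \cite{RS} and used as a black box, so there is no in-paper argument to compare against. Your sketch is a faithful outline of the standard proof from \cite{RS}: the Kasparov-product construction of $\gamma$, verification for algebras with free $\mathrm{K}$-theory, and the geometric free resolution via a mapping cone are exactly the classical steps. One caution on logical order: the ``principle'' you isolate (a $\mathrm{K}_*$-isomorphism in $\mathrm{KK}$ between objects of $\mathcal{N}$ is a $\mathrm{KK}$-equivalence) is in the usual treatments a \emph{consequence} of the UCT rather than an input to it --- the induction over the bootstrap operations is run on the class of $A$ for which the UCT holds for all $B$ (using the five lemma on the two six-term sequences), and only afterwards does one deduce that objects of $\mathcal{N}$ with free $\mathrm{K}$-theory are $\mathrm{KK}$-equivalent to elementary algebras; as written, your step ``$C$ has free $\mathrm{K}$-theory, so by the principle $C$ satisfies the UCT'' risks circularity unless that induction is carried out first. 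Since you explicitly flag this as the genuine content and defer it to \cite{RS}, the sketch is acceptable for the purpose it serves here, where the theorem is only ever applied to algebras manifestly in $\mathcal{N}$.
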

\begin{theorem}[Proposition 7.3 and Corollary 7.5 in \cite{RS}]\label{kk-equivalence}
Let $A_1$ and $A_2$ be ${\rm C}^*$-algebras in $\mathcal{N}$ with the same (abstract) $\mathrm{K}$-groups, that is,
we have a graded isomorphism $\rho:\,\mathrm{K}_*(A_1)\to \mathrm{K}_*(A_2)$, then there exists  a $\mathrm{K}\mathrm{K}$-equivalence $\lambda\in \mathrm{K}\mathrm{K}(A_1, A_2)$
have the property that $\gamma(\lambda)=\lambda_*=\rho\in {\rm Hom}(\mathrm{K}_*(A_1), \mathrm{K}_*(A_2))$.
\end{theorem}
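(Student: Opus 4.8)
The plan is to deduce the statement from the UCT of Rosenberg--Schochet (Theorem~\ref{UCT}) together with the compatibility of $\gamma$ with the Kasparov product. There are three moves: lift $\rho$ and $\rho^{-1}$ to $\mathrm{KK}$-classes, observe that the two Kasparov products one can form out of these lifts agree with the identity classes modulo $\ker\gamma$, and then upgrade this to genuine invertibility using that $\ker\gamma$ is a square-zero ideal.

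First I would invoke surjectivity of $\gamma$ in Theorem~\ref{UCT}: since $A_1\in\mathcal{N}$ and $\rho\in\mathrm{Hom}(\mathrm{K}_*(A_1),\mathrm{K}_*(A_2))$, there is $\lambda\in\mathrm{KK}(A_1,A_2)$ with $\gamma(\lambda)=\rho$. Because $\rho$ is a graded isomorphism, $\rho^{-1}\in\mathrm{Hom}(\mathrm{K}_*(A_2),\mathrm{K}_*(A_1))$, and, $A_2$ being in $\mathcal{N}$ as well, there is $\mu\in\mathrm{KK}(A_2,A_1)$ with $\gamma(\mu)=\rho^{-1}$. Form the Kasparov products $u:=\lambda\otimes_{A_2}\mu\in\mathrm{KK}(A_1,A_1)$ and $v:=\mu\otimes_{A_1}\lambda\in\mathrm{KK}(A_2,A_2)$. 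Functoriality of $\gamma$ gives $\gamma(u)=\gamma(\mu)\circ\gamma(\lambda)=\rho^{-1}\circ\rho=\mathrm{id}_{\mathrm{K}_*(A_1)}$ and likewise $\gamma(v)=\mathrm{id}_{\mathrm{K}_*(A_2)}$. Hence $u-1_{A_1}$ and $v-1_{A_2}$ lie in $\ker\gamma$.

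The key step is to show that an element of the form $1_A+n$ with $n\in\ker\gamma\subset\mathrm{KK}(A,A)$ is invertible. For this I would use the fact --- this is the substance of \cite[\S 7]{RS} --- that $\ker\gamma=\mathrm{image}(\delta)$ is a two-sided ideal of the ring $\mathrm{KK}^*(A,A)$ on which the product vanishes: the UCT sequence is compatible with the Kasparov product, so that $z\otimes\delta(e)=\delta(\gamma(z)_{\#}e)$ for any composable $z$ and any $\mathrm{Ext}$-class $e$ (and symmetrically); thus $z\in\mathrm{image}(\delta)$, for which $\gamma(z)=0$, forces the product with any $\delta(e)$ to vanish, and in particular $n^2=0$. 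Therefore $(1_A+n)(1_A-n)=1_A-n^2=1_A=(1_A-n)(1_A+n)$, so $1_A+n$ is invertible. Applying this with $A=A_1$ and $A=A_2$ shows $u$ and $v$ are invertible in $\mathrm{KK}(A_1,A_1)$ and $\mathrm{KK}(A_2,A_2)$ respectively.

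Finally I would extract an inverse for $\lambda$ itself: the class $\mu\otimes_{A_1}u^{-1}\in\mathrm{KK}(A_2,A_1)$ is a right inverse for $\lambda$, since $\lambda\otimes_{A_2}(\mu\otimes_{A_1}u^{-1})=u\otimes_{A_1}u^{-1}=1_{A_1}$, while $v^{-1}\otimes_{A_2}\mu\in\mathrm{KK}(A_2,A_1)$ is a left inverse, since $(v^{-1}\otimes_{A_2}\mu)\otimes_{A_1}\lambda=v^{-1}\otimes_{A_2}v=1_{A_2}$; a morphism in a category possessing both a left and a right inverse has them coincide, so $\lambda$ is a $\mathrm{KK}$-equivalence, and $\gamma(\lambda)=\rho$ by construction, which is the claim. (Alternatively one may quote \cite[Proposition~7.3 and Corollary~7.5]{RS} verbatim, the statement being theirs; the above indicates the mechanism.) The only genuine obstacle is the square-zero property of $\ker\gamma$ used in the third paragraph --- this is precisely where the internal structure of the UCT (its compatibility with the Kasparov/intersection product) is needed --- and everything else is formal manipulation inside the $\mathrm{KK}$-category.
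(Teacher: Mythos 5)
Your argument is correct and is essentially the proof given in the cited source: the paper itself offers no proof (it quotes \cite[Proposition 7.3 and Corollary 7.5]{RS}), and your reconstruction --- lifting $\rho$ and $\rho^{-1}$ via surjectivity of $\gamma$, noting the products differ from the identities by elements of $\ker\gamma=\mathrm{image}(\delta)$, and using that this ideal has zero multiplication to invert $1+n$ --- is exactly the mechanism of those results. The one place where real content enters, the square-zero property of $\ker\gamma$ coming from compatibility of the UCT sequence with the Kasparov product, is correctly identified and correctly attributed.
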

\begin{theorem}{\rm (}\cite{DL2}, \cite[Theorem 4.2]{DG}, {\bf UMCT} {\rm )}\label{UMCT}
Let $A, B$ be ${\rm C}^*$-algebras. Suppose that $A\in \mathcal{N}$
and $B$ is $\sigma$-unital. Then there is a short exact sequence
$$
0 \to {\rm Pext}(\mathrm{K}_*(A), \mathrm{K}_*(B)) \to \mathrm{K}\mathrm{K}(A, B) \to {\rm Hom}_\Lambda(\underline{\mathrm{K}}(A), \underline{\mathrm{K}}(B))\to 0,
$$
which is natural in each variable.
\end{theorem}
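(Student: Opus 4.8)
The plan is to follow the Dadarlat--Loring strategy: first prove the statement when $\mathrm{K}_*(A)$ is finitely generated --- where ${\rm Pext}(\mathrm{K}_*(A),\mathrm{K}_*(B))=0$ and one must show that the natural map $\Gamma$ of Notion~\ref{def k-total} is an \emph{isomorphism} $\mathrm{KK}(A,B)\xrightarrow{\sim}{\rm Hom}_\Lambda(\underline{\mathrm{K}}(A),\underline{\mathrm{K}}(B))$ --- and then obtain the general case by a mapping-telescope argument in which ${\rm Pext}$ appears as a $\varprojlim{}^1$-term. For the finitely generated case: writing $\mathrm{K}_0(A)$ and $\mathrm{K}_1(A)$ as finite direct sums of cyclic groups and applying Theorem~\ref{kk-equivalence}, $A$ is $\mathrm{KK}$-equivalent to a finite direct sum of the coefficient algebras $\mathbb{C}$, $C_0(\mathbb{R})$, $C_0(W_n)$ and $C_0(\mathbb{R})\otimes C_0(W_n)$ $(n\ge 2)$, whose $\mathrm{K}$-theory pairs $(\mathrm{K}_0,\mathrm{K}_1)$ are $(\mathbb{Z},0),(0,\mathbb{Z}),(\mathbb{Z}_n,0),(0,\mathbb{Z}_n)$ respectively (the last two read off the six-term sequence in~\ref{def k-total}). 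Both $\mathrm{KK}(-,B)$ and ${\rm Hom}_\Lambda(\underline{\mathrm{K}}(-),\underline{\mathrm{K}}(B))$ are additive and invariant under $\mathrm{KK}$-equivalence and $\Gamma$ is natural, so it suffices to verify that $\Gamma$ is an isomorphism on each coefficient algebra.

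For $\mathbb{C}$ (and $C_0(\mathbb{R})$ by suspension) this is immediate: $\underline{\mathrm{K}}(\mathbb{C})$ is the free $\Lambda$-module on a single generator in degree $(0,\mathbb{Z})$, so ${\rm Hom}_\Lambda(\underline{\mathrm{K}}(\mathbb{C}),\underline{\mathrm{K}}(B))=\mathrm{K}_0(B)=\mathrm{KK}(\mathbb{C},B)$, and since $\gamma_0$ coincides with the UCT map $\gamma$ this identification is exactly $\Gamma$. The substantive case is $D=C_0(W_n)$: one determines $\underline{\mathrm{K}}(C_0(W_n))$ as a $\Lambda$-module --- it is \emph{not} simply a free cyclic $\Lambda$-module, because of the numerous relations among $\rho,\kappa,\beta$ in $\Lambda$ --- and, using a $\Lambda$-module presentation of it, shows that ${\rm Hom}_\Lambda(\underline{\mathrm{K}}(C_0(W_n)),\underline{\mathrm{K}}(B))$ sits inside the same short exact sequence
$$
0\to \mathrm{K}_1(B)/n\mathrm{K}_1(B)\to \mathrm{KK}(C_0(W_n),B)\to \{x\in\mathrm{K}_0(B):nx=0\}\to 0
$$
that the classical UCT (Theorem~\ref{UCT}) attaches to $\mathrm{KK}(C_0(W_n),B)$, with $\Gamma$ realizing the identification; the case $C_0(\mathbb{R})\otimes C_0(W_n)$ follows by suspension. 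Since ${\rm Pext}(\mathbb{Z},G)=0$ and ${\rm Pext}(\mathbb{Z}_n,G)=0$ (any pure extension of $\mathbb{Z}_n$ splits), the ${\rm Pext}$-term vanishes here, which completes the finitely generated case with $\mathrm{KK}(A,B)\cong{\rm Hom}_\Lambda(\underline{\mathrm{K}}(A),\underline{\mathrm{K}}(B))$.

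For a general separable $A\in\mathcal{N}$, realize $A$ up to $\mathrm{KK}$-equivalence as a mapping telescope $A\simeq\varinjlim A_k$ with each $\mathrm{K}_*(A_k)$ finitely generated and $\underline{\mathrm{K}}(A)=\varinjlim\underline{\mathrm{K}}(A_k)$; the telescope produces a Milnor sequence
$$
0\to {\varprojlim}^1 \mathrm{KK}^1(A_k,B)\to \mathrm{KK}(A,B)\to \varprojlim \mathrm{KK}(A_k,B)\to 0 .
$$
By the finitely generated case $\mathrm{KK}(A_k,B)\cong{\rm Hom}_\Lambda(\underline{\mathrm{K}}(A_k),\underline{\mathrm{K}}(B))$ naturally in $k$, so, since ${\rm Hom}_\Lambda(-,\underline{\mathrm{K}}(B))$ sends $\varinjlim$ to $\varprojlim$, the right-hand term equals ${\rm Hom}_\Lambda(\varinjlim\underline{\mathrm{K}}(A_k),\underline{\mathrm{K}}(B))={\rm Hom}_\Lambda(\underline{\mathrm{K}}(A),\underline{\mathrm{K}}(B))$. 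It remains to identify the left-hand term with ${\rm Pext}(\mathrm{K}_*(A),\mathrm{K}_*(B))$: one rewrites $\mathrm{KK}^1(A_k,B)$ via the finitely generated case and the classical UCT, kills the auxiliary ${\rm Ext}$-towers by a Mittag--Leffler argument (the approximating system being chosen so that the groups $\mathrm{K}_*(A_k)$ form an increasing union of finitely generated subgroups of $\mathrm{K}_*(A)$, whence the connecting maps of the ${\rm Ext}$-towers are surjective), and applies Jensen's theorem, which expresses ${\varprojlim}^1$ of a tower of ${\rm Hom}$-groups between finitely generated groups as a ${\rm Pext}$-group. Naturality of $\Gamma$ then matches the two short exact sequences, and naturality in each variable is inherited from that of $\Gamma$ and of the telescope construction.

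The genuine obstacle is the coefficient-algebra step for $C_0(W_n)$: pinning down the $\Lambda$-module $\underline{\mathrm{K}}(C_0(W_n))$ and a workable presentation of it, and checking that $\Gamma$ identifies ${\rm Hom}_\Lambda(\underline{\mathrm{K}}(C_0(W_n)),\underline{\mathrm{K}}(B))$ with $\mathrm{KK}(C_0(W_n),B)$ \emph{compatibly with} the UCT extension --- this is precisely the multi-coefficient refinement of the ordinary UCT, and it forces a careful accounting of all the Bockstein relations in $\Lambda$ (here the alternative description of $\mathrm{K}_*(\,\cdot\,;\mathbb{Z}_n)$ via the Cuntz algebra $\mathcal{O}_{n+1}$ mentioned after~\ref{def k-total} can be used to shorten the $\mathrm{KK}$-computations). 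The telescope bookkeeping --- the Mittag--Leffler/Jensen analysis of the $\mathrm{KK}$-side ${\varprojlim}^1$-term --- is the remaining, comparatively routine, technical point.
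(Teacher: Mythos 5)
This theorem is not proved in the paper: it is imported verbatim from Dadarlat--Loring \cite{DL2} (see also \cite[Theorem 4.2]{DG}), so there is no internal proof to compare against. Your outline faithfully reconstructs the architecture of the original argument --- reduction, via Theorem~\ref{kk-equivalence}, to the case of finitely generated $\mathrm{K}_*(A)$, where $A$ is $\mathrm{KK}$-equivalent to a direct sum of the coefficient algebras $\mathbb{C}$, $C_0(\mathbb{R})$, $C_0(W_n)$, $C_0(\mathbb{R})\otimes C_0(W_n)$, the ${\rm Pext}$-term vanishes (these $\mathrm{K}$-groups are pure-projective), and $\Gamma$ must be checked to be an isomorphism; then the passage to general separable $A\in\mathcal{N}$ by a mapping telescope, with the Milnor $\varprojlim^1$-sequence, the Mittag--Leffler vanishing of $\varprojlim^1$ of the ${\rm Ext}$-towers (surjectivity of restriction along inclusions of subgroups, since ${\rm Ext}^2$ vanishes for abelian groups), and Jensen's identification of $\varprojlim^1$ of the ${\rm Hom}$-towers with ${\rm Pext}$. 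The degree bookkeeping in the last step also comes out right. The only caveat is the one you yourself flag: the computation of $\underline{\mathrm{K}}(C_0(W_n))$ as a $\Lambda$-module together with the verification that $\Gamma$ identifies ${\rm Hom}_\Lambda(\underline{\mathrm{K}}(C_0(W_n)),\underline{\mathrm{K}}(B))$ with $\mathrm{KK}(C_0(W_n),B)$ compatibly with the UCT extension is the actual mathematical content of the theorem at the level of coefficient algebras, and your proposal names this step rather than carries it out. As written it is an accurate roadmap of the proof in \cite{DL2}, not a self-contained argument.
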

For an abelian group $G$, we denote its subgroup consisting of all the torsion elements of $G$   by $\operatorname{tor}(G)$.
\begin{definition}\rm {\rm (}\cite{Ell0},\cite[Lemma 8.1]{Goo2}, \cite[Definition 1.2.7]{EG}{\rm )}\label{0weakly un}
An ordered abelian group $\left(G, G_{+}\right)$ is called weakly unperforated if it satisfies both of the following conditions:

(i) $G/\operatorname{tor}(G)$ is unperforated, i.e, in the quotient ordered group, $ng \geq 0$ with
$ n\in \mathbb{N}^+$  implies $g\geq0$;

(ii) Given $g\in G_+$, $t\in  \operatorname{tor}(G)$, $n\in \mathbb{N}^+$ and $m\in \mathbb{Z}$ with $ng + mt\in G_+$, then
$t = t' + t''$ for some $t', t''\in\operatorname{tor}(G)$ such that  $mt' = 0$ and $g+ t''\in G_+$.
\end{definition}

We will also need the following result obtained from \cite[Theorem 4.9]{W}, one can also see analogous versions under the circumstance in \cite{Sk1,Sk2,W0}.
\begin{theorem}[Theorem 3.5 of \cite{AL2}]\label{strong wei}
Let $A,B$ be nuclear separable ${\rm C}^*$-algebras of stable rank one and real rank zero with $A\in \mathcal{N}$. Assume that  $A$ is unital simple,  $B$ is stable and $({\rm K}_0(B),{\rm K}_0^+(B))$  is weakly unperforated.
Let ${\rm Text}_{s}^u(A,B)$ be  the set of strongly unitary equivalence classes of all the unital extensions of $A$ by $B$ with trivial index maps. Then  we have
$$
\mathrm{Ext}_{[1]}(\mathrm{K}_*(A), \mathrm{K}_*(B))\cong {\rm Text}_{s}^u(A,B).
$$
\end{theorem}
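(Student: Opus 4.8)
The plan is to build a natural homomorphism
$$
\Theta\colon {\rm Text}_{s}^u(A,B)\longrightarrow \mathrm{Ext}_{[1]}(\mathrm{K}_*(A),\mathrm{K}_*(B))
$$
out of the K-theory six-term sequence of an extension, to check it is a group homomorphism, and then to prove bijectivity by combining it with the based-versus-unbased exact sequence for $\mathrm{Ext}_{[1]}$ and with the classification of extensions by stable ${\rm C}^*$-algebras.

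\textbf{Step 1: the map, and that it is a homomorphism.} Let $e\colon 0\to B\to E\to A\to 0$ be a unital extension with trivial index maps; since $A$ is unital simple and $B$ is stable, $e$ is automatically essential, so here ``unital'' and ``unital essential'' pick out the same classes. Because $\delta_0=\delta_1=0$, the six-term exact sequence breaks into two short exact sequences
$$
0\to \mathrm{K}_j(B)\to \mathrm{K}_j(E)\to \mathrm{K}_j(A)\to 0,\qquad j=0,1 .
$$
The $j=0$ sequence carries the base point $[1_E]\mapsto[1_A]$, hence represents a class in $\mathrm{Ext}((\mathrm{K}_0(A),[1_A]),\mathrm{K}_0(B))$, while the $j=1$ sequence represents a class in $\mathrm{Ext}(\mathrm{K}_1(A),\mathrm{K}_1(B))$; let $\Theta(e)$ be the pair. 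Well-definedness under $\sim_s$ follows from Proposition~\ref{strong tui cong}: the isomorphism $\psi(a,b)=(a,ubu^*)$ there restricts to $\mathrm{id}_B$, covers $\mathrm{id}_A$, and sends $1_{E_1}$ to $1_{E_2}$, so $\psi_*$ is an equivalence of the associated (based) group extensions. A routine diagram chase identifies the Cuntz (BDF) sum of ${\rm C}^*$-extensions with the Baer sum of group extensions, so $\Theta$ is a group homomorphism.

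\textbf{Step 2: bijectivity.} The based-versus-unbased exact sequence quoted above, used in the $\mathrm{K}_0$-coordinate, gives
$$
0\to \mathrm{K}_0(B)/G_0\longrightarrow \mathrm{Ext}_{[1]}(\mathrm{K}_*(A),\mathrm{K}_*(B))\xrightarrow{\,q\,}\mathrm{Ext}(\mathrm{K}_*(A),\mathrm{K}_*(B))\to 0 ,
$$
with $G_0=\{f([1_A]):f\in\mathrm{Hom}(\mathrm{K}_0(A),\mathrm{K}_0(B))\}$. So it suffices to prove that $q\circ\Theta$ is surjective with kernel isomorphic to $\mathrm{K}_0(B)/G_0$, and then invoke the five lemma. \emph{Surjectivity:} given a class in $\mathrm{Ext}(\mathrm{K}_*(A),\mathrm{K}_*(B))=\mathrm{Ext}(\mathrm{K}_0(A),\mathrm{K}_0(B))\oplus\mathrm{Ext}(\mathrm{K}_1(A),\mathrm{K}_1(B))$, Theorem~\ref{UCT} in degree one identifies it with an element of $\ker\gamma\subseteq\mathrm{KK}^1(A,B)$, the image of $\gamma$ on an extension class being exactly the pair of index maps; since $A$ is separable nuclear, $\mathrm{Ext}(A,B)\cong\mathrm{KK}^1(A,B)$, so the element is represented by a semisplit extension, which may be taken unital and essential (still with trivial index maps, since these are a $\mathrm{KK}$-invariant), and whose $\Theta$ maps onto the given class under $q$. \emph{Kernel:} if $\Theta(e)\in\ker q$, both underlying K-extensions split and $[1_E]=(x,[1_A])\in\mathrm{K}_0(B)\oplus\mathrm{K}_0(A)$, the coset $x+G_0$ being the only remaining invariant; every coset is realized by changing a projection in $M(B)$ lifting $\tau(1_A)$, and the cancellation furnished by weak unperforation of $(\mathrm{K}_0(B),\mathrm{K}_0^+(B))$ ensures that exactly $\mathrm{K}_0(B)/G_0$ occurs --- here one also uses that $B$ is stable of real rank zero and stable rank one, so that $\mathrm{K}_1(M(B)/B)\cong\mathrm{K}_0(B)$ and $U(M(B))$ is connected.

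\textbf{The main obstacle.} The heart of Step~2 is a Weyl--von Neumann type uniqueness statement: two unital essential extensions of $A$ by $B$ with trivial index maps and the same based K-theory datum must be strongly unitarily equivalent (together with its dual existence statement). This is precisely where all the hypotheses act at once --- $A$ unital, simple, nuclear and in $\mathcal N$ (so that its extensions are controlled by $\mathrm{KK}$ through the UCT and $A$ itself is rigid), and $B$ stable of real rank zero and stable rank one with $(\mathrm{K}_0(B),\mathrm{K}_0^+(B))$ weakly unperforated (to eliminate order obstructions when lifting $\mathrm{KK}$-data to a conjugating unitary in $M(B)$ and when moving the class of the unit). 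I would derive it from \cite[Theorem~4.9]{W}, or from the Skandalis-type absorption theorems in \cite{Sk1,Sk2,W0}, which provide exactly such a classification of extensions with trivial index maps up to strong unitary equivalence; everything else --- the homological bookkeeping of Step~2 and the two diagram chases of Step~1 --- is routine.
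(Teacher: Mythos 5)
The paper does not actually prove Theorem~\ref{strong wei}: it is imported verbatim as \cite[Theorem 3.5]{AL2}, which in turn the authors describe as ``obtained from \cite[Theorem 4.9]{W}''. So there is no in-paper argument to compare against, and your proposal ends up resting on exactly the same external pillar --- you explicitly defer the Weyl--von Neumann uniqueness statement (same based $\mathrm{K}$-datum $\Rightarrow$ strong unitary equivalence) and, implicitly, the matching existence statement to \cite{W,Sk1,Sk2,W0}. In that sense your route is the same as the paper's, and the scaffolding you build around the citation (splitting the six-term sequence, keeping the base point $[1_E]\mapsto[1_A]$, comparing with the based-versus-unbased sequence for $\mathrm{Ext}_{[1]}$) is the natural way to massage Wei's classification into the stated form.

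As a standalone proof, however, the proposal has three soft spots that you should not present as ``routine''. First, for $\Theta$ to be a \emph{group} homomorphism and for the five lemma to apply, you need ${\rm Text}_s^u(A,B)$ to be a group under the Cuntz sum; existence of a neutral element and of inverses in the unital, strong-unitary-equivalence setting is itself a consequence of the absorption theorem, not something available a priori. Second, your reduction ``$q\circ\Theta$ is surjective with kernel isomorphic to $\mathrm{K}_0(B)/G_0$'' is not enough: you must show that $\Theta$ restricted to $\ker(q\circ\Theta)$ \emph{is} the canonical identification with $\mathrm{K}_0(B)/G_0\subset\mathrm{Ext}_{[1]}$, i.e.\ that the coset $[1_E]+G_0$ computed from the extension agrees with the image of the abstract parametrization by projections in $M(B)$; an abstract isomorphism of the kernels does not feed the five lemma. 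Third, ``the element is represented by a semisplit extension, which may be taken unital and essential'' hides a genuine step: realizing a class of $\mathrm{KK}^1(A,B)$ by a \emph{unital} Busby invariant requires an argument about the class of the unit (using that $1_{M(B)/B}$ is properly infinite for $B$ stable, together with the absorption machinery), and this is precisely part of what \cite[Theorem 4.9]{W} delivers. None of these is a fatal error --- each is resolved by the very references you and the paper both lean on --- but as written your Steps 1--2 restate the theorem's content more than they prove it.
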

Suppose $A,B$ are ${\rm C}^*$-algebras satisfy the above conditions  and $$\varepsilon\in \mathrm{Ext}_{[1]}(\mathrm{K}_*(A), \mathrm{K}_*(B))$$ is the equivalent class of the extension
$$
0\to \mathrm{K}_*(B)\to(G,g_0)\to (\mathrm{K}_*(A),[1_A])\to 0.
$$
Then there exists an extension with trivial index maps of $A$ by $B$:
$$
0\to B \xrightarrow{\iota} E \xrightarrow{\pi}  A\to 0,
$$
such that the following diagram commutes:
$$
\xymatrixcolsep{2pc}
\xymatrix{
{\,\,0\,\,} \ar[r]^-{}
& {\,\,\mathrm{K}_*(B)\,\,} \ar[d]_-{{\rm id}} \ar[r]^-{}
& {\,\,(G,g_0)\,\,} \ar[d]_-{\alpha} \ar[r]^-{}
& {\,\,(\mathrm{K}_*(A), [1_A])\,\,} \ar[d]_-{{\rm id}} \ar[r]^-{}
& {\,\,0\,\,} \\
{\,\,0\,\,} \ar[r]^-{}
& {\,\,\mathrm{K}_*(B)\,\,} \ar[r]^-{\mathrm{K}_*(\iota)}
& {\,\,(\mathrm{K}_*(E),[1_E]) \,\,} \ar[r]^-{\mathrm{K}_*(\pi)}
& {\,\,(\mathrm{K}_*(A), [1_A]) \,\,} \ar[r]_-{}
& {\,\,0\,\,}.}
$$
Here, such an $E$ is unique up to isomorphism by Proposition \ref{strong tui cong}.

\section{Counterexample}
In this section, we construct two unital, separable, nuclear ${\rm C}^*$-algebras of stable rank one and  real rank zero with the same ordered scaled  total K-theory, and we show that they are not isomorphic.
\begin{notion}\label{b1b2}\rm
Recall the ${\rm C}^*$-algebras $C,D$ (with $p=2$) constructed in \cite[Theorem 3.3]{DL3}.

For any ${\rm C}^*$-algebra $A$ with unit $1_A$, denote $\delta_i\,:\,\widetilde{\mathbb{I}}_2\rightarrow A$, for $i=0,1$, the map $\delta_i(f)=f(i)\cdot 1_A$. If $0<t<1$, we use $\delta_{t}\,:\,\widetilde{\mathbb{I}}_2\rightarrow M_2(A)$ for the map $\delta_t(f)=1_A\otimes f(t)$. Let $\{t_n\}$ be a sequence dense in $(0,1)$, define $\psi_n={\rm id} \oplus\delta_{t_n}: \widetilde{\mathbb{I}}_2\rightarrow M_3(\widetilde{\mathbb{I}}_2)$, set
$$
\varphi_n=\psi_n\otimes {\rm id}: M_{3^{n+1}}(\widetilde{\mathbb{I}}_2)\rightarrow M_{3^{n+2}}(\widetilde{\mathbb{I}}_2).
$$

For any $n\in \mathbb{N}$, let
$$
A_n=M_{3^{n+1}}(\widetilde{\mathbb{I}}_2)\oplus M_{3^n}\oplus\cdots\oplus M_3\oplus \mathbb{C}\oplus M_3\oplus \cdots \oplus M_{3^n}.
$$
Define the connecting maps $\phi_{n,n+1}, \phi'_{n,n+1}: A_n\rightarrow A_{n+1}$ as follows
$$
\phi_{n,n+1}(f,x_{-n},\cdots,x_n)=(\varphi_n(f),f(0),x_{-n},\cdots,x_n,f(0)),
$$
$$
\phi'_{n,n+1}(f,x_{-n},\cdots,x_n)=(\varphi_n(f),f(0),x_{-n},\cdots,x_n,f(1)),
$$
where $f\in M_{3^{n+1}}(\widetilde{\mathbb{I}}_2)$, $x_0\in \mathbb{C}$ and $x_{-m},x_m \in M_{3^{m}}$ for $m=1,\cdots,n$.

Write $C=\lim\limits_{\longrightarrow}(A_n,\phi_{n,n+1})$, $D=\lim\limits_{\longrightarrow}(A_n,\phi'_{n,n+1})$.
 Then $C,D$ have real rank zero and stable rank one.

In this whole section, we use the convention  $$B_1:=C\otimes\mathcal{K},\quad B_2:=D\otimes\mathcal{K}. $$
It is proved in
\cite[Theorem 3.3]{DL3}  that $B_1$, $B_2$ are not isomorphic and
$$
(\mathrm{K}_*(B_1),\mathrm{K}_*^+(B_1))
\cong(\mathrm{K}_*(B_2),\mathrm{K}_*^+(B_2)).
$$
By \cite[Theorem 9.1]{DG}, we have
$$
(\underline{\mathrm{K}}(B_1),\underline{\mathrm{K}}(B_1)_+)\ncong
(\underline{\mathrm{K}}(B_2),\underline{\mathrm{K}}(B_2)_+),
$$
where the orders we take above are both Dadarlat-Gong orders. Note that by
Theorem \ref{kk-equivalence}, we can still have $\underline{\mathrm{K}}(B_1)\cong\underline{\mathrm{K}}(B_2)$, while the orders are not preserved.
\end{notion}

\begin{notion}\rm
Denote $\mathbf{Z}$ the subgroup of $\mathbb{Z}[\frac{1}{3}]\oplus \prod_{\mathbb{Z}}\mathbb{Z}$, which consists of all the elements of the form $(a, \prod_{\mathbb{Z}} a_m)$ with $a_m=3^{|m|} \cdot a$ for all large enough $|m|$.

For $B_1$, $B_2$, by \cite[Theorem 3.3]{DL3}, we have
$$
\mathrm{K}_0(B_i)= \mathbf{Z},\quad  \mathrm{K}_1(B_i)=\mathbb{Z}_2,
$$
and
$$
 \mathrm{K}_0^+(B_i)=\mathbf{Z}\cap ( \mathbb{R}_+ \oplus \prod_{\mathbb{Z}}\mathbb{R}_+),
$$
where $i=1,2$ and $\mathbb{R}_+=[0,+\infty)$.

As for each $i=1,2,$ $\mathrm{K}_0(B_i)$ is a torsion free group, and it is immediate that $\mathrm{K}_0(B_i)$
is weakly unperforated (see Definition \ref{0weakly un}).
\end{notion}


\begin{notion}\rm  ({\bf Counterexample})\label{counter ex}
Denote $\mathbf{Q}$ to be the subgroup of $\mathbb{Q}\oplus\prod_\mathbb{Z}\mathbb{Q}$ consisting of all the elements with the form $(a, \prod_{\mathbb{Z}} a_m)$ and $ a_m= 3^{|m|}\cdot a$  for
all large enough $|m|$.

Consider the following exact sequence
$$0\to\mathbf{Z}\xrightarrow{\zeta}\mathbf{Q}\xrightarrow{\eta} \mathbf{Q}_\chi^3\to 0,$$
where $\zeta(a, \prod_\mathbb{Z}a_m)=(a, \prod_\mathbb{Z}a_m)$
is the natural embedding map,
$\mathbf{Q}_\chi^3:=\mathbf{Q}/\mathbf{Z}$ is the quotient group and $\eta$ is the quotient map.

(Denote
$$\mathbb{Q}_\chi^3=\{[\frac{q}{p}]\in \mathbb{Q}/\mathbb{Z}:\,p,q\in \mathbb{Z},3\nmid p\}.$$
One can check that $\mathbf{Q}_\chi^3$ is isomorphic to the subgroup of $\mathbb{Q}_\chi^3\oplus\prod_\mathbb{Z}\mathbb{Q}/\mathbb{Z}$ consisting of all the elements of the form $(b, \prod_\mathbb{Z}b_m) $ and $b_m= 3^{|m|}\cdot b$  for all large enough $|m|$.)

We only use the fact that $\mathbf{Q}_\chi^3$  is a countable torsion group.
By \cite[Theorem 4.20]{EG}, let $A_\chi$ be a unital simple AH algebra of stable rank one and real rank zero, with no dimension growth satisfying
 $$\mathrm{K}_0(A_\chi)=\mathbb{Q}\oplus \mathbf{Q}_\chi^3,\quad \mathrm{K}_1(A_\chi)=0,\quad [1_{A_\chi}]=(1,0),$$
 $$\mathrm{K}_0^+(A_\chi)=\{(x,y)\in \mathbb{Q}\oplus \mathbf{Q}_\chi^3|\,x>0\}\cup\{(0,0)\in \mathbb{Q}\oplus \mathbf{Q}_\chi^3\}.$$
Let $\varepsilon_i\in {\rm Ext}_{[1]}(\mathrm{K}_*(A_\chi), \mathrm{K}_*(B_i))$ be the equivalent class of
$$
0\to \mathrm{K}_0(B_i)\xrightarrow{\zeta_i} (\mathbb{Q}\oplus \mathbf{Q},(1,0))\xrightarrow{\eta_i} (\mathrm{K}_0(A_\chi),(1,0))\to 0,
$$
$$
0\to \mathrm{K}_1(B_i)\xrightarrow{\nu_i}\mathbb{Z}_2\xrightarrow{0} \mathrm{K}_1(A_\chi)\to 0,
$$
where $\zeta_i(a, \prod_\mathbb{Z}a_m)=(0, (a, \prod_\mathbb{Z}a_m))$, $\nu_i= {\rm id}_{\mathbb{Z}_2}$ and
$$\eta_i(x, (b, \prod_\mathbb{Z}b_m))=(x, \eta(b, \prod_\mathbb{Z}b_m)).$$

By Theorem \ref{strong wei}, there are two unital essential extensions of ${\rm C}^*$-algebras with trivial index maps (see Definition \ref{trivial index def})
$$
0\to B_i\xrightarrow{\iota_i} E_i\xrightarrow{\pi_i}A_\chi\to 0, \quad i=1,2
$$
realising $\varepsilon_i$ and satisfying the following commutative diagram
$$
\xymatrixcolsep{1pc}
\xymatrix{
{\,\,0\,\,} \ar[r]^-{}
& {\,\,\mathrm{K}_*(B_i)\,\,} \ar[d]_-{{\rm id}} \ar[r]^-{(\zeta_i,\nu_i)}
& {\,\,((\mathbb{Q}\oplus \mathbf{Q},\mathbb{Z}_2),(1,0))\,\,} \ar[d]_-{\rho_i} \ar[r]^-{(\eta_i,0)}
& {\,\,(\mathrm{K}_*(A_\chi), [1_{A_\chi}])\,\,} \ar[d]_-{{\rm id}} \ar[r]^-{}
& {\,\,0\,\,} \\
{\,\,0\,\,} \ar[r]^-{}
& {\,\,\mathrm{K}_*(B_i)\,\,} \ar[r]^-{\mathrm{K}_*(\iota_i)}
& {\,\,(\mathrm{K}_*(E_i),[1_{E_i}]) \,\,} \ar[r]^-{\mathrm{K}_*(\pi_i)}
& {\,\,(\mathrm{K}_*(A_\chi), [1_{A_\chi}]) \,\,} \ar[r]_-{}
& {\,\,0\,\,}.}
$$
We will identify
$\mathrm{K}_*(E_i)=(\mathbb{Q}\oplus \mathbf{Q},\mathbb{Z}_2)$ through the isomorphism $\rho_i$.
Note that by Proposition \ref{strong tui cong}, for each $i=1,2$, $E_i$ is unique up to isomorphism
and by Proposition \ref{lin inj}, both $E_1,E_2$ are of stable rank one and real rank zero.

{\bf In this whole section, $E_1$, $E_2$ and $A_\chi$ will always be taken as the above ${\rm C}^*$-algebras.}

\end{notion}
\begin{remark}
As $B_1,B_2, A_\chi$ are separable nuclear ${\rm C}^*$-algebras of stable rank one and real rank zero in the bootstrap class $\mathcal{N}$, we have $E_1,E_2$ are both unital, separable, nuclear ${\rm C}^*$-algebras of stable rank one and real rank zero  in  $\mathcal{N}$ (for each $E_i$, separability is trivial; see nuclearity in \cite[IV 3.1.3]{Bop}
).
For each $i=1,2$, the following exact sequence
$$
0\to \mathrm{K}_0(B_i)\xrightarrow{{\rm K}_0(\iota_i)} \mathrm{K}_0(E_i)\xrightarrow{{\rm K}_0(\pi_i)}\mathrm{K}_0(A_\chi)\to 0,
$$
which is
$$
0\to \mathbf{Z} \xrightarrow{0\oplus\zeta} \mathbb{Q}\oplus\mathbf{Q}\xrightarrow{{\rm id}\oplus\eta}\mathbb{Q}\oplus \mathbf{Q}^3_\chi\to 0,
$$
is not a pure  extension of abelian groups (see Definition \ref{group pure}). So both $E_1$ and $E_2$ are not K-pure ${\rm C}^*$-algebras
(see Definition \ref{def kpure}). By \cite[Proposition 4.4]{DE}, we have both of them are not $A\mathcal{HD}$ algebras. We inform the readers in advance
that by Lemma \ref{k-pure lemma} (iii) and (iv), for each $i=1,2,$ in the following sequence
$$\underline{{\rm K}}(B_i ) \xrightarrow{\underline{{\rm K}}(\iota_i)} \underline{{\rm K}}(E_i) \xrightarrow{\underline{{\rm K}}(\pi_i)} \underline{{\rm K}}(A_\chi), $$
$\underline{{\rm K}}(\iota_i)$ is not injective, and at the same time $\underline{{\rm K}}(\pi_i)$ is not surjective.
\end{remark}
\begin{proposition}\label{Ei ktotal}
For each $i=1,2$, we have
$$
\mathrm{K}_0(E_i)=\mathbb{Q}\oplus \mathbf{Q}\quad{\rm and}\quad
\mathrm{K}_1(E_i)=\mathbb{Z}_2;$$ and for any $n\geq1$,
$$
\mathrm{K}_0(E_i;\mathbb{Z}_{2n})=\mathbb{Z}_{2},\quad
\mathrm{K}_1(E_i;\mathbb{Z}_{2n})=\mathbb{Z}_{2}
$$
and
$$
\mathrm{K}_0(E_i;\mathbb{Z}_{2n-1})=0,\quad
\mathrm{K}_1(E_i;\mathbb{Z}_{2n-1})=0.
$$
\end{proposition}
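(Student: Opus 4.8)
The plan is to read off everything from the identification $\mathrm{K}_*(E_i)=(\mathbb{Q}\oplus\mathbf{Q},\mathbb{Z}_2)$ fixed via $\rho_i$ in \ref{counter ex}, combined with the six-term exact sequence for mod-$n$ K-theory recalled in \ref{def k-total}. The equalities $\mathrm{K}_0(E_i)=\mathbb{Q}\oplus\mathbf{Q}$ and $\mathrm{K}_1(E_i)=\mathbb{Z}_2$ are exactly that identification, so only the coefficient groups $\mathrm{K}_j(E_i;\mathbb{Z}_n)$ need attention.

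For those, I would split the six-term sequence $\mathrm{K}_j(E_i)\xrightarrow{\times n}\mathrm{K}_j(E_i)\xrightarrow{\rho_n^j}\mathrm{K}_j(E_i;\mathbb{Z}_n)\xrightarrow{\beta_n^j}\mathrm{K}_{j+1}(E_i)\xrightarrow{\times n}\mathrm{K}_{j+1}(E_i)$ at the middle term. Since the cokernel of multiplication by $n$ on $\mathrm{K}_*(E_i)$ is $\mathrm{K}_*(E_i)\otimes\mathbb{Z}_n$ and its kernel is the $n$-torsion subgroup, which is $\cong\mathrm{Tor}(\mathrm{K}_*(E_i),\mathbb{Z}_n)$, this yields the short exact sequence
$$0\to \mathrm{K}_j(E_i)\otimes\mathbb{Z}_n\to \mathrm{K}_j(E_i;\mathbb{Z}_n)\to \mathrm{Tor}(\mathrm{K}_{j+1}(E_i),\mathbb{Z}_n)\to 0.$$
So it remains to evaluate the four outer groups for $j=0,1$, which comes down to two elementary observations. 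First, $\mathbb{Q}\oplus\mathbf{Q}$ is divisible and torsion-free: torsion-freeness holds since $\mathbf{Q}$ embeds in a rational vector space, and divisibility of $\mathbf{Q}$ is checked directly --- for $(a,\prod_{\mathbb{Z}}a_m)\in\mathbf{Q}$ and $n\in\mathbb{N}$ the element $(a/n,\prod_{\mathbb{Z}}(a_m/n))$ again lies in $\mathbf{Q}$, because $a_m/n=3^{|m|}(a/n)$ for all large $|m|$. Consequently $\mathrm{K}_0(E_i)\otimes\mathbb{Z}_n=0$ and $\mathrm{Tor}(\mathrm{K}_0(E_i),\mathbb{Z}_n)=0$ for every $n$. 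Second, $\mathbb{Z}_2\otimes\mathbb{Z}_n\cong\mathrm{Tor}(\mathbb{Z}_2,\mathbb{Z}_n)\cong\mathbb{Z}_{\gcd(2,n)}$, which is $\mathbb{Z}_2$ when $n$ is even and $0$ when $n$ is odd.

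Feeding these into the short exact sequence completes the argument: the $j=0$ row collapses to $\mathrm{K}_0(E_i;\mathbb{Z}_n)\cong\mathrm{Tor}(\mathbb{Z}_2,\mathbb{Z}_n)$ and the $j=1$ row to $\mathrm{K}_1(E_i;\mathbb{Z}_n)\cong\mathbb{Z}_2\otimes\mathbb{Z}_n$, and specialising to $n=2n$ (even) and $n=2n-1$ (odd) gives $\mathbb{Z}_2$ and $0$ respectively, as claimed. The whole computation is routine bookkeeping with the universal coefficient sequence; the single point that deserves a moment's care --- and which I would flag as the only real subtlety --- is the divisibility of $\mathbf{Q}$, where it is essential to use $\mathbf{Q}$ rather than $\mathbf{Z}$ (the latter is not divisible). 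Beyond that there is no genuine obstacle.
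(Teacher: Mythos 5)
Your argument is correct and is essentially the paper's own proof: the paper also runs the six-term sequence $\mathrm{K}_j(E_i)\xrightarrow{\times k}\mathrm{K}_j(E_i)\to\mathrm{K}_j(E_i;\mathbb{Z}_k)\to\mathrm{K}_{j+1}(E_i)\xrightarrow{\times k}\mathrm{K}_{j+1}(E_i)$, using that $\times k$ is an isomorphism on $\mathbb{Q}\oplus\mathbf{Q}$ (your divisibility/torsion-freeness check) and is zero or an isomorphism on $\mathbb{Z}_2$ according to the parity of $k$. Your repackaging via the short exact sequence $0\to\mathrm{K}_j\otimes\mathbb{Z}_n\to\mathrm{K}_j(E_i;\mathbb{Z}_n)\to\mathrm{Tor}(\mathrm{K}_{j+1},\mathbb{Z}_n)\to0$ is just a reformulation of the same exactness argument, so there is nothing substantive to add.
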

\begin{proof}
Recall that in \ref{counter ex}, for each $i=1,2$, we have
$$
\mathrm{K}_0(E_i)=\mathbb{Q}\oplus \mathbf{Q}\quad{\rm and}\quad
\mathrm{K}_1(E_i)=\mathbb{Z}_2.$$
For each $k\geq1$,
via  the cofibre sequence, we have the following exact sequence
$$
\xymatrixcolsep{3pc}
\xymatrix{
{\mathrm{K}_0(E_i)}  \ar[r]^-{\times k}
& {\mathrm{K}_0(E_i)}  \ar[r]^-{}
& {\mathrm{K}_0(E_i; \mathbb{Z}_k)} \ar[d]_-{}
 \\
{\mathrm{K}_1(E_i; \mathbb{Z}_k)} \ar[u]_-{}
& {\mathrm{K}_1(E_i)} \ar[l]_-{}
& {\mathrm{K}_1(E_i)} \ar[l]_-{\times k}.
}
$$
Note that the map
$\times k:\,\mathbb{Q}\oplus \mathbf{Q}\to \mathbb{Q}\oplus \mathbf{Q}$ in the first row is an isomorphism.
If $k$ is even, the map $\times k:\,\mathbb{Z}_2\to \mathbb{Z}_2$ in the second row is the zero map; if $k$ is odd,  the map $\times k:\,\mathbb{Z}_2\to \mathbb{Z}_2$ is an isomorphism.

By exactness of the diagram, it is immediate that for any $n\geq1$,
$$
\mathrm{K}_0(E_i;\mathbb{Z}_{2n})=\mathbb{Z}_{2},\quad
\mathrm{K}_1(E_i;\mathbb{Z}_{2n})=\mathbb{Z}_{2}
$$
and
$$
\mathrm{K}_0(E_i;\mathbb{Z}_{2n-1})=0,\quad
\mathrm{K}_1(E_i;\mathbb{Z}_{2n-1})=0.
$$
\end{proof}

\begin{notion}\rm\label{def lambda}
Now we define $$
\lambda:\,\underline{\mathrm{K}}(E_1)\to\underline{\mathrm{K}}(E_2)
$$
 to be the identity map from the above proposition.

 Note that by  UCT (Theorem \ref{UCT})
and  UMCT (Theorem \ref{UMCT}), every isomorphism between the $\mathrm{K}_*$-groups
can be lifted to a
$\Lambda$-isomorphism between the corresponding $\underline{\mathrm{K}}$-groups. Also note that $\lambda$ is the unique graded isomorphism from $\underline{\mathrm{K}}(E_1)$ to $\underline{\mathrm{K}}(E_2)$, whose restriction on $\mathrm{K}_*$-groups is ${\rm id}\in {\rm Hom}(\mathrm{K}_*(E_1),\mathrm{K}_*(E_2))$. Hence, $\lambda$ is  automatically a $\Lambda$-isomorphism.
\end{notion}


\begin{lemma}{\rm (}\cite[Corollary 2.7]{AL2}{\rm )}\label{sheying dadada}
Let $A,B$ be nuclear separable ${\rm C}^*$-algebras of stable rank one and real rank zero. Suppose that $A$ is unital simple,  $B$ is stable and $({\rm K}_0(B),{\rm K}_0^+(B))$  is weakly unperforated (see Definition \ref{0weakly un}). Let $e$ be a unital essential extension with trivial index maps (see Definition \ref{trivial index def})
$$
0\to B\xrightarrow{\iota} E\xrightarrow{\pi} A\to 0.
$$
Then for any nonzero projection $p\in E\backslash B$ and any projection $q\in B$, we have $[\iota(q)]\leq [p]$ in ${\rm K}_0(E)$.
\end{lemma}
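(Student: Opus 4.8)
\emph{Strategy.} The plan is to reduce the inequality to a structural statement about the hereditary subalgebra $pBp$ of $B$, and then to prove that statement using simplicity of $A$ together with stability of $B$. Since the index maps of $e$ vanish, $E$ has stable rank one and real rank zero, so ${\rm K}_0(E)$ is an ordered group with cancellation and, for the level-one projections $\iota(q)$ and $p$, one has $[\iota(q)]\le[p]$ in ${\rm K}_0(E)$ precisely when $\iota(q)\preceq p$ in $E$; as $\iota(q)$ lies in the ideal $B$, a subprojection of $p$ equivalent to $\iota(q)$ must lie in $pBp:=pEp\cap B$. Using cancellation and real rank zero of $B$, I would therefore reduce the lemma to the following assertion: \emph{for every projection $q\in B$ there is a genuine projection $e\in pBp$ with $[e]=[q]$ in ${\rm K}_0(B)$} (indeed $[e]\ge[q]$ already suffices, since then $q\preceq e$ in $B$). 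Equivalently, $pBp$ must be ``as large as $B$'' at the level of ${\rm K}_0$.

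\emph{Fullness of $pBp$.} Passing to the pull-back picture $E=\{(a,x)\in A\times M(B):\tau(a)=\rho(x)\}$, with $\tau\colon A\to\mathcal Q(B):=M(B)/B$ the Busby map and $\rho\colon M(B)\to\mathcal Q(B)$ the quotient, write $p=(\pi(p),m)$ with $m\in M(B)$ a projection, $\rho(m)=\tau(\pi(p))$ and $pBp\cong mBm$. Essentiality makes $\tau$ injective, and $p\notin B$ forces $\pi(p)\ne0$, hence $\rho(m)\ne0$, i.e.\ $m\notin B$; in particular $pBp\ne0$. To see that $pBp$ is full in $B$, consider the ideal $I:=\overline{B(pBp)B}$ of $B$; since $EB=B$ one checks that $I$ is in fact an ideal of $E$ contained in $B$. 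If $I\subsetneq B$, then $B/I$ is again stable (every ideal of the stable algebra $B$ splits off a copy of $\mathcal K$), hence non-unital, so — because $A$ is simple — the Busby map of the quotient extension $0\to B/I\to E/I\to A\to0$ is injective; but $p+I$ is a nonzero projection in $E/I$ with $(p+I)(B/I)(p+I)\subseteq(pBp+I)/I=0$, contradicting essentiality of that extension. Hence $I=B$, and $pBp$ is a full hereditary subalgebra of $B$.

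\emph{$pBp$ is ``$=B$''.} Here simplicity of $A$ enters a second time: since $A$ is unital and simple and $\pi(p)\ne0$, we have $1_A\preceq\pi(p)^{\oplus n}$ in $M_n(A)$ for some $n$, so applying the unital map $\tau$ yields $1_{\mathcal Q(B)}\preceq\rho(m)^{\oplus n}$ in $M_n(\mathcal Q(B))\cong\mathcal Q(B)$ (using $M_n(B)\cong B$). Thus $\rho(m)$ is a \emph{full} projection in the corona $\mathcal Q(B)$, which is properly infinite because $B$ is stable; a short argument then upgrades this to $m\sim1_{M(B)}$ in $M(B)$ — lift the corona-level equivalence and absorb the resulting $B$-perturbation, again using stability of $B$. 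It follows that $pBp\cong mBm\cong1_{M(B)}\,B\,1_{M(B)}=B$; in particular $pBp$ is stable, of real rank zero and stable rank one, and the inclusion $pBp\hookrightarrow B$ induces an order isomorphism on ${\rm K}_0$. Consequently every class in ${\rm K}_0^+(B)$ is represented by a genuine projection of $pBp$, which is exactly the assertion isolated in Step 1: given $q$, pick $e\in pBp$ with $[e]=[q]$, note $e\sim\iota(q)$ in $B$ by cancellation, and conclude $\iota(q)\preceq e\le p$ in $E$, i.e.\ $[\iota(q)]\le[p]$ in ${\rm K}_0(E)$.

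\emph{Main obstacle.} The delicate part is the third step — turning ``$\rho(m)$ is a full projection in the properly infinite corona $\mathcal Q(B)$'' into ``$m\sim1_{M(B)}$ in $M(B)$'', and thence into $pBp\cong B$. This is precisely what promotes the cheap matricial bound $\iota(q)\preceq p^{\oplus n}$ (already visible after Step 2) to the sharp bound $\iota(q)\preceq p$; stability of $B$ is indispensable here. Real rank zero of $A$ and $B$ plays the supporting role of keeping everything projection-based — so that $E$ has stable rank one and real rank zero and the relevant positive cones consist of projection classes — and it excludes degenerate behaviour: e.g.\ $B=C_0(0,1)\otimes\mathcal K$ is stable of stable rank one but has real rank one, and for it the conclusion genuinely fails.
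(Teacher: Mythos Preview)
The paper does not prove this lemma; it is quoted from \cite[Corollary~2.7]{AL2}, so there is no in-paper argument to compare against. Your overall reduction is sound: since $E$ has stable rank one, $[\iota(q)]\le[p]$ is equivalent to $\iota(q)\preceq p$, which in turn amounts to $q$ being Murray--von Neumann equivalent in $B$ to a projection of $pBp=mBm$; and your Step~2 argument that $mBm$ is full in $B$ (via essentiality of the quotient extension by the stable algebra $B/I$) is correct.

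The genuine gap is in Step~3. The inference ``$\rho(m)$ is a full projection in the properly infinite corona $\mathcal Q(B)$, hence $\rho(m)\sim 1_{\mathcal Q(B)}$'' is not a general fact: in a non-simple ${\rm C}^*$-algebra, proper infiniteness of the unit does not force an arbitrary full projection to be properly infinite, let alone equivalent to $1$. What one actually needs is that comparison of projections in $\mathcal Q(B)$ is governed by ideal membership --- essentially pure infiniteness of the corona in the Kirchberg--R\o rdam sense --- and establishing this for $\mathcal Q(B)$ with $B$ separable, stable, of real rank zero is itself a substantive theorem that typically uses the weak unperforation of $K_0(B)$. Even granting $\rho(m)\sim 1$ in $\mathcal Q(B)$, the lift to $m\sim 1_{M(B)}$ is not the ``short argument'' you describe: a lifted partial isometry only yields an equivalence modulo $B$, and absorbing the $B$-discrepancy into an honest equivalence in $M(B)$ again requires structural input (a Weyl--von Neumann--type result for $M(B)$, or a separate argument that $mBm$ is stable). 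A clear symptom that something is missing is that your argument never invokes the hypothesis that $(K_0(B),K_0^+(B))$ is weakly unperforated; since this is exactly the condition that controls comparison in $M(B)$ and $\mathcal Q(B)$ for real-rank-zero $B$, its absence indicates that Step~3, as written, is a sketch of what one hopes is true rather than a proof.
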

\begin{theorem}\label{yibande k0+}
Let $A,B$ be nuclear, separable ${\rm C}^*$-algebras of stable rank one and real rank zero. Suppose that $A$ is unital simple,  $B$ is stable and $({\rm K}_0(B),{\rm K}_0^+(B))$  is weakly unperforated. Let $e$ be a unital essential extension with trivial index maps
$$
0\to B\xrightarrow{\iota} E\xrightarrow{\pi} A\to 0.
$$
Then
$$\mathrm{K}_0^+(E)=((\mathrm{K}_0(\pi))^{-1} (\mathrm{K}_0^+(A)\backslash\{0\}))
\,\cup\,  (\mathrm{K}_0(\iota)(\mathrm{K}_0^+(B))).$$

\end{theorem}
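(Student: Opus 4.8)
The plan is to establish the two inclusions separately. First note that the hypotheses force $E$ to have stable rank one and real rank zero: by Definition \ref{trivial index def} and Proposition \ref{lin inj}, trivial index maps together with $rr(A)=rr(B)=0$ and $sr(A)=sr(B)=1$ give $rr(E)=0$ and $sr(E)=1$. Consequently each of $\mathrm K_0^+(A),\mathrm K_0^+(B),\mathrm K_0^+(E)$ consists exactly of the classes of projections in the respective stabilizations, each of these ordered groups satisfies the $\mathrm K_0$-version of Theorem \ref{ordertotal} (in particular $\mathrm K_0=\mathrm K_0^+-\mathrm K_0^+$ and the positive cone is closed under addition), and $A$ is stably finite (stable rank one yields cancellation of projections, hence finiteness), so a projection in $A$ with vanishing $\mathrm K_0$-class is $0$. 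For the inclusion $\subseteq$, take $x\in\mathrm K_0^+(E)$ and write $x=[p]$ with $p$ a projection in $E\otimes\mathcal K$. Then $\mathrm K_0(\pi)(x)=[(\pi\otimes\mathrm{id})(p)]\in\mathrm K_0^+(A)$; if it is nonzero, $x$ belongs to $(\mathrm K_0(\pi))^{-1}(\mathrm K_0^+(A)\setminus\{0\})$, and if it is zero then, by stable finiteness of $A$, $(\pi\otimes\mathrm{id})(p)=0$, so $p\in\ker(\pi\otimes\mathrm{id})=B\otimes\mathcal K$ and $x=\mathrm K_0(\iota)([p])\in\mathrm K_0(\iota)(\mathrm K_0^+(B))$.

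For $\supseteq$, the inclusion $\mathrm K_0(\iota)(\mathrm K_0^+(B))\subseteq\mathrm K_0^+(E)$ is immediate since $\mathrm K_0(\iota)$ carries classes of projections to classes of projections. It remains to show that $\mathrm K_0(\pi)(x)\in\mathrm K_0^+(A)\setminus\{0\}$ implies $x\in\mathrm K_0^+(E)$. Replacing $A,B,E$ by $M_n(A),M_n(B),M_n(E)$ for a suitable $n$ — which leaves the statement and all hypotheses intact — we may assume $\mathrm K_0(\pi)(x)=[\bar p]$ for a nonzero projection $\bar p\in A$. Since $rr(E)=0$ and $\delta_0\equiv 0$, Proposition \ref{lin inj}(i) lets us lift $\bar p$ to a projection $p\in E$; then $x-[p]\in\ker\mathrm K_0(\pi)=\operatorname{im}\mathrm K_0(\iota)$, so $x=[p]+\mathrm K_0(\iota)(w)$ for some $w\in\mathrm K_0(B)$. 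Write $w=w_+-w_-$ with $w_\pm\in\mathrm K_0^+(B)$ and fix a projection $q\in B$ with $[q]=w_-$ (possible since $B$ is stable). Because $\pi(p)=\bar p\neq 0$, we have $p\in E\setminus B$, so Lemma \ref{sheying dadada} gives $[\iota(q)]\leq[p]$ in $\mathrm K_0(E)$, i.e. $[p]-\mathrm K_0(\iota)(w_-)\in\mathrm K_0^+(E)$. Hence $x=\big([p]-\mathrm K_0(\iota)(w_-)\big)+\mathrm K_0(\iota)(w_+)$ is a sum of two elements of $\mathrm K_0^+(E)$, and therefore lies in $\mathrm K_0^+(E)$.

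The main obstacle is this last step: the six-term sequence only yields $x=[p]+\mathrm K_0(\iota)(w)$ with $w$ of indeterminate sign, and converting this to positivity requires absorbing the negative part $\mathrm K_0(\iota)(w_-)$ into $[p]$, which is exactly the content of Lemma \ref{sheying dadada} (and is where weak unperforation of $\mathrm K_0(B)$ enters, through that lemma). Two routine points must be checked along the way: that the reduction to $M_n$ is harmless — $M_n(A)$ is again unital, simple, of stable rank one and real rank zero, $M_n(B)\cong B$ is again stable with weakly unperforated real-rank-zero $\mathrm K_0$, the amplified extension is again unital, essential and of trivial index maps, and $\mathrm K_0,\mathrm K_0^+,\mathrm K_0(\iota),\mathrm K_0(\pi)$ are unchanged — and that $\bar p$ can indeed be taken in $A$ itself (not merely a matrix algebra over $A$) after this reduction, so that the lifted projection $p$ and the projection $q$ live in $E$ and $B$, where Lemma \ref{sheying dadada} applies verbatim.
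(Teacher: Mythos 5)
Your proof is correct and follows essentially the same route as the paper's: lift $\mathrm K_0(\pi)(x)$ to a projection $p$ in a matrix amplification of $E$, write the difference $x-[p]$ as $\mathrm K_0(\iota)(w_+)-\mathrm K_0(\iota)(w_-)$, and absorb the negative part using Lemma \ref{sheying dadada} together with cancellation. The only (harmless) divergence is in the $\mathrm K_0(\pi)(x)=0$ case, where you work directly with a projection in $E\otimes\mathcal K$ and conclude it lies in $B\otimes\mathcal K$ by cancellation in $A\otimes\mathcal K$, whereas the paper first conjugates the projection into $E$ before applying the same cancellation argument.
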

\begin{proof}
Note that
$$
0\to B\xrightarrow{\iota} E\xrightarrow{\pi} A\to 0
$$
is a unital essential extension of ${\rm C}^*$-algebras with trivial index maps.
By Proposition \ref{lin inj}, we have $E$ has stable rank one and real rank zero. In particular, we have the following is an exact sequence of abelian groups
$$
0\to \mathrm{K}_0(B)\xrightarrow{\mathrm{K}_0(\iota)} \mathrm{K}_0(E)\xrightarrow{\mathrm{K}_0(\pi)} \mathrm{K}_0(A)\to 0.
$$
Since $\iota,\pi$ are homomorphisms, we have
$$
\mathrm{K}_0(\iota)(\mathrm{K}_0^+(B))\subset \mathrm{K}_0^+(E)
$$
and also by exactness,
$$
  \mathrm{K}_0^+(E)\subset
  ((\mathrm{K}_0(\pi))^{-1}(\mathrm{K}_0^+(A)\backslash\{0\}))\,\cup \, (\mathrm{K}_0(\iota)(\mathrm{K}_0(B))).
$$

For any
$
x\in \mathrm{K}_0(E)
$
with $\mathrm{K}_0(\pi)(x)\in \mathrm{K}_0^+(A)\backslash\{0\}$,
$\mathrm{K}_0(\pi)(x)$ can be lifted to a projection in $M_m(A)$ for some integer $m$.
Considering the induced extension as follows:
$$
0\to M_m(B)\xrightarrow{\iota} M_m(E)\xrightarrow{\pi}M_m(A)\to 0,
$$
in which we still denote  the induced maps by
$\iota,\pi$, respectively.
By Proposition \ref{lin inj} (i)(c),
there exists a projection $g\in M_m(E)$ such that
$$\mathrm{K}_0(\pi)([g])=\mathrm{K}_0(\pi)(x)\in \mathrm{K}_0^+(A).$$
By exactness, there exist projections
$p_0,q_0\in M_m(B)\cong B$ such that
$$
[g]=x+[\iota(q_0)]-[\iota(p_0)]\in \mathrm{K}_0^+(E).
$$
Then
by Lemma \ref{sheying dadada},
for any projection $f\in M_m(B)\cong B$, we have
$$[\iota(f)]\leq [g]=x+[\iota(q_0)]-[\iota(p_0)]\in \mathrm{K}_0^+(E).$$
Particularly, we choose $f\in M_m(B)\cong B$ with $[f]=[q_0].$ Since $E$ has cancellation of projections,
we obtain that
$$x-[\iota(p_0)]\in \mathrm{K}_0^+(E).$$
Hence,
$x\in \mathrm{K}_0^+(E).$
That is,
$$(\mathrm{K}_0(\pi))^{-1}(\mathrm{K}_0^+(A)\backslash\{0\})\subset \mathrm{K}_0^+(E).
$$

Note that $$(\mathrm{K}_0(\pi))^{-1}(0)=\mathrm{K}_0(\iota)(\mathrm{K}_0(B)).$$
Take any $y\in \mathrm{K}_0(B)$ with $\mathrm{K}_0(\iota)(y)\in \mathrm{K}_0^+(E)$. Since
$E$ has stable rank one, $y$ can be realised as a projection $h$ in $E\otimes \mathcal{K}$.
Since
$$\mathrm{K}_0(\pi)([1_E]-[h])=[1_A]\in \mathrm{K}_0^+(A)\backslash\{ 0\},$$
as we have just shown above, we obtain
$$
[1_{E}]-[h]=[1_{E}]-y\in \mathrm{K}_0^+(E).
$$
Then there exists a unitary $u\in M_n(E)$ for some integer $n$ such that $uhu^*\in E$. 
   Since $A$ has stable rank one, then $[\pi(uhu^*)]=0$ implies $\pi(uhu^*)=0$, that is, by exactness, $\iota^{-1}(uhu^*)\in B$ realises $y$. Now we have
$$
((\mathrm{K}_0(\pi))^{-1}(0))\cap \mathrm{K}_0^+(E)= \mathrm{K}_0(\iota)(\mathrm{K}_0^+(B)).
$$
This concludes the proof.

\end{proof}
The following proposition is a direct application of Theorem \ref{yibande k0+}.
\begin{proposition}\label{positive cone} For each $i=1,2$,
$$\mathrm{K}_0^+(E_i)=\{(\frac{l}{k},y)|k,l\in\mathbb{N}\backslash\{0\}, y\in \mathbf{Q}\}\cup \{(0,y)|y\in \mathrm{K}_0^+(B_i)\}.$$
Hence,
$$\mathrm{K}_0^+(E_1)\cong \mathrm{K}_0^+(E_2).$$
Moreover,  for each $i=1,2,$ $(\mathrm{K}_0(E_i),\mathrm{K}_0^+(E_i))$ is torsion free and is not weakly unperforated (see Definition \ref{0weakly un}).
\end{proposition}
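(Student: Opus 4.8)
The plan is to obtain the displayed formula by a direct application of Theorem~\ref{yibande k0+} to the extensions
$$0\to B_i\xrightarrow{\iota_i} E_i\xrightarrow{\pi_i} A_\chi\to 0$$
constructed in~\ref{counter ex}. All the hypotheses of that theorem are in place: $A_\chi$ and $B_i$ are nuclear, separable, of stable rank one and real rank zero; $A_\chi$ is unital and simple; $B_i$ is stable; $(\mathrm{K}_0(B_i),\mathrm{K}_0^+(B_i))$ is weakly unperforated because it is torsion free; and the extension is unital, essential and has trivial index maps. Hence
$$\mathrm{K}_0^+(E_i)=\big((\mathrm{K}_0(\pi_i))^{-1}(\mathrm{K}_0^+(A_\chi)\setminus\{0\})\big)\cup\big(\mathrm{K}_0(\iota_i)(\mathrm{K}_0^+(B_i))\big),$$
and it remains to identify the two pieces through the identification $\mathrm{K}_0(E_i)=\mathbb{Q}\oplus\mathbf{Q}$ fixed by $\rho_i$ in~\ref{counter ex}.

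Under that identification the commutative diagram of~\ref{counter ex} gives $\mathrm{K}_0(\pi_i)=\mathrm{id}_{\mathbb{Q}}\oplus\eta$ and shows that $\mathrm{K}_0(\iota_i)$ sends $a\in\mathbf{Z}$ to $(0,\zeta(a))$, where $\zeta\colon\mathbf{Z}\hookrightarrow\mathbf{Q}$ is the natural inclusion. Since $\mathrm{K}_0^+(A_\chi)\setminus\{0\}=\{(x,y)\in\mathbb{Q}\oplus\mathbf{Q}_\chi^3: x>0\}$ and $\eta$ is surjective, the first piece equals $\{(x,z)\in\mathbb{Q}\oplus\mathbf{Q}: x>0\}=\{(\frac{l}{k},y): k,l\in\mathbb{N}\setminus\{0\},\ y\in\mathbf{Q}\}$; and, identifying $\mathbf{Z}$ with its image in $\mathbf{Q}$, the second piece equals $\{(0,y): y\in\mathrm{K}_0^+(B_i)\}$. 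Combining the two gives the asserted description of $\mathrm{K}_0^+(E_i)$. Because the right-hand side depends on $i$ only through $\mathrm{K}_0^+(B_i)$, and $\mathrm{K}_0^+(B_1)=\mathrm{K}_0^+(B_2)=\mathbf{Z}\cap(\mathbb{R}_+\oplus\prod_{\mathbb{Z}}\mathbb{R}_+)$, the identity map of $\mathbb{Q}\oplus\mathbf{Q}$ is an order isomorphism, so $\mathrm{K}_0^+(E_1)\cong\mathrm{K}_0^+(E_2)$.

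For the last assertion, $\mathbb{Q}\oplus\mathbf{Q}$ is a subgroup of a torsion-free group, hence torsion free, so $\operatorname{tor}(\mathrm{K}_0(E_i))=0$ and weak unperforation (Definition~\ref{0weakly un}) reduces to ordinary unperforation. To see unperforation fails I would exhibit the witness $g=(0,y)$ where $y=(\frac{1}{2},\prod_{\mathbb{Z}}a_m)\in\mathbf{Q}$ is chosen with $a_m=0$ for $|m|$ small and $a_m=3^{|m|}/2$ for $|m|$ large. Then $y\notin\mathbf{Z}$, since its first coordinate $\frac{1}{2}$ does not lie in $\mathbb{Z}[\frac{1}{3}]$, whereas $2y=(1,\prod_{\mathbb{Z}}2a_m)$ has integer, nonnegative coordinates and first coordinate $1$, so $2y\in\mathbf{Z}\cap(\mathbb{R}_+\oplus\prod_{\mathbb{Z}}\mathbb{R}_+)=\mathrm{K}_0^+(B_i)$. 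By the formula just proved, $2g=(0,2y)\in\mathrm{K}_0^+(E_i)$ while $g=(0,y)\notin\mathrm{K}_0^+(E_i)$; thus $2g\ge 0$ but $g\not\ge 0$, contradicting unperforation. The whole argument is essentially bookkeeping once Theorem~\ref{yibande k0+} is in hand; the one place demanding care is this witness, where one must arrange simultaneously that $y\notin\mathbf{Z}$ (failure of $\mathbb{Z}[\frac{1}{3}]$-integrality in the first coordinate) and that $2y$ is a genuine positive element of $\mathrm{K}_0^+(B_i)$, and then test positivity against the two-part description of $\mathrm{K}_0^+(E_i)$ rather than against the order on $B_i$ alone.
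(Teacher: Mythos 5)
Your proposal is correct and follows exactly the paper's route: the paper derives the description of $\mathrm{K}_0^+(E_i)$ as a direct application of Theorem~\ref{yibande k0+} and leaves the bookkeeping and the failure of weak unperforation to the reader. Your explicit witness $y=(\tfrac12,\prod a_m)$ with $2y\in\mathbf{Z}\cap(\mathbb{R}_+\oplus\prod_{\mathbb{Z}}\mathbb{R}_+)$ but $y\notin\mathbf{Z}$ is a valid verification of that last claim, since torsion-freeness of $\mathbb{Q}\oplus\mathbf{Q}$ reduces weak unperforation to ordinary unperforation.
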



Now let us check that $\lambda:\,\underline{{\rm K}}(E_1)\to \underline{{\rm K}}(E_2)$ is an ordered scaled $\Lambda$-isomorphism as one of our main results:
\begin{theorem}\label{zhuyao fanli}
We have $$
(\underline{\mathrm{K}}(E_1),\underline{\mathrm{K}}(E_1)_+,[1_{E_1}])\cong
(\underline{\mathrm{K}}(E_2),\underline{\mathrm{K}}(E_2)_+,[1_{E_2}]),
$$
while $E_1\ncong E_2$.
\end{theorem}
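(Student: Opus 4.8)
The statement has an isomorphism half and a non-isomorphism half, and I would prove them by completely different routes.

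For the isomorphism, the natural candidate is the map $\lambda\colon\underline{\mathrm{K}}(E_1)\to\underline{\mathrm{K}}(E_2)$ of \ref{def lambda}. By Proposition \ref{Ei ktotal} the two total K-groups are literally identical, $\lambda$ is the identity, it is automatically $\Lambda$-linear (as noted in \ref{def lambda}, via UCT/UMCT), and under the identifications of \ref{counter ex} it sends $[1_{E_1}]=(1,0)$ to $(1,0)=[1_{E_2}]$. So everything comes down to showing $\lambda(\underline{\mathrm{K}}(E_1)_+)=\underline{\mathrm{K}}(E_2)_+$; since $\lambda$ is an isomorphism, it suffices by Proposition \ref{gong ideal preserving} to check that for each $x\in\mathrm{K}_0^+(E_1)$, which equals $\mathrm{K}_0^+(E_2)$ by Proposition \ref{positive cone}, the set $\underline{\mathrm{K}}(\iota_{I_x}\mid E_1)\subseteq\underline{\mathrm{K}}(E_1)$ coming from the ideal $I_x\subseteq E_1\otimes\mathcal{K}$ coincides with the corresponding $\underline{\mathrm{K}}(\iota_{I_x}\mid E_2)\subseteq\underline{\mathrm{K}}(E_2)$, taking $y=x$.

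The next step is to identify these ideals, exactly as in the proof of Theorem \ref{yibande k0+}. If $\mathrm{K}_0(\pi_i)(x)\neq0$, equivalently $x=(l/k,w)$ with $l,k\in\mathbb{N}\setminus\{0\}$, then (by Lemma \ref{sheying dadada} and the simplicity of $A_\chi$) a projection representing $x$ lies outside $B_i\otimes\mathcal{K}$ and dominates every projection of it, so $I_x=E_i\otimes\mathcal{K}$ and $\underline{\mathrm{K}}(\iota_{I_x}\mid E_i)=\underline{\mathrm{K}}(E_i)$: there is no constraint, and the part of the cone lying over such an $x$ is visibly the same for $E_1$ and $E_2$. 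If $\mathrm{K}_0(\pi_i)(x)=0$, i.e.\ $x=(0,z)$ with $z\in\mathrm{K}_0^+(B_i)$, then $x$ is represented by a projection in $B_i\otimes\mathcal{K}$, $I_x$ is the ideal $I_z$ of $B_i\otimes\mathcal{K}$ generated by $z$, and $\underline{\mathrm{K}}(\iota_{I_x}\mid E_i)=\underline{\mathrm{K}}(\iota_i)\bigl(\underline{\mathrm{K}}(I_z\mid B_i)\bigr)$. The key point here — which is precisely why $E_i$ fails to be K-pure — is that $\underline{\mathrm{K}}(\iota_i)$ is very far from injective, and that it kills exactly the data distinguishing the Dadarlat--Gong orders on $\underline{\mathrm{K}}(B_1)$ and $\underline{\mathrm{K}}(B_2)$: since $\mathrm{K}_0(E_i)=\mathbb{Q}\oplus\mathbf{Q}$ is divisible, $\mathrm{K}_*(E_i;\mathbb{Z}_n)$ is determined by $\operatorname{tor}(\mathrm{K}_1(E_i))=\mathbb{Z}_2$ (and vanishes for odd $n$), and because $\mathrm{K}_1(\iota_i)\colon\mathbb{Z}_2\to\mathbb{Z}_2$ is an isomorphism, naturality of the Bockstein six-term sequences forces $\underline{\mathrm{K}}(\iota_i)\bigl(\underline{\mathrm{K}}(I_z\mid B_i)\bigr)$ to depend on $z$ only through $\mathrm{K}_0(I_z\mid B_i)\subseteq\mathbf{Z}$ and $\mathrm{K}_1(I_z\mid B_i)\subseteq\mathbb{Z}_2$. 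The former is the order ideal of $\mathbf{Z}$ generated by $z$, hence the same for $i=1,2$; the latter is $\mathbb{Z}_2$ when $z$ has positive first coordinate and $0$ otherwise, again independently of $i$, since $C$ and $D$ carry identical $\widetilde{\mathbb{I}}_2$-blocks (their connecting maps differ only in their endpoint evaluations). Thus the two cones agree over every $x$, and $\lambda$ is an ordered scaled $\Lambda$-isomorphism. In practice the cleanest write-up is probably to compute $\underline{\mathrm{K}}(E_i)_+$ outright, in parallel with the description of $\mathrm{K}_0^+(E_i)$ in Proposition \ref{positive cone}, and observe that the two cones coincide.

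For $E_1\ncong E_2$ the total K-theory is of no help (it agrees), and I would instead argue from ideal structure: $B_i$ is the largest stable ideal of $E_i$. Indeed, any ideal of $E_i$ contained in $B_i$ is an ideal of the stable $\sigma$-unital algebra $B_i=C\otimes\mathcal{K}$, hence stable; conversely, any ideal $J\not\subseteq B_i$ satisfies $\pi_i(J)=A_\chi$ (as $A_\chi$ is simple), so $J$ has the unital algebra $A_\chi$ as a quotient and therefore cannot be stable, since quotients of stable C*-algebras are stable and stable C*-algebras are never unital. Consequently any isomorphism $E_1\xrightarrow{\sim}E_2$ restricts to an isomorphism $B_1\xrightarrow{\sim}B_2$, contradicting \cite[Theorem 3.3]{DL3}. (Alternatively, $E_1\ncong E_2$ also follows from the sharper fact, proved later, that $\underline{\mathrm{Cu}}_u(E_1)\ncong_L\underline{\mathrm{Cu}}_u(E_2)$.) The only genuinely delicate point in the whole argument is the cone-preservation step above: one must verify that the real discrepancy between the orders on $\underline{\mathrm{K}}(B_1)$ and $\underline{\mathrm{K}}(B_2)$, which does exist by \cite[Theorem 9.1]{DG}, is entirely absorbed into the kernel of $\underline{\mathrm{K}}(\iota_i)$. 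Everything else is routine once $B_i$ is recognised as an isomorphism invariant of $E_i$.
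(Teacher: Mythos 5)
Your proposal is correct and follows essentially the same route as the paper: the same map $\lambda$ from \ref{def lambda}, the same case split on whether $\mathrm{K}_0(\pi_i)(x)$ vanishes, and the same key observation that the failure of K-purity forces $\underline{\mathrm{K}}(I_x\,|\,E_i)$ to depend only on data that $B_1$ and $B_2$ share (the paper carries this out by explicit Bockstein diagram chases in its Cases 2.1 and 2.2, arriving at exactly the explicit description of the cone you suggest writing down). The only harmless deviation is in the non-isomorphism half, where you characterize $B_i$ as the largest stable ideal of $E_i$ while the paper characterizes it as the unique maximal ideal; both recover $B_i$ canonically inside $E_i$ and conclude from $B_1\ncong B_2$.
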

\begin{proof}
By \ref{def lambda}, since $\lambda$ is already a $\Lambda$-linear isomorphism, we only  need to check $\lambda$ is also an order-preserving isomorphism.

Given any
$$\textstyle
(x,\mathfrak{u},\bigoplus\limits_{n=1}^{\infty}  ( \mathfrak{s}_{n,0}, \mathfrak{s}_{n,1}))\in \underline{\mathrm{K}}(E_1)_+,
$$
we are going to show
$$\textstyle
\lambda(x,\mathfrak{u},\bigoplus\limits_{n=1}^{\infty} ( \mathfrak{s}_{n,0}, \mathfrak{s}_{n,1}))\in \underline{\mathrm{K}}(E_2)_+.
$$
By Dadarlat-Gong order \ref{dg order}, we have
$x\in \mathrm{K}_0^+(E_1)$, $\mathfrak{u}\in \mathrm{K}_1(I_{x}|E_1)$, $\mathfrak{s}_{n,0}\in \mathrm{K}_0(I_{x}|E_1;\mathbb{Z}_n),$
$\mathfrak{s}_{n,1}\in \mathrm{K}_1(I_{x}|E_1;\mathbb{Z}_n),$ where $I_{x}$ is the ideal of $E_1\otimes \mathcal{K}$ generated by $x$.
Note that at most finitely many elements of $x,\mathfrak{u},\mathfrak{s}_{n,0},\mathfrak{s}_{n,1}$ ($n\geq1$) are nonzero.

By Proposition \ref{positive cone}, we have
$$
x\in \mathrm{K}_0^+(E_1)=\{(\frac{l}{k},y)\,|\,k,l\in\mathbb{N}\backslash\{0\}, y\in \mathbf{Q}\}\cup \{(0,y)\,|\,y\in \mathrm{K}_0^+(B_1)\}.
$$
We consider the following two cases:

Case 1: If $x\in \{(\frac{l}{k},y)\,|\,k,l\in\mathbb{N}\backslash\{0\}, y\in \mathbf{Q}\}$, then
$x\geq (0,y')\in \mathrm{K}_0^+(E_1)$ for any $y'\in \mathrm{K}_0^+(B_1)$. Thus,
$I_x=E_1\otimes \mathcal{K}$ ($A_\chi$ is simple).

As $\lambda_0^0:{\rm K}_0(E_1)\to{\rm K}_0(E_2)$ is the identity map, we have $I_{\lambda_0^0(x)}=E_2\otimes \mathcal{K}$.
$\lambda$ is an isomorphism, of cause we have
$$\textstyle
\lambda(x,\mathfrak{u},\bigoplus\limits_{n=1}^{\infty} ( \mathfrak{s}_{n,0}, \mathfrak{s}_{n,1}))\in \underline{\mathrm{K}}(E_2)=\underline{\mathrm{K}}(I_{\lambda_0^0(x)}\mid E_2).
$$

Case 2: If $x\in \{(0,y)\,|\,y\in \mathrm{K}_0^+(B_1)\}$, $y$ has the form $(a,\prod_{\mathbb{Z}} a_m)\in  \mathrm{K}_0^+(B_1)$.

Case 2.1: If $a=0$, then only finitely many $a_m$ are nonzero.
The ideal $I_{x}$  generated by
$x$ in $E_1\otimes \mathcal{K}$,
which is exactly the ideal generated by
$(a,\prod_{\mathbb{Z}} a_m)$
in $B_1$ ($B_1$ is stable), is isomorphic to a finite direct sum of $\mathcal{K}$ (see the construction of $B_1$ in \ref{b1b2}).
Then the natural embedding map $\iota_{I_x}:\,I_x\to E_1\otimes \mathcal{K}$
will induce the following commutative diagram with exact rows
$$
\xymatrixcolsep{2pc}
\xymatrix{
{\mathrm{K}_0(I_x)} \ar[d]^-{{\rm K}_0(\iota_{I_x})} \ar[r]^-{\times n}
& {\mathrm{K}_0(I_x)} \ar[d]^-{{\rm K}_0(\iota_{I_x})} \ar[r]^-{}
& {\mathrm{K}_0(I_x;\mathbb{Z}_n)} \ar[d]^-{{\rm K}_0(\iota_{I_x};\mathbb{Z}_n)} \ar[r]^-{}
& 
 {\mathrm{K}_1(I_x)} \ar[d]^-{{\rm K}_1(\iota_{I_x})} \ar[r]^-{\times n}
& {\mathrm{K}_1(I_x)} \ar[d]^-{{\rm K}_1(\iota_{I_x})}
 \\
{\mathrm{K}_0(E_1)} \ar[r]^-{\times n}
& {\mathrm{K}_0(E_1)} \ar[r]_-{}
& {\mathrm{K}_0(E_1; \mathbb{Z}_n)} \ar[r]^-{}
& 
{\mathrm{K}_{1}(E_1)} \ar[r]^-{\times n}
& {\mathrm{K}_{1}(E_1)},
}
$$
which is
$$
\xymatrixcolsep{2pc}
\xymatrix{
{\mathrm{K}_0(I_x)} \ar[d]^-{{\rm K}_0(\iota_{I_x})} \ar[r]^-{\times n}
& {\mathrm{K}_0(I_x)} \ar[d]^-{{\rm K}_0(\iota_{I_x})} \ar[r]^-{}
& {\mathrm{K}_0(I_x;\mathbb{Z}_n)} \ar[d]^-{{\rm K}_0(\iota_{I_x};\mathbb{Z}_n)} \ar[r]^-{}
& 
 {0} \ar[d]^-{{\rm K}_1(\iota_{I_x})} \ar[r]^-{\times n}
& {0} \ar[d]^-{{\rm K}_1(\iota_{I_x})}
 \\
{\mathbb{Q}\oplus \mathbf{Q}} \ar[r]^-{\times n}
& {\mathbb{Q}\oplus \mathbf{Q}} \ar[r]_-{}
& {\mathrm{K}_0(E_1; \mathbb{Z}_n)} \ar[r]^-{\beta_n^{0,E_1}}
& 
{\mathbb{Z}_2} \ar[r]^-{\times n}
& {\mathbb{Z}_2}.
}
$$
In the above diagram: when $n$ is even ($n\geq 2$),
the map $\times n:\,\mathbb{Q}\oplus \mathbf{Q}\to\mathbb{Q}\oplus \mathbf{Q} $ is an isomorphism and
the map $\times n:\,\mathbb{Z}_2\to \mathbb{Z}_2$ is a zero map. By the exactness of the second row,
we have $\beta_n^{0,E_1}:\,\mathrm{K}_0(E_1; \mathbb{Z}_n)\to \mathbb{Z}_2$ is an isomorphism.
Hence, by commutativity, we have ${\rm K}_0(\iota_{I_x};\mathbb{Z}_n)$ is a zero map;
when $n$ is odd ($n\geq 1$),
by Proposition \ref{Ei ktotal},
we obtain $\mathrm{K}_0(E_1; \mathbb{Z}_n)=0$, so ${\rm K}_0(\iota_{I_x};\mathbb{Z}_n)$ is also a zero map. (${\rm K}_1(\iota_{I_x};\mathbb{Z}_n)$ is always a zero map, as $\mathrm{K}_1(I_x; \mathbb{Z}_n)$ is always a zero group.)

This means that we can identify  $\underline{\mathrm{K}}(I_x\,|\, E_1)$ with $\mathrm{K}_0(I_x)$, even though $\underline{\mathrm{K}}(I_x)$ contains more information than $\mathrm{K}_0(I_x)$ for any nonzero $x=(0,y)$.
By Dadarlat-Gong order \ref{dg order}, the positivity of $(x,\mathfrak{u},\bigoplus_{n=1}^{\infty}  ( \mathfrak{s}_{n,0}, \mathfrak{s}_{n,1}))$ implies that all $\mathfrak{u},\mathfrak{s}_{n,0},\mathfrak{s}_{n,1}$ ($n\geq1$) are zero, and we have
$$\textstyle
\lambda(x,\mathfrak{u},\bigoplus\limits_{n=1}^{\infty} ( \mathfrak{s}_{n,0}, \mathfrak{s}_{n,1}))
=
(\lambda_0^0(x),0,\bigoplus\limits_{n=1}^{\infty} (0,0))
\in\underline{\mathrm{K}}(I_{\lambda_0^0(x)}\mid E_2).
$$

Case 2.2: If $a>0$, we have only finitely many $a_m$ are zero.
Denote the ideal $I_{x}$  generated by
$(a,\prod_{\mathbb{Z}}a_m)$ in $E_1\otimes \mathcal{K}$, which is also an ideal of $B_1$.
From the construction of $B_1$ in \ref{b1b2}, there exists
a ${\rm C}^*$-algebra $R_1$, which is a finite direct sum of $\mathcal{K}$ corresponding to the zero entry of $\prod_{\mathbb{Z}} a_m$, satisfying
$$
B_1=R_1\oplus I_{x}.
$$
Similarly, consider the diagram
$$
\xymatrixcolsep{1pc}
\xymatrix{
{\mathrm{K}_0(B_1)} \ar[d]^-{{\rm K}_0(\iota_1)} \ar[r]^-{\times n}
& {\mathrm{K}_0(B_1)} \ar[d]^-{{\rm K}_0(\iota_1)} \ar[r]^-{}
& {\mathrm{K}_0(B_1;\mathbb{Z}_n)} \ar[d]^-{{\rm K}_0(\iota_1;\mathbb{Z}_n)} \ar[r]^-{}
& 
 {\mathrm{K}_1(B_1)} \ar[d]^-{{\rm K}_1(\iota_1)} \ar[r]^-{\times n}
& {\mathrm{K}_1(B_1)} \ar[d]^-{{\rm K}_1(\iota_1)} \ar[r]^-{}
& {\mathrm{K}_1(B_1;\mathbb{Z}_n)} \ar[d]^-{{\rm K}_1(\iota_1;\mathbb{Z}_n)}
 \\
{\mathrm{K}_0(E_1)} \ar[r]^-{\times n}
& {\mathrm{K}_0(E_1)} \ar[r]_-{}
& {\mathrm{K}_0(E_1; \mathbb{Z}_n)} \ar[r]^-{}
& 
{\mathrm{K}_{1}(E_1)} \ar[r]^-{\times n}
& {\mathrm{K}_{1}(E_1)}\ar[r]^-{}
& {\mathrm{K}_1(B_1;\mathbb{Z}_n)} ,
}
$$
which is
$$
\xymatrixcolsep{2pc}
\xymatrix{
{\mathbf{Z}} \ar[d]^-{{\rm K}_0(\iota_1)} \ar[r]^-{\times n}
& {\mathbf{Z}} \ar[d]^-{{\rm K}_0(\iota_1)} \ar[r]^-{}
& {\mathrm{K}_0(B_1; \mathbb{Z}_n)} \ar[d]^-{{\rm K}_0(\iota_1;\mathbb{Z}_n)} \ar[r]^-{\beta_n^{0,B_1}}
& 
 {\mathbb{Z}_2} \ar[d]^-{{\rm id}} \ar[r]^-{\times n}
& {\mathbb{Z}_2} \ar[d]^-{{\rm id}}\ar[r]^-{\rho_n^{1,B_1}}
& {\mathrm{K}_1(B_1; \mathbb{Z}_n)} \ar[d]^-{{\rm K}_1(\iota_1;\mathbb{Z}_n)}
 \\
{\mathbb{Q}\oplus \mathbf{Q}} \ar[r]^-{\times n}
& {\mathbb{Q}\oplus \mathbf{Q}} \ar[r]_-{}
& {\mathrm{K}_0(E_1; \mathbb{Z}_n)} \ar[r]^-{\beta_n^{0,E_1}}
& 
{\mathbb{Z}_2} \ar[r]^-{\times n}
& {\mathbb{Z}_2} \ar[r]^-{\rho_n^{1,E_1}}
& {\mathrm{K}_1(E_1; \mathbb{Z}_n)}.
}
$$
(Recall the construction in \ref{counter ex}, we take ${\rm K}_1(\iota_1)={\rm id}_{\mathbb{Z}_2}$.)
In the above diagram: when $n$ is odd,
by Proposition \ref{Ei ktotal}, we have $\mathrm{K}_0(E_1; \mathbb{Z}_n)=0$ and $\mathrm{K}_1(E_1; \mathbb{Z}_n)=0$.
So both ${\rm K}_0(\iota_1;\mathbb{Z}_n)$ and ${\rm K}_1(\iota_1;\mathbb{Z}_n)$ are automatically surjective.
When $n$ is even ($n\geq 2$), the maps
$\times n:\,\mathbb{Q}\oplus \mathbf{Q}\to\mathbb{Q}\oplus \mathbf{Q} $
 and
$\times n:\,\mathbb{Z}_2\to \mathbb{Z}_2$ are isomorphisms and
$\times n:\,\mathbf{Z}\to \mathbf{Z} $ is injective. By exactness of both the rows,
we have
$\beta_n^{0,E_1},\rho_n^{1,E_1},\rho_n^{1,B_1}$ are all  isomorphisms and $\beta_n^{0,B_1}$ is surjective.
By commutativity, we have both ${\rm K}_0(\iota_1;\mathbb{Z}_n)$ and ${\rm K}_1(\iota_1;\mathbb{Z}_n)$ are surjective.

Now for any $n\geq 1$,
as we have discussed in Case 2.1,
both the restriction of ${\rm K}_0(\iota_1;\mathbb{Z}_n)$ on ${\rm K}_0(R_1;\mathbb{Z}_n)$ and the restriction of ${\rm K}_1(\iota_1;\mathbb{Z}_n)$
on  ${\rm K}_1(R_1;\mathbb{Z}_n)$  are zero maps, which means
 the restriction of ${\rm K}_0(\iota_1;\mathbb{Z}_n)$ on ${\rm K}_0(I_x;\mathbb{Z}_n)$ and the restriction of ${\rm K}_1(\iota_1;\mathbb{Z}_n)$ on ${\rm K}_1(I_x;\mathbb{Z}_n)$   are surjective.
Also note that the restriction of ${\rm K}_1(\iota_1)$ on ${\rm K}_1(I_x)$ is also surjective.
That is,
$$\underline{\mathrm{K}}(B_1\,|\, E_1)=
\underline{\mathrm{K}}(R_1\oplus I_{x}\,|\, E_1)=\underline{\rm K}(R_1\,|\,E_1)\oplus \underline{\mathrm{K}}(I_{x}\,|\, E_1).$$
Similar with the proof of  Case 2.1, we can identify $\underline{\rm K}(R_1\,|\,E_1)$ with ${\rm K}_0(R_1)$.
This implies that for any choice of $\mathfrak{v}\in \mathrm{K}_1(E_1),\mathfrak{t}_{n,0}\in \mathrm{K}_0(E_1;\mathbb{Z}_n),\mathfrak{t}_{n,1}\in \mathrm{K}_1(E_1;\mathbb{Z}_n)$, we will always have
$$\textstyle
(x,\mathfrak{v},\bigoplus\limits_{n=1}^{\infty} (\mathfrak{t}_{n,0}, \mathfrak{t}_{n,1}))\in
\underline{\mathrm{K}}(I_x\mid E_1)
$$
is a positive element with respect to the
Dadarlat-Gong order \ref{dg order}.

As $\lambda_0^0:{\rm K}_0(E_1)\to{\rm K}_0(E_2)$ is the identity map, with the same procedure  for $\lambda_0^0(x)$ and $E_2$, we obtain
$$\textstyle
\lambda(x,\mathfrak{u},\bigoplus\limits_{n=1}^{\infty} ( \mathfrak{s}_{n,0}, \mathfrak{s}_{n,1}))
=(\lambda_0^0(x),\lambda_0^1(\mathfrak{u}),\bigoplus\limits_{n=1}^{\infty} (\lambda_n^0(\mathfrak{s}_{n,0}), \lambda_n^1(\mathfrak{s}_{n,1})))
\in\underline{\mathrm{K}}(I_{\lambda_0^0(x)}\mid E_2)
$$
is a positive element in $\underline{\mathrm{K}}(E_2)$.

Combining Case 1, Case 2.1 and Case 2.2, by  \ref{dg order},
$\lambda$ preserves Dadarlat-Gong order. The converse direction is the same.

In general, we have $$
(\underline{\mathrm{K}}(E_1),\underline{\mathrm{K}}(E_1)_+,[1_{E_1}])\cong
(\underline{\mathrm{K}}(E_2),\underline{\mathrm{K}}(E_2)_+,[1_{E_2}]).
$$

More directly, we have $(x,\mathfrak{u},\bigoplus_{n=1}^{\infty}  ( \mathfrak{s}_{n,0}, \mathfrak{s}_{n,1}))$
is positive if and only if one of the following is satisfied:

(1) $x=(\frac{l}{k},y)$ and $\frac{l}{k}>0$;

(2) $x=(0,y)$, $y=(a,\prod_{\mathbb{Z}} a_m)\in {\rm K}_0^+(B_i)$ and $a>0$;

(3) $x=(0,y)$, $y=(0,\prod_{\mathbb{Z}} a_m)\in {\rm K}_0^+(B_i)$, $\mathfrak{u}=0,$ $\mathfrak{s}_{n,0}=0,$ $\mathfrak{s}_{n,1}=0$ for all  $n\geq1$.

As $E_1,E_2$ are separable ${\rm C}^*$-algebras of real rank zero and stable rank one, all the ideals of them are generated by projections.
By Proposition \ref{positive cone}, we point out that
$B_1$, $B_2$ are the unique maximal ideals of $E_1$, $E_2$, respectively,
which implies
$$E_1\ncong E_2.$$
Otherwise, suppose that there is an isomorphism from $E_1$ to $E_2$, it must take the unique maximal ideal $B_1$ isomorphic to  the unique maximal ideal $B_2$. However, $B_1\ncong B_2$, which forms a contradiction.

\end{proof}
\begin{remark}
Now we have shown that the total K-theory can't distinguish $E_1$ and $E_2$, then the Conjecture \ref{ell conj} is not true. Recall the construction in \ref{counter ex}, the extensions we choose are not pure extensions and $E_1,E_2$ are not K-pure,  the total K-theory doesn't fully reflect the
structure of ideals. Under the ``K-pure'' setting, this would not happen. It can be expected that Elliott Conjecture for real rank zero ${\rm C}^*$-algebras may hold  for K-pure algebras.  In the last section, we will use a new invariant called total Cuntz semigroup to distinguish $E_1$, $E_2$ and classify non K-pure extensions. This gives evidence supporting this new invariant might be a complete invariant for separable nuclear ${\rm C}^*$-algebras of stable rank one and real rank zero.
\end{remark}

\section{K-pureness and total K-theory}
In this section, some elementary properties are established for K-pure extensions and K-pure ${\rm C}^*$-algebras.

\begin{lemma}\label{k-pure lemma}
Given an $extension$ of ${\rm C}^*$-algebras
  $$
e:\quad 0 \to B \xrightarrow{\iota}  E \xrightarrow{\pi} A \to 0,$$
we have the following statements are equivalent:

(i) $e$ is a {\rm K}-pure extension;

(ii) For any $n \geq 0$, the sequences
$$0 \to {\rm K}_*(B ; \mathbb{Z}_n) \to {\rm K}_*(E; \mathbb{Z}_n) \to {\rm K}_*(A ; \mathbb{Z}_n) \to 0$$
are exact;

(iii) For any $n \geq 0$, $j=0,1,$ $\mathrm{K}_j(\iota; \mathbb{Z}_n)$ is an injective map, i.e.,
$\underline{\mathrm{K}}(\iota)$ is injective;

(iv) For any $n \geq 0$, $j=0,1,$ $\mathrm{K}_j(\pi; \mathbb{Z}_n)$ is a surjective map, i.e.,
$\underline{\mathrm{K}}(\pi)$ is surjective;

(v) The sequence  $$0 \to \underline{{\rm K}}(B ) \to \underline{{\rm K}}(E) \to \underline{{\rm K}}(A) \to 0$$
is exact.
\end{lemma}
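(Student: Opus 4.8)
The plan is to reduce everything to three standard facts. First, for each fixed $n\geq 0$ the extension $e$ induces a cyclic six-term exact sequence in mod-$n$ K-theory: for $n=0$ this is the ordinary six-term sequence of $e$, while for $n\geq 1$ it comes from tensoring $e$ with the commutative (hence exact) C*-algebra $C_0(W_n)$, which keeps the sequence short exact, and then applying the usual six-term sequence. Second, for every C*-algebra $D$ the Bockstein six-term sequence of \ref{def k-total} repackages, via $\operatorname{coker}(\times n)=\mathrm{K}_j(D)\otimes\mathbb{Z}_n$ and $\ker(\times n)=\operatorname{Tor}(\mathrm{K}_{j+1}(D),\mathbb{Z}_n)$, into a short exact sequence
\[
0\to \mathrm{K}_j(D)\otimes\mathbb{Z}_n \xrightarrow{\;\rho_n^j\;} \mathrm{K}_j(D;\mathbb{Z}_n)\xrightarrow{\;\beta_n^j\;}\operatorname{Tor}\!\bigl(\mathrm{K}_{j+1}(D),\mathbb{Z}_n\bigr)\to 0,
\]
which is natural in $D$ because $\rho$, $\beta$ and $\times n$ are. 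Third, a short exact sequence $0\to K\to G\to H\to 0$ of abelian groups is pure if and only if tensoring it with $\mathbb{Z}_n$ yields a short exact sequence for every $n$, equivalently if and only if $\operatorname{Tor}(\,\cdot\,,\mathbb{Z}_n)$ carries it to a short exact sequence for every $n$.

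Granting these, $(ii)\Leftrightarrow(iii)\Leftrightarrow(iv)\Leftrightarrow(v)$ is essentially formal. Fixing $n$, exactness of the mod-$n$ six-term sequence shows that injectivity of $\mathrm{K}_j(\iota;\mathbb{Z}_n)$ for $j=0,1$, surjectivity of $\mathrm{K}_j(\pi;\mathbb{Z}_n)$ for $j=0,1$, vanishing of the two connecting maps $\mathrm{K}_j(A;\mathbb{Z}_n)\to\mathrm{K}_{1-j}(B;\mathbb{Z}_n)$, and the splitting of the six-term sequence into the two short exact sequences of $(ii)$ are all equivalent. Since $\underline{\mathrm{K}}$ is the direct sum of the groups $\mathrm{K}_j(\,\cdot\,;\mathbb{Z}_n)$, injectivity, surjectivity and exactness may be checked summand by summand; this turns the pointwise statements into the global forms $(iii)$, $(iv)$, $(v)$, and in particular gives $(v)\Leftrightarrow(ii)$. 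The case $n=0$ is the ordinary six-term sequence, so the exactness of $0\to\mathrm{K}_j(B)\to\mathrm{K}_j(E)\to\mathrm{K}_j(A)\to 0$ that is tacit in $(i)$–$(v)$ is handled uniformly here.

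The real content is $(i)\Leftrightarrow(ii)$. For $(i)\Rightarrow(ii)$, fix $n$ and $j$ and write the commutative diagram whose three rows are the natural short exact sequence above for $D=B,E,A$ and whose columns are induced by $\iota$ and $\pi$; the left column $0\to\mathrm{K}_j(B)\otimes\mathbb{Z}_n\to\mathrm{K}_j(E)\otimes\mathbb{Z}_n\to\mathrm{K}_j(A)\otimes\mathbb{Z}_n\to 0$ is exact because the $j$-th extension is pure, the right column $0\to\operatorname{Tor}(\mathrm{K}_{j+1}(B),\mathbb{Z}_n)\to\operatorname{Tor}(\mathrm{K}_{j+1}(E),\mathbb{Z}_n)\to\operatorname{Tor}(\mathrm{K}_{j+1}(A),\mathbb{Z}_n)\to 0$ is exact because the $(j+1)$-th extension is pure, and the $3\times 3$ lemma then forces the middle column $0\to\mathrm{K}_j(B;\mathbb{Z}_n)\to\mathrm{K}_j(E;\mathbb{Z}_n)\to\mathrm{K}_j(A;\mathbb{Z}_n)\to 0$ to be exact, which is $(ii)$. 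For $(ii)\Rightarrow(i)$, chase the commuting square with horizontal maps $\rho_n^j$ and vertical maps induced by $\iota$: the top map $\mathrm{K}_j(B)\otimes\mathbb{Z}_n\to\mathrm{K}_j(B;\mathbb{Z}_n)$ is injective and the right map $\mathrm{K}_j(B;\mathbb{Z}_n)\to\mathrm{K}_j(E;\mathbb{Z}_n)$ is injective by $(ii)$, so their composite, hence the left map $\mathrm{K}_j(B)\otimes\mathbb{Z}_n\to\mathrm{K}_j(E)\otimes\mathbb{Z}_n$, is injective; as this holds for all $n$ and $j$, and the sequences $0\to\mathrm{K}_j(B)\to\mathrm{K}_j(E)\to\mathrm{K}_j(A)\to 0$ are exact by $(ii)$ at $n=0$, the purity criterion yields $(i)$. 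I expect the only delicate points to be bookkeeping: confirming that the natural short exact sequence of the second fact is genuinely functorial in $D$ so that the $3\times 3$ diagram commutes, and keeping track of the fact that "pure extension" in $(i)$ already carries ordinary exactness, so that the $n=0$ instance is not silently dropped.
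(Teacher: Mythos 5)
Your proposal is correct, and its skeleton matches the paper's: the equivalences (ii)$\Leftrightarrow$(iii)$\Leftrightarrow$(iv)$\Leftrightarrow$(v) via the cyclic mod-$n$ six-term sequence, and the implication back to purity via the commuting square involving $\rho_n^j$, are exactly the paper's arguments (the paper's chase with $\tilde y\in \mathrm{K}_j(\iota)(\mathrm{K}_j(B))\cap n\mathrm{K}_j(E)$ is your injectivity statement for $\mathrm{K}_j(B)\otimes\mathbb{Z}_n\to \mathrm{K}_j(E)\otimes\mathbb{Z}_n$ unwound by hand). The one place you genuinely diverge is (i)$\Rightarrow$(ii): the paper does not prove this at all but cites \cite[Lemma 5.6]{AL} and \cite[Section 4]{DE}, whereas you give a self-contained derivation from the natural short exact sequence
$0\to \mathrm{K}_j(D)\otimes\mathbb{Z}_n\to \mathrm{K}_j(D;\mathbb{Z}_n)\to \operatorname{Tor}(\mathrm{K}_{j+1}(D),\mathbb{Z}_n)\to 0$
together with the characterization of purity by exactness under $-\otimes\mathbb{Z}_n$ and $\operatorname{Tor}(-,\mathbb{Z}_n)$ and the nine lemma (with the middle column a complex by functoriality). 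This is essentially the content of the cited references, so your version buys self-containedness at the cost of having to verify naturality of the coefficient sequence and the correct attribution of the two outer columns to purity in degrees $j$ and $j+1$ respectively, both of which you handle correctly.
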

\begin{proof}
For any $n\geq 0$, $j=0,1,$ from the following exact sequence
$$
\xymatrixcolsep{3pc}
\xymatrix{
{\mathrm{K}_j(B; \mathbb{Z}_n)}  \ar[r]^-{\mathrm{K}_j(\iota\,;\, \mathbb{Z}_n)}
& {\mathrm{K}_j(E; \mathbb{Z}_n)}  \ar[r]^-{}
& {\mathrm{K}_j(A; \mathbb{Z}_n)} \ar[d]_-{}
 \\
{\mathrm{K}_{1-j}(A; \mathbb{Z}_n)} \ar[u]_-{}
& {\mathrm{K}_{1-j}(E; \mathbb{Z}_n)} \ar[l]_-{\mathrm{K}_{1-j}(\pi\,;\, \mathbb{Z}_n)}
& {\mathrm{K}_{1-j}(B; \mathbb{Z}_n)} \ar[l]_-{},
}
$$
we have $\mathrm{K}_j(\iota; \mathbb{Z}_n)$ is injective
if and only if $\mathrm{K}_{1-j}(\pi; \mathbb{Z}_n)$ is surjective.

Thus, (ii) $\Leftrightarrow$ (iii) $\Leftrightarrow$ (iv) $\Leftrightarrow$ (v)
 is trivial. One can see \cite[Lemma 5.6]{AL} or \cite[Section 4]{DE} for (i) $\Rightarrow$ (ii). Now we prove (iii) $\Rightarrow$ (i).

For $n =0$, the following is an exact sequence:
$$0 \to {\rm K}_*(B ) \to {\rm K}_*(E) \to {\rm K}_*(A) \to 0.$$
We need to show this sequence is pure exact (see Definition \ref{group pure}).

For any $n\geq 1$,  consider the commutative diagram with exact rows as follows
$$
\xymatrixcolsep{3pc}
\xymatrix{
{\mathrm{K}_j(B)} \ar[d]_-{{\rm K}_j(\iota)} \ar[r]^-{\times n}
& {\mathrm{K}_j(B)} \ar[d]_-{{\rm K}_j(\iota)} \ar[r]^-{\rho_j^B}
& {\mathrm{K}_j(B; \mathbb{Z}_n)} \ar[d]_-{\mathrm{K}_j(\iota\,;\, \mathbb{Z}_n)} \ar[r]^-{}
& 
 {\mathrm{K}_{1-j}(B)} \ar[d]_-{{\rm K}_{1-j}(\iota)}
 \\
{\mathrm{K}_j(E)} \ar[r]_-{\times n}
& {\mathrm{K}_j(E)} \ar[r]_-{\rho_j^E}
& {\mathrm{K}_j(E; \mathbb{Z}_n)} \ar[r]_-{}
& 
{\mathrm{K}_{1-j}(E)},
}
$$
where $j=0,1$.

Since it is obvious that $n\times {\rm K}_j(\iota) (\mathrm{K}_j(B))\subset
{\rm K}_j(\iota)(\mathrm{K}_j(B))\cap
(n\times \mathrm{K}_j(E)),$
we only need to check
$$n\times {\rm K}_j(\iota)(\mathrm{K}_j(B))\supset
{\rm K}_j(\iota) (\mathrm{K}_j(B))\cap
(n\times \mathrm{K}_j(E)).$$

For any $\tilde{y}\in {\rm K}_j(\iota)(\mathrm{K}_j(B))\cap
(n\times \mathrm{K}_j(E))$,
there exists $x\in \mathrm{K}_j(B)$ and
$y\in\mathrm{K}_j(E)$
such that
$$
{\rm K}_j(\iota)(x)=\tilde{y}
\quad{\rm and}\quad
 ny=\tilde{y}.
$$
By the exactness of the second row, we have $\rho_j^E(\tilde{y})=0=\rho_j^E({\rm K}_j(\iota)(x))$. The commutativity implies
 $$\mathrm{K}_j(\iota\,;\, \mathbb{Z}_n)\circ \rho_j^B(x)=0.$$
By assumption,  $\mathrm{K}_j(\iota\,;\, \mathbb{Z}_n)$ is an injective map, which means  $\rho_j^B(x)=0$. By the exactness of the first row,
there exists an $\tilde{x}\in \mathrm{K}_j(B)$ such that
$n\tilde{x}=x.$ Now we have
$$
\tilde{y}={\rm K}_j(\iota)(x)={\rm K}_j(\iota)(n\tilde{x})
=n\times {\rm K}_j(\iota)(\tilde{x})\in n\times {\rm K}_j(\iota)(\mathrm{K}_j(B)).
$$
This concludes the proof.

\end{proof}

\begin{remark}\rm
Let $I$ be an ideal of  $A$. Then
$0\to I\xrightarrow{\iota} A\to A/I\to 0$ is an extension.
From Lemma \ref{k-pure lemma},
if $I$ is K-pure in $A$, it means that when we check the image of
$\underline{\mathrm{K}}(I)$ in $\underline{\mathrm{K}}(A)$ through $\underline{\mathrm{K}}(\iota)$, all the information is kept ($\underline{\mathrm{K}}(\iota)$ is injective);
on the contrary, if $I$ is not K-pure in $A$, the information contained in $\ker(\underline{\mathrm{K}}(\iota))$ will be lost when we only fucus on $\underline{\mathrm{K}}(A)$.
Particularly, for the non K-pure algebras $E_1,E_2$ in \ref{counter ex}, some key characterizations of $\underline{\mathrm{K}}(B_1)$ and $\underline{\mathrm{K}}(B_2)$
get lost in
$\underline{\mathrm{K}}(E_1)$ and $\underline{\mathrm{K}}(E_2)$, respectively.
\end{remark}

\begin{theorem}\label{k pure ideal and q}
If $A$ is a $\mathrm{K}$-pure ${\rm C}^*$-algebra, then for any ideal $I$ of $A$, both $I$ and $A/I$ are $\mathrm{K}$-pure ${\rm C}^*$-algebras.
\end{theorem}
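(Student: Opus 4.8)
The plan is to reduce both assertions to the characterization of K-pureness given in Lemma \ref{k-pure lemma}, namely that an extension $0\to J\to C\to C/J\to 0$ is K-pure if and only if the map $\underline{\mathrm{K}}(J)\to\underline{\mathrm{K}}(C)$ is injective (equivalently, the map $\underline{\mathrm{K}}(C)\to\underline{\mathrm{K}}(C/J)$ is surjective, equivalently all the sequences with $\mathbb{Z}_n$-coefficients are short exact). So fix an ideal $I$ of the K-pure algebra $A$, and let $J$ be an arbitrary ideal of $I$ (to handle $I$) or an arbitrary ideal of $A/I$, pulled back to an ideal $\widehat{J}$ of $A$ containing $I$ (to handle $A/I$).

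For the quotient $A/I$: an ideal of $A/I$ is of the form $\widehat{J}/I$ for an ideal $\widehat{J}$ of $A$ with $I\subseteq\widehat{J}$. Since $A$ is K-pure, the sequence $0\to\widehat{J}\to A\to A/\widehat{J}\to 0$ is K-pure, so $\underline{\mathrm{K}}(A)\to\underline{\mathrm{K}}(A/\widehat{J})$ is surjective. Using the canonical identification $A/\widehat{J}\cong(A/I)/(\widehat{J}/I)$, this is exactly the statement that $\underline{\mathrm{K}}(A)\to\underline{\mathrm{K}}((A/I)/(\widehat{J}/I))$ is surjective; since $\underline{\mathrm{K}}(A)\to\underline{\mathrm{K}}(A/I)$ is itself surjective (apply K-pureness of $A$ to the ideal $I$) and the triangle of quotient maps commutes, the map $\underline{\mathrm{K}}(A/I)\to\underline{\mathrm{K}}((A/I)/(\widehat{J}/I))$ is surjective. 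By Lemma \ref{k-pure lemma}, $\widehat{J}/I$ is K-pure in $A/I$. As $\widehat{J}/I$ was arbitrary, $A/I$ is K-pure.

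For the ideal $I$: let $J$ be an ideal of $I$. Since $A$ is K-pure, the map $\underline{\mathrm{K}}(J')\to\underline{\mathrm{K}}(A)$ is injective for every ideal $J'$ of $A$. Applying this with $J'=I$ gives that $\underline{\mathrm{K}}(I)\to\underline{\mathrm{K}}(A)$ is injective; the delicate point is that $J$ need not be an ideal of $A$ in general, though it will be if $A$ has stable rank one or, more relevantly here, one can instead take $J'$ to be the ideal of $A$ generated by $J$ and argue on the level of the diagram. The cleanest route is: consider the composite $\underline{\mathrm{K}}(J)\to\underline{\mathrm{K}}(I)\to\underline{\mathrm{K}}(A)$; if $J$ happens to be an ideal of $A$ then this composite is injective by K-pureness of $A$, hence the first arrow $\underline{\mathrm{K}}(J)\to\underline{\mathrm{K}}(I)$ is injective, and Lemma \ref{k-pure lemma} finishes it. The general case reduces to this by noting every ideal of $I$ is an ideal of $A$ whenever $I$ itself is, which holds because a closed ideal of a closed ideal of a $\mathrm{C}^*$-algebra is again a closed ideal of the whole algebra (this is a standard fact about $\mathrm{C}^*$-algebras).

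The main obstacle, and the step I would write out carefully, is precisely this last point and its bookkeeping: one must make sure that ``ideal of an ideal is an ideal'' is invoked correctly so that K-pureness of $A$ can be applied to $J\subseteq A$ directly, and then chase the commuting square
\[
\xymatrix{
\underline{\mathrm{K}}(J)\ar[r]\ar[d] & \underline{\mathrm{K}}(I)\ar[d]\\
\underline{\mathrm{K}}(J)\ar[r] & \underline{\mathrm{K}}(A)
}
\]
(with the left vertical the identity) to conclude injectivity of the top horizontal map from injectivity of the bottom composite. Everything else is a direct application of the equivalences in Lemma \ref{k-pure lemma} together with the third-isomorphism-theorem identifications of quotients, which are routine.
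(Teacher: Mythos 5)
Your proposal is correct and follows essentially the same route as the paper: for the ideal case the paper likewise notes that an ideal $I'$ of $I$ is an ideal of $A$, factors $\underline{\mathrm{K}}(I')\to\underline{\mathrm{K}}(I)\to\underline{\mathrm{K}}(A)$ and uses injectivity from Lemma \ref{k-pure lemma}(iii), and for the quotient it uses the surjectivity criterion of Lemma \ref{k-pure lemma}(iv) exactly as you do. (Your extra appeal to surjectivity of $\underline{\mathrm{K}}(A)\to\underline{\mathrm{K}}(A/I)$ is harmless but not needed, since surjectivity of the composite already forces surjectivity of the second map.)
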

\begin{proof}
Given any ideal $I'$ of $I$,  $I'$ is also an ideal of $A$. By assumption, both $I'$ and $I$ are $\mathrm{K}$-pure in $A$. Consider the following commutative diagram
$$
\xymatrixcolsep{3pc}
\xymatrix{
{\,\,\underline{\mathrm{K}}(I)\,\,} \ar[r]^-{\underline{\rm K}(\iota_{IA})}
& {\,\,\underline{\mathrm{K}}(A).\,\,}
 \\
{\,\,\underline{\mathrm{K}}(I')\,\,} \ar[ur]_-{\underline{\rm K}(\iota_{I'A})}\ar[u]^-{\underline{\rm K}(\iota_{I'I})}}
$$
Lemma \ref{k-pure lemma} (iii) implies both $\underline{\rm K}(\iota_{IA})$ and $\underline{\rm K}(\iota_{I'A})$ are injective, then $\underline{\rm K}(\iota_{I'I})$ is injective. Apply Lemma \ref{k-pure lemma} (iii) again, we have $I'$ is K-pure in $I$. Then $I$ is a K-pure ${\rm C}^*$-algebra.

The proof for $A/I$ is similar by using the surjectivity and Lemma \ref{k-pure lemma} (iv).

\end{proof}
\begin{corollary}\label{K pure transitivity}
 Let $A,B,C$ be ${\rm C}^*$-algebras. If $A$ is {\rm K}-pure in $B$, $B$ is {\rm K}-pure in $C$, then $A$ is {\rm K}-pure in $C$.
\end{corollary}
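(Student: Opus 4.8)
The plan is to reduce everything to the injectivity criterion of Lemma~\ref{k-pure lemma}~(iii) and exploit functoriality of $\underline{\mathrm{K}}$, in the same spirit as the proof of Theorem~\ref{k pure ideal and q}.

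First I would check that the statement is well posed: since $A$ is a closed two-sided ideal of $B$ and $B$ is a closed two-sided ideal of $C$, $A$ is automatically a closed two-sided ideal of $C$. Indeed, taking an approximate unit $(e_\lambda)$ for $A$, for $c\in C$ and $a\in A$ one has $ce_\lambda\in B$ (because $B$ is an ideal of $C$ and $e_\lambda\in A\subseteq B$), hence $ce_\lambda a\in A$ (because $A$ is an ideal of $B$), and letting $\lambda$ increase gives $ca\in A$; right multiplication is handled symmetrically, and $A$ is closed in $C$ since it is closed in $B$ which is closed in $C$. Thus the extension $0\to A\xrightarrow{\iota_{AC}} C\to C/A\to 0$ makes sense, and it suffices to prove it is $\mathrm{K}$-pure.

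Next, the canonical embeddings fit into a commuting triangle $\iota_{AC}=\iota_{BC}\circ\iota_{AB}$ with $\iota_{AB}\colon A\hookrightarrow B$ and $\iota_{BC}\colon B\hookrightarrow C$. Applying the functor $\underline{\mathrm{K}}$ yields $\underline{\mathrm{K}}(\iota_{AC})=\underline{\mathrm{K}}(\iota_{BC})\circ\underline{\mathrm{K}}(\iota_{AB})$. By hypothesis $A$ is $\mathrm{K}$-pure in $B$ and $B$ is $\mathrm{K}$-pure in $C$, so Lemma~\ref{k-pure lemma}~(iii) shows that both $\underline{\mathrm{K}}(\iota_{AB})$ and $\underline{\mathrm{K}}(\iota_{BC})$ are injective; a composition of injective homomorphisms of graded groups is injective, so $\underline{\mathrm{K}}(\iota_{AC})$ is injective. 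Invoking the implication (iii)~$\Rightarrow$~(i) of Lemma~\ref{k-pure lemma} once more, we conclude that $A$ is $\mathrm{K}$-pure in $C$.

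There is no genuinely hard step here: the entire content is carried by the equivalence (i)~$\Leftrightarrow$~(iii) of Lemma~\ref{k-pure lemma}, which trades the somewhat opaque purity condition on integral $\mathrm{K}$-theory for the transparently transitive condition that $\underline{\mathrm{K}}(\iota)$ be injective. The only point deserving a word of care is the elementary observation, used above, that an ideal of an ideal of a $\mathrm{C}^*$-algebra is again an ideal, which is what guarantees the conclusion is not vacuous.
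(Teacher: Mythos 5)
Your argument is correct and is exactly the route the paper intends: the corollary is read off from the equivalence (i)\,$\Leftrightarrow$\,(iii) of Lemma~\ref{k-pure lemma} together with functoriality, $\underline{\mathrm{K}}(\iota_{AC})=\underline{\mathrm{K}}(\iota_{BC})\circ\underline{\mathrm{K}}(\iota_{AB})$, a composition of injective maps being injective. Your preliminary check that an ideal of an ideal is again an ideal is a worthwhile (and correct) addition that the paper leaves implicit.
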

\begin{corollary}
If $A_1\xrightarrow{\psi_{1,2}} A_2\xrightarrow{\psi_{2,3}}  A_3\to \cdots$ is an inductive system of $\mathrm{K}$-pure ${\rm C}^*$-algebras with an inductive limit $A$,
then $A$ is $\mathrm{K}$-pure.
\end{corollary}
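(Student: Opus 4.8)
The plan is to reduce the statement to the injectivity criterion of Lemma \ref{k-pure lemma}(iii) together with the continuity of total $\mathrm{K}$-theory under inductive limits. Write $A=\varinjlim(A_n,\psi_{n,n+1})$ with each $A_n$ being $\mathrm{K}$-pure, and let $\psi_{n,\infty}\colon A_n\to A$ be the canonical maps. Fix an arbitrary ideal $I$ of $A$; I must show that the extension $0\to I\xrightarrow{\iota} A\to A/I\to 0$ is $\mathrm{K}$-pure. Put $I_n=\psi_{n,\infty}^{-1}(I)$, which is an ideal of $A_n$, and note that $\psi_{n,n+1}(I_n)\subseteq I_{n+1}$, so $(I_n,\psi_{n,n+1}|_{I_n})$ is an inductive system. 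By the standard description of ideals in C$^*$-inductive limits, the composite $A_n\to A\to A/I$ has kernel exactly $I_n$, hence $A/I=\varinjlim A_n/I_n$, and passing to the limit in the exact sequences $0\to I_n\to A_n\to A_n/I_n\to 0$ identifies the inclusion $\iota\colon I\hookrightarrow A$ with $\varinjlim\iota_n$, where $\iota_n\colon I_n\hookrightarrow A_n$.

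Next I would invoke continuity of $\underline{\mathrm{K}}$: since $\mathrm{K}_*$ commutes with inductive limits and $\mathrm{K}_*(-\,;\mathbb{Z}_n)=\mathrm{K}_*\!\big((-)\otimes C_0(W_n)\big)$ with $(-)\otimes C_0(W_n)$ also commuting with inductive limits, one gets $\underline{\mathrm{K}}(I)=\varinjlim\underline{\mathrm{K}}(I_n)$ and $\underline{\mathrm{K}}(A)=\varinjlim\underline{\mathrm{K}}(A_n)$ as $\mathbb{Z}_2\times\mathbb{Z}^+$-graded groups, and under these identifications $\underline{\mathrm{K}}(\iota)=\varinjlim\underline{\mathrm{K}}(\iota_n)$. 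Because each $A_n$ is $\mathrm{K}$-pure, Lemma \ref{k-pure lemma}(iii) gives that every component of every $\underline{\mathrm{K}}(\iota_n)$ is injective. Using that filtered colimits are exact in the category of abelian groups—so that a directed colimit of injective homomorphisms compatible with the system maps is again injective—I conclude that each component of $\underline{\mathrm{K}}(\iota)\colon\underline{\mathrm{K}}(I)\to\underline{\mathrm{K}}(A)$ is injective, i.e.\ $\underline{\mathrm{K}}(\iota)$ is injective. By Lemma \ref{k-pure lemma}(iii) again, $I$ is $\mathrm{K}$-pure in $A$; since $I$ was arbitrary, $A$ is $\mathrm{K}$-pure. (Alternatively one can run the dual argument via Lemma \ref{k-pure lemma}(iv), using that a filtered colimit of surjections is surjective, applied to $\underline{\mathrm{K}}(\pi)$ for the quotient map $\pi\colon A\to A/I$.)

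I expect no serious obstacle here: the argument is essentially formal once two bookkeeping points are in place, namely (a) that an arbitrary ideal of $A$ really is the inductive limit of its preimages $I_n=\psi_{n,\infty}^{-1}(I)$ in the $A_n$, and (b) that under the continuity isomorphisms for $\underline{\mathrm{K}}$ the map induced by $\iota$ is identified with $\varinjlim\underline{\mathrm{K}}(\iota_n)$ (functoriality of $\underline{\mathrm{K}}$ with respect to the directed system, including the coefficient maps and Bockstein operations in $\Lambda$). Granting this compatibility, injectivity of $\underline{\mathrm{K}}(\iota)$ is immediate from exactness of filtered colimits, so the main work is the verification of (a) and (b) rather than any conceptual difficulty.
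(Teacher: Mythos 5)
Your proposal is correct and follows essentially the same route as the paper: pass to the inductive system of preimage ideals $I_n=\psi_{n,\infty}^{-1}(I)$ with $I=\varinjlim I_n$, apply Lemma~\ref{k-pure lemma}(iii) to each $I_n\subseteq A_n$, and use continuity of $\underline{\mathrm{K}}$ together with the fact that a filtered colimit of injections is injective. The paper leaves these verifications as "routine"; your write-up simply makes them explicit.
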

\begin{proof}
For each $n\geq 1,$ denote the induced map from
$A_n$ to $A$ by $\psi_n$. Given any ideal $I$ of $A$,
we have an inductive system
$$I_1\xrightarrow{\psi_{1,2}'} I_2\xrightarrow{\psi_{2,3}'}  I_3\to \cdots,$$
where $I_n=\psi_n^{-1}(I)$ is an ideal of $A_n$, $\psi_{n,n+1}'$ is the restriction map. Then, $I=\lim\limits_{\longrightarrow} (I_n,\psi_{n,n+1}')$.
Since each $I_n$ is K-pure in $A_n$, by Lemma \ref{k-pure lemma} (iii),  it is routine to check that $I$ is K-pure in $A$ from the fact that both
$\underline{\rm K}(I)$ and $\underline{\rm K}(A)$ are the algebraic inductive limits. Then we conclude that $A$ is K-pure.

\end{proof}

\begin{theorem}\label{k-pure ext k-pure}
Let $A,B$ be $\mathrm{K}$-pure ${\rm C}^*$-algebras. Then for any
$\mathrm{K}$-pure extension
$$
0\to B\xrightarrow{\iota} E\xrightarrow{\pi} A\to 0,
$$
we have $E$ is a $\mathrm{K}$-pure ${\rm C}^*$-algebra.
\end{theorem}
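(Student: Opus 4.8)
The plan is to fix an arbitrary ideal $J$ of $E$ and show it is $\mathrm{K}$-pure in $E$, i.e.\ that $\underline{\mathrm{K}}(\iota_{JE})$ is injective (Lemma \ref{k-pure lemma}(iii)); as $J$ is arbitrary this yields that $E$ is $\mathrm{K}$-pure. Put $J_0=J\cap B$, an ideal of $B$, and $\bar J=\pi(J)$, an ideal of $A$; then $\pi$ restricts to a short exact sequence
$$
e_J:\quad 0\to J_0\to J\to \bar J\to 0,
$$
and the natural inclusions $J_0\hookrightarrow B$, $J\hookrightarrow E$, $\bar J\hookrightarrow A$ form a morphism of extensions $e_J\to e$. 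Since $A$ and $B$ are $\mathrm{K}$-pure, $J_0$ is $\mathrm{K}$-pure in $B$ and $\bar J$ is $\mathrm{K}$-pure in $A$; and since $e$ is a $\mathrm{K}$-pure extension, $B$ is $\mathrm{K}$-pure in $E$, whence by Corollary \ref{K pure transitivity} the ideal $J_0$ is $\mathrm{K}$-pure in $E$ as well.

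The first step of substance is to prove that $e_J$ is itself a $\mathrm{K}$-pure extension. For $j=0,1$ the map $\mathrm{K}_j(J_0)\to\mathrm{K}_j(E)$ induced by the inclusion $J_0\hookrightarrow E$ factors through $\mathrm{K}_j(J)$, and it is a pure monomorphism because $J_0$ is $\mathrm{K}$-pure in $E$; since a left factor of a pure monomorphism is again a pure monomorphism (a one-line check from the definitions), $\mathrm{K}_j(J_0)\to\mathrm{K}_j(J)$ is injective with pure image for $j=0,1$. Feeding the injectivity of $\mathrm{K}_j(J_0)\to\mathrm{K}_j(J)$ for both $j$ into the six-term exact sequence of $e_J$ forces both index maps of $e_J$ to be zero, so $0\to\mathrm{K}_j(J_0)\to\mathrm{K}_j(J)\to\mathrm{K}_j(\bar J)\to 0$ is exact for $j=0,1$; combined with the purity just noted, this says precisely that $e_J$ is $\mathrm{K}$-pure. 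By Lemma \ref{k-pure lemma}(v) it then follows that $0\to\underline{\mathrm{K}}(J_0)\to\underline{\mathrm{K}}(J)\to\underline{\mathrm{K}}(\bar J)\to 0$ is exact.

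Finally I would finish by a short chase on the commutative ladder obtained by applying $\underline{\mathrm{K}}$ to $e_J\to e$. Let $x\in\underline{\mathrm{K}}(J)$ with $\underline{\mathrm{K}}(\iota_{JE})(x)=0$; applying $\underline{\mathrm{K}}(\pi)$ and using commutativity, the image of $x$ in $\underline{\mathrm{K}}(\bar J)$ is sent to $0$ in $\underline{\mathrm{K}}(A)$, and since $\underline{\mathrm{K}}(\iota_{\bar JA})$ is injective ($\bar J$ is $\mathrm{K}$-pure in $A$, Lemma \ref{k-pure lemma}(iii)) we get $x\mapsto 0$ in $\underline{\mathrm{K}}(\bar J)$. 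By exactness of $0\to\underline{\mathrm{K}}(J_0)\to\underline{\mathrm{K}}(J)\to\underline{\mathrm{K}}(\bar J)$ at $\underline{\mathrm{K}}(J)$, $x$ is the image of some $x_0\in\underline{\mathrm{K}}(J_0)$, and then $x_0$ is sent to $0$ under the injective map $\underline{\mathrm{K}}(\iota_{J_0E})$ (injective because $J_0$ is $\mathrm{K}$-pure in $E$), so $x_0=0$ and hence $x=0$; thus $\underline{\mathrm{K}}(\iota_{JE})$ is injective, as desired. The one point that needs genuine care is the middle paragraph — turning the inclusion $\mathrm{K}_j(J_0)\hookrightarrow\mathrm{K}_j(E)$ into the statement that $e_J$ is $\mathrm{K}$-pure — which requires tracking injectivity of $\mathrm{K}_j(J_0)\to\mathrm{K}_j(J)$ (to kill the index maps of $e_J$ and obtain short exact sequences for $j=0,1$) and purity of the resulting subgroup simultaneously; everything else (constructing $e_J$, the six-term sequences, the closing chase) is routine.
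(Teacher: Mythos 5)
Your proof is correct, but it takes a genuinely different route from the paper's. The paper factors the inclusion $I\hookrightarrow E$ through the intermediate ideal $I+B$: it first shows $I$ is K-pure in $I+B$ by running the weak five lemma on the mod-$n$ six-term sequences of $0\to I\cap B\to I\to I/(I\cap B)\to 0$ and $0\to B\to I+B\to (I+B)/B\to 0$ (using that $I\cap B$ is K-pure in $B$), then shows $I+B$ is K-pure in $E$ by composing the two surjections $\underline{\mathrm{K}}(E)\twoheadrightarrow\underline{\mathrm{K}}(A)\twoheadrightarrow\underline{\mathrm{K}}(A/I_2)$ coming from criterion (iv) of Lemma \ref{k-pure lemma}, and only invokes transitivity (Corollary \ref{K pure transitivity}) at the very end. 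You instead work with the restricted extension $e_J:\;0\to J\cap B\to J\to\pi(J)\to 0$, invoke transitivity at the outset to make $J\cap B$ K-pure in $E$, and use the (correct, and easily verified) fact that a left factor of a pure monomorphism is a pure monomorphism to see that $e_J$ is itself K-pure; the concluding chase on the $\underline{\mathrm{K}}$-ladder then needs only injectivity of $\underline{\mathrm{K}}(\iota_{\bar JA})$ and $\underline{\mathrm{K}}(\iota_{J_0E})$. Your route avoids the five lemma and stays at the level of ordinary K-theory until the final chase, and it yields the extra intermediate fact that the restricted extension $e_J$ is K-pure; one could even streamline it further, since the final chase only uses exactness of $\underline{\mathrm{K}}(J_0)\to\underline{\mathrm{K}}(J)\to\underline{\mathrm{K}}(\bar J)$ at the middle term, which holds for any extension, so the purity half of your middle paragraph is not strictly needed for the conclusion (though it is needed if you want to quote Lemma \ref{k-pure lemma}(v) verbatim). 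The paper's route, by contrast, leans entirely on criteria (iii) and (iv) of Lemma \ref{k-pure lemma} and so never has to verify purity of a subgroup by hand.
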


\begin{proof}
We identify $B$ as the its image in $E$ and identify $A$ as $E/B$. Take any ideal $I$ of $E$. For any $n\geq0$, $j=0,1,$ we have the following commutative diagram with exact rows
induced by the natural inclusion maps
$$
\xymatrixcolsep{0.4pc}
\xymatrix{
{\mathrm{K}_{1-j}(I_1;\mathbb{Z}_n)} \ar[d]_-{\cong}\ar[r]^-{}
& {\mathrm{K}_j(I\cap B;\mathbb{Z}_n)} \ar[d]_-{\mathrm{K}_j(\iota_1;\mathbb{Z}_n)} \ar[r]^-{}
& {\mathrm{K}_j(I;\mathbb{Z}_n)} \ar[d]_-{\mathrm{K}_j(\iota_2;\mathbb{Z}_n)} \ar[r]_-{}
& {\mathrm{K}_j(I_1;\mathbb{Z}_n)} \ar[d]_-{\cong} \ar[r]_-{}
& {\mathrm{K}_{1-j}(I\cap B;\mathbb{Z}_n)} \ar[d]_-{\mathrm{K}_{1-j}(\iota_1;\mathbb{Z}_n)}
\\
 {\mathrm{K}_{1-j}(I_2;\mathbb{Z}_n)}\ar[r]^-{}
& {\mathrm{K}_j(B;\mathbb{Z}_n)} \ar[r]^-{}
& {\mathrm{K}_j(I+B;\mathbb{Z}_n)} \ar[r]_-{}
& {\mathrm{K}_j(I_2;\mathbb{Z}_n)} \ar[r]_-{}
& 
{\mathrm{K}_{1-j}(B;\mathbb{Z}_n)},
}
$$
where $I_1:=I/(I\cap B)$, $I_2:=(I+B)/B$, $\iota_1 :I\cap B\to B$ and  $\iota_2:I\rightarrow I+B$.

Since $B$ is K-pure, then $I\cap B$ is K-pure in $B$. By Lemma \ref{k-pure lemma}, both $\mathrm{K}_0(\iota_1;\mathbb{Z}_n)$ and $\mathrm{K}_{1}(\iota_1;\mathbb{Z}_n)$ are injective.
Using the weak version of five lemma, we have $\mathrm{K}_j(\iota_2;\mathbb{Z}_n)$ is injective.
By Lemma \ref{k-pure lemma} again,  $I$ is K-pure in $I+B$.

Consider the following commutative diagram:
$$
\xymatrixcolsep{2pc}
\xymatrix{
{\,\,0\,\,} \ar[r]^-{}
& {\,\,B\,\,} \ar[d]_-{} \ar[r]^-{\iota}
& {\,\,E\,\,} \ar[d]_-{{\rm id}} \ar[r]^-{\pi}
& {\,\,A\,\,} \ar[d]_-{\pi_{AA'}} \ar[r]^-{}
& {\,\,0\,\,} \\
{\,\,0\,\,} \ar[r]^-{}
& {\,\,I+B\,\,} \ar[r]_-{}
& {\,\,E \,\,} \ar[r]_-{\pi_{EA'}}
& {\,\,A'\,\,} \ar[r]_-{}
& {\,\,0\,\,},}
$$
where $A':=E/(I+B)\cong A/I_2$ and $\pi_{AA'}$, $\pi_{EA'}$ are the relative  quotient maps, respectively.

Since $B$ is K-pure in $E$ and $I_2$ is K-pure in $A$, by Lemma \ref{k-pure lemma}, for any $n\geq0$, $j=0,1,$ we have both $\mathrm{K}_j(\pi;\mathbb{Z}_n)$ and $\mathrm{K}_j(\pi_{AA'};\mathbb{Z}_n)$ are surjective. Then the composed map $\mathrm{K}_j(\pi_{EA'};\mathbb{Z}_n)$ is also surjective.
Still by Lemma \ref{k-pure lemma}, we have $I+B$ is K-pure in $E$.
From Corollary \ref{K pure transitivity}, $I$ is K-pure in $E$.
In general, $E$ is K-pure.

\end{proof}

\section{Classification: K-pure extension}
In this section, we give our classification theorems for the class of K-pure algebras arising from K-pure extensions. The main result is Theorem \ref{ab1b2}.

We say a class $\mathcal{R}$ of ${\rm C}^*$-algebras is a  $\mathcal{DG}$-class, if  $\mathcal{R}$ is a subclass of $\mathcal{N}$ consisting of some K-pure, nuclear, separable ${\rm C}^*$-algebras of real rank zero and stable rank one, whose ${\rm K}_0$-groups are weakly unperforated and $\mathcal{R}$ can be classified by scaled  total K-theory
with Dadarlat-Gong order. (Here, we require that each isomorphism between invariants can be lifted to an isomorphism between algebras.)

Thus, for a $\mathcal{DG}$-class,
UCT, UMCT and  Theorem \ref{strong wei} will be satisfied in the corresponding cases.

Given an extension of abelian groups
   $$e:\quad 0\rightarrow  K \xrightarrow{\iota} G \xrightarrow{\pi} H\rightarrow 0$$
it is easily seen that the following are equivalent:

(i) $e$ is pure exact, i.e., $
n\times \iota(K)=\iota(K)\cap (n\times G)$   for every $n\in\mathbb{N}$.

(ii) $\iota(K)$ is a relatively divisible subgroup of $G$,
i.e., for every $n\in \mathbb{N}$, $h\in \iota(K)$ is divisible by $n$ in $\iota(K)$ if
$h$ is divisible by $n$ in $G$ .

Note that if $A$ is separable ${\rm C}^*$-algebra of  stable rank one and real rank zero,
there is a natural isomorphism $\phi:\,{\rm Lat}(A)\cong {\rm Lat}({\rm K}_*(A))$, where
${\rm Lat}(A)$ is the lattice of ideals of $A$,
${\rm Lat}({\rm K}_*(A))$ is the lattice of order ideals of $({\rm K}_*(A),{\rm K}_*^+(A))$, and
$\phi(I)=({\rm K}_*(I),{\rm K}_*^+(I))$.
Combining these with \cite[2.1 (iii)]{Ell} and
\cite[Paragraph 3 of Theorem 4.28]{EG}, we obtain:

\begin{proposition}\label{ell propo}
Let $A$ be a separable ${\rm C}^*$-algebra of stable rank one and real rank zero, then the following are equivalent:

(i) $(\mathrm{K}_*(A),\mathrm{K}_*^+(A))$ is weakly unperforated (see Definition \ref{0weakly un});

(ii) $(\mathrm{K}_0(A),\mathrm{K}_0^+(A))$ is weakly unperforated and $A$ is {\rm K}-pure.
\end{proposition}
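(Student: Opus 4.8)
The plan is to reduce the statement to an order-theoretic fact about $(\mathrm{K}_*(A),\mathrm{K}_*^+(A))$ with the Dadarlat-Gong order, and then apply \cite[2.1 (iii)]{Ell} together with \cite[Paragraph 3 of Theorem 4.28]{EG}. First I would set up the translation. Since $A$ has stable rank one and real rank zero, Proposition \ref{lin inj} makes both index maps of every ideal sequence $0\to I\to A\to A/I\to 0$ vanish, so each $0\to\mathrm{K}_j(I)\to\mathrm{K}_j(A)\to\mathrm{K}_j(A/I)\to 0$ is short exact; hence $A$ is {\rm K}-pure exactly when $\mathrm{K}_j(I)$ is a pure subgroup of $\mathrm{K}_j(A)$ for every ideal $I$ and $j=0,1$. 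Via the lattice isomorphism $\phi:{\rm Lat}(A)\cong{\rm Lat}(\mathrm{K}_*(A))$ recalled above (with $\phi(I)=(\mathrm{K}_*(I),\mathrm{K}_*^+(I))$), this is exactly the requirement that every order ideal of $(\mathrm{K}_*(A),\mathrm{K}_*^+(A))$ be a pure subgroup. I would also record two elementary facts for repeated use: (a) if $h,h'\in\mathrm{K}_0^+(A)$ satisfy $nh=nh'$ for some $n\in\mathbb{N}^+$ --- in particular if $h-h'\in\operatorname{tor}(\mathrm{K}_0(A))$, or if $h'=nh$ --- then $I_h=I_{h'}$; (b) if $H\leq G$ is pure then every torsion class in $G/H$ lifts to $\operatorname{tor}(G)$, and $mt\in H$ with $t\in\operatorname{tor}(G)$ forces $t=t'+t''$ with $mt'=0$, $t''\in H$ (take $t''$ with $dt''=dt$ for $d=\gcd(m,\operatorname{ord}(t))$, using $dt\in H\cap dG=dH$). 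Throughout I use that, for the Dadarlat-Gong order, $(g,u)\in\mathrm{K}_*^+(A)$ iff $g\in\mathrm{K}_0^+(A)$ and $u\in\mathrm{K}_1(I_g\,|\,A)$, and that the latter group is canonically $\mathrm{K}_1(I_g)$ because the index maps vanish.

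For (i)$\Rightarrow$(ii): weak unperforation of $(\mathrm{K}_0(A),\mathrm{K}_0^+(A))$ follows by restriction, since $g\mapsto(g,0)$ embeds it into $(\mathrm{K}_*(A),\mathrm{K}_*^+(A))$ with $(g,0)\geq 0\iff g\geq 0$ and $\operatorname{tor}(\mathrm{K}_0(A))\oplus 0=(\mathrm{K}_0(A)\oplus 0)\cap\operatorname{tor}(\mathrm{K}_*(A))$, and a direct check shows both clauses of Definition \ref{0weakly un} pass to this subgroup. For {\rm K}-pureness I must show order ideals of $(\mathrm{K}_*(A),\mathrm{K}_*^+(A))$ are pure: if the group were unperforated this is the standard short argument (given $nh\in J$, write $nh=a-b$ with $a,b\in J\cap\mathrm{K}_*^+(A)$; then $n(a-h)=(n-1)a+b\geq 0$ and $n(h+b)=a+(n-1)b\geq 0$ give $-b\leq h\leq a$, so $0\leq h+b\leq a+b\in J$ and hereditariness of $J$ yields $h\in J$), but in the merely weakly unperforated case this only places $h$ between two elements of $J$ up to torsion, and absorbing that stray torsion is precisely the content of \cite[Paragraph 3 of Theorem 4.28]{EG}, which I would invoke here.

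For (ii)$\Rightarrow$(i): I would check the two clauses of Definition \ref{0weakly un} for $(\mathrm{K}_*(A),\mathrm{K}_*^+(A))$ directly. For the unperforation clause, $n(g,u)$ plus a torsion element lying in $\mathrm{K}_*^+(A)$ forces $ng$ plus a torsion element to lie in $\mathrm{K}_0^+(A)$, so by weak unperforation of $\mathrm{K}_0(A)$ there is a torsion $t_0$ with $g+t_0\in\mathrm{K}_0^+(A)$; fact (a) identifies all the relevant order ideals with $I_{g+t_0}$, so the $\mathrm{K}_1$-hypothesis becomes ``$nu$ plus a torsion element lies in $\mathrm{K}_1(I_{g+t_0})$'', and since that subgroup is pure ({\rm K}-pureness), the lifting part of fact (b) supplies a torsion $t_1$ with $u+t_1\in\mathrm{K}_1(I_{g+t_0})$, so $(g,u)+(t_0,t_1)\in\mathrm{K}_*^+(A)$. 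The torsion clause runs the same way: split the $\mathrm{K}_0$-torsion using weak unperforation of $\mathrm{K}_0(A)$, transport to the common ideal via fact (a), then split the $\mathrm{K}_1$-torsion using the splitting part of fact (b) for the pure subgroup $\mathrm{K}_1(I_g)$, observing that $u$ already lies there.

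The subtle point throughout is that the $\mathrm{K}_1$-part of the Dadarlat-Gong cone depends on which order ideal the $\mathrm{K}_0$-component generates, so any torsion correction to the $\mathrm{K}_0$-coordinate threatens to move that ideal; fact (a) --- positive generators agreeing up to a positive multiple yield the same order ideal --- is what keeps the two coordinates synchronized. I expect the real obstacle to be the (i)$\Rightarrow$(ii) direction, namely that \emph{weak} (not strict) unperforation of $\mathrm{K}_*(A)$ still forces purity of every order ideal; this is the reason that step is delegated to \cite[Paragraph 3 of Theorem 4.28]{EG} rather than settled by the naive ordered-group computation, while the remaining pieces (restriction of weak unperforation to the $\mathrm{K}_0$-summand, and facts (a), (b)) are routine.
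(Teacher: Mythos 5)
Your proposal is correct and follows essentially the same route as the paper: the paper's entire argument consists of the purity/relative-divisibility equivalence, the lattice isomorphism ${\rm Lat}(A)\cong{\rm Lat}(\mathrm{K}_*(A))$, and the citations to \cite[2.1 (iii)]{Ell} and \cite[Paragraph 3 of Theorem 4.28]{EG}, which is exactly your reduction. You additionally write out the routine verifications (the restriction of weak unperforation to $\mathrm{K}_0$, the synchronization of order ideals under torsion corrections via your fact (a), and the purity arguments of fact (b) for the $\mathrm{K}_1$-coordinate), and you correctly isolate the only non-routine step --- that weak unperforation of $\mathrm{K}_*(A)$ forces purity of order ideals --- as the part to be delegated to \cite{EG}, just as the paper does.
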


It is already well-known that
the ${\rm K}_*$-group of
an A$\mathcal{HD}$ algebra of real rank zero is weakly unperforated, as the ${\rm K}_*$-groups of the building blocks of an A$\mathcal{HD}$ algebra are all weakly unperforated.
Combining with the following theorem, we have the class of A$\mathcal{HD}$ algebras of real rank zero is a $\mathcal{DG}$-class.
\begin{theorem}{\rm (}{\cite[Theorem 9.1, Remark 9.3]{DG}}{\rm )}
Let $A, B$ be two A$\mathcal{HD}$ algebras of real rank zero. 
Suppose that there is an isomorphism of ordered scaled
groups
$$
\alpha\,:\,({\rm\underline{\mathrm{K}}}(A),{\rm\underline{K}}(A)_+,\Sigma A)\rightarrow ({\rm\underline{\mathrm{K}}}(B),{\rm\underline{K}}(A)_+,\Sigma B)
$$
which preserves the action of the Bockstein operations, i.e., $\alpha$ is an ordered scaled $\Lambda$-isomorphism. Then there is a
$*$-isomorphism  $\varphi :A\to B$ with $\underline{\mathrm{K}}(\varphi)=\alpha$.
\end{theorem}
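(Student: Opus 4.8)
The plan is to run Elliott's approximate intertwining argument, whose two essential ingredients are an \emph{existence} theorem (lifting a prescribed ordered scaled $\Lambda$-morphism on total K-theory to a genuine $*$-homomorphism between the building blocks) and a \emph{uniqueness} theorem (two $*$-homomorphisms inducing the same map on $\underline{\mathrm{K}}$ are approximately unitarily equivalent). First I would write $A=\varinjlim(A_n,\phi_{n,m})$ and $B=\varinjlim(B_n,\psi_{n,m})$ as inductive limits of finite direct sums of the standard building blocks $M_r(\widetilde{\mathbb{I}}_p)$ and $PM_r(C(X))P$ with $X$ one of the four finite CW complexes listed in the definition of an A$\mathcal{HD}$ algebra. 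Since $\underline{\mathrm{K}}$ commutes with inductive limits, $\underline{\mathrm{K}}(A)=\varinjlim\underline{\mathrm{K}}(A_n)$ and likewise for $B$, and both the Dadarlat--Gong positive cone and the scale are recovered from the building-block data in the limit; so it suffices to push $\alpha$ (and $\alpha^{-1}$) down to the finite stages, compatibly.

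For the existence step, fix $n$ and consider the composite $\alpha\circ\underline{\mathrm{K}}(\phi_{n,\infty})\in\operatorname{Hom}_\Lambda(\underline{\mathrm{K}}(A_n),\underline{\mathrm{K}}(B))$. By UMCT (Theorem \ref{UMCT}), since $A_n\in\mathcal{N}$ and $B$ is $\sigma$-unital, this lifts to a class $\kappa_n\in KK(A_n,B)$. The crucial point is that $\kappa_n$ is represented by an honest $*$-homomorphism $A_n\to B_m$ for $m$ large (after a further compression), because: (i) the order hypothesis $\alpha(\underline{\mathrm{K}}(A)_+)\subseteq\underline{\mathrm{K}}(B)_+$, together with Proposition \ref{gong ideal preserving}, forces $\kappa_n$ to respect the ideal structure, so that each projection class living in an ideal of $A_n$ is transported into the corresponding ideal of $B$; (ii) $B$ has real rank zero, hence an abundance of projections from which to build such a map, and stable rank one supplies cancellation; and (iii) the building blocks are semiprojective (the dimension-drop intervals) or otherwise geometrically flexible enough (the one-dimensional and the special two-dimensional spectra) that $KK$-data constrained in this way is realizable by genuine homomorphisms. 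The scale condition $\alpha(\Sigma A)=\Sigma B$ is used to arrange that the resulting maps are unital, or more generally to match the scales.

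For the uniqueness step, I would show that if $\phi,\psi\colon A_n\to B_m$ are $*$-homomorphisms with $\underline{\mathrm{K}}(\phi)=\underline{\mathrm{K}}(\psi)$, then after composing with some $\psi_{m,m'}$ they become approximately unitarily equivalent; once more real rank zero (enough projections to perturb) and stable rank one (cancellation) do the work, as in the Dadarlat--Gong style uniqueness arguments for subhomogeneous algebras. With existence and uniqueness available, the standard two-sided Elliott intertwining produces $*$-homomorphisms $A_1\to B_{m_1}\to A_{k_1}\to B_{m_2}\to\cdots$ forming a diagram that commutes up to approximate unitary equivalence on prescribed finite sets and that induces $\alpha$ and $\alpha^{-1}$ on $\underline{\mathrm{K}}$; passing to the limit yields a $*$-isomorphism $\varphi\colon A\to B$ with $\underline{\mathrm{K}}(\varphi)=\alpha$.

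The main obstacle is the existence step, and within it the genuine realization, by $*$-homomorphisms out of the subhomogeneous building blocks, of the \emph{full} $\Lambda$-module morphism --- all the mod-$n$ groups and Bockstein maps at once, not just $\mathrm{K}_0$ and $\mathrm{K}_1$ --- while respecting the prescribed ideal (spectral) behaviour dictated by the Dadarlat--Gong order. The dimension-drop intervals $\widetilde{\mathbb{I}}_p$ and the two-dimensional complexes $T_{II,k}$ with $H^2(T_{II,k})=\mathbb{Z}/k\mathbb{Z}$ are exactly what make the torsion part of $\underline{\mathrm{K}}$ nontrivial, so controlling how that torsion and the coefficient/Bockstein structure is transported is the delicate part; everything else --- continuity of $\underline{\mathrm{K}}$, the intertwining bookkeeping, and the passage to unital maps via the scale --- is routine once the building-block existence and uniqueness theorems are in place.
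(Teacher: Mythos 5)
This statement is quoted by the paper from \cite[Theorem 9.1, Remark 9.3]{DG}; the paper itself gives no proof, so there is nothing internal to compare against. Your outline correctly reproduces the architecture of the Dadarlat--Gong argument (Elliott two-sided approximate intertwining driven by building-block existence and uniqueness theorems, with UMCT supplying the $KK$-lift of the $\Lambda$-morphism), so as a description of the strategy it is accurate.

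That said, as a proof it is only a roadmap: the two pillars you invoke --- that a $KK$-class whose induced map on $\underline{\mathrm{K}}$ preserves the Dadarlat--Gong order and the scale is realized by an honest $*$-homomorphism between the finite stages, and that two homomorphisms agreeing on $\underline{\mathrm{K}}$ (on a prescribed finite part) become approximately unitarily equivalent after pushing further into the limit --- are precisely the content of the bulk of \cite{DG}, and your item (iii) (``semiprojective or otherwise geometrically flexible enough'') papers over this. In particular, realizing the full $\Lambda$-module morphism, including the mod-$p$ groups and Bockstein maps, out of the dimension-drop and $T_{II,k}$ blocks while respecting the ideal structure encoded in $\underline{\mathrm{K}}(I_e\,|\,A)$ requires the detailed existence theorems of \cite{DG} and cannot be deduced from real rank zero, stable rank one, and semiprojectivity alone. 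One further point worth making explicit: UMCT lifts $\alpha\circ\underline{\mathrm{K}}(\phi_{n,\infty})$ to $KK(A_n,B)$ only up to an ambiguity in $\mathrm{Pext}(\mathrm{K}_*(A_n),\mathrm{K}_*(B))$; for the finitely generated $\mathrm{K}$-theory of the building blocks this obstruction vanishes, but that observation should be part of the argument rather than left implicit. So: right approach, same as the cited source, but the substantive steps are asserted rather than proved.
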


\begin{proposition}\label{simple k* to k total}
Let $A_1,A_2$ be simple algebras in $\mathcal{N}$. Then for any
$$
\alpha^*:\,(\mathrm{K}_*(A_1),\mathrm{K}_*^+(A_1))\cong (\mathrm{K}_*(A_2),\mathrm{K}_*^+(A_2)),
$$
there exists an
$$
\underline{\alpha}:\,(\underline{\mathrm{K}}(A_1),\underline{\mathrm{K}}(A_1)_+)\cong (\underline{\mathrm{K}}(A_2),\underline{\mathrm{K}}(A_2)_+)
$$
whose restriction is $\alpha^*$.
\end{proposition}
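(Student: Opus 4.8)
The statement is about promoting an ordered $\mathrm{K}_*$-isomorphism between two simple $\mathrm{C}^*$-algebras in the bootstrap class to an ordered total-$\underline{\mathrm{K}}$-isomorphism. The plan is to first upgrade $\alpha^*$ at the level of abstract (ungraded-order) groups, and then check that the Dadarlat--Gong order is automatically respected because simplicity collapses the ideal structure. Concretely, I would start by invoking Theorem \ref{kk-equivalence}: since $A_1,A_2\in\mathcal{N}$ and $\alpha^*:\mathrm{K}_*(A_1)\to\mathrm{K}_*(A_2)$ is a graded group isomorphism, there is a $\mathrm{KK}$-equivalence $\lambda\in\mathrm{KK}(A_1,A_2)$ with $\gamma(\lambda)=\alpha^*$. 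Applying the functor $\Gamma$ from the UMCT picture (Theorem \ref{UMCT}), we obtain $\underline{\alpha}:=\Gamma(\lambda)=\underline{\lambda}\in\mathrm{Hom}_\Lambda(\underline{\mathrm{K}}(A_1),\underline{\mathrm{K}}(A_2))$; because $\lambda$ is a $\mathrm{KK}$-equivalence, $\underline{\alpha}$ is a $\Lambda$-isomorphism, and its restriction to the $\mathbb{Z}$-graded (i.e.\ $n=0$) part is precisely $\gamma(\lambda)=\alpha^*$. So far this uses nothing about simplicity or order.

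The remaining task is to verify $\underline{\alpha}(\underline{\mathrm{K}}(A_1)_+)\subseteq\underline{\mathrm{K}}(A_2)_+$ and symmetrically for $\underline{\alpha}^{-1}$. Here I would use the characterization in Proposition \ref{gong ideal preserving}: it suffices to show that for every $x\in\mathrm{K}_0^+(A_1)$ and every $\overline{x}$ in the relevant product of $\mathrm{K}_1(I_x)$ and $\mathrm{K}_*(I_x;\mathbb{Z}_n)$, the image under $\underline{\alpha}\circ\underline{\mathrm{K}}(\iota_{I_x})$ lands in $\underline{\mathrm{K}}(\iota_{I_y})$ for the ideal $I_y$ of $A_2\otimes\mathcal{K}$ generated by $y:=\alpha^*(x)$. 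Because $A_1$ is simple, for any nonzero $x\in\mathrm{K}_0^+(A_1)$ the ideal $I_x$ of $A_1\otimes\mathcal{K}$ is all of $A_1\otimes\mathcal{K}$ (stable rank one gives cancellation, so a nonzero positive $\mathrm{K}_0$-class is represented by a nonzero projection, which generates the whole algebra by simplicity), hence $\underline{\mathrm{K}}(I_x\,|\,A_1)=\underline{\mathrm{K}}(A_1)$; similarly $\underline{\mathrm{K}}(I_y\,|\,A_2)=\underline{\mathrm{K}}(A_2)$ whenever $y=\alpha^*(x)\neq 0$, which holds since $\alpha^*$ is injective. The case $x=0$ is immediate, since then positivity of $(x,\overline{x})$ forces $\overline{x}=0$. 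Therefore $\underline{\mathrm{K}}(A_1)_+$ is simply $\{(x,\overline{x}):x\in\mathrm{K}_0^+(A_1)\}$ together with $\{0\}$, and $\underline{\alpha}$ maps it into $\underline{\mathrm{K}}(A_2)_+$ precisely because $\alpha^*$ carries $\mathrm{K}_0^+(A_1)$ into $\mathrm{K}_0^+(A_2)$. Running the same argument for $(\alpha^*)^{-1}$ (and $\underline{\alpha}^{-1}=\Gamma(\lambda^{-1})$) gives that $\underline{\alpha}$ is an order isomorphism.

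The main obstacle, such as it is, is bookkeeping rather than depth: one must be careful that the positive cone $\underline{\mathrm{K}}(A)_+$ of Definition/Notation \ref{dg order} really does degenerate for simple $A$ in the way claimed, i.e.\ that there is no subtlety coming from the $\mathcal{K}$-stabilization or from $\mathrm{K}_0^+$ possibly containing $0$ only trivially. This is handled by the cancellation-of-projections remark in the preliminaries together with simplicity: every nonzero projection in $A\otimes\mathcal{K}$ is full. One should also note that $\underline{\alpha}$ automatically preserves the grading since $\Gamma$ lands in $\mathrm{Hom}_\Lambda$, so no extra verification of $\Lambda$-linearity or gradedness is needed beyond citing Theorem \ref{UMCT}. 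In summary: $\mathrm{KK}$-equivalence lifting (Theorem \ref{kk-equivalence}) plus the UMCT functor $\Gamma$ (Theorem \ref{UMCT}) produces the $\Lambda$-isomorphism, and simplicity trivializes the Dadarlat--Gong order check via Proposition \ref{gong ideal preserving}.
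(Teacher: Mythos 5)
Your proposal is correct and follows essentially the same route as the paper: lift $\alpha^*$ to a $\mathrm{KK}$-equivalence via the UCT machinery, push it through the UMCT map $\Gamma$ to get a $\Lambda$-isomorphism $\underline{\alpha}$, and then observe that simplicity forces $I_x=A_i\otimes\mathcal{K}$ for every nonzero $x\in\mathrm{K}_0^+(A_i)$, so the Dadarlat--Gong positive cone is determined by $\mathrm{K}_0^+$ alone and the order check reduces to $\alpha^*$ being order-preserving. The paper states this last step in one sentence; your more explicit verification (including the degenerate case $x=0$) fills in exactly what that sentence is asserting.
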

\begin{proof}
By UCT (Theorem \ref{UCT}), there is a KK-equivalence $\alpha\in \mathrm{K}\mathrm{K}(A_1,A_2)$ inducing the given
$$
\alpha^*:\,(\mathrm{K}_*(A_1),\mathrm{K}_*^+(A_1))\cong (\mathrm{K}_*(A_2),\mathrm{K}_*^+(A_2)).
$$
By UMCT (Theorem \ref{UMCT}) (see also \ref{def k-total}), denote $\underline{\alpha}$ the natural image of $\alpha$ in $ {\rm Hom}_\Lambda(\underline{\mathrm{K}}(A_1),\underline{\mathrm{K}}(A_2))$, which is a $\Lambda$-isomorphism.

Since $A_1$ and $A_2$ are simple, the Dadarlat-Gong orders only depend on the positive cones $\mathrm{K}_0^+(A_1)$ and $\mathrm{K}_0^+(A_2)$, which means $\underline{\alpha}$ is an order-preserving  $\Lambda$-isomorphism.

\end{proof}
\begin{lemma}\label{alpha0}
Let $A_1, A_2, B_1,B_2\in\mathcal{R}$, where $\mathcal{R}$ is a $\mathcal{DG}$-class of ${\rm C}^*$-algebras. Suppose $A_1, A_2$ are unital simple, $B_1, B_2$ are stable.
Given two unital essential extensions of ${\rm C}^*$-algebras
$$
e_i\,\,:\,\,0\to B_i \xrightarrow{\iota_i} (E_i,1_{E_i})\xrightarrow{\pi_i} (A_i,1_{A_i})\to 0,\quad i=1,2
$$
with trivial index maps,
if we have an order-preserving  isomorphism $$\alpha:\,(\mathrm{K}_0(E_1),\mathrm{K}_0^+(E_1),[1_{E_1}])\to (\mathrm{K}_0(E_2),\mathrm{K}_0^+(E_2),[1_{E_2}]),$$
then we have a natural commutative diagram as follows
$$
\xymatrixcolsep{2pc}
\xymatrix{
{\,\,0\,\,} \ar[r]^-{}
& {\,\,\mathrm{K}_0(B_1)\,\,} \ar[d]_-{\alpha_0} \ar[r]^-{\mathrm{K}_0(\iota_1)}
& {\,\,(\mathrm{K}_0(E_1),[1_{E_1}])\,\,} \ar[d]_-{\alpha} \ar[r]^-{\mathrm{K}_0(\pi_1)}
& {\,\,(\mathrm{K}_0(A_1),[1_{A_1}])\,\,} \ar[d]_-{\alpha_1} \ar[r]^-{}
& {\,\,0\,\,} \\
{\,\,0\,\,} \ar[r]^-{}
& {\,\,\mathrm{K}_0(B_2)\,\,} \ar[r]_-{\mathrm{K}_0(\iota_2)}
& {\,\,(\mathrm{K}_0(E_2),[1_{E_2}]) \,\,} \ar[r]_-{\mathrm{K}_0(\pi_2)}
& {\,\,(\mathrm{K}_0(A_2) ,[1_{A_2}])\,\,} \ar[r]_-{}
& {\,\,0\,\,},}
$$
where $\alpha_0,\alpha_1$ are both  scaled  order-preserving  isomorphisms.
\end{lemma}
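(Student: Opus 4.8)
The plan is to reconstruct the entire exact sequence of ordered groups from the single ordered scaled group $(\mathrm{K}_0(E_i),\mathrm{K}_0^+(E_i),[1_{E_i}])$, by giving an intrinsic (order‑theoretic) description of the subgroup $\mathrm{K}_0(\iota_i)(\mathrm{K}_0(B_i))$. First I would record the set‑up: since $e_i$ has trivial index maps and $A_i,B_i$ have stable rank one and real rank zero, Proposition \ref{lin inj} shows $E_i$ has stable rank one and real rank zero and that
$$0\to \mathrm{K}_0(B_i)\xrightarrow{\mathrm{K}_0(\iota_i)} \mathrm{K}_0(E_i)\xrightarrow{\mathrm{K}_0(\pi_i)} \mathrm{K}_0(A_i)\to 0$$
is exact; in particular $\mathrm{K}_0(\iota_i)$ is injective and $\mathrm{K}_0(\iota_i)(\mathrm{K}_0(B_i))=\ker \mathrm{K}_0(\pi_i)$. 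Using $rr(E_i)=0$ again (projections in matrix algebras over $A_i$ lift, Proposition \ref{lin inj}(i)(c)) one gets $\mathrm{K}_0^+(A_i)=\mathrm{K}_0(\pi_i)(\mathrm{K}_0^+(E_i))$, and intersecting the description of $\mathrm{K}_0^+(E_i)$ from Theorem \ref{yibande k0+} with $\ker \mathrm{K}_0(\pi_i)$ one gets $\mathrm{K}_0(\iota_i)(\mathrm{K}_0^+(B_i))=\ker\mathrm{K}_0(\pi_i)\cap\mathrm{K}_0^+(E_i)$. Hence $\mathrm{K}_0(\iota_i)$ is an order isomorphism of $(\mathrm{K}_0(B_i),\mathrm{K}_0^+(B_i))$ onto the order ideal $\ker\mathrm{K}_0(\pi_i)$, and the quotient $\mathrm{K}_0(E_i)/\ker\mathrm{K}_0(\pi_i)$ is order isomorphic to $(\mathrm{K}_0(A_i),[1_{A_i}])$.

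The crux is then the following intrinsic characterization of the positive part of this order ideal:
$$\mathrm{K}_0(\iota_i)(\mathrm{K}_0^+(B_i))=\{\,x\in \mathrm{K}_0^+(E_i): x\ \text{is not an order unit of}\ \mathrm{K}_0(E_i)\,\}.$$
For the inclusion ``$\subseteq$'': if $x\in\ker\mathrm{K}_0(\pi_i)$ were an order unit, then $[1_{E_i}]\le nx$ for some $n$, and applying the order‑preserving map $\mathrm{K}_0(\pi_i)$ would give $[1_{A_i}]\le 0$, contradicting $[1_{A_i}]\neq 0$ (stable rank one forces $A_i$ stably finite). For ``$\supseteq$'': if $x\in\mathrm{K}_0^+(E_i)$ with $\mathrm{K}_0(\pi_i)(x)\neq 0$, then $\mathrm{K}_0(\pi_i)(x)$ is a nonzero positive element of $\mathrm{K}_0(A_i)$, hence an order unit because $A_i$ is simple; so $[1_{A_i}]\le m\,\mathrm{K}_0(\pi_i)(x)$ for some $m$, and feeding this back through Theorem \ref{yibande k0+} shows $mx-[1_{E_i}]$ or $(m{+}1)x-[1_{E_i}]$ lies in $\mathrm{K}_0^+(E_i)$, whence $x$ is an order unit of $\mathrm{K}_0(E_i)$ (recall $[1_{E_i}]$ is one). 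Since an order isomorphism preserves positivity and the purely order‑theoretic property of being an order unit, $\alpha$ carries the right‑hand set for $i=1$ bijectively onto that for $i=2$; as $B_i$ is stable with real rank zero, $\ker\mathrm{K}_0(\pi_i)$ is the difference set $\mathrm{K}_0(\iota_i)(\mathrm{K}_0^+(B_i))-\mathrm{K}_0(\iota_i)(\mathrm{K}_0^+(B_i))$ (Theorem \ref{ordertotal} applied to $\mathrm{K}_0$), so $\alpha(\ker\mathrm{K}_0(\pi_1))=\ker\mathrm{K}_0(\pi_2)$.

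With that invariance in hand the rest is formal. Define $\alpha_0\colon\mathrm{K}_0(B_1)\to\mathrm{K}_0(B_2)$ by $\mathrm{K}_0(\iota_2)\circ\alpha_0=\alpha\circ\mathrm{K}_0(\iota_1)$ — this is legitimate since the image of $\alpha\circ\mathrm{K}_0(\iota_1)$ is $\alpha(\ker\mathrm{K}_0(\pi_1))=\ker\mathrm{K}_0(\pi_2)=\mathrm{K}_0(\iota_2)(\mathrm{K}_0(B_2))$ and $\mathrm{K}_0(\iota_2)$ is injective — and define $\alpha_1\colon\mathrm{K}_0(A_1)\to\mathrm{K}_0(A_2)$ by $\alpha_1\circ\mathrm{K}_0(\pi_1)=\mathrm{K}_0(\pi_2)\circ\alpha$, which is well defined because $\alpha$ maps $\ker\mathrm{K}_0(\pi_1)$ onto $\ker\mathrm{K}_0(\pi_2)$. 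Both squares of the diagram then commute by construction, and the rows are the exact sequences above. That $\alpha_0$ and $\alpha_1$ are order isomorphisms follows from the first paragraph (the vertical identifications $\mathrm{K}_0(B_i)\cong\ker\mathrm{K}_0(\pi_i)$ and $\mathrm{K}_0(A_i)\cong\mathrm{K}_0(E_i)/\ker\mathrm{K}_0(\pi_i)$ are order isomorphisms and $\alpha$ is one); $\alpha_0$ is scaled automatically since $\Sigma B_i=\mathrm{K}_0^+(B_i)$ for the stable algebra $B_i$; and $\alpha_1([1_{A_1}])=[1_{A_2}]$ because $\alpha([1_{E_1}])=[1_{E_2}]$ and $\mathrm{K}_0(\pi_i)([1_{E_i}])=[1_{A_i}]$.

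The step I expect to be the genuine obstacle is the intrinsic characterization in the second paragraph — i.e.\ recognizing $\mathrm{K}_0(\iota_i)(\mathrm{K}_0(B_i))$ inside $(\mathrm{K}_0(E_i),\mathrm{K}_0^+(E_i))$ without reference to the extension. Note a naive guess such as ``$x$ bounded by multiples of $[1_{E_i}]$'' fails (it already fails in the concrete algebras of Section 2), so one really must exploit both Theorem \ref{yibande k0+} and the simplicity of $A_i$; once this characterization is established, everything else is routine diagram chasing.
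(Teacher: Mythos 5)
Your proof is correct, and it reorganizes the decisive step in a genuinely different way from the paper. The paper argues element by element: for each projection $p\in B_1$ it supposes $\mathrm{K}_0(\pi_2)\circ\alpha\circ \mathrm{K}_0(\iota_1)([p])\neq 0$, uses simplicity of $A_2$ to pick $n$ with $3[1_{A_2}]\leq n\,\mathrm{K}_0(\pi_2)\circ\alpha\circ \mathrm{K}_0(\iota_1)([p])$, lifts $n(\cdot)-2[1_{A_2}]$ to a projection $s$ in $E_2\otimes\mathcal{K}$, and then uses the infinitesimality $n\,\mathrm{K}_0(\iota_1)([p])\leq [1_{E_1}]$ for all $n$ (stability of $B_1$ plus unitality of $e_1$, in the spirit of Lemma \ref{sheying dadada}) to force $[s]\in \mathrm{K}_0^+(E_2)\cap(-\mathrm{K}_0^+(E_2))=\{0\}$, a contradiction; the positivity of the preimage $[t]$ defining $\alpha_0$ is then supplied by the identity $(\mathrm{K}_0(\pi_2))^{-1}(0)\cap \mathrm{K}_0^+(E_2)=\mathrm{K}_0(\iota_2)(\mathrm{K}_0^+(B_2))$ from Theorem \ref{yibande k0+}. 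You instead package exactly the same two mechanisms (kernel classes are infinitesimal hence not order units; positive classes with nonzero image in $\mathrm{K}_0(A_i)$ dominate $[1_{E_i}]$ by simplicity and Theorem \ref{yibande k0+}, hence are order units) into the single intrinsic identity $\mathrm{K}_0(\iota_i)(\mathrm{K}_0^+(B_i))=\{x\in \mathrm{K}_0^+(E_i): x \text{ is not an order unit}\}$, after which the invariance under $\alpha$ is automatic because being an order unit is preserved by any order isomorphism. Your route buys conceptual clarity and gives the positivity statement $\alpha(\mathrm{K}_0(\iota_1)(\mathrm{K}_0^+(B_1)))=\mathrm{K}_0(\iota_2)(\mathrm{K}_0^+(B_2))$ for free, while the paper's route avoids proving a set equality by running the one-sided argument separately for $\alpha$ and $\alpha^{-1}$; the construction of $\alpha_1$ and the verification that it is scaled and order preserving (via projection lifting, Proposition \ref{lin inj}(i)(c)) is the same in both.
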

\begin{proof}
We first prove that for any projection $p\in B_1$, we have
$$\mathrm{K}_0(\pi_2)\circ\alpha\circ \mathrm{K}_0(\iota_1)([p])=0.$$

Suppose
$\mathrm{K}_0(\pi_2)\circ\alpha\circ \mathrm{K}_0(\iota_1)([p])\neq0.$
Since $A_2$ is simple and unital, there exists a large enough integer $n$ satisfying
$$
3\times [1_{A_2}]\leq n\times \mathrm{K}_0(\pi_2)\circ\alpha\circ \mathrm{K}_0(\iota_1)([p]) \,\,\,\,{\rm in}\,\, \mathrm{K}_0(A_2).
$$
Note that $E_2$ is of real rank zero, by  Proposition \ref{lin inj} (i)(c), there exists a nonzero projection
$s\in E_2\otimes\mathcal{K}$ such that
$$\mathrm{K}_0(\pi_2)([s])=
n\times \mathrm{K}_0(\pi_2)\circ\alpha\circ \mathrm{K}_0(\iota_1)([p])-2\times [1_{A_2}]\in \mathrm{K}_0^+({A_2}).
$$
Hence, $$
\mathrm{K}_0(\pi_2)(n\times \alpha\circ \mathrm{K}_0(\iota_1)([p])- 2[1_{E_2}])=\mathrm{K}_0(\pi_2)([s]),$$
by the exactness of the second row, there exist projections $f,g\in B_2$ such that
$$
[s]=n\times \alpha\circ \mathrm{K}_0(\iota_1)([p])- 2[1_{E_2}]+\mathrm{K}_0(\iota_2)([f])- \mathrm{K}_0(\iota_2)([g])\in \mathrm{K}_0^+(E_2),
$$
Since the extension $e_1$ is  unital 
and $B_1$ is stable, we have
$n\times \mathrm{K}_0(\iota_1)([p])\leq [1_{E_1}]$. Then
$$n\times\alpha\circ \mathrm{K}_0(\iota_1)([p])\leq \alpha( [1_{E_1}])=[1_{E_2}].$$

As the extension $e_2$ is  unital 
and $B_2$ is stable, we have
$$- [1_{E_2}]+\mathrm{K}_0(\iota_2)([f])\in -\mathrm{K}_0^+(E_2).$$
Then
$$n\times\alpha\circ \mathrm{K}_0(\iota_1)([p])- 2[1_{E_2}]+\mathrm{K}_0(\iota_2)[f]\in -\mathrm{K}_0^+(E_2),$$
$$
[s]=n\times \alpha\circ \mathrm{K}_0(\iota_1)([p])- 2[1_{E_2}]+
\mathrm{K}_0(\iota_2)([f])- \mathrm{K}_0(\iota_2)([g])\in  -\mathrm{K}_0^+(E_2).
$$
Then we have $[s]=0$, and hence, $$n\times \mathrm{K}_0(\pi_2)\circ\alpha\circ \mathrm{K}_0(\iota_1)([p])=2\times[1_{A_2}],$$
which leads a contradiction.

Now from the exactness of the second row, there exists a unique element $[t]\in \mathrm{K}_0^+(B_2)$ such that $\mathrm{K}_0(\iota_2)([t])=\alpha\circ \mathrm{K}_0(\iota_1)([p])$. Denote $\alpha_0$ the Grothendieck map of the corresponding map  $$[p]\mapsto [t].$$
Then $\alpha_0$ is an injective order-preserving homomorphism from $\mathrm{K}_0(B_1)$ to $\mathrm{K}_0(B_2)$. Note that $\alpha$ is an isomorphism, with a same procedure for $\alpha^{-1}$, we have $\alpha_0$ is an order-preserving isomorphism.

At last, we point out that Proposition \ref{lin inj} (i)(c) guarantees the induced map $\alpha_1$ is also an order-preserving isomorphism.

\end{proof}
\begin{corollary}\label{* AND Total}
Let $A_1, A_2, B_1,B_2\in\mathcal{R}$, where $\mathcal{R}$ is a $\mathcal{DG}$-class of $C^*$-algebras. Suppose $A_1, A_2$ are unital simple, $B_1, B_2$ are stable.
Given two $\mathrm{K}$-pure unital essential  extensions
$$
e_i\,\,:\,\,0\to B_i \xrightarrow{\iota_i} (E_i,1_{E_i})\xrightarrow{\pi_i} (A_i,1_{A_i})\to 0,\quad i=1,2,
$$
 if we have an ordered scaled $\Lambda$-isomorphism $$\alpha:\,(\underline{\mathrm{K}}(E_1),\underline{\mathrm{K}}(E_1)_+,[1_{E_1}])\to (\underline{\mathrm{K}}(E_2),\underline{\mathrm{K}}(E_2)_+,[1_{E_2}]),$$
then we have a natural commutative diagram with exact rows as follows
$$
\xymatrixcolsep{2pc}
\xymatrix{
{\,\,0\,\,} \ar[r]^-{}
& {\,\,\underline{\mathrm{K}}(B_1)\,\,} \ar[d]_-{\alpha_0} \ar[r]^-{\underline{\rm K}(\iota_1)}
& {\,\,(\underline{\mathrm{K}}(E_1),[1_{E_1}])\,\,} \ar[d]_-{\alpha} \ar[r]^-{\underline{\rm K}(\pi_1)}
& {\,\,(\underline{\mathrm{K}}(A_1),[1_{A}])\,\,}  \ar[d]_-{\alpha_1}\ar[r]^-{}
& {\,\,0\,\,} \\
{\,\,0\,\,} \ar[r]^-{}
& {\,\,\underline{\mathrm{K}}(B_2)\,\,} \ar[r]_-{\underline{\rm K}(\iota_2)}
& {\,\,(\underline{\mathrm{K}}(E_2),[1_{E_2}]) \,\,} \ar[r]_-{\underline{\rm K}(\pi_2)}
& {\,\,(\underline{\mathrm{K}}(A_2),[1_{A}]) \,\,} \ar[r]_-{}
& {\,\,0\,\,},}
$$
where
$\alpha_0,\alpha_1$ are also ordered scaled $\Lambda$-isomorphisms.

\end{corollary}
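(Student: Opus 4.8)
The plan is to descend $\alpha$ to $\mathrm{K}_0$, feed the result into Lemma~\ref{alpha0}, and then lift back to total K-theory; the one genuinely new point is that the Dadarlat--Gong order on $\underline{\mathrm{K}}(E_i)$ already pins down the image of the ideal $B_i$ in \emph{every} Bockstein component, not merely in $\mathrm{K}_0$.

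\emph{Reductions and the $\mathrm{K}_0$-picture.} First I would observe that $\mathrm{K}$-pureness of $e_i$ already forces regularity: by Lemma~\ref{k-pure lemma}(v) the sequence $0\to\underline{\mathrm{K}}(B_i)\to\underline{\mathrm{K}}(E_i)\to\underline{\mathrm{K}}(A_i)\to 0$ is exact, so in the six-term sequence $\mathrm{K}_1(B_i)\to\mathrm{K}_1(E_i)$ is injective and $\mathrm{K}_1(E_i)\to\mathrm{K}_1(A_i)$ is surjective, whence $\delta_0=\delta_1=0$; thus each $e_i$ has trivial index maps and, by Proposition~\ref{lin inj}, each $E_i$ has stable rank one and real rank zero, so $\underline{\mathrm{K}}(E_i)_+$ is a genuine Dadarlat--Gong order. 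Restricting $\alpha$ to the $(0,0)$-graded component and using $\mathrm{K}_0^+=\mathrm{K}_0\cap\underline{\mathrm{K}}_+$ gives an ordered scaled isomorphism $\alpha|_{\mathrm{K}_0}\colon(\mathrm{K}_0(E_1),\mathrm{K}_0^+(E_1),[1_{E_1}])\to(\mathrm{K}_0(E_2),\mathrm{K}_0^+(E_2),[1_{E_2}])$. Since $B_i\in\mathcal{R}$ are nuclear separable of stable rank one and real rank zero with weakly unperforated $\mathrm{K}_0$, and $A_i$ are unital simple in $\mathcal{N}$, Lemma~\ref{alpha0} applies and yields scaled order isomorphisms $\alpha_0^{0}\colon\mathrm{K}_0(B_1)\to\mathrm{K}_0(B_2)$, $\alpha_1^{0}\colon\mathrm{K}_0(A_1)\to\mathrm{K}_0(A_2)$ together with the commuting $\mathrm{K}_0$-diagram.

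\emph{The key claim: $\alpha\big(\underline{\mathrm{K}}(\iota_1)(\underline{\mathrm{K}}(B_1))\big)=\underline{\mathrm{K}}(\iota_2)(\underline{\mathrm{K}}(B_2))$.} Take $\xi=\underline{\mathrm{K}}(\iota_1)(\zeta)$ with $\zeta\in\underline{\mathrm{K}}(B_1)$. By Theorem~\ref{ordertotal}(iii) applied to $B_1$ there are $k\in\mathbb{N}$ and a projection $e\in B_1$ with $\zeta+k[e]\in\underline{\mathrm{K}}(B_1)_+$; writing $y\in\mathrm{K}_0^+(B_1)$ for its $\mathrm{K}_0$-part and realizing $y$ by a projection $p\in B_1\otimes\mathcal{K}$, the definition of the Dadarlat--Gong order gives $\zeta+k[e]\in\underline{\mathrm{K}}(I_{[p]}\mid B_1\otimes\mathcal{K})$. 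As $B_1\otimes\mathcal{K}$ is an ideal, the ideal of $E_1\otimes\mathcal{K}$ generated by $p$ is again $I_{[p]}$, so $\xi+k[\iota_1(e)]\in\underline{\mathrm{K}}(I_{[\iota_1(p)]}\mid E_1)$ with positive $\mathrm{K}_0$-part, hence $\xi+k[\iota_1(e)]\in\underline{\mathrm{K}}(E_1)_+$. Applying $\alpha$ and reading off the $\mathrm{K}_0$-part through the $\mathrm{K}_0$-diagram, that part equals $\mathrm{K}_0(\iota_2)(\alpha_0^{0}(y))\in\mathrm{K}_0(\iota_2)(\mathrm{K}_0^+(B_2))$, realized by a projection $f\in B_2\otimes\mathcal{K}$. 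Since $\alpha$ preserves $\underline{\mathrm{K}}_+$ and $I_{[\iota_2(f)]}\subseteq B_2\otimes\mathcal{K}$, we get $\alpha(\xi)+k\,\alpha([\iota_1(e)])\in\underline{\mathrm{K}}(I_{[\iota_2(f)]}\mid E_2)\subseteq\underline{\mathrm{K}}(B_2\otimes\mathcal{K}\mid E_2)=\underline{\mathrm{K}}(\iota_2)(\underline{\mathrm{K}}(B_2))$; and $\alpha([\iota_1(e)])=\mathrm{K}_0(\iota_2)(\alpha_0^{0}[e])$ is also in that subgroup, so subtracting yields $\alpha(\xi)\in\underline{\mathrm{K}}(\iota_2)(\underline{\mathrm{K}}(B_2))$. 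The symmetric argument for $\alpha^{-1}$ gives equality.

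\emph{Assembling the diagram.} Since $e_i$ is $\mathrm{K}$-pure, $\underline{\mathrm{K}}(\iota_i)$ is injective (Lemma~\ref{k-pure lemma}(iii)), so the key claim makes $\alpha_0:=\underline{\mathrm{K}}(\iota_2)^{-1}\circ\alpha\circ\underline{\mathrm{K}}(\iota_1)$ a well-defined graded $\Lambda$-isomorphism, and it descends to a $\Lambda$-isomorphism $\alpha_1$ on the quotients $\underline{\mathrm{K}}(A_i)\cong\underline{\mathrm{K}}(E_i)/\underline{\mathrm{K}}(\iota_i)(\underline{\mathrm{K}}(B_i))$; the displayed diagram commutes by construction and has exact rows by $\mathrm{K}$-pureness. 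For order-preservation of $\alpha_0$ I would note that each $\underline{\mathrm{K}}(\iota_i)$ \emph{preserves} the Dadarlat--Gong order (being induced by a $*$-homomorphism) and, being injective, also \emph{reflects} it: if $\underline{\mathrm{K}}(\iota_i)(\eta)$ is positive then Theorem~\ref{yibande k0+}, which identifies $\mathrm{K}_0^+(E_i)$, forces the $\mathrm{K}_0$-part of $\eta$ into $\mathrm{K}_0^+(B_i)$, and then injectivity of $\underline{\mathrm{K}}(\iota_i)$ promotes this to $\eta\in\underline{\mathrm{K}}(B_i)_+$; combining, $\alpha_0=\underline{\mathrm{K}}(\iota_2)^{-1}\circ\alpha\circ\underline{\mathrm{K}}(\iota_1)$ and its inverse are positive. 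Scaled\-ness and order-preservation of $\alpha_1$ are then immediate from Lemma~\ref{alpha0} ($\alpha_1^{0}[1_{A_1}]=[1_{A_2}]$) and from the fact that, $A_i$ being simple, the Dadarlat--Gong order on $\underline{\mathrm{K}}(A_i)$ depends only on $\mathrm{K}_0^+$.

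\emph{Main obstacle.} The crux is the key claim: a priori the Dadarlat--Gong order only records the $\mathrm{K}_0$-classes attached to ideals, so one has to argue that the combined information---positivity of $\xi+k[\iota_1(e)]$, $\Lambda$-linearity and order-preservation of $\alpha$, and the $\mathrm{K}_0$-level diagram from Lemma~\ref{alpha0}---already determines all higher Bockstein components of $\alpha$ on the ideal $B_1$. The subsidiary point that $\underline{\mathrm{K}}(\iota_i)$ reflects Dadarlat--Gong positivity also needs care, but it reduces cleanly to Theorem~\ref{yibande k0+} together with injectivity of $\underline{\mathrm{K}}(\iota_i)$.
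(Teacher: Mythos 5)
Your proof is correct and follows essentially the same route as the paper's: restrict $\alpha$ to $\mathrm{K}_0$ and invoke Lemma~\ref{alpha0}, use Dadarlat--Gong positivity of elements supported on ideals of $B_1$ (together with Theorem~\ref{yibande k0+} and the identification $I_z\cong I_y$) to show $\alpha$ carries $\underline{\mathrm{K}}(\iota_1)(\underline{\mathrm{K}}(B_1))$ onto $\underline{\mathrm{K}}(\iota_2)(\underline{\mathrm{K}}(B_2))$, and then use injectivity of $\underline{\mathrm{K}}(\iota_i)$ coming from K-pureness to define $\alpha_0$ and induce $\alpha_1$ on the quotients. Your explicit observation that K-pureness already forces trivial index maps (so that Lemma~\ref{alpha0} and Theorem~\ref{yibande k0+} apply) is a point the paper leaves implicit, and your direct definition $\alpha_0=\underline{\mathrm{K}}(\iota_2)^{-1}\circ\alpha\circ\underline{\mathrm{K}}(\iota_1)$ is equivalent to the paper's Grothendieck construction from the positive cone.
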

\begin{proof}
By assumption, we have the following diagram with exact rows:
$$
\xymatrixcolsep{2pc}
\xymatrix{
{\,\,0\,\,} \ar[r]^-{}
& {\,\,\underline{\mathrm{K}}(B_1)\,\,} \ar[r]^-{\underline{\rm K}(\iota_1)}
& {\,\,(\underline{\mathrm{K}}(E_1),[1_{E_1}])\,\,} \ar[d]_-{\alpha} \ar[r]^-{\underline{\rm K}(\pi_1)}
& {\,\,(\underline{\mathrm{K}}(A_1),[1_{A}])\,\,}  \ar[r]^-{}
& {\,\,0\,\,} \\
{\,\,0\,\,} \ar[r]^-{}
& {\,\,\underline{\mathrm{K}}(B_2)\,\,} \ar[r]_-{\underline{\rm K}(\iota_2)}
& {\,\,(\underline{\mathrm{K}}(E_2),[1_{E_2}]) \,\,} \ar[r]_-{\underline{\rm K}(\pi_2)}
& {\,\,(\underline{\mathrm{K}}(A_2),[1_{A}]) \,\,} \ar[r]_-{}
& {\,\,0\,\,},}
$$
where $\alpha$ is an ordered  scaled    $\Lambda$-isomorphism.

For any $(x,\bar{x})\in \underline{\mathrm{K}}(B_1)_+$ with $x\in \mathrm{K}_0^+(B_1)$, by Lemma \ref{alpha0},  we have
$$\underline{\rm K}(\pi_2)\circ\alpha\circ \underline{\rm K}(\iota_1)(x,0)=(0,0).$$
By Dadarlat-Gong order \ref{dg order}, we have
$$
(x,\bar{x})\in \underline{\mathrm{K}}(I_x\mid B_1).
$$
As both $\underline{\rm K}(\iota_1)$ and $\alpha$ are order-preserving maps, we have
$$
(y,\bar{y})=:\alpha\circ\underline{\rm K}(\iota_1)(x,\bar{x})\in \underline{\mathrm{K}}( E_2)_+.
$$
As $\underline{\rm K}(\pi_2)$ is also an order-preserving map, we have
$$
\underline{\rm K}(\pi_2)(y,\overline{y})=(0,w)\in \underline{\mathrm{K}}(A_2)_+.
$$
Then we have $w=0$.

From injectivity of $\underline{\rm K}(\iota_2)$, denote
$$
(z,\overline{z})=:\underline{\rm K}(\iota_2)^{-1}(y,\overline{y}).
$$
Note that $z$, $y$ are both positive elements in $\mathrm{K}_0$-groups, denote $I_z,\,I_y$ the ideals generated by
$z$, $y$ in $B_2$ and $E_2$, respectively. By assumption, $\iota_2$ is injective and $\iota_2(I_z)$ is an ideal in $E_2$, then the fact ${\rm K}_0(\iota_2)(z)=y$ implies $\iota_2(I_z)=I_y$, we have ${\iota_2}_{I_zI_y}=:\iota_2|_{I_z\to I_y}$ is an isomorphism.

Now we have the following commutative diagram
$$
\xymatrixcolsep{2pc}
\xymatrix{
{\,\,\underline{\mathrm{K}}(I_z)\,\,} \ar[d]_-{\underline{\mathrm{K}}(\iota_{I_zB_2})} \ar[r]^-{\underline{\mathrm{K}}({\iota_2}_{I_zI_y})}
& {\,\,\underline{\mathrm{K}}(I_y)\,\,} \ar[d]^-{\underline{\mathrm{K}}(\iota_{I_yE_2})} \\
{\,\,\underline{\mathrm{K}}(B_2) \,\,} \ar[r]_-{\underline{\mathrm{K}}({\iota_2})}
& {\,\,\underline{\mathrm{K}}(E_2) \,\,},}
$$
where $\iota_{I_zB_2}, \iota_{I_yE_2}$ are the natural inclusions.

Since $(y,\overline{y})\in \underline{\mathrm{K}}(I_y\mid E_2)$, then
there exist $(0,\widehat{y})\in \underline{\mathrm{K}}(I_y)$, $(0,\widehat{z}) \in \underline{\mathrm{K}}(I_z)$  such that
$$
\underline{\mathrm{K}}(\iota_{I_yE_2})(0,\widehat{y})=(0,\overline{y})
\quad {\rm and}\quad
(0,\widehat{z})=\underline{\mathrm{K}}({\iota_2}_{I_zI_y})^{-1}(0,\widehat{y}).
$$
From the commutativity, we have
$$
\underline{\mathrm{K}}(\iota_2)\circ\underline{\mathrm{K}}(\iota_{I_zB_2})(0,\widehat{z})=(0,\overline{y}).
$$
By the injectivity of $\underline{\mathrm{K}}(\iota_2)$, $
\underline{\mathrm{K}}(\iota_{I_zB_2})(0,\widehat{z})=(0,\overline{z}),
$
that is, $(z,\bar{z})\in \underline{\mathrm{K}}(I_z| B_2)$, and hence,
$
(z,\bar{z})\in\underline{\mathrm{K}}(B_2)_+.
$

With a similar argument in Lemma \ref{alpha0} and apply Theorem \ref{ordertotal} , we have that the Grothendieck map $\alpha_0$ of the following map
$$
(x,\overline{x})\mapsto(z,\overline{z})
$$
is an order-preserving  isomorphism from $\underline{\mathrm{K}}(B_1)$ to $\underline{\mathrm{K}}(B_2)$. Note that $A_1$, $A_2$ is simple, the order on the total K-theory is determined by the positive cone of  $\mathrm{K}_0$-group. Similarly, we can obtain the order-preserving isomorphism  $\alpha_1$ as expected. 

Moreover, since both the extensions  $e_1, e_2$ are K-pure, by Theorem \ref{k-pure ext k-pure}, we have $E_1,E_2$ are both K-pure.
Then by Lemma \ref{k-pure lemma} (iii), for each $i=1,2,$
we have
$\underline{\mathrm{K}}(B_i)$ is regarded as
the sub $\Lambda$-module $\underline{\mathrm{K}}(E_i)$ through
$\underline{\mathrm{K}}(\iota_i)$,
and $\alpha_0$ is indeed an ordered scaled  $\Lambda$-isomorphism; so does the induced map $\alpha_1$.

\end{proof}

\begin{definition}
Let $A$, $B$ be ${\rm C}^*$-algebras. Two extensions $e_1$ and $e_2$ of $A$ by $B$ are called weakly isomorphic,
denoted by $e_1\simeq_w e_2$, if there exist isomorphisms $\psi_0,\psi,\psi_1$ making the following diagram commute:
$$
\xymatrixcolsep{2pc}
\xymatrix{
{\,\,e_1:\quad0\,\,} \ar[r]^-{}
& {\,\,B\,\,} \ar[d]_-{\psi_0} \ar[r]^-{}
& {\,\,E_1\,\,} \ar[d]_-{\psi} \ar[r]^-{}
& {\,\,A\,\,} \ar[d]_-{\psi_1} \ar[r]^-{}
& {\,\,0\,\,} \\
{\,\,e_2:\quad0\,\,} \ar[r]^-{}
& {\,\,B\,\,} \ar[r]_-{}
& {\,\,E_2 \,\,} \ar[r]_-{}
& {\,\,A \,\,} \ar[r]_-{}
& {\,\,0\,\,}.}
$$
\end{definition}
\begin{theorem}\label{4 dengjia}
Let $A, B \in\mathcal{R}$, where $\mathcal{R}$ is a $\mathcal{DG}$-class of ${\rm C}^*$-algebras. Assume that $A$ is unital simple, $B$ is stable.
Given two unital essential $\mathrm{K}$-pure extensions of ${\rm C}^*$-algebras
$$
e_i\,\,:\,\,0\to B \xrightarrow{\iota_i} E_i\xrightarrow{\pi_i}A\to 0,\quad i=1,2.
$$
The following statements are equivalent:

{\rm (i)} $E_1\cong E_2$;

{\rm (ii)} $e_1$ and $e_2$ are weakly isomorphic;

{\rm (iii)} $(\underline{\mathrm{K}}(E_1),\underline{\mathrm{K}}(E_1)_+,[1_{E_1}])\cong
(\underline{\mathrm{K}}(E_2),\underline{\mathrm{K}}(E_2)_+,[1_{E_2}])$.
\end{theorem}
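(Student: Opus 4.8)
The plan is to establish the cycle $\text{(i)}\Rightarrow\text{(iii)}\Rightarrow\text{(ii)}\Rightarrow\text{(i)}$, with nearly all the work concentrated in $\text{(iii)}\Rightarrow\text{(ii)}$. The implication $\text{(ii)}\Rightarrow\text{(i)}$ is immediate, since a weak isomorphism of extensions supplies in particular a $*$-isomorphism $E_1\to E_2$. For $\text{(i)}\Rightarrow\text{(iii)}$, any $*$-isomorphism $\phi\colon E_1\to E_2$ induces $\underline{\mathrm{K}}(\phi)$, which is a $\Lambda$-isomorphism sending $[1_{E_1}]$ to $[1_{E_2}]$; moreover $\phi$ carries the ideal lattice of $E_1$ bijectively onto that of $E_2$, and since the Dadarlat--Gong cone (see \ref{dg order}) is defined through the images of $\underline{\mathrm{K}}$ of the ideals, $\underline{\mathrm{K}}(\phi)$ preserves $\underline{\mathrm{K}}(\cdot)_+$. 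Hence $\underline{\mathrm{K}}(\phi)$ is an ordered scaled $\Lambda$-isomorphism.

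So fix an ordered scaled $\Lambda$-isomorphism $\alpha$ realizing (iii). First note that $\mathrm{K}$-pureness forces the two $\mathrm{K}_*$-sequences of each $e_i$ to be short exact, hence both index maps to vanish; Corollary \ref{* AND Total} then applies and produces ordered scaled $\Lambda$-automorphisms $\alpha_0$ of $\underline{\mathrm{K}}(B)$ and $\alpha_1$ of $\underline{\mathrm{K}}(A)$ fitting with $\alpha$ into a commuting ladder of short exact sequences. Because $\mathcal{R}$ is a $\mathcal{DG}$-class, $\alpha_0$ and $\alpha_1$ lift to $*$-automorphisms $\psi_0\in\operatorname{Aut}(B)$ and $\psi_1\in\operatorname{Aut}(A)$ with $\underline{\mathrm{K}}(\psi_0)=\alpha_0$ and $\underline{\mathrm{K}}(\psi_1)=\alpha_1$.

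Next I would replace $e_1$ by the twisted extension $e_1'\colon 0\to B\xrightarrow{\iota_1\psi_0^{-1}} E_1\xrightarrow{\psi_1\pi_1} A\to 0$, which still has middle algebra $E_1$ and is again unital, essential and of trivial index maps (its Busby invariant is $\check{\psi}_0\circ\tau_1\circ\psi_1^{-1}$, with $\check{\psi}_0$ the automorphism of $M(B)/B$ induced by $\psi_0$). On the level of $\mathrm{K}_*$-theory, the class of $e_1'$ in $\mathrm{Ext}_{[1]}(\mathrm{K}_*(A),\mathrm{K}_*(B))$ is obtained from the class of $e_1$ by pushing forward along $\alpha_0|_{\mathrm{K}_*}$ and pulling back along $(\alpha_1|_{\mathrm{K}_*})^{-1}$; but the commuting ladder from Corollary \ref{* AND Total} says precisely that $(\alpha_0)_*[e_1]=(\alpha_1)^*[e_2]$, so $[e_1']=[e_2]$. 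Since $A$ is unital simple, $B$ is stable and $\mathrm{K}_0(B)$ is weakly unperforated (all from $A,B\in\mathcal{R}$), Theorem \ref{strong wei} gives that $\mathrm{Ext}_{[1]}(\mathrm{K}_*(A),\mathrm{K}_*(B))$ classifies unital extensions of trivial index maps up to strong unitary equivalence, so $e_1'\sim_s e_2$.

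Finally, Proposition \ref{strong tui cong} turns $e_1'\sim_s e_2$ into a $*$-isomorphism $\psi\colon E_1\to E_2$ of the form $(a,b)\mapsto(a,ubu^*)$ for a unitary $u\in M(B)$, with $\psi\circ(\iota_1\psi_0^{-1})=\iota_2\circ\operatorname{Ad}(u)$ and $\pi_2\circ\psi=\psi_1\circ\pi_1$; absorbing $\operatorname{Ad}(u)$ into $\psi_0$ yields that $(\operatorname{Ad}(u)\circ\psi_0,\psi,\psi_1)$ is a weak isomorphism $e_1\simeq_w e_2$, which closes the cycle. The main obstacle is the middle step: one must verify that the twisted sequence $e_1'$ remains of the precise type covered by Theorem \ref{strong wei}, and that the commuting $\underline{\mathrm{K}}$-ladder of Corollary \ref{* AND Total} translates, via pushout/pullback of group extensions (with base points tracked), into the equality $[e_1']=[e_2]$ in $\mathrm{Ext}_{[1]}$. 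Once this is in place, reconciling the inner automorphism $\operatorname{Ad}(u)$ arising from $\sim_s$ with the definition of weak isomorphism is routine.
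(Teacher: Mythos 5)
Your proposal is correct, and the decisive implication is handled exactly as in the paper: starting from an ordered scaled $\Lambda$-isomorphism, apply Corollary \ref{* AND Total} to get $\alpha_0,\alpha_1$, lift them to automorphisms $\psi_0,\psi_1$ using the $\mathcal{DG}$-class hypothesis, twist one extension so that the ladder has identities on the ends, and invoke Theorem \ref{strong wei} together with Proposition \ref{strong tui cong}. The only genuine difference is organizational: you run the cycle (i)$\Rightarrow$(iii)$\Rightarrow$(ii)$\Rightarrow$(i), so you must upgrade the output of the machinery from a bare isomorphism $E_1\cong E_2$ to a weak isomorphism of extensions, which you do by tracking the unitary $u$ from the strong unitary equivalence and absorbing $\operatorname{Ad}(u)$ into $\psi_0$ --- a correct and in fact cleanly explicit bookkeeping step. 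The paper instead runs (i)$\Rightarrow$(ii)$\Rightarrow$(iii)$\Rightarrow$(i) and therefore needs a standalone proof of (i)$\Rightarrow$(ii): it shows that \emph{any} $*$-isomorphism $\psi\colon E_1\to E_2$ automatically carries $\iota_1(B)$ onto $\iota_2(B)$, by the $\mathrm{K}_0$-argument of Lemma \ref{alpha0} combined with stable rank one of $A$ and the fact that $B$ is generated by its projections. That version is marginally stronger (every abstract isomorphism of the middle algebras is part of a weak isomorphism, not just the one produced by the classification machinery), whereas your version avoids that separate argument at the cost of the $\operatorname{Ad}(u)$ reconciliation; either suffices for the stated equivalence.
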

\begin{proof}

{\rm (i)} $\Rightarrow$ {\rm (ii)}: Given an isomorphism $\psi:\, E_1\to E_2$,
by the proof of Lemma \ref{alpha0}, for any projection $p\in B$,
we have
$${\rm K}_0(\pi_2)\circ\alpha\circ {\rm K}_0(\iota_1)([p])=[\pi_2\circ\psi\circ \iota_1(p)]=0.$$

Since $A$ has stable rank one, then $\pi_2\circ\psi\circ \iota_1(p)=0$. By exactness,
there exists a unique projection  $p'\in B$, such that $\iota_2(p')=\psi\circ \iota_1(p)$, i.e.,
for $\psi\circ \iota_1(p)$, we may define $p'=\iota_2^{-1}(\psi\circ \iota_1(p))$.
Then
$\iota_2^{-1}\circ\psi\circ \iota_1$ is a bijection between projections in $B$. Since $B$ is generated by its projections, we have
$\iota_2^{-1}\circ\psi\circ \iota_1$ is an automorphism on $B$ and we
denote it by $\psi_0$. Then we get an induced automorphism $\psi_1$ on $A$ and the following is commutative.
$$
\xymatrixcolsep{2pc}
\xymatrix{
{\,\,0\,\,} \ar[r]^-{}
& {\,\,B\,\,} \ar[d]_-{\psi_0} \ar[r]^-{}
& {\,\,E_1\,\,} \ar[d]_-{\psi} \ar[r]^-{}
& {\,\,A\,\,} \ar[d]_-{\psi_1} \ar[r]^-{}
& {\,\,0\,\,\,} \\
{\,\,0\,\,} \ar[r]^-{}
& {\,\,B\,\,} \ar[r]_-{}
& {\,\,E_2 \,\,} \ar[r]_-{}
& {\,\,A \,\,} \ar[r]_-{}
& {\,\,0\,\,}}
$$

 {\rm (ii)} $\Rightarrow$ {\rm (iii)}: Trivial.

{\rm (iii)} $\Rightarrow$ {\rm (i)}: Given a unital order-preserving $\Lambda$-isomorphism $\alpha$. 
$$\xymatrixcolsep{2pc}
\xymatrix{
{\,\,0\,\,} \ar[r]^-{}
& {\,\,\underline{\mathrm{K}}(B)\,\,} \ar[r]^-{\underline{\rm K}(\iota_1)}
& {\,\,(\underline{\mathrm{K}}(E_1),[1_{E_1}])\,\,} \ar[d]_-{\alpha} \ar[r]^-{\underline{\rm K}(\pi_1)}
& {\,\,(\underline{\mathrm{K}}(A),[1_{A}])\,\,} \ar[r]^-{}
& {\,\,0\,\,} \\
{\,\,0\,\,} \ar[r]^-{}
& {\,\,\underline{\mathrm{K}}(B)\,\,} \ar[r]_-{\underline{\rm K}(\iota_2)}
& {\,\,(\underline{\mathrm{K}}(E_2),[1_{E_2}]) \,\,} \ar[r]_-{\underline{\rm K}(\pi_2)}
& {\,\,(\underline{\mathrm{K}}(A),[1_{A}]) \,\,} \ar[r]_-{}
& {\,\,0\,\,}}
$$
By Corollary \ref{* AND Total} and definition of $\mathcal{DG}$ class, we have two homomorphisms $\psi_0,\psi_1$  
such that the following diagram is commutative with exact rows
$$\xymatrixcolsep{2pc}
\xymatrix{
{\,\,0\,\,} \ar[r]^-{}
& {\,\,\underline{\mathrm{K}}(B)\,\,} \ar[d]_-{\underline{\mathrm{K}}({\psi_0})}\ar[r]^-{\underline{\rm K}(\iota_1)}
& {\,\,(\underline{\mathrm{K}}(E_1),[1_{E_1}])\,\,} \ar[d]_-{\alpha} \ar[r]^-{\underline{\rm K}(\pi_1)}
& {\,\,(\underline{\mathrm{K}}(A),[1_{A}])\,\,} \ar[d]_-{\underline{\rm K}(\psi_1)} \ar[r]^-{}
& {\,\,0\,\,} \\
{\,\,0\,\,} \ar[r]^-{}
& {\,\,\underline{\mathrm{K}}(B)\,\,} \ar[r]_-{\underline{\rm K}(\iota_2)}
& {\,\,(\underline{\mathrm{K}}(E_2),[1_{E_2}]) \,\,} \ar[r]_-{\underline{\rm K}(\pi_2)}
& {\,\,(\underline{\mathrm{K}}(A),[1_{A}]) \,\,} \ar[r]_-{}
& {\,\,0\,\,},}
$$
which can be transformed  into
$$
\xymatrixcolsep{3pc}
\xymatrix{
{\,\,0\,\,} \ar[r]^-{}
& {\,\,\underline{\mathrm{K}}(B)\,\,} \ar[d]_-{{\rm id}} \ar[r]^-{\underline{\rm K}(\iota_1)}
& {\,\,(\underline{\mathrm{K}}(E_1),[1_{E_1}])\,\,} \ar[d]_-{\alpha} \ar[r]^-{\underline{\rm K}(\psi_1\circ{\pi_1})}
& {\,\,(\underline{\mathrm{K}}(A),[1_A])\,\,} \ar[d]_-{{\rm id}} \ar[r]^-{}
& {\,\,0\,\,} \\
{\,\,0\,\,} \ar[r]^-{}
& {\,\,\underline{\mathrm{K}}(B)\,\,} \ar[r]_-{ \underline{\rm K}({\iota_2}\circ {\psi_0})}
& {\,\,(\underline{\mathrm{K}}(E_2),[1_{E_2}]) \,\,} \ar[r]_-{\underline{\rm K}(\pi_2)}
& {\,\,(\underline{\mathrm{K}}(A),[1_A]) \,\,} \ar[r]_-{}
& {\,\,0\,\,}.}
$$
Restricting on $\mathrm{K}_*$,
we have
$$
\xymatrixcolsep{2pc}
\xymatrix{
{\,\,0\,\,} \ar[r]^-{}
& {\,\,\mathrm{K}_*(B)\,\,} \ar[d]_-{{\rm id}} \ar[r]^-{\mathrm{K}_*(\iota_1)}
& {\,\,(\mathrm{K}_*(E_1),[1_{E_1}])\,\,} \ar[d]_-{\alpha^*} \ar[r]^-{\mathrm{K}_* (\psi_1\circ\pi_1)}
& {\,\,(\mathrm{K}_*(A),[1_{A}])\,\,} \ar[d]_-{{\rm id}} \ar[r]^-{}
& {\,\,0\,\,} \\
{\,\,0\,\,} \ar[r]^-{}
& {\,\,\mathrm{K}_*(B)\,\,} \ar[r]_-{ \mathrm{K}_*(\iota_2\circ\psi_0)}
& {\,\,(\mathrm{K}_*(E_2),[1_{E_2}]) \,\,} \ar[r]_-{\mathrm{K}_*(\pi_2)}
& {\,\,(\mathrm{K}_*(A),[1_{A}]) \,\,} \ar[r]_-{}
& {\,\,0\,\,},}
$$
where $\alpha^*$ is the restriction map of $\alpha$.

Then by Theorem \ref{strong wei}, the unital essential extensions with trivial index maps
$$
0\to B \xrightarrow{\iota_1} E_1\xrightarrow{\psi_1\circ \pi_1} A\to 0
$$
and
$$
0\to B \xrightarrow{\iota_2\circ \psi_0} E_2\xrightarrow{\pi_2} A\to 0
$$
are strongly unitarily equivalent. By Proposition \ref{strong tui cong}, we have $E_1\cong E_2$.

\end{proof}
Now we raise the following classification theorem for the class of ${\rm C}^*$-algebras obtained from K-pure extensions in terms of total K-theory.
\begin{theorem}\label{ab1b2}
Let $A_1, A_2, B_1,B_2\in\mathcal{R}$, where $\mathcal{R}$ is a $\mathcal{DG}$-class of $C^*$-algebras. Suppose $A_1, A_2$ are  unital simple, $B_1, B_2$ are stable.
Given two unital essential {\rm K}-pure extensions of ${\rm C}^*$-algebras
$$
0\to B_i \xrightarrow{\iota_i} E_i\xrightarrow{\pi_i} A_i\to 0,\quad i=1,2.
$$
We have $E_1\cong E_2$ iff
$(\underline{\mathrm{K}}(E_1),\underline{\mathrm{K}}(E_1)_+,[1_{E_1}])\cong
(\underline{\mathrm{K}}(E_2),\underline{\mathrm{K}}(E_2)_+,[1_{E_2}])$.
\end{theorem}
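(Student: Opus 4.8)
The forward implication is immediate: an isomorphism $E_1\cong E_2$ induces, via functoriality of $\underline{\mathrm{K}}$ together with the naturality of the Dadarlat--Gong order and of the scale, an isomorphism $(\underline{\mathrm{K}}(E_1),\underline{\mathrm{K}}(E_1)_+,[1_{E_1}])\cong(\underline{\mathrm{K}}(E_2),\underline{\mathrm{K}}(E_2)_+,[1_{E_2}])$. For the converse, the plan is to reduce to the ``same building blocks'' situation already handled in Theorem \ref{4 dengjia}, by transporting the second extension along $*$-isomorphisms of the ideal and of the quotient so that $e_1$ and the transported extension live over a common $A$ and a common $B$.

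So assume we are given an ordered scaled $\Lambda$-isomorphism $\alpha\colon(\underline{\mathrm{K}}(E_1),\underline{\mathrm{K}}(E_1)_+,[1_{E_1}])\to(\underline{\mathrm{K}}(E_2),\underline{\mathrm{K}}(E_2)_+,[1_{E_2}])$. Since both extensions are $\mathrm{K}$-pure, the surjectivity of each $\mathrm{K}_j(\pi_i)$ forces the index maps to vanish in the associated six-term sequences, so the hypotheses of Lemma \ref{alpha0}, and hence of Corollary \ref{* AND Total}, are in force. Applying Corollary \ref{* AND Total} to $\alpha$ yields a commutative ladder with exact rows comparing the total K-theory sequences of $e_1$ and $e_2$, whose outer vertical maps are ordered scaled $\Lambda$-isomorphisms $\alpha_0\colon\underline{\mathrm{K}}(B_1)\to\underline{\mathrm{K}}(B_2)$ and $\alpha_1\colon\underline{\mathrm{K}}(A_1)\to\underline{\mathrm{K}}(A_2)$. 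Because $B_1,B_2\in\mathcal{R}$ and $A_1,A_2\in\mathcal{R}$ and $\mathcal{R}$ is a $\mathcal{DG}$-class (so every isomorphism of ordered scaled total K-theory lifts to a $*$-isomorphism), we obtain $*$-isomorphisms $\psi_0\colon B_1\to B_2$ and $\psi_1\colon A_1\to A_2$ (with $\underline{\mathrm{K}}(\psi_0)=\alpha_0$, $\underline{\mathrm{K}}(\psi_1)=\alpha_1$, although only their existence will be used).

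Next I would transport $e_2$ to an extension of $A_1$ by $B_1$. Writing $\tau_2\colon A_2\to M(B_2)/B_2$ for the Busby invariant of $e_2$ and $\overline{\psi_0}\colon M(B_1)/B_1\to M(B_2)/B_2$ for the isomorphism induced by $\psi_0$ through $M(\psi_0)$, the map $\tau_2':=\overline{\psi_0}^{-1}\circ\tau_2\circ\psi_1$ is the Busby invariant of a unital essential extension
$$
e_2'\colon\quad 0\to B_1\to E_2'\to A_1\to 0,
$$
and $(a,b)\mapsto(\psi_1(a),M(\psi_0)(b))$ on the pull-back descriptions of $E_2',E_2$ is a $*$-isomorphism $E_2'\cong E_2$ carrying $B_1$ onto $B_2$. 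It is routine to check that $e_2'$ again has trivial index maps and is $\mathrm{K}$-pure (the defining injectivity of $\underline{\mathrm{K}}(\iota)$ and surjectivity of $\underline{\mathrm{K}}(\pi)$ are preserved under the transport), so $e_1$ and $e_2'$ are unital essential $\mathrm{K}$-pure extensions of the single simple unital algebra $A_1\in\mathcal{R}$ by the single stable algebra $B_1\in\mathcal{R}$. Composing $\alpha$ with the ordered scaled $\Lambda$-isomorphism induced by $E_2\cong E_2'$ gives $(\underline{\mathrm{K}}(E_1),\underline{\mathrm{K}}(E_1)_+,[1_{E_1}])\cong(\underline{\mathrm{K}}(E_2'),\underline{\mathrm{K}}(E_2')_+,[1_{E_2'}])$, so Theorem \ref{4 dengjia} applies and gives $E_1\cong E_2'\cong E_2$.

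The step I expect to require the most care is the transport of the extension: verifying that $e_2'$ inherits unitality, essentiality, vanishing of the index maps, and---most importantly---$\mathrm{K}$-pureness, and that the induced map $E_2'\to E_2$ is a genuine $*$-isomorphism identifying the ideals compatibly. Everything else is an assembly of results already in place: Corollary \ref{* AND Total} to split $\alpha$ into the pieces $\alpha_0,\alpha_1$ on ideal and quotient, the classification built into the $\mathcal{DG}$-class to realise these pieces by $*$-isomorphisms $\psi_0,\psi_1$, and Theorem \ref{4 dengjia} to conclude once the two extensions have been put over a common ideal and quotient.
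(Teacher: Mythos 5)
Your proposal is correct and follows essentially the same route as the paper: apply Corollary \ref{* AND Total} to split $\alpha$ into $\alpha_0,\alpha_1$, lift these to $*$-isomorphisms $\psi_0,\psi_1$ via the $\mathcal{DG}$-class hypothesis, and reduce to Theorem \ref{4 dengjia}. The only (cosmetic) difference is that you transport $e_2$ through the Busby invariant to a new extension $e_2'$ of $A_1$ by $B_1$, whereas the paper keeps $E_1,E_2$ fixed and instead replaces the structure maps by $\psi_1\circ\pi_1$ and $\iota_2\circ\psi_0$ so that both extensions sit over $A_2$ and $B_1$; the two reductions are equivalent.
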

\begin{proof}Suppose we have $\alpha:\,(\underline{\mathrm{K}}(E_1),\underline{\mathrm{K}}(E_1)_+,[1_{E_1}])\cong
(\underline{\mathrm{K}}(E_2),\underline{\mathrm{K}}(E_2)_+,[1_{E_2}])$, by Corollary \ref{* AND Total}, we have the following commutative diagram with exact rows
$$\xymatrixcolsep{2pc}
\xymatrix{
{\,\,0\,\,} \ar[r]^-{}
& {\,\,\underline{\mathrm{K}}(B_1)\,\,} \ar[d]_-{\alpha_0}\ar[r]^-{\underline{\rm K}(\iota_1)}
& {\,\,(\underline{\mathrm{K}}(E_1),[1_{E_1}])\,\,} \ar[d]_-{\alpha} \ar[r]^-{\underline{\rm K}(\pi_1)}
& {\,\,(\underline{\mathrm{K}}(A_1),[1_{A_1}])\,\,} \ar[d]_-{\alpha_1} \ar[r]^-{}
& {\,\,0\,\,} \\
{\,\,0\,\,} \ar[r]^-{}
& {\,\,\underline{\mathrm{K}}(B_2)\,\,} \ar[r]_-{\underline{\rm K}(\iota_2)}
& {\,\,(\underline{\mathrm{K}}(E_2),[1_{E_2}]) \,\,} \ar[r]_-{\underline{\rm K}(\pi_2)}
& {\,\,(\underline{\mathrm{K}}(A_2),[1_{A_2}]) \,\,} \ar[r]_-{}
& {\,\,0\,\,},}
$$
where both $\alpha_0,\alpha_1$ are scaled order-preserving $\Lambda$-isomorphism.
Lifting $\alpha_0,\alpha_1$ to $\psi_0,\psi_1$, we have
$$
\xymatrixcolsep{2pc}
\xymatrix{
{\,\,0\,\,} \ar[r]^-{}
& {\,\,\underline{\mathrm{K}}(B_1)\,\,} \ar[d]_-{{\rm id}} \ar[r]^-{\underline{\rm K}(\iota_1)}
& {\,\,(\underline{\mathrm{K}}(E_1),[1_{E_1}])\,\,} \ar[d]_-{\alpha} \ar[r]^-{\underline{\rm K}({\psi}_1\circ{\pi_1})}
& {\,\,(\underline{\mathrm{K}}(A_2),[1_{A_2}])\,\,} \ar[d]_-{{\rm id}} \ar[r]^-{}
& {\,\,0\,\,} \\
{\,\,0\,\,} \ar[r]^-{}
& {\,\,\underline{\mathrm{K}}(B_1)\,\,} \ar[r]_-{ \underline{\rm K}( {\iota_2}\circ {\psi}_0)}
& {\,\,(\underline{\mathrm{K}}(E_2),[1_{E_2}]) \,\,} \ar[r]_-{\underline{\rm K}(\pi_2)}
& {\,\,(\underline{\mathrm{K}}(A_2),[1_{A_2}]) \,\,} \ar[r]_-{}
& {\,\,0\,\,}.}
$$
 Consider the extensions
$$0\to B_1 \xrightarrow{\iota_1} E_1\xrightarrow{\psi_1\circ \pi_1} A_2\to 0$$and$$0\to B_1 \xrightarrow{\iota_2\circ \psi_0} E_2\xrightarrow{\pi_2} A_2\to 0.$$
From the assumption and Theorem \ref{4 dengjia}, we have $E_1\cong E_2$.

\end{proof}

\begin{corollary}
Suppose $\mathcal{R}$ is a $\mathcal{DG}$-class with $\mathbb{C}\in\mathcal{R}$. For stable $C^*$-algebras $B_1,B_2\in\mathcal{R}$, $B_1,B_2$ are  isomorphic if and only if $\widetilde{B_1}\cong \widetilde{B_2},$
where
$\widetilde{B_i}$ is the minimal unitization of $B_i$, $i=1,2$.
\end{corollary}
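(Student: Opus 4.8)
The forward implication is immediate, since the minimal unitization is functorial: an isomorphism $B_1\cong B_2$ extends canonically to an isomorphism $\widetilde{B_1}\cong\widetilde{B_2}$. The content is the converse, and the plan is to deduce it by combining Corollary \ref{* AND Total} with the classification built into the notion of $\mathcal{DG}$-class, applied to the split extension
$$
e_i:\quad 0\to B_i\xrightarrow{\iota_i}\widetilde{B_i}\xrightarrow{\pi_i}\mathbb{C}\to 0,\qquad i=1,2 .
$$
First I would check that $e_i$ meets the hypotheses of Corollary \ref{* AND Total} with $A_1=A_2=\mathbb{C}$. It is unital, because its Busby invariant sends $1\in\mathbb{C}$ to the class of $1_{M(B_i)}$ in $M(B_i)/B_i$, which is nonzero as $B_i$ is non-unital; hence $e_i$ is essential as well. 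The canonical unital embedding $\mathbb{C}\hookrightarrow\widetilde{B_i}$ splits $\pi_i$, so each sequence $0\to\mathrm{K}_*(B_i;\mathbb{Z}_n)\to\mathrm{K}_*(\widetilde{B_i};\mathbb{Z}_n)\to\mathrm{K}_*(\mathbb{C};\mathbb{Z}_n)\to 0$ is split exact, hence pure exact; by Lemma \ref{k-pure lemma} the extension $e_i$ is $\mathrm{K}$-pure (and in particular has vanishing index maps). Finally $\mathbb{C}\in\mathcal{R}$ and $B_i\in\mathcal{R}$ by hypothesis, $\mathbb{C}$ is unital simple, $B_i$ is stable, and $\widetilde{B_i}$ has real rank zero and stable rank one because $B_i$ does, so all the standing assumptions are in force.

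Next, given an isomorphism $\psi:\widetilde{B_1}\xrightarrow{\ \cong\ }\widetilde{B_2}$, the induced map $\underline{\mathrm{K}}(\psi)$ is an ordered scaled $\Lambda$-isomorphism between the total K-theories of $\widetilde{B_1}$ and $\widetilde{B_2}$. Feeding $\underline{\mathrm{K}}(\psi)$ into Corollary \ref{* AND Total} produces a commuting ladder between the $\underline{\mathrm{K}}$-theory sequences of $e_1$ and $e_2$ whose left-hand vertical arrow is an ordered scaled $\Lambda$-isomorphism $\alpha_0:\underline{\mathrm{K}}(B_1)\to\underline{\mathrm{K}}(B_2)$. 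Since $B_1,B_2\in\mathcal{R}$ and $\mathcal{R}$ is a $\mathcal{DG}$-class, $\alpha_0$ lifts to a $*$-isomorphism $B_1\cong B_2$, which completes the argument.

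The only point that needs any care is the passage, via Corollary \ref{* AND Total}, from an isomorphism of scaled total K-theory at the level of the unitizations to one at the level of $B_1,B_2$; but this is precisely what that corollary delivers, once the (easy) verification that $e_i$ is $\mathrm{K}$-pure has been made, and the latter is immediate from the splitting. Alternatively, one can bypass the invariant entirely: for nonzero stable $B_i$ the ideal $B_i$ is the \emph{unique} maximal ideal of $\widetilde{B_i}$---any other maximal ideal $M$ would yield a unital simple quotient $\widetilde{B_i}/M\cong B_i/(B_i\cap M)$, which is impossible since a quotient of a stable algebra is stable, hence non-unital---so any isomorphism $\widetilde{B_1}\cong\widetilde{B_2}$ carries $B_1$ onto $B_2$. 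I record the route through the invariant because it is the one fitting the framework of this section.
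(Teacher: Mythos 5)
Your proof is correct and follows essentially the same route as the paper, which simply cites the K-pure classification (Theorem \ref{ab1b2}) applied to the unitization extension with $A_1=A_2=\mathbb{C}$. You in fact supply a step the paper leaves implicit — passing from $\widetilde{B_1}\cong\widetilde{B_2}$ down to $B_1\cong B_2$ via Corollary \ref{* AND Total} and the lifting property of the $\mathcal{DG}$-class (or, alternatively, via the uniqueness of the codimension-one ideal) — and your verification that the unitization extension is unital, essential and $\mathrm{K}$-pure is accurate.
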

\begin{proof}
The minimal unitization of a stable algebra forms a K-pure extension. By taking $A_1=A_2=\mathbb{C}$ in Corollary \ref{ab1b2}, we achieve the proof.

\end{proof}
\begin{remark}
Without the assumption of stability, the above is not true.
For example, $C_0[0,1)\ncong C_0[0,1/2)\oplus C_0(1/2,1]$, but
$$\widetilde{C_0[0,1)}\cong \widetilde{C_0[0,1/2)\oplus C_0(1/2,1]}\cong C[0,1].$$
\end{remark}
\begin{remark}\rm
If a $\mathcal{DG}$-class $\mathcal{R}$  is closed under stabilization and $\mathbb{C}\in\mathcal{R}$, denote $\mathcal{E}$ the class consisting of the ${\rm C}^*$-algebra $E$ arised from a unital essential K-pure extension
$$
0\to B \to E\to A\to 0,
$$
or
$E=A$,
where $A,B\in\mathcal{R}$, $A$ is unital simple, $B$ is stable. Then
$$\{A,\,\widetilde{G\otimes \mathcal{K}}|\,A, G\in\mathcal{R},\,A\,\,{\rm is\,\,unital\,\, simple}\}\subset\mathcal{E}.$$
Thus, for any $E_1, E_2\in \mathcal{E},$ by Theorem \ref{ab1b2}, we have $E_1\cong E_2$ if and only if $$(\underline{\mathrm{K}}(E_1),\underline{\mathrm{K}}(E_1)_+,[1_{E_1}])\cong
(\underline{\mathrm{K}}(E_2),\underline{\mathrm{K}}(E_2)_+,[1_{E_2}]).$$
\end{remark}
\section{Classification: the general case}

Now we tend to the not necessarily K-pure extensions, and we will show that the  total Cuntz semigroup classifies the class of ${\rm C}^*$-algebras arised from general extensions of K-pure algebras.
\begin{notion}\rm
  ({\bf The Cuntz semigroup of a ${\rm C}^*$-algebra} \cite{Cu, CEI}) Denote the cone of positive elements in $A$ by $A_+$. Let $a,b\in A_+$. 
  We say that $a$ is $Cuntz$ $subequivalent$ to $b$, denoted by $a\lesssim_{\rm Cu} b$, if there exists a sequence $(r_n)$ in $A$ such that $r_n^*br_n\rightarrow a$. One says that $a$ is  $Cuntz$ $equivalent$ to $b$, denoted by $a\sim_{\rm Cu} b$, if $a\lesssim_{\rm Cu} b$ and $b\lesssim_{\rm Cu} a$. If $a\sim_{\rm Cu} b$, then $a$ and $b$ generate the same ideal in $A$. The $Cuntz$ $semigroup$ of $A$ is defined as ${\rm Cu}(A)=(A\otimes\mathcal{K})_+/\sim_{\rm Cu}$. We will denote the class of $a\in (A\otimes\mathcal{K})_+$ in ${\rm Cu}(A)$ by $\langle a\rangle$. Using an isomorphism $M_2(\mathcal{K}) \cong \mathcal{K}$, we get an isomorphism $\psi: M_2(A \otimes \mathcal{K}) \rightarrow A \otimes \mathcal{K}$, write $ a\oplus b=\psi(\begin{array}{cc}a & 0 \\ 0 & b\end{array})$. Then ${\rm Cu}(A)$ is a positively ordered abelian semigroup when equipped with the addition: $\langle a\rangle+\langle b\rangle =\langle a \oplus b\rangle$,  and the relation:
  $$
   \langle a\rangle\leq\langle b\rangle \Leftrightarrow a\lesssim_{\rm Cu} b,\quad a,b\in (A\otimes\mathcal{K})_+.
  $$
\end{notion}

\begin{notion}\rm
Let $A$ be a ${\rm C}^*$-algebra and let Lat$(A)$ denote the collection of ideals in $A$, equipped with the partial order given by inclusion of ideals.
We say that $M$ is an ideal of ${\rm Cu}(A)$, if $M$ is an order-hereditary (for any $a,b\in {\rm Cu}(A)$, $a\leq b$ and $b\in M$ imply $a\in M$) submonoid which is closed under suprema of increasing sequences.
 For any ideal $I$ in $A$, then ${\rm Cu}(I)$ is an ideal of ${\rm Cu}(A)$. (See \cite [5.1]{APT} and \cite[3.1]{Ciu} for more details.) 
  The map $I\rightarrow {\rm Cu}(I)$ defines a lattice isomorphism between the lattice Lat$(A)$ of closed two-sided ideals of $A$ and the lattice ${\rm Lat}({\rm Cu}(A))$ of ideals of ${\rm Cu}(A)$.

  If $A$ is a ${\rm C}^*$-algebra of stable rank one. Denote ${\rm Lat}_f(A)$ the subset of ${\rm Lat}(A)$ consisting of all the ideals of $A$ that contains a full positive element.  Hence, any ideal in ${\rm Lat}_f(A)$ is singly-generated. For $a\in A_+$, we denote $I_a$ the ideal of $A$ generated by $a$.
Define {\rm Cu}$_f(I):=\{\langle a\rangle\in {\rm Cu}(A)\,|\, I_a=I\}$. In other words, ${\rm Cu}_f(I)$ consists of the elements of ${\rm Cu}(A)$ that are full in ${\rm Cu}(I)$. We will also use $I_{\langle a\rangle}$ to represent the ideal $I_a$ in $A$. If ${\langle a\rangle}\leq {\langle b\rangle}$, we will have $I_{\langle a\rangle}\subset I_{\langle b\rangle}$.

\end{notion}

The following definition combines Definition 3.18 and Theorem 3.19 in \cite{AL}. One can also see \cite[Definition 3.7]{AL} for an equivalent  version, in which we look at the KK-theory from the point of view of Cuntz \cite{Cu1}, the elements can be regarded as the homotopy classes of homomorphisms.

\begin{definition} \label{cu total def}\rm ({\bf Total Cuntz semigroup})
  Let $A$ be a separable ${\rm C}^*$-algebra of stable rank one.
   Define
  $$
  \underline{{\rm Cu}}(A):=\coprod_{I\in {\rm Lat}_f(A)} {\rm Cu}_f(I)\times {\rm K}_1(I)\times\bigoplus_{n=1}^{\infty} {\rm K}_* (I; \mathbb{Z}_n).
  $$


If $x,y\in {{\rm Cu}}(A)$ and $x\leq y$, denote $I_x$ and $I_y$ to be ideals in $A\otimes\mathcal{K}$ generated by $x$ and $y$, respectively. Denote $\iota_{I_xI_y}:\,I_x\to I_y$ to be the natural inclusion and $\delta_{I_xI_y}=\underline{\rm K}(\iota_{I_xI_y})=\underline{\rm K}( I_x)\rightarrow \underline{\rm K}( I_y)$.

  We equip $\underline{{\rm Cu}}(A)$ with addition and order as follows:
  For any
  $$\textstyle
  (x,\mathfrak{u},\bigoplus\limits_{n=1}^{\infty}(\mathfrak{s}_{n,0},\mathfrak{s}_{n,1}))\in{\rm Cu}_f(I_x)\times {\rm K}_1(I_x)\times\bigoplus\limits_{n=1}^{\infty} {\rm K}_* (I_x; \mathbb{Z}_n)
  $$
  and
$$\textstyle
  (y,\mathfrak{v},\bigoplus\limits_{n=1}^{\infty}(\mathfrak{t}_{n,0},\mathfrak{t}_{n,1}))\in{\rm Cu}_f(I_y)\times {\rm K}_1( I_y)\times\bigoplus\limits_{n=1}^{\infty} {\rm K}_* (I_y; \mathbb{Z}_n),
  $$
then
$$\textstyle
(x,\mathfrak{u},\bigoplus\limits_{n=1}^{\infty}(\mathfrak{s}_{n,0},\mathfrak{s}_{n,1}))
+(y,\mathfrak{v},\bigoplus\limits_{n=1}^{\infty}(\mathfrak{t}_{n,0},\mathfrak{t}_{n,1}))
$$
$$\textstyle
=(x+y,\delta_{I_xI_{x+y}}(\mathfrak{u},\bigoplus\limits_{n=1}^{\infty}(\mathfrak{s}_{n,0},\mathfrak{s}_{n,1}))
+\delta_{I_yI_{x+y}}(\mathfrak{v},\bigoplus\limits_{n=1}^{\infty}(\mathfrak{t}_{n,0},\mathfrak{t}_{n,1})))
$$
and
$$\textstyle
(x,\mathfrak{u},\bigoplus\limits_{n=1}^{\infty}(\mathfrak{s}_{n,0},\mathfrak{s}_{n,1}))
\leq(y,\mathfrak{v},\bigoplus\limits_{n=1}^{\infty}(\mathfrak{t}_{n,0},\mathfrak{t}_{n,1})),
$$
if
$$\textstyle
\,\,x\leq y\,\,{\rm and}\,\,\delta_{I_xI_y}
(\mathfrak{u},\bigoplus\limits_{n=1}^{\infty}(\mathfrak{s}_{n,0},\mathfrak{s}_{n,1}))=(\mathfrak{v},
\bigoplus\limits_{n=1}^{\infty}(\mathfrak{t}_{n,0},\mathfrak{t}_{n,1})),
$$

 If $A$ is unital, we set
$$\textstyle
\underline{{\rm Cu}}_u(A)=({\rm \underline{Cu}}(A),([1_A],0,\bigoplus\limits_{n=1}^{\infty}(0,0))).
$$
\end{definition}
\begin{remark}
  Let $A$ and $B$ be separable ${\rm C}^*$-algebras of stable rank one and let $\psi:\, A\to B$ be a $*$-homomorphism. Still denote $\psi\otimes {\rm id}_\mathcal{K}$ by $\psi$.
Let $x\in {\rm {Cu}}(A)$ and $y\in {\rm {Cu}}(B)$, suppose that $\psi(I_x)\subset I_y$, we denote the restriction map $\psi|_{I_x\to I_y}:\,I_x\to I_y$ by $\psi_{I_x I_y}$.

Then $\psi$ induces the two morphisms:
$$
{\rm \underline{K}}(\psi):\,{\rm \underline{K}}(A)\to{\rm \underline{K}}(B)
\quad{\rm and}
\quad {\rm \underline{K}}(\psi_{I_x I_y}):\,{\rm \underline{K}}(I_x)\to{\rm \underline{K}}(I_y).$$

In general, ${\rm \underline{K}}(\psi_{I_x I_y})$ may  not be the restriction of ${\rm \underline{K}}(\psi)$, when $I_x$ is not K-pure in $A$, ${\rm \underline{K}}(I_x)$ is not a subgroup of ${\rm \underline{K}}(A)$.

In particular,
the map ${\rm \underline{Cu}}(\psi):\,{\rm \underline{Cu}}(A)\to {\rm \underline{Cu}}(B)$ is formulated by
$$\textstyle
{\rm \underline{Cu}}(\psi)(x,\mathfrak{u},\bigoplus\limits_{n=1}^{\infty}
(\mathfrak{s}_{n,0},\mathfrak{s}_{n,1}))=({\rm {Cu}}(\psi)(x),{\rm \underline{K}}(\psi_{I_x I_{{\rm {Cu}}(\psi)(x)}})(\mathfrak{u},
\bigoplus\limits_{n=1}^{\infty}(\mathfrak{s}_{n,0},\mathfrak{s}_{n,1}))).
$$
Note that $(\mathfrak{u},\bigoplus_{n=1}^{\infty}(\mathfrak{s}_{n,0},\mathfrak{s}_{n,1}))$ is identified with $(0,\mathfrak{u},\bigoplus_{n=1}^{\infty}(\mathfrak{s}_{n,0},\mathfrak{s}_{n,1}))\in {\rm \underline{K}}(I_x)$.

Then we have
$$\textstyle
{\rm \underline{Cu}}(\psi)(x,\mathfrak{u},\bigoplus\limits_{n=1}^{\infty}(\mathfrak{s}_{n,0},\mathfrak{s}_{n,1}))\leq (y,\mathfrak{v},\bigoplus\limits_{n=1}^{\infty}(\mathfrak{t}_{n,0},\mathfrak{t}_{n,1}))$$
if and only if
$
{\rm {Cu}}(\psi)(x)\leq y\in {\rm {Cu}}(B)$ {\rm and}
$$\textstyle
{\rm \underline{K}}(\psi_{I_x I_y})(0,\mathfrak{u},\bigoplus\limits_{n=1}^{\infty}(\mathfrak{s}_{n,0},\mathfrak{s}_{n,1}))=
(0,\mathfrak{v},\bigoplus\limits_{n=1}^{\infty}(\mathfrak{t}_{n,0},\mathfrak{t}_{n,1}))\in {\rm \underline{K}}(I_y).
$$

\end{remark}
\begin{notion}\rm\label{Cuaxiom}
  ({\bf The category $\mathrm{Cu}^\sim$}) (\cite[Definition 3.7]{L1})  Let $(S, \leq)$ be an ordered monoid (not necessarily positively ordered, i.e., for any $x\in S$, we don't require $0\leq x$) such that the suprema of increasing sequences always exists in $S$. For $x$ and $y$ in $S$, let us say that $x$ is compactly contained in $y$, and denote it by  $x \ll y$, if for every increasing sequence $(y_n)$ that has a supremum  in $S$  such that $y\leq\sup _{n \geq 1} y_{n}$, then there exists $k$ such that $x\leq y_{k} .$ 
   If $x\in S$ satisfies $x\ll x$, we say that $x$ is compact.

We say that $S$ is a $\mathrm{Cu}^\sim$-semigroup of the  category $\mathrm{Cu}^\sim$, if it has a 0 element with $0\ll 0$ and satisfies the following order-theoretic axioms:

(O1): Every increasing sequence of elements in $S$ has a supremum.

(O2): For any $x \in S$, there exists a $\ll$-increasing sequence $\left(x_{n}\right)$ in $S$ such that $\sup_{n \geq 1} x_{n}=x$.

(O3): Addition and the compact containment relation are compatible.

(O4): Addition and suprema of increasing sequences are compatible.

A $\mathrm{Cu}^\sim$-morphism between two $\mathrm{Cu}^\sim$-semigroups is an ordered monoid morphism that preserves the compact containment relation and suprema of increasing sequences.
In particular, the well-known category Cu is the full subcategory of  $\mathrm{Cu}^\sim$
consisting of all those positively ordered objects.

\end{notion}

For any ${\rm Cu}^\sim$-semigroup $S$, we denote by $S_c$ the set of compact elements in $S$ and  denote ${\rm Gr}(S_c)$ the Grothendieck construction of $S_c$. Let $\phi$ be a ${\rm Cu}^\sim$-morphism between ${\rm Cu}^\sim$-semigroups, denote the restriction map of $\phi$ on the set of compact elements by $\phi_c$ and denote the induced Grothendieck map of $\phi_c$ by ${\rm Gr}(\phi_c)$.
\begin{definition} \rm {\rm (}\cite[Definition 3.15]{AL}{\rm )}
The total Cuntz category ${\rm \underline{Cu}}$ is defined as follows:
$$
{\rm Ob}({\rm \underline{Cu}})=\{\,S\in {\rm Ob}({\rm Cu}^\sim)\,|\,{\rm Gr}(S_c)\,\,{\rm is\,\, a\,\, \Lambda-module}\,\};
$$
let $X,Y\in {\rm Ob}({\rm \underline{Cu}})$, we say the map $\phi:\,X\to Y$ is a ${\rm \underline{Cu}}$-morphism if $\phi$ satisfies the following two conditions:

(1) $\phi$ is a ${\rm {Cu}}^\sim$-morphism.

(2) The induced Grothendieck map ${\rm Gr}(\phi_c): {\rm Gr}(X_c) \rightarrow {\rm Gr}(Y_c)$ is $\Lambda$-linear.

\end{definition}

The following results show that we can recover the total K-theory from the total Cuntz semigroup and under the conditions K-pure and real rank zero,
these two invariants determine each other.

\begin{proposition}{\rm(}{\rm  \cite[Proposition 5.3]{AL}}{\rm )}\label{recover prop}
  The assignment
\begin{eqnarray*}
  \qquad\qquad\underline{H}:\,  {\rm \underline{Cu}}_u\text{--}\, {\rm category} &\rightarrow& \Lambda_u\text{--}\, {\rm category} \\
  (S,u) &\mapsto & ({\rm Gr}(S_c),\rho(S_c),\rho((S_+)_c),\rho(u))\\
  \phi &\mapsto & {\rm Gr}(\phi_c)
\end{eqnarray*}
is a functor, where $\rho:\, S_c\to {\rm Gr}(S_c)$ is the natural map $(\rho(x)=[(x,0)]_{\rm Gr})$, $S_+=\{x\in S|\,x\geq 0\}$.
 The functor $\underline{H}$ yields a natural equivalence $\underline{H}\circ {\rm \underline{Cu}}_u\simeq  {\rm \underline{K}}$, which means, for any unital separable ${\rm C}^*$-algebras $A,B$ with stable rank one, if $\underline{\rm Cu}_u(A)\cong \underline{\rm Cu}_u(B)$,
then
$$
 ({\rm \underline{K}}(A),{\rm \underline{K}}(A)_+,[1_A])\cong ({\rm \underline{K}}(B),{\rm \underline{K}}(B)_+,[1_B]).
$$
\end{proposition}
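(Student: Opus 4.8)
The plan is to check the three assertions separately: that $\underline{H}$ is well defined on objects, that it is functorial, and that $\underline{H}\circ\underline{\rm Cu}_u$ is naturally equivalent to $\underline{\rm K}$; the displayed statement about isomorphic total Cuntz semigroups is then an immediate corollary of the natural equivalence.

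For the object level, fix $S\in{\rm Ob}(\underline{\rm Cu})$. By axiom (O3) the set $S_c$ of compact elements is a submonoid containing $0$ (since $0\ll 0$), so ${\rm Gr}(S_c)$ is defined, and by the definition of ${\rm Ob}(\underline{\rm Cu})$ it carries a $\Lambda$-module structure. One then verifies that $\rho(S_c)$, $\rho((S_+)_c)$ and $\rho(u)$ give a realizable subset, an additive and $\Lambda$-compatible positive cone, and a scale element, so that $\underline{H}(S,u)$ is a genuine object of the $\Lambda_u$-category; here one uses that, after the pushforward identifications built into $\underline{\rm Cu}$, every compact element is dominated by a multiple of $u$ and splits off a positive part, exactly as in Theorem~\ref{ordertotal}. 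For functoriality, if $\phi\colon X\to Y$ is a $\underline{\rm Cu}$-morphism then, being a ${\rm Cu}^\sim$-morphism, it preserves $0$, the order and compact containment, so $\phi_c$ maps $X_c$ to $Y_c$ and $(X_+)_c$ to $(Y_+)_c$ and preserves the unit; hence ${\rm Gr}(\phi_c)$ is an order- and scale-preserving homomorphism, and it is $\Lambda$-linear by condition (2) of the definition of a $\underline{\rm Cu}$-morphism. Since both restriction to compact elements and the Grothendieck construction are functorial, $(\psi\circ\phi)_c=\psi_c\circ\phi_c$, ${\rm Gr}((\psi\circ\phi)_c)={\rm Gr}(\psi_c)\circ{\rm Gr}(\phi_c)$, and identities are preserved, so $\underline{H}$ is a functor.

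The crux is the natural equivalence. For $A$ separable, unital, of stable rank one one invokes the standard fact that the compact elements of ${\rm Cu}(A)$ are exactly the classes $\langle p\rangle$ of projections $p\in A\otimes\mathcal{K}$; hence an element $(x,\mathfrak{u},\bigoplus_n(\mathfrak{s}_{n,0},\mathfrak{s}_{n,1}))$ of $\underline{\rm Cu}(A)$ is compact precisely when $x=\langle p\rangle$, in which case $I_x=I_p$, so that $(\underline{\rm Cu}(A))_c=\coprod_p\{\langle p\rangle\}\times{\rm K}_1(I_p)\times\bigoplus_n{\rm K}_*(I_p;\mathbb{Z}_n)$ with addition pushed forward along the maps $\delta_{I_pI_q}$. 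Grothendieck-completing, the ${\rm K}_0$-coordinate yields ${\rm Gr}(V(A))={\rm K}_0(A)$, where $V(A)$ is the monoid of Murray--von Neumann classes of projections in $A\otimes\mathcal{K}$, and the assignment $([p],\mathfrak{u},\bigoplus_n(\mathfrak{s}_{n,0},\mathfrak{s}_{n,1}))\mapsto[p]+\underline{\rm K}(\iota_{I_p})(\mathfrak{u},\bigoplus_n(\mathfrak{s}_{n,0},\mathfrak{s}_{n,1}))$ defines the candidate isomorphism $\underline{H}(\underline{\rm Cu}_u(A))\to(\underline{\rm K}(A),\underline{\rm K}(A)_+,[1_A])$; surjectivity holds because $A$ is unital, so choosing $[p]$ a large multiple of $[1_A]$ forces $I_p=A\otimes\mathcal{K}$, and injectivity and $\Lambda$-linearity follow from the six-term and Bockstein exact sequences together with naturality of the inclusions $\iota_{I_p}$. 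Under this identification $\rho((S_+)_c)$ corresponds to the Dadarlat--Gong cone of Definition~\ref{dg order} and $\rho(u)$ to $[1_A]$, and naturality in $A$ holds because $\underline{\rm Cu}_u(\psi)$ acts on compact elements through ${\rm Cu}(\psi)$ and the maps $\underline{\rm K}(\psi_{I_xI_{{\rm Cu}(\psi)(x)}})$. I expect the main obstacle to be precisely this last identification: one must keep careful track of how the ``coproduct over ideals'' bookkeeping and the pushforward addition interact with Grothendieck completion, so that the resulting ordered scaled group is literally $(\underline{\rm K}(A),\underline{\rm K}(A)_+,[1_A])$ and not merely an abstractly isomorphic copy. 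Granting the natural equivalence, an isomorphism $\underline{\rm Cu}_u(A)\cong\underline{\rm Cu}_u(B)$ is a $\underline{\rm Cu}_u$-isomorphism, so applying $\underline{H}$ gives $(\underline{\rm K}(A),\underline{\rm K}(A)_+,[1_A])\cong(\underline{\rm K}(B),\underline{\rm K}(B)_+,[1_B])$, as claimed.
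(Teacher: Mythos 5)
The paper does not actually prove this proposition: it is imported verbatim from \cite[Proposition 5.3]{AL}, so there is no in-paper argument to compare yours against. Judged on its own terms, your reconstruction has the right overall architecture (well-definedness on objects via (O3) and $0\ll 0$, functoriality from the two conditions defining a $\underline{\rm Cu}$-morphism, and the identification of compact elements of ${\rm Cu}(A)$ with projection classes under stable rank one, followed by Grothendieck completion against a full projection). The candidate map $(\langle p\rangle,\mathfrak{u},\ldots)\mapsto [p]+\underline{\rm K}(\iota_{I_p})(\mathfrak{u},\ldots)$ is the correct one, and your surjectivity/injectivity argument via a large multiple of $[1_A]$ is sound.

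There is, however, one concrete error in the step that actually matters for this paper, namely the matching of order structures. You assert that $\rho((S_+)_c)$ corresponds to the Dadarlat--Gong cone. It does not. By the order of Definition~\ref{cu total def}, $(y,\mathfrak{v},\bigoplus_n(\mathfrak{t}_{n,0},\mathfrak{t}_{n,1}))\geq 0$ forces $\delta_{I_0I_y}(0)=(\mathfrak{v},\ldots)$, i.e.\ all $\mathrm{K}_1$- and torsion-coordinates vanish; hence $S_+\cong{\rm Cu}(A)$ and $\rho((S_+)_c)$ is only $\mathrm{K}_0^+(A)$ sitting inside $\mathrm{K}_0(A)\subset\underline{\rm K}(A)$. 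The Dadarlat--Gong cone $\underline{\rm K}(A)_+$ is recovered as $\rho(S_c)$, the image of \emph{all} compact elements: indeed $(\langle p\rangle,\mathfrak{u},\ldots)\mapsto \underline{\rm K}(\iota_{I_p})([p]_{\mathrm{K}_0(I_p)},\mathfrak{u},\ldots)\in\underline{\rm K}(I_p\,|\,A)$ with $[p]\in\mathrm{K}_0^+(A)$, and conversely every element of $\underline{\rm K}(A)_+$ lifts to a compact element by choosing a preimage tuple in $\underline{\rm K}(I_e)$. Your final conclusion survives because a $\underline{\rm Cu}_u$-isomorphism preserves both $S_c$ and $(S_+)_c$, but as written you would be verifying order-preservation against the wrong subset. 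Separately, you explicitly defer the hardest bookkeeping (``I expect the main obstacle to be precisely this last identification''), so the proposal is a plausible outline rather than a complete proof; that part is exactly what \cite{AL} supplies.
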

   Note that if we replace ``unital" by ``has an approximate unit consisting of projections", the above statement is still true. The key point is that we can use ``large enough" projections in place of the unit in  \cite[Theorem 3.17]{AL}.

\begin{theorem}{\rm (}{\cite[Theorem 5.10]{AL}}{\rm )}\label{totalKthm}
  Upon restriction to the class of unital, separable, $\mathrm{K}$-$pure$ (or A$\mathcal{HD}$) ${\rm C}^*$-algebras of stable rank one and real rank zero, there are natural equivalences of functors:
    $$
  \underline{H}\circ \underline{\rm Cu}_{u}\simeq \underline{{\rm K}}\quad{ and}\quad   \underline{\gamma}\circ \underline{{\rm K}}\simeq \underline{\rm Cu}_{u}.
 $$
  Therefore, for these algebras, ${\rm \underline{K}}$ is a classifying functor if, and only if, so is ${\rm \underline{Cu}}_{u}$.
\end{theorem}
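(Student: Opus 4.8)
The plan is to obtain the theorem by combining Proposition~\ref{recover prop} with a construction of a reverse functor $\underline{\gamma}$ from the $\Lambda_u$-category to the $\underline{\rm Cu}_u$-category, together with a verification that, restricted to the invariants of algebras in the stated class, $\underline{\gamma}$ and $\underline{H}$ are mutually (naturally) inverse. The first displayed equivalence, $\underline{H}\circ\underline{\rm Cu}_u\simeq\underline{\rm K}$, is in fact already furnished by Proposition~\ref{recover prop} for all unital separable ${\rm C}^*$-algebras of stable rank one, so it needs neither real rank zero nor $\mathrm{K}$-purity; the substantive point is the second equivalence $\underline{\gamma}\circ\underline{\rm K}\simeq\underline{\rm Cu}_u$, and the ``therefore'' is then formal.

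For the construction of $\underline{\gamma}$ I would reconstruct $\underline{\rm Cu}_u(A)$ directly from $(\underline{\rm K}(A),\underline{\rm K}(A)_+,[1_A])$ when $A$ is unital, separable, $\mathrm{K}$-pure, of stable rank one and real rank zero. Under stable rank one and real rank zero, cancellation of projections gives $V(A)\cong\mathrm{K}_0^+(A)$, the ideal lattice ${\rm Lat}(A)$ is canonically ${\rm Lat}(\mathrm{K}_*(A))$, and the Cuntz semigroup $\mathrm{Cu}(A)$ — its compact part together with the soft (non-compact) part built as an order-theoretic completion indexed by ${\rm Lat}(A)$ — is determined as a $\mathrm{Cu}^\sim$-semigroup by the ordered scaled group $(\mathrm{K}_*(A),\mathrm{K}_*^+(A),[1_A])$, hence a fortiori by $(\underline{\rm K}(A),\underline{\rm K}(A)_+,[1_A])$. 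To promote this to the total Cuntz semigroup $\underline{\rm Cu}(A)=\coprod_{I\in{\rm Lat}_f(A)}\mathrm{Cu}_f(I)\times\mathrm{K}_1(I)\times\bigoplus_{n\geq1}\mathrm{K}_*(I;\mathbb{Z}_n)$ one must attach to each ideal $I$, viewed as an order ideal of $\mathrm{K}_*(A)$, the remaining total K-theory of $I$; this is exactly where $\mathrm{K}$-purity enters. By Lemma~\ref{k-pure lemma}(iii) the map $\underline{\rm K}(\iota_I)$ is injective, so $\underline{\rm K}(I)$ is identified with its image $\underline{\rm K}(I\mid A)\subseteq\underline{\rm K}(A)$, and this image — with the $\Lambda$-action it inherits — is intrinsic to the $\Lambda_u$-object $\underline{\rm K}(A)$ once the order ideal corresponding to $I$ is known. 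Thus I would set $\underline{\gamma}(\underline{\rm K}(A),\dots)$ to be the coproduct, over the order ideals, of the reconstructed Cuntz data of each ideal together with the pieces $\mathrm{K}_1(I)$ and $\mathrm{K}_*(I;\mathbb{Z}_n)$ read off from $\underline{\rm K}(I\mid A)$, based at the class of $[1_A]$; functoriality on $\Lambda_u$-morphisms follows by a diagram chase, using that a morphism of scaled ordered $\Lambda$-modules carries order ideals to order ideals.

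The natural equivalence $\underline{\gamma}\circ\underline{\rm K}\simeq\underline{\rm Cu}_u$ on the class at hand is then the obvious identification, summand by summand over ideals, of the reconstructed Cuntz part with the genuine $\mathrm{Cu}_f(I)$ and of the reconstructed pieces with the genuine $\mathrm{K}_1(I)$ and $\mathrm{K}_*(I;\mathbb{Z}_n)$ — again valid by Lemma~\ref{k-pure lemma}(iii); naturality in $A$ is a diagram chase over the ideal lattices and over the $\Lambda$-action, and the A$\mathcal{HD}$ case is covered by the same argument since those algebras are $\mathrm{K}$-pure by \cite[Proposition~4.4]{DE}. Finally, the ``therefore'' is purely formal: if $A,B$ lie in the class, then $\underline{\rm K}(A)\cong\underline{\rm K}(B)$ gives $\underline{\rm Cu}_u(A)\cong\underline{\gamma}(\underline{\rm K}(A))\cong\underline{\gamma}(\underline{\rm K}(B))\cong\underline{\rm Cu}_u(B)$, and conversely $\underline{\rm Cu}_u(A)\cong\underline{\rm Cu}_u(B)$ gives $\underline{\rm K}(A)\cong\underline{H}(\underline{\rm Cu}_u(A))\cong\underline{H}(\underline{\rm Cu}_u(B))\cong\underline{\rm K}(B)$; since the two equivalences compose to natural equivalences $\underline{\gamma}\circ\underline{H}\circ\underline{\rm Cu}_u\simeq\underline{\rm Cu}_u$ and $\underline{H}\circ\underline{\gamma}\circ\underline{\rm K}\simeq\underline{\rm K}$, each functor is recoverable from the other on this class, so one is a classifying (complete, lift-realizing) functor exactly when the other is.

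The main obstacle is the reconstruction of the soft part of the Cuntz semigroup together with its $\mathrm{Cu}^\sim$-structure: one must show that, for stable rank one and real rank zero ${\rm C}^*$-algebras, the non-compact elements of $\mathrm{Cu}(A)$ are canonically determined — as an ordered set with the $\ll$-relation, addition, and the pairing with ${\rm Lat}(A)$ — by the ordered scaled $\mathrm{K}_0$-group and its lattice of order ideals, and that the resulting identification is compatible with the $\Lambda$-action on the ideal-wise total K-theory in a functorial way. Verifying axioms (O1)--(O4) and $\Lambda$-linearity for the reconstructed object, and checking naturality of all identifications simultaneously over the ideal lattice, is where the bulk of the technical work lies; everything else is bookkeeping on top of Proposition~\ref{recover prop} and Lemma~\ref{k-pure lemma}.
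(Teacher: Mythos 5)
The paper gives no proof of this statement at all---it is imported verbatim from \cite[Theorem 5.10]{AL}---so the only comparison available is with that source, whose strategy your sketch reproduces: the equivalence $\underline{H}\circ\underline{\rm Cu}_u\simeq\underline{\rm K}$ is Proposition~\ref{recover prop}, and $\underline{\gamma}$ is obtained by recovering $\mathrm{Cu}(A)$ as the sequential round-ideal completion of $\mathrm{K}_0^+(A)$ (real rank zero makes every Cuntz class a supremum of compact ones) and reading the ideal-wise data $\mathrm{K}_1(I)$, $\mathrm{K}_*(I;\mathbb{Z}_n)$ off the subgroups $\underline{\rm K}(I\mid A)\subseteq\underline{\rm K}(A)$ encoded by the Dadarlat--Gong positive cone, with Lemma~\ref{k-pure lemma}(iii) (K-purity) guaranteeing that these images carry the full total K-theory of the ideals. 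Your outline is correct and takes essentially the same route as the cited proof.
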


Consider the general extension case, we need to characterize the ideal structure of the invariant.

\begin{definition}\rm {\rm (}\cite[Section 3]{L2}{\rm )}
 Let $S$ be a $\mathrm{Cu}^{\sim}$-semigroup.  A subset $O \subseteq S$ is $Scott$-$open$ if

(i) $O$ is an upper set, that is, for any $y \in S, \,y \geq x \in O$ implies $y \in O$;

(ii) for any $x \in O$, there exists $x^{\prime} \ll x$ such that $x^{\prime} \in O$. Equivalently, for any increasing sequence of $S$ whose supremum belongs to $O$, there exists an element of the sequence also in $O$.

 We say that $S$ is $positively$ $directed$ if, for any $x \in S$, there exists $p_x \in S$ such that $x+p_x \geq 0$.

Let $S$ be a positively directed $\mathrm{Cu}^{\sim}$-semigroup and let $M$ be a submonoid of $S$. We say $M$ is $positively$ $stable$ if it satisfies the following two conditions:

(i) $M$ is a positively directed ordered monoid.

(ii) For any $x \in S$, if $\left(x+P_x\right) \cap M \neq \varnothing$, then $x \in M$, where $P_x:=\{y \in S \mid x+y \geq 0\}$.

Let $M$ be a positively stable submonoid of $S$. We say  $M$ is an
$ideal$ of $S$, if $M$ is  Scott-closed in $S$, i.e., $S\backslash M$ is scott-open.
\end{definition}
With a similar procedure of \cite[Proposition 3.16]{L2} for the unitary Cuntz semigroup,
we raise the following total version:
\begin{proposition}  Let $A$ be a ${\rm C}^*$-algebra of stable rank one. Denote
${\rm Lat}(\underline{\mathrm{Cu}}(A))$ the collection
of all the ideals of  $\underline{\mathrm{Cu}}(A)$.
Then the map
\begin{eqnarray*}
  \Phi: {\rm Lat} (A) &\to & {\rm Lat}(\underline{\mathrm{Cu}}(A)), \\
  I &\mapsto& \underline{\mathrm{Cu}}(I)
\end{eqnarray*}
is an isomorphism of complete lattices that maps ${\rm Lat}_f(A)$ onto ${\rm Lat}_f(\underline{\mathrm{Cu}}(A))$. In particular, $A$ is simple if
and only if $\underline{\mathrm{Cu}}(A)$ is simple.
\end{proposition}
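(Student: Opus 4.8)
The plan is to build $\Phi$ from the classical lattice isomorphism $\mathrm{Lat}(A)\cong \mathrm{Lat}(\mathrm{Cu}(A))$ recalled earlier in the paper and to upgrade it along the forgetful map $\underline{\mathrm{Cu}}(A)\to \mathrm{Cu}(A)$, $(x,\mathfrak u,\bigoplus(\mathfrak s_{n,0},\mathfrak s_{n,1}))\mapsto x$. First I would check that $\Phi$ is well-defined: for an ideal $I$ of $A$, the set $\underline{\mathrm{Cu}}(I)=\coprod_{J\in\mathrm{Lat}_f(I)}\mathrm{Cu}_f(J)\times\mathrm K_1(J)\times\bigoplus\mathrm K_*(J;\mathbb Z_n)$ sits inside $\underline{\mathrm{Cu}}(A)$ because $\mathrm{Lat}_f(I)\subset\mathrm{Lat}_f(A)$, and I must verify it is an ideal in the sense of the definition quoted from \cite{L2}: it is a submonoid (the addition in $\underline{\mathrm{Cu}}$ only changes base ideals via inclusions $\iota_{I_xI_{x+y}}$, which stay inside $I$), it is positively directed, positively stable, and Scott-closed. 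Positive stability and Scott-closedness both reduce, via the projection to the first coordinate, to the corresponding known facts for $\mathrm{Cu}(I)\subseteq\mathrm{Cu}(A)$ together with the observation that the $\mathrm K$-theory data attached to a point $x$ lives in $\underline{\mathrm K}(I_x)$, which is entirely determined by $x$ once $x$ is fixed; so no extra constraint on the higher coordinates is imposed.

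Next I would produce the inverse. Given an ideal $M$ of $\underline{\mathrm{Cu}}(A)$, let $p(M)$ be its image under the first-coordinate projection $p\colon\underline{\mathrm{Cu}}(A)\to\mathrm{Cu}(A)$; I claim $p(M)$ is an ideal of $\mathrm{Cu}(A)$ and $M=\underline{\mathrm{Cu}}(A)|_{p(M)}:=\{(x,\dots)\in\underline{\mathrm{Cu}}(A)\mid x\in p(M)\}$. The key point is that for any $x\in\mathrm{Cu}(A)$ the fibre $p^{-1}(x)$ is a single coset of $\mathrm K_1(I_x)\times\bigoplus\mathrm K_*(I_x;\mathbb Z_n)$ and any two elements of this fibre differ by something that is $\leq$ a multiple (in the monoid sense) of an element of the fibre over $0$-scaled data; more usefully, if $(x,\mathfrak u,\dots)\in M$ then adding $(x,\mathfrak u',\dots)$ where $(0,\mathfrak u'-\mathfrak u,\dots)\in\underline{\mathrm K}(I_x)$ is realized inside $M$ because $M$ is a submonoid containing, by order-hereditarity and positive stability applied within the fixed ideal $I_x$ of $A$, all of $\underline{\mathrm{Cu}}(I_x)$ once it contains one full element of it. Thus $M$ is saturated in the vertical direction, and $p(M)$ being an ideal of $\mathrm{Cu}(A)$ follows from the Scott-open/positively-stable axioms passing through the (order- and supremum-preserving, hence Scott-continuous) map $p$. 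Then $\Phi$ and $I\mapsto$ (the $A$-ideal corresponding to $p(\Phi(I))$ under the classical isomorphism) are mutually inverse, and both preserve order, arbitrary suprema and infima, so $\Phi$ is an isomorphism of complete lattices.

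The remaining claims are then quick corollaries. That $\Phi$ maps $\mathrm{Lat}_f(A)$ onto $\mathrm{Lat}_f(\underline{\mathrm{Cu}}(A))$ follows because $I$ contains a full positive element $a$ iff $\langle a\rangle$ is a full element of $\mathrm{Cu}(I)$ iff $(\langle a\rangle,0,\bigoplus(0,0))$ is a full element of $\underline{\mathrm{Cu}}(I)=\Phi(I)$ — fullness in $\underline{\mathrm{Cu}}$ is detected on the first coordinate by the fibrewise saturation observed above. Simplicity of $A$ means $\mathrm{Lat}(A)=\{0,A\}$, which via $\Phi$ is exactly $\mathrm{Lat}(\underline{\mathrm{Cu}}(A))=\{0,\underline{\mathrm{Cu}}(A)\}$, i.e.\ simplicity of $\underline{\mathrm{Cu}}(A)$. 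I expect the main obstacle to be the vertical-saturation lemma: showing rigorously that an ideal $M$ of $\underline{\mathrm{Cu}}(A)$ that meets a fibre $p^{-1}(x)$ must contain all of $\underline{\mathrm{Cu}}(I_x)$. This needs a careful use of the definition of the order and addition on $\underline{\mathrm{Cu}}(A)$ — in particular that for $x'\ll x$ the induced map $\delta_{I_{x'}I_x}$ on total K-theory can be arranged to hit any prescribed class, which is where the structure of $\underline{\mathrm K}$ of an ideal with an approximate unit of projections (Theorem \ref{ordertotal}) enters — combined with the Scott-continuity of $p$. Everything else is a transport of the already-established $\mathrm{Cu}$-level statement (\cite[Proposition 3.16]{L2}) through the forgetful functor.
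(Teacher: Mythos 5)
Your proposal is correct and follows essentially the same route as the paper: both reduce everything to the first-coordinate projection onto $\mathrm{Cu}(A)$, observe that membership in $\underline{\mathrm{Cu}}(I)$ is detected by whether $x\in\mathrm{Cu}(I)$, and then invoke the classical lattice isomorphism \cite[Proposition 5.1.10]{APT}, with the inverse given by testing elements of the form $(\langle yy^*\rangle,0,\bigoplus(0,0))$. The paper's proof is only a couple of lines and leaves implicit the ``vertical saturation'' of ideals of $\underline{\mathrm{Cu}}(A)$ that you correctly identify as the main point and justify via positive stability, so your write-up is a faithful (and more detailed) version of the intended argument.
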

\begin{proof}
By Definition \ref{cu total def}, $(x,\mathfrak{u},\bigoplus_{n=1}^{\infty}(\mathfrak{s}_{n,0},\mathfrak{s}_{n,1}))
\in\underline{\mathrm{Cu}}(A)$ belongs to $\underline{\mathrm{Cu}}(I)$ if and only if $x \in \mathrm{Cu}(I)$,  the proof is as same as \cite[Proposition
5.1.10]{APT}. For convenience, we write the inverse map as follows:
$$
\Psi : {\rm Lat}(\underline{\mathrm{Cu}}(A)) \to {\rm Lat}(A),$$
$$\textstyle
J\mapsto \{y \in A \mid(\langle yy^* \rangle, 0,\bigoplus\limits_{n=1}^{\infty}(0,\,0)) \in  J_+\}$$
where $J_+$ the positive cone of $J$.

\end{proof}
\begin{definition}\rm \label{lattice iso def}

Let $X,Y\in {\rm \underline{Cu}}$, of whom all the ideals form lattices, respectively. We say the map $\phi:\,X\to Y$ is a latticed ${\rm \underline{Cu}}$-morphism, if $\phi$ satisfies the following two conditions:

(1) $\phi$ is a ${\rm {Cu}}^\sim$-morphism.

(2) For any  ideal couples $(I_1,J_1)$, $(I_2,J_2)$ of $(X,Y)$ with $J_i$ as the ideal in $Y$ containing $\phi(I_i)$ for each $i=1,2,$ we have the following diagram commute naturally in $\Lambda$-category.
\begin{displaymath}
\xymatrixcolsep{0.4pc}
\xymatrix{
{\rm Gr}((I_1\wedge I_2)_c) \ar[rr]^-{}\ar[dr]^-{}\ar[ddd]^-{} && {\rm Gr}((I_1)_c) \ar[dr]^-{}\ar[ddd]_-{} \\
&{\rm Gr}((J_1\wedge J_2)_c) \ar[d]^-{}\ar[rr]^-{}&&
{\rm Gr}((J_1)_c) \ar[d]^-{}\\
&{\rm Gr}((J_2)_c) \ar[rr]^-{}&&
{\rm Gr}((J_1\vee J_2)_c)  \\
{\rm Gr}((I_2)_c) \ar[rr]^-{}\ar[ur]^-{} && {\rm Gr}((I_1\vee I_2)_c) \ar[ur]^-{}
}
\end{displaymath}


Particularly, if $\phi:\,X\to Y$ is a bijection with both $\phi$ and  $\phi^{-1}$ are latticed ${\rm \underline{Cu}}$-morphism, we will call $X$ and $Y$ are latticed ${\rm \underline{Cu}}$-isomorphic or $\phi$
is a latticed ${\rm \underline{Cu}}$-isomorphism, denoted by
$$
X\cong_L Y.
$$
\end{definition}
\begin{remark}\rm
It is easily seen that for any homomorphism $\psi:\,A\to B$,
where $A,B$ are separable ${\rm C}^*$-algebras of stable rank one,
${\rm \underline{Cu}}(\psi):\,{\rm \underline{Cu}}(A)\to {\rm \underline{Cu}}(B) $ is a latticed homomorphism.

Further more, suppose $I$ is an ideal of $A$. From the definition
of total Cuntz semigroup, whether $I$ is K-pure in $A$ or not, the map
${\rm \underline{Cu}}(\iota_{IA}):\,
{\rm \underline{Cu}}(I)\to {\rm \underline{Cu}}(A)$
is an injective latticed ${\rm \underline{Cu}}$-morphism,
where $\iota_{IA}:\,I\to A$ is the natural embedding map.
\end{remark}

The following is our classification theorem for the algebras obtained from general extensions.

\begin{theorem}\label{lattice Cu total classify}
Let $A_1, A_2, B_1,B_2\in\mathcal{R}$, where $\mathcal{R}$ is a $\mathcal{DG}$-class of ${\rm C}^*$-algebras. Suppose $A_1, A_2$ are unital simple and $B_1, B_2$ are stable.
Given two unital essential extensions (not necessarily {\rm K}-pure) with trivial index maps
$$
0\to B_i \xrightarrow{\iota_i} E_i\xrightarrow{\pi_i} A_i\to 0,\quad i=1,2.
$$
We have $E_1\cong E_2$ iff
$$\underline{\mathrm{Cu}}_u(E_1)\cong_L \underline{\mathrm{Cu}}_u(E_2).$$
\end{theorem}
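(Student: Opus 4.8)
The plan is to follow the pattern of the proof of Theorem~\ref{4 dengjia}, reducing the assertion to a comparison of the two extensions at the level of $\mathrm{K}_*$ via Theorem~\ref{strong wei}; the point of working with the total Cuntz semigroup is that, unlike $\underline{\mathrm{K}}(E_i)$, it remembers the entire ideal lattice of $E_i$ together with the total K-theory of each ideal, so that the exact $\mathrm{K}_*$-ladder survives even though the extensions are allowed to fail K-purity. The forward implication follows at once from the functoriality of $\underline{\mathrm{Cu}}_u$ and the remark after Definition~\ref{lattice iso def}, so I concentrate on the converse. First I would fix a latticed $\underline{\mathrm{Cu}}$-isomorphism $\Phi\colon\underline{\mathrm{Cu}}_u(E_1)\to\underline{\mathrm{Cu}}_u(E_2)$. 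Since $A_i$ is simple, every proper ideal of $E_i$ is contained in $B_i$, so $B_i$ is the unique coatom of $\mathrm{Lat}(E_i)$; as $\Phi$ induces an isomorphism of ideal lattices, it carries the ideal $\underline{\mathrm{Cu}}(B_1)\leq\underline{\mathrm{Cu}}(E_1)$ onto $\underline{\mathrm{Cu}}(B_2)\leq\underline{\mathrm{Cu}}(E_2)$ and sends $[1_{E_1}]$ to $[1_{E_2}]$. Writing $\Phi_B$ for the resulting restriction $\underline{\mathrm{Cu}}(B_1)\to\underline{\mathrm{Cu}}(B_2)$, the square with horizontal arrows the inclusions $\underline{\mathrm{Cu}}(\iota_i)\colon\underline{\mathrm{Cu}}(B_i)\hookrightarrow\underline{\mathrm{Cu}}(E_i)$ and vertical arrows $\Phi_B,\Phi$ commutes tautologically; applying the recovery functor $\underline{H}$ of Proposition~\ref{recover prop}, in the version (noted right after it) valid for ${\rm C}^*$-algebras with an approximate unit of projections, and using its $\Lambda$-naturality, I obtain an ordered scaled $\Lambda$-isomorphism $\beta:=\underline{H}(\Phi)\colon\underline{\mathrm{K}}(E_1)\to\underline{\mathrm{K}}(E_2)$ and an ordered $\Lambda$-isomorphism $\beta_0:=\underline{H}(\Phi_B)\colon\underline{\mathrm{K}}(B_1)\to\underline{\mathrm{K}}(B_2)$ with $\beta\circ\underline{\mathrm{K}}(\iota_1)=\underline{\mathrm{K}}(\iota_2)\circ\beta_0$.

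Next I would descend to ordinary K-theory. Because $\delta_0=\delta_1=0$, the six-term sequences collapse to short exact sequences $0\to\mathrm{K}_*(B_i)\xrightarrow{\mathrm{K}_*(\iota_i)}\mathrm{K}_*(E_i)\xrightarrow{\mathrm{K}_*(\pi_i)}\mathrm{K}_*(A_i)\to 0$, so $\mathrm{K}_*(\iota_i)$ is injective with image $\ker\mathrm{K}_*(\pi_i)$. Restricting the above square to $\mathrm{K}_*$ shows that $\beta^{*}:=\beta|_{\mathrm{K}_*}$ carries $\ker\mathrm{K}_*(\pi_1)$ onto $\ker\mathrm{K}_*(\pi_2)$, hence induces $\beta_1^{*}\colon\mathrm{K}_*(A_1)\to\mathrm{K}_*(A_2)$ with $\beta_1^{*}([1_{A_1}])=[1_{A_2}]$; the orderedness of $\beta,\beta_0$, together with $sr(E_i)=rr(E_i)=0$ (as in the proof of Lemma~\ref{alpha0}), makes $\beta_0^{*}:=\beta_0|_{\mathrm{K}_*}$ and $\beta_1^{*}$ scaled order-preserving isomorphisms, so that there is a commutative ladder
$$
\xymatrixcolsep{2pc}
\xymatrix{
{0} \ar[r]
& {\mathrm{K}_*(B_1)} \ar[d]_-{\beta_0^{*}} \ar[r]^-{\mathrm{K}_*(\iota_1)}
& {(\mathrm{K}_*(E_1),[1_{E_1}])} \ar[d]^-{\beta^{*}} \ar[r]^-{\mathrm{K}_*(\pi_1)}
& {(\mathrm{K}_*(A_1),[1_{A_1}])} \ar[d]^-{\beta_1^{*}} \ar[r]
& {0} \\
{0} \ar[r]
& {\mathrm{K}_*(B_2)} \ar[r]_-{\mathrm{K}_*(\iota_2)}
& {(\mathrm{K}_*(E_2),[1_{E_2}])} \ar[r]_-{\mathrm{K}_*(\pi_2)}
& {(\mathrm{K}_*(A_2),[1_{A_2}])} \ar[r]
& {0}}
$$
with exact rows. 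Since $A_1,A_2$ are simple and lie in $\mathcal{N}$, Proposition~\ref{simple k* to k total} lifts $\beta_1^{*}$ to an ordered scaled $\Lambda$-isomorphism $\underline{\mathrm{K}}(A_1)\cong\underline{\mathrm{K}}(A_2)$, and the $\mathcal{DG}$-class hypothesis (applied to $B_1,B_2$ via $\beta_0$, and to $A_1,A_2$ via this lift) produces $*$-isomorphisms $\psi_0\colon B_1\xrightarrow{\ \cong\ }B_2$ and $\psi_1\colon A_1\xrightarrow{\ \cong\ }A_2$ with $\mathrm{K}_*(\psi_0)=\beta_0^{*}$ and $\mathrm{K}_*(\psi_1)=\beta_1^{*}$.

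To finish I would, after identifying $B_2$ with $B_1$ along $\psi_0$ and $A_1$ with $A_2$ along $\psi_1$, compare the two unital essential extensions of $A_2$ by $B_1$ with trivial index maps
$$
0\to B_1\xrightarrow{\ \iota_1\ }E_1\xrightarrow{\ \psi_1\circ\pi_1\ }A_2\to 0
\qquad\text{and}\qquad
0\to B_1\xrightarrow{\ \iota_2\circ\psi_0\ }E_2\xrightarrow{\ \pi_2\ }A_2\to 0 .
$$
The ladder just constructed is precisely an equivalence of the two associated group extensions, so these ${\rm C}^*$-extensions have the same class in $\mathrm{Ext}_{[1]}(\mathrm{K}_*(A_2),\mathrm{K}_*(B_1))$; since $(\mathrm{K}_0(B_1),\mathrm{K}_0^+(B_1))$ is weakly unperforated, Theorem~\ref{strong wei} identifies that group with ${\rm Text}_s^u(A_2,B_1)$, whence the two extensions are strongly unitarily equivalent, and Proposition~\ref{strong tui cong} gives $E_1\cong E_2$.

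The step I expect to be the genuine obstacle is the passage from $\Phi$ to the first commutative square, i.e.\ verifying that the recovery functor $\underline{H}$, applied to the ideal $\underline{\mathrm{Cu}}(B_i)$ and to its inclusion $\underline{\mathrm{Cu}}(\iota_i)$ into $\underline{\mathrm{Cu}}(E_i)$, is $\Lambda$-naturally compatible with the connecting map $\underline{\mathrm{K}}(\iota_i)$; this is exactly where the \emph{latticed} structure of $\Phi$ (Definition~\ref{lattice iso def}) is indispensable, and it is what makes the exact $\mathrm{K}_*$-ladder recoverable from $\underline{\mathrm{Cu}}$ although it is not recoverable from $\underline{\mathrm{K}}(E_i)$ alone once K-purity is dropped. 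A secondary point requiring care is that $\underline{\mathrm{Cu}}(B_i)$ is non-unital, so both the recovery functor and the $\mathcal{DG}$-classification must be invoked in their ``approximate unit of projections'' form, with the scale of the stable algebra $B_i$ taken to be its full positive cone.
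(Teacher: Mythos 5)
Your proposal is correct and follows essentially the same route as the paper: restrict the latticed $\underline{\mathrm{Cu}}$-isomorphism to $\underline{\mathrm{Cu}}(B_i)$, apply the recovery functor $\underline{H}$ to get a commutative (not necessarily exact) $\underline{\mathrm{K}}$-square, pass to the exact $\mathrm{K}_*$-ladder, lift the outer maps to $*$-isomorphisms via the $\mathcal{DG}$-hypothesis and Proposition~\ref{simple k* to k total}, and conclude with Theorem~\ref{strong wei} and Proposition~\ref{strong tui cong}. The only (harmless) deviation is that you identify $\underline{\mathrm{Cu}}(B_i)$ as the unique coatom of the ideal lattice — which does need Lemma~\ref{sheying dadada} to see that every proper ideal of $E_i$ sits inside $B_i$ — whereas the paper reaches the same restriction via Lemma~\ref{alpha0} applied to $\mathrm{Gr}((\rho|_{\mathrm{Cu}})_c)$ together with a short positivity computation.
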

\begin{proof}
We need only to prove the converse direction. Suppose we have $\rho:\, \underline{\mathrm{Cu}}_u(E_1)\to \underline{\mathrm{Cu}}_u(E_2)$ is a latticed $\underline{\mathrm{Cu}}_u$-isomorphism.
Denote $\rho_{|_{\rm Cu}}$ the restriction map of $\rho$ on the Cuntz semigroup: $$\rho_{|_{\rm Cu}}:\, {\mathrm{Cu}}(E_1)\to {\mathrm{Cu}}(E_2).$$
Since $E_1,E_2$ has stable rank one, the Cuntz equivalence and Murray--von Neumann equivalence coincide for projections. Thus, the restriction map of $\rho_{|_{\rm Cu}}$ on the compact positive elements is the  order-preserving  semigroup isomorphism
$$
(\rho_{|_{\rm Cu}})_c:\, \mathrm{K}_0^+(E_1)\to \mathrm{K}_0^+(E_2),
$$
whose Grothendieck map induced the  scaled order-preserving  group isomorphism
$${\rm Gr}((\rho_{|_{\rm Cu}})_c):\,(\mathrm{K}_0(E_1),\mathrm{K}_0^+(E_1),[1_{E_1}])\to (\mathrm{K}_0(E_2),\mathrm{K}_0^+(E_2),[1_{E_2}]).$$

Denote $\alpha={\rm Gr}((\rho_{|_{\rm Cu}})_c)$, applying  Lemma \ref{alpha0}, 
we have a natural commutative diagram as follows:
$$
\xymatrixcolsep{2pc}
\xymatrix{
{\,\,0\,\,} \ar[r]^-{}
& {\,\,\mathrm{K}_0(B_1)\,\,} \ar[d]_-{\alpha_0} \ar[r]^-{\mathrm{K}_0(\iota_1)}
& {\,\,\mathrm{K}_0(E_1)\,\,} \ar[d]_-{\alpha} \ar[r]^-{\mathrm{K}_0(\pi_1)}
& {\,\,\mathrm{K}_0(A_2)\,\,} \ar[d]_-{\alpha_1} \ar[r]^-{}
& {\,\,0\,\,} \\
{\,\,0\,\,} \ar[r]^-{}
& {\,\,\mathrm{K}_0(B_2)\,\,} \ar[r]_-{\mathrm{K}_0(\iota_2)}
& {\,\,\mathrm{K}_0(E_2) \,\,} \ar[r]_-{\mathrm{K}_0(\pi_2)}
& {\,\,\mathrm{K}_0(A_2) \,\,} \ar[r]_-{}
& {\,\,0\,\,},}
$$
where $\alpha_0,\alpha_1$ are also  scaled order-preserving  isomorphisms.

This means the isomorphism from ${\rm Lat}(\mathrm{K}_0(E_1))$ to ${\rm Lat}(\mathrm{K}_0(E_2))$ induced by $\alpha$
will take $\mathrm{K}_0(B_1)$ to $\mathrm{K}_0(B_2)$. Since $E_1, E_2$ are of real rank zero, the
isomorphism from ${\rm Lat}(\mathrm{Cu}(E_1))$ to ${\rm Lat}(\mathrm{Cu}(E_2))$ induced by $\rho_{|_{\rm Cu}}$
will take $\mathrm{Cu}(B_1)$ to $\mathrm{Cu}(B_2)$.
Now we claim that the latticed $\underline{\mathrm{Cu}}_u$-isomorphism $\rho$ will take $\underline{\mathrm{Cu}}(B_1)$ to $\underline{\mathrm{Cu}}(B_2)$.

For any $\textstyle
  (x,\mathfrak{u},\bigoplus\limits_{n=1}^{\infty}(\mathfrak{s}_{n,0},\mathfrak{s}_{n,1}))
  \in \underline{\mathrm{Cu}}(B_1),$
  set
  $$\textstyle
  (y,\mathfrak{v},\bigoplus\limits_{n=1}^{\infty}(\mathfrak{t}_{n,0},\mathfrak{t}_{n,1})) =:\rho(x,\mathfrak{u},\bigoplus\limits_{n=1}^{\infty}(\mathfrak{s}_{n,0},\mathfrak{s}_{n,1}))
   \in \underline{\mathrm{Cu}}(E_2);$$
$$\textstyle
(z,\mathfrak{w},\bigoplus\limits_{n=1}^{\infty}(\mathfrak{r}_{n,0},\mathfrak{r}_{n,1})) =:\rho(x,-\mathfrak{u},\bigoplus\limits_{n=1}^{\infty}(-\mathfrak{s}_{n,0},-\mathfrak{s}_{n,1}))
   \in \underline{\mathrm{Cu}}(E_2).$$
As $\rho$ is an order-preserving monoid morphism, we must have
$$\textstyle
0\leq\rho(2x,0,\bigoplus\limits_{n=1}^{\infty}(0,0))
=(y,\mathfrak{v},\bigoplus\limits_{n=1}^{\infty}(\mathfrak{t}_{n,0},\mathfrak{t}_{n,1}))
+(z,\mathfrak{w},\bigoplus\limits_{n=1}^{\infty}(\mathfrak{r}_{n,0},\mathfrak{r}_{n,1}))
$$
which implies
$$\textstyle
(\rho_{|_{\rm Cu}}(2x),0,\bigoplus\limits_{n=1}^{\infty}(0,0))
=(y+z,0,\bigoplus\limits_{n=1}^{\infty}(0,0)).
$$
Since $\rho_{|_{\rm Cu}}(2x)\in {\mathrm{Cu}}(B_2)$,
we have $y,z\in {\mathrm{Cu}}(B_2)$.
Particularly, we have
$$\textstyle
(y,\mathfrak{v},\bigoplus\limits_{n=1}^{\infty}(\mathfrak{t}_{n,0},\mathfrak{t}_{n,1}))
   \in \underline{\mathrm{Cu}}(B_2).$$
In general, the restriction of $\rho$ forms a $\mathrm{Cu}^\sim$-isomorphism
$\widetilde{\rho}:\,\underline{\mathrm{Cu}}(B_1)\to \underline{\mathrm{Cu}}(B_2).$
Moreover, by Definition \ref{lattice iso def},
$\widetilde{\rho}$ is a $\underline{\mathrm{Cu}}$-isomorphism. 

From assumption, $B_1,B_2$ have an approximate unit consisting of projections, by Proposition \ref{recover prop} (see also the paragraph below it), $\widetilde{\rho}$  induces an order-preserving  $\Lambda$-isomorphism
$$
\underline{{H}}(\widetilde{\rho}):\,(\underline{\mathrm{K}}(B_1),\underline{\mathrm{K}}(B_1)_+)
\cong (\underline{\mathrm{K}}(B_2),\underline{\mathrm{K}}(B_2)_+).
$$
By the definition of $\mathcal{DG}$ class, there exists an isomorphism
$\phi_0:\,B_1\to B_2$ such that $ \underline{\mathrm{K}}(\phi_0)=\underline{{H}}(\widetilde{\rho})$. Still via Definition  \ref{lattice iso def},
we have the following commutative diagram
$$\xymatrixcolsep{2pc}
\xymatrix{
 {\,\,\underline{\mathrm{K}}(B_1)\,\,} \ar[d]_-{\underline{\mathrm{K}}(\phi_0)}\ar[r]^-{\mathrm{K}_0(\iota_1)}
& {\,\,(\underline{\mathrm{K}}(E_1),[1_{E_1}])\,\,} \ar[d]_-{\underline{H}(\rho)} \ar[r]^-{\mathrm{K}_0(\pi_1)}
& {\,\,(\underline{\mathrm{K}}(A_1),[1_{A_1}])\,\,}\\
{\,\,\underline{\mathrm{K}}(B_2)\,\,} \ar[r]_-{\mathrm{K}_0(\iota_2)}
& {\,\,(\underline{\mathrm{K}}(E_2),[1_{E_2}]) \,\,} \ar[r]_-{\mathrm{K}_0(\pi_2)}
& {\,\,(\underline{\mathrm{K}}(A_2),[1_{A_2}]) \,\,} .}
$$
Restricting on $\mathrm{K}_*$, we have the following commutative diagram with exact rows
$$
\xymatrixcolsep{2pc}
\xymatrix{
{\,\,0\,\,} \ar[r]^-{}
& {\,\,\mathrm{K}_*(B_1)\,\,} \ar[d]_-{\mathrm{K}_*(\phi_0)} \ar[r]^-{\mathrm{K}_*(\iota_1)}
& {\,\,(\mathrm{K}_*(E_1),[1_{E_1}])\,\,} \ar[d]_-{{\underline{H}(\rho)}^*} \ar[r]^-{\mathrm{K}_*(\pi_1)}
& {\,\,(\mathrm{K}_*(A_1),[1_{A_1}])\,\,} \ar[d]_-{\varrho} \ar[r]^-{}
& {\,\,0\,\,} \\
{\,\,0\,\,} \ar[r]^-{}
& {\,\,\mathrm{K}_*(B_2)\,\,} \ar[r]_-{  \mathrm{K}_*(\iota_2)}
& {\,\,(\mathrm{K}_*(E_2),[1_{E_2}]) \,\,} \ar[r]_-{\mathrm{K}_*(\pi_2)}
& {\,\,(\mathrm{K}_*(A_2),[1_{A_2}]) \,\,} \ar[r]_-{}
& {\,\,0\,\,},}
$$ 
where $\mathrm{K}_*(\phi_0)$ is induced by $\phi_0$,  ${\underline{H}(\rho)}^*$ is the restriction map of $\underline{H}(\rho)$ between $\mathrm{K}_*$-groups
and  $\varrho$ is the induced map obtained from $\mathrm{K}_*(\phi_0)$ and ${\underline{H}(\rho)}^*$.

Note that both $\mathrm{K}_*(\phi_0)$ and ${\underline{H}(\rho)}^*$ are scaled  order-preserving maps, Proposition \ref{lin inj} implies that
 $\varrho$ is also a scaled  order-preserving map.
  By Proposition \ref{simple k* to k total},
we can lift $\varrho$ to an isomorphism $\phi_1:\,A_1\to A_2$.
Now we have the following commutative diagram with exact rows:
$$
\xymatrixcolsep{2pc}
\xymatrix{
{\,\,0\,\,} \ar[r]^-{}
& {\,\,\mathrm{K}_*(B_1)\,\,} \ar[d]_-{{\rm id}} \ar[r]^-{\mathrm{K}_*(\iota_1)}
& {\,\,(\mathrm{K}_*(E_1),[1_{E_1}])\,\,} \ar[d]_-{{\underline{H}(\rho)}^*} \ar[r]^-{\mathrm{K}_*(\phi_1\circ \pi_1) }
& {\,\,(\mathrm{K}_*(A_2),[1_{A_2}])\,\,} \ar[d]_-{{\rm id}} \ar[r]^-{}
& {\,\,0\,\,\,} \\
{\,\,0\,\,} \ar[r]^-{}
& {\,\,\mathrm{K}_*(B_1)\,\,} \ar[r]_-{ \mathrm{K}_*( \iota_2\circ\phi_0)}
& {\,\,(\mathrm{K}_*(E_2),[1_{E_2}]) \,\,} \ar[r]_-{\mathrm{K}_*(\pi_2)}
& {\,\,(\mathrm{K}_*(A_2),[1_{A_2}]) \,\,} \ar[r]_-{}
& {\,\,0\,\,}.}
$$

That is,  by Theorem \ref{strong wei}, the two extensions
$$
0\to B_1 \xrightarrow{\iota_1} E_1\xrightarrow{\phi_1\circ \pi_1} A_2\to 0
$$
and
$$
0\to B_1 \xrightarrow{\iota_2\circ \phi_0} E_2\xrightarrow{\pi_2} A_2\to 0
$$
are strongly unitarily equivalent, by Proposition \ref{strong tui cong}, then $E_1\cong E_2$.

\end{proof}
Combining Theorem \ref{zhuyao fanli} and Theorem \ref{lattice Cu total classify}, it is immediate that:
\begin{corollary}
For the algebras $E_i$, $i=1,2$ constructed in \ref{counter ex}, we have
$\underline{\mathrm{Cu}}(E_1)\ncong_L\underline{\mathrm{Cu}}(E_2)$. 
\end{corollary}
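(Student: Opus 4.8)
The plan is to prove this by contradiction, combining the classification of Theorem~\ref{lattice Cu total classify} with the non-isomorphism $E_1\ncong E_2$ from Theorem~\ref{zhuyao fanli}. Recall from \ref{counter ex} (and the remarks around it) that for $i=1,2$ there is a unital essential extension with trivial index maps $0\to B_i\to E_i\to A_\chi\to 0$, in which $B_i$ is stable, $A_\chi$ is unital simple, and $B_1,B_2,A_\chi$ are all A$\mathcal{HD}$ algebras of real rank zero --- a $\mathcal{DG}$-class. Thus $E_1,E_2$ satisfy the hypotheses of Theorem~\ref{lattice Cu total classify}, so $\underline{\mathrm{Cu}}_u(E_1)\cong_L\underline{\mathrm{Cu}}_u(E_2)$ would force $E_1\cong E_2$, contradicting Theorem~\ref{zhuyao fanli}; hence $\underline{\mathrm{Cu}}_u(E_1)\ncong_L\underline{\mathrm{Cu}}_u(E_2)$. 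The remaining task is to upgrade this to the base-point-free statement, i.e.\ to rule out even a latticed $\underline{\mathrm{Cu}}$-isomorphism $\rho\colon\underline{\mathrm{Cu}}(E_1)\to\underline{\mathrm{Cu}}(E_2)$.

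For that I would use the ideal structure: by Proposition~\ref{positive cone} (as exploited in the proof of Theorem~\ref{zhuyao fanli}), $B_i$ is the \emph{unique} maximal proper ideal of $E_i$, hence, under the lattice isomorphism ${\rm Lat}(E_i)\cong{\rm Lat}(\underline{\mathrm{Cu}}(E_i))$, $\underline{\mathrm{Cu}}(B_i)$ is the unique maximal proper ideal of $\underline{\mathrm{Cu}}(E_i)$. Any latticed $\underline{\mathrm{Cu}}$-isomorphism $\rho$ must therefore carry $\underline{\mathrm{Cu}}(B_1)$ onto $\underline{\mathrm{Cu}}(B_2)$, and, repeating the argument in the proof of Theorem~\ref{lattice Cu total classify}, it restricts to a latticed $\underline{\mathrm{Cu}}$-isomorphism $\widetilde\rho\colon\underline{\mathrm{Cu}}(B_1)\to\underline{\mathrm{Cu}}(B_2)$. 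Since $B_1,B_2$ are stable they have an approximate unit of projections, so Proposition~\ref{recover prop} (in the form noted right after it) applies and $\widetilde\rho$ induces an order-preserving $\Lambda$-isomorphism $(\underline{\mathrm{K}}(B_1),\underline{\mathrm{K}}(B_1)_+)\cong(\underline{\mathrm{K}}(B_2),\underline{\mathrm{K}}(B_2)_+)$. But by \ref{b1b2} --- that is, \cite[Theorem~9.1]{DG} --- these ordered total K-groups are not isomorphic, a contradiction. So no such $\rho$ exists, i.e.\ $\underline{\mathrm{Cu}}(E_1)\ncong_L\underline{\mathrm{Cu}}(E_2)$.

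I expect the only genuinely delicate point to be exactly this passage between the pointed invariant $\underline{\mathrm{Cu}}_u$ of Theorem~\ref{lattice Cu total classify} and the unpointed $\underline{\mathrm{Cu}}$ of the statement: the class $[1_{E_i}]$ is not distinguished order-theoretically inside $\mathrm{K}_0^+(E_i)$ (there are plenty of order automorphisms of $\mathrm{K}_0^+(E_i)$, e.g.\ rescalings of the first coordinate), so one cannot simply drop and restore the unit. The unique-maximal-ideal reduction above bypasses this; alternatively, if the corollary is read as a statement about $\underline{\mathrm{Cu}}_u$, it is immediate from Theorems~\ref{zhuyao fanli} and~\ref{lattice Cu total classify} with nothing further to verify.
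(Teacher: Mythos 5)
Your proposal is correct, and its first half is exactly the paper's argument: the paper proves the corollary in one line by combining Theorem~\ref{zhuyao fanli} ($E_1\ncong E_2$) with Theorem~\ref{lattice Cu total classify}, which yields $\underline{\mathrm{Cu}}_u(E_1)\ncong_L\underline{\mathrm{Cu}}_u(E_2)$. Where you go beyond the paper is in observing that this literally only rules out a \emph{unit-preserving} latticed isomorphism, whereas the corollary as printed asserts the stronger, base-point-free statement $\underline{\mathrm{Cu}}(E_1)\ncong_L\underline{\mathrm{Cu}}(E_2)$; since a pointed isomorphism gives an unpointed one but not conversely, the paper's ``immediate'' deduction does not by itself cover the unpointed claim. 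Your supplementary argument closes this gap cleanly and uses only ingredients already in the paper: by Proposition~\ref{positive cone} each $B_i$ is the unique maximal ideal of $E_i$, so under the lattice isomorphism ${\rm Lat}(E_i)\cong{\rm Lat}(\underline{\mathrm{Cu}}(E_i))$ any latticed $\underline{\mathrm{Cu}}$-isomorphism restricts to one from $\underline{\mathrm{Cu}}(B_1)$ onto $\underline{\mathrm{Cu}}(B_2)$; Proposition~\ref{recover prop} (in the approximate-unit-of-projections form) then produces an order-preserving $\Lambda$-isomorphism $(\underline{\mathrm{K}}(B_1),\underline{\mathrm{K}}(B_1)_+)\cong(\underline{\mathrm{K}}(B_2),\underline{\mathrm{K}}(B_2)_+)$, contradicting \ref{b1b2} (i.e.\ \cite[Theorem 9.1]{DG}). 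This is in fact the same mechanism the paper uses inside the proof of Theorem~\ref{lattice Cu total classify}, but you correctly replace the unit-dependent step (Lemma~\ref{alpha0}) by the unique-maximal-ideal observation so that no base point is needed. Your remark that $[1_{E_i}]$ is not order-theoretically distinguished in $\mathrm{K}_0^+(E_i)$ is also apt and justifies why the extra step is not vacuous. In short: same core argument as the paper, plus a worthwhile repair of an imprecision in the paper's statement versus its one-line proof.
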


\begin{remark}
Note that  $E_1,E_2$ in \ref{counter ex}, the algebra $E$ in \cite[Example 4.5]{DL0} and a large class of ${\rm C}^*$-algebras are classified by Theorem \ref{lattice Cu total classify}.  The algebras $A_{(t,z)}$$(z\neq 0)$ in \cite[Example 20]{BD}, which are not isomorphic to any inductive limit of subhomogeneous ${\rm C}^*$-algebras, are also covered by our classification theorem.
Note that all the concrete algebras we mentioned here are not K-pure, and hence, their
$\mathrm{K}_*$-groups are not weakly unperforated.
\end{remark}
\begin{remark}
On one hand, we have shown that the total Cuntz semigroup is a complete invariant for a class of
${\rm C}^*$-algebras of stable rank one and real rank zero;
on the other hand, if $A$ is a unital, simple, exact, $\mathcal{Z}$-stable ${\rm C}^*$-algebra
of stable rank one,
we can recover both ${\rm Cu}(A)$ and $(\underline{{\rm K}}(A),\underline{{\rm K}}(A)_+, [1_A])$ from
$\underline{{\rm Cu}}_u(A)$,
which means $\underline{{\rm Cu}}_u(A)$ contains all information of
$${\rm Ell}(A) = (({\rm K}_0(A), {\rm K}_0^+(A), [1_A]),{\rm K}_1(A), {\rm T}(A), r_A)$$
(we refer the readers to \cite[Remark 4.5, Corollary 6.8]{ERS} for details).

Thus, such an invariant---total Cuntz semigroup, does work well in both  simple setting and real rank zero setting.

\end{remark}

\section*{Acknowledgements}
The first author was supported by NNSF of China (No.:12101113).
The second author was supported by NNSF of China (No.:12101102).

\end{document}